\newtheorem{theorem}{Theorem}[section]
\newtheorem{lemma}[theorem]{Lemma}
\newtheorem{proposition}[theorem]{Proposition}
\newtheorem{corollary}[theorem]{Corollary}
\newtheorem*{thma}{Theorem}
\newtheorem*{coronotnumb}{Corollary}
\theoremstyle{definition}
\newtheorem*{definition}{Definition}
\newenvironment{rem}[1][Remark.]{\begin{trivlist}
\item[\hskip \labelsep {\bfseries #1}]}{\end{trivlist}}
\renewcommand{\Re}{ \operatorname{Re} }
\renewcommand{\Im}{ \operatorname{Im} }
\newcommand{\Lie}{\mathfrak}
\newcommand{\son} { \operatorname{SO} }
\newcommand{\spin}{ \operatorname{Spin} }
\newcommand{\tor} { \operatorname{\tau} }
\newcommand{\normaltor} { \mathcal{T} }
\newcommand{\sln} { \operatorname{SL} }
\newcommand{\ort} { \operatorname{O} }
\newcommand{\su}  { \operatorname{SU}(2) }
\newcommand{\gln} { \operatorname{GL} }
\newcommand{\psl} { \operatorname{PSL} }
\newcommand{\Hol} { \operatorname{Hol} }
\newcommand{\Vol} { \operatorname{Vol} }
\newcommand{\Isom}{ \operatorname{Isom} }
\newcommand{\supp}{ \operatorname{supp} }
\newcommand{\lift}{ \tilde }
\newcommand{\cs}{\operatorname{cshape}}
\newcommand{\tr}{ \operatorname{trace} }
\newcommand{\length}{ l }
\newcommand{\pclsp}{ \operatorname{\mu_{sp}} }
\newcommand{\plsp}{ \operatorname{\mu_{lsp}} }
\newcommand{\diam}{ \operatorname{diam} }
\newcommand{\cmplx}{ \mathbf{C} }
\newcommand{\hyp}{ \mathbf{H} }
\newcommand{\real}{ \mathbf{R} }
\newcommand{\integer}{ \mathbf{Z} }
\newcommand{\disk}{\overline{D}}
\newcommand{\modtwo}{ \mathbf{Z}/2\mathbf{Z} }
\newcommand{\cohom}{ \operatorname{H} }
\newcommand{\idest}{i.e.\;}
\title{Higher-dimensional Reidemeister torsion invariants for cusped hyperbolic $3$-manifolds}
\author{Pere Menal-Ferrer \and Joan Porti 
\thanks{ Both authors partially supported by the 
Spanish Micinn through grant MTM2009-0759 and by 
the Catalan AGAUR through grant SGR2009-1207. 
The second author received the prize “ICREA Acad\`emia” 
for excellence in research, funded by the Generalitat de Catalunya.}
} 
\begin{document}
\maketitle
\begin{abstract}
	For an oriented finite volume hyperbolic $3$-manifold $M$ 
	with a fixed spin structure $\eta$, we consider a sequence of invariants 
	$\{\normaltor_n(M; \eta)\}$. 
	Roughly speaking, $\normaltor_n(M; \eta)$ is the Reidemeister torsion of $M$ with respect to the 
	representation given by the composition of the lift of the holonomy representation 
	defined by $\eta$, and the $n$-dimensional, irreducible, complex representation of $\sln(2,\cmplx)$.
	In the present work, we focus on two aspects of this invariant:
	its asymptotic behaviour and its relationship with the complex-length spectrum of the manifold.
	Concerning the former, we prove that for suitable spin structures, $\log |\normaltor_n(M; \eta)| \sim -n^2 \frac{\Vol M}{4\pi}$,
	extending thus the result obtained by W.\;M\"uller for the compact case in \cite{Mul}.
	Concerning the latter, we prove that the sequence $\{|\normaltor_n(M; \eta)|\}$
	determines the complex-length spectrum of the manifold up to complex conjugation. 
\end{abstract}

\section{Introduction}

Let $M$ be an oriented, complete, hyperbolic three-manifold of finite volume. 
The hyperbolic structure of $M$ yields the holonomy representation:
\[
\Hol_M \colon \pi_1(M, p) \to \Isom^+ \hyp^3,
\]
where $\Isom^+ \hyp^3$ denotes the orientation preserving isometry group of hyperbolic 3-space $\hyp^3$. 
Using the upper half-space model, $\Isom^+ \hyp^3$ is naturally identified 
with $\psl(2,\cmplx) = \sln(2,\cmplx)/\{\pm 1\}$. 
It is known that $\Hol_M$ can be lifted to $\sln(2,\cmplx)$;
moreover, such lifts are in canonical one-to-one correspondence with spin structures on $M$. 
Thus, attached to a fixed spin structure $\eta$ on $M$, we get a representation
\[
\Hol_{(M,\eta)} \colon \pi_1(M, p) \to \sln(2,\cmplx).
\]
On the other hand, for all $n > 0$ there exists a unique (up to isomorphism)
$n$-dimensional, complex, irreducible representation of the Lie group $\sln(2,\cmplx)$, say:
\[
\varsigma_n \colon \sln(2,\cmplx) \to \sln(n,\cmplx).
\]
Hence, composing $\Hol_{(M,\eta)}$ with $\varsigma_n$ we get the following representation:
\[
\rho_n \colon \pi_1(M, p) \to \sln(n,\cmplx).
\]
This representation will be called the \emph{canonical n-dimensional representation}
of the spin-hyperbolic manifold $(M,\eta)$. 

Roughly speaking, the Reidemeister torsion invariants that we want to study are those coming from $\rho_n$.
The first issue that arises in trying to define the Reidemeister torsion 
concerns the cohomology groups $\cohom^*(M; \rho_n)$ 
(\idest the cohomology groups of $M$ in the local system defined by $\rho_n$).
If all these groups vanish, then it makes sense to consider the Reidemeister torsion $\tau(M; \rho_n)$;
however, if some of them are not trivial, then a choice of bases for $\cohom^*(M; \rho_n)$ is required.

An important case for which $\rho_n$ is acyclic (\idest $\cohom^*(M; \rho_n) = 0$) 
is when $M$ is closed. This is a particular case of Raghunathan's vanishing theorem.
For $M$ closed, the invariant $\tau(M; \rho_n)$ has been considered by W.\;M\"uller in \cite{Mul},
and for $n = 3$ by J.\;Porti in \cite{Porti}.

In general, the representation $\rho_n$ does not need to be acyclic for a cusped manifold $M$.
Therefore, we need to choose bases in (co)homology to define $\tau(M; \rho_n)$.
Obviously, if we want an invariant of the manifold, these bases must be chosen in a somehow canonical way.
Unfortunately, we do not know if this is possible. 
J.\;Porti proved in \cite{Porti} that for $n = 3$ a natural choice of bases can be done once a basis for 
$\cohom^1(\partial \overline M; \integer)$ is chosen.
Using the same approach, we will prove the following result: 
given non-trivial cycles $\{ \theta_i \}$ in $\cohom_1(\partial \overline M; \integer)$,
one for each connected component of $\partial \overline M$, 
there is a canonical family of bases of $\cohom^*(M; \rho_n)$ 
such that any member of this family yields the same Reidemeister torsion,
say $\tau (M; \rho_n; \{\theta_i\})$. Moreover, we will show that for $ k > 1$ the following quotients
are independent of the choices $\{\theta_i\}$:
\begin{eqnarray*}
	\normaltor_{2k+1}(M, \eta) & := & \frac{\tau( M; \rho_{2k + 1}; \{ \theta_i \})}
	{\tau( M; \rho_3; \{ \theta_i \} )} \in \cmplx^*/\{\pm 1\} ,\\
	\normaltor_{2k  }(M, \eta) & := & \frac{\tau( M; \rho_{2k}; \{ \theta_i \} )}
	{\tau( M; \rho_2; \{ \theta_i \})} \in \cmplx^*/\{\pm 1\}.
\end{eqnarray*}
Thus, for all $n \geq 4$, $\normaltor_n(M,\eta)$ is an invariant of the spin-hyperbolic manifold $(M,\eta)$.
Notice that if $n$ is odd the quantity $\normaltor_n(M,\eta)$ is independent of the spin structure
(this is an immediate consequence of the fact that an odd 
dimensional irreducible complex representation of $\sln(2,\cmplx)$ factors through $\psl(2,\cmplx)$),
and hence we will denote it simply by $\normaltor_{2k + 1}(M)$.
The invariant $\normaltor_n(M,\eta)$ will be called the \emph{normalized $n$-dimensional Reidemeister torsion}
of the cusped spin-hyperbolic manifold $M$. We will also refer to these invariants
as the higher-dimensional Reidemeister torsion invariants. 
These invariants are the focus of study of the present paper.

\begin{rem}
	It is possible to assign a well defined sign to $\normaltor_n(M,\eta)$:
	if $n$ is even, this can be done for $\tau(M;\rho_n)$ (see Turaev's book \cite{Turaev});
	if $n$ is odd, this can be done because, roughly speaking,
	the sign indeterminacy of $\tau(M;\rho_n)$ is the same for $\tau(M;\rho_3)$. 
	In spite of this, we will work up to sign in general, as our main results concern just the modulus of $\normaltor_n(M,\eta)$.
\end{rem}

To simplify the exposition in this introduction, we will restrict ourselves to the odd-dimensional case.
Thus we do not need any spin structure on $M$.
Our main result concerns the asymptotic behaviour of $\{\normaltor_{2k + 1}(M)\}$.
\begin{thma}
	Let $M$ be an oriented, complete, finite-volume, hyperbolic $3$-manifold. Then 
	\[
	\lim_{k\to \infty} \frac{\log|\normaltor_{2k+1}(M)|}{(2k +1)^2} = - \frac{\Vol(M)}{4\pi}.
	\] 
\end{thma}

For $M$ compact, this result was established by W.\;M\"uller in \cite{Mul}.
To explain our approach to this theorem, we need to discuss M\"uller's result.

Let us assume that $M$ is closed. According to M\"uller's Theorem on the equivalence between
Reidemeister torsion and Ray-Singer torsion for \emph{unimodular} representations (see \cite{MulUni}),
we have
\[
|\tau(M; \rho_n)| = \text{Tor}(M; \rho_n),
\]
where $\text{Tor}(M; \rho_n)$ is the Ray-Singer torsion of $M$ with respect to $\rho_n$.
For a hyperbolic manifold and a unitary representation $\rho$, 
D.\;Fried established in \cite{Fried1} and \cite{Fried2} a deep relationship between $\text{Tor}(M,\rho_n)$ and the twisted Ruelle zeta function.
The twisted Ruelle zeta function of $M$ and $\rho$ is formally defined by
\begin{equation} \label{intro:Ruelle}
	R_{\rho} (s) = \prod_{\varphi \in \mathcal{PC}(M)} \det\left( \text{Id} - \rho_n(\varphi) e^{-s\length(\varphi)}\right),
\end{equation}
where $\mathcal{PC}(M)$ denotes the set of oriented, prime, closed geodesics in $M$, and $\length(\varphi)$ is the length of $\varphi$
(we are using the identification between $\mathcal{PC}(M)$ and the set of hyperbolic conjugacy classes of $\pi_1 M$,
so the expression appearing inside the product above makes sense).
D.\;Fried proved that, for any representation $\rho$, the function 
$R_{\rho}(s)$ admits a meromorphic extension to the whole plane; moreover,
if $\rho$ is assumed to be acyclic and unitary, then $|R_{\rho} (0)| = {\text{Tor}(M,\rho_n)}^{2}$. 
The work of U.\;Br\"ocker \cite{Brocker} and A.\;Wotzke \cite{Wotzke} shows that a similar 
result also holds for a compact hyperbolic manifold and representations of its fundamental group
arising from representations of $\Isom^+\hyp^n$.
In our particular case, the result is the following.  
\begin{thma}[\textbf{A.\;Wotzke, \cite{Wotzke}}]
	Let $(M,\eta)$ be a compact spin-hyperbolic $3$-manifold. 
	Then, for $n > 1$, $R_{\rho_n}(s)$ admits a meromorphic extension to the whole complex plane and
	\[
	|R_{\rho_n}(0)| = {\operatorname{Tor}(M; \rho_n)}^{2}.
	\]
\end{thma}

U.\;Br\"ocker established in \cite{Brocker} a functional equation for $R_{\rho_n}(s)$
involving the volume of the manifold. Using this equation and other related material,
M\"uller has recently established in \cite{Mul} the following formula for $|\tau(M; \rho_n)|$, which involves the volume  
of the closed manifold $M$ and some related Ruelle zeta functions $R_k(s)$,
\begin{equation} \label{intro:RuelleMuller}
	\log \left| \frac{\tor(M, \rho_{2k +1}) }{\tor(M, \rho_5)} \right|  =  
		 \sum_{j=3}^k \log{|R_{2j}(j)| } - \frac{1}{\pi} \Vol M \left( k(k+1) -6 \right).
\end{equation}
One of the advantages of this formula is that the Ruelle zeta functions $R_k(s)$ are evaluated
inside the corresponding region of convergence, and hence they have an expression similar to that of Equation \eqref{intro:Ruelle}.
The result about the asymptotics of the torsion is then deduced by showing that the sum appearing in the right hand side 
of Equation \eqref{intro:RuelleMuller} is uniformly bounded on $k$.

In trying to adapt M\"uller's proof to the non-compact case,
some difficulties arise, the main one being the fact that the Ray-Singer torsion is \emph{a priori} 
not defined for non-compact manifolds.
Nevertheless, the terms appearing in Equation \eqref{intro:RuelleMuller} still make sense for cusped manifolds.
Thus this equation is meaningful for such manifolds also;
we prove that this true in Section \ref{chapter:asymptotic}.
Roughly speaking, our proof will consist in approximating the manifold $M$ by 
the compact manifolds $\{M_{p/q}\}$ obtained by performing Dehn fillings on $M$.
Then we will get a formula relating $\normaltor_{2k+1}(M)$ and $\normaltor_{2k+1}(M_{p/q})$ in Section \ref{chapter:surgery}.
This will be done using a Mayer-Vietoris argument. As a by-product of this formula, 
the behaviour of the higher-dimensional Reidemeister torsion invariants under Dehn filling will be established as well.

The other thing we must take into account concerns the limit of the 
Ruelle zeta functions of the manifolds $M_{p/q}$ as $(p,q)$ goes to infinity.
Our main tool to deal with this will be the continuity of the complex-length spectrum, which we briefly discuss now.
\begin{definition}
	The \emph{prime complex-length spectrum} of $M$, denoted $\pclsp M$,
	is the measure on $\cmplx$ defined by
	\[
	\pclsp M = \sum_{ \varphi \in \mathcal{PC}(M)} \delta_{ e^{ \lambda(\varphi) } },
	\]
	where $\lambda$ is the complex-length function of $M$, and $\delta_x$ denotes the Dirac measure centered at $x$.
	In other words, $\pclsp M$ is the image measure of the counting measure in $\mathcal{PC}(M)$ 
	under the exponential of the complex-length function. 
\end{definition}
\begin{rem}
	The complex-length spectrum is usually regarded as a collection of complex numbers and multiplicities.
	This is of course equivalent to our definition;
	however, we think that regarding it as a measure puts some questions in a natural context. 
\end{rem}

We can consider the prime complex-length spectrum as a map from $\mathcal M$, 
the set of complete oriented hyperbolic $3$-manifolds of finite volume, to 
$M(\cmplx\setminus \overline D)$, the set of measures on the exterior of the unit disc $\overline D$.
Both spaces are endowed with natural topologies: the former with the geometric topology, and the latter 
with the topology of weak convergence. 
Using standard techniques from hyperbolic geometry we will prove
the continuity of this map in Section \ref{chapter:cmplxlength}.
\begin{thma}
	The map $\pclsp \colon \mathcal M \to M(\cmplx\setminus \overline D)$
	which assigns to every finite volume complete oriented hyperbolic $3$-manifold its complex-length spectrum
	is continuous.
\end{thma}

With this formalism, Equation \eqref{intro:RuelleMuller} can be expressed in terms of the complex-length spectrum measure.
Using some complex analysis, we will prove that  
if we know all the values $\{|\normaltor_{2k +1} (M)|\}_{k\geq N}$,
for some $N \geq 4$, then we also know the values of the following integrals
\[
M_k = \int_{|z| > 1} ( z^{-k} + {\bar z}^{-k} ) \operatorname{d}\pclsp M(z), \quad k \geq N.
\]
Using the Cauchy transform we will prove that for this kind of measure 
this information is enough to recover the measure up to complex conjugation, that is,
we do not know $\pclsp M$ but 
\[
\pclsp M + \overline {\pclsp M},
\]
where $\overline{\pclsp M}$ denotes the image measure of $\pclsp M$ under complex conjugation.
As a consequence, we will obtain the following result.
\begin{thma}
	Let $M$ be an oriented complete hyperbolic $3$-manifold of finite volume.
	For all $N \geq 4$, the sequence of values $\{|\normaltor_{2k +1} (M)|\}_{k \geq N}$ determines
	the complex-length spectrum of $M$ up to complex conjugation.
\end{thma}
\begin{rem}
	This theorem may be regarded as a geometric interpretation of the 
	information encoded in the higher-dimensional Reidemeister torsion invariants.
\end{rem}
As a particular case, if $M$ admits an orientation-reversing isometry 
(this is for instance the case of the complement of the figure eight knot), 
then $\overline {\pclsp M} = \pclsp M$, and hence the sequence $\{|\normaltor_{2k +1} (M)|\}_{k\geq N}$
determines the complex-length spectrum completely. 

Using Wotzke's Theorem we obtain the following corollary of this theorem.
\begin{coronotnumb}
	Let $M$ be an oriented \emph{compact} hyperbolic $3$-manifold.
	Knowing the invariants $|\normaltor_{2k +1} (M)|$ for all $k \geq N \geq 4$
	is equivalent to knowing the complex-length spectrum of $M$ up to complex conjugation.
\end{coronotnumb}

\section{Spin-hyperbolic three-manifolds}

The aim of this section is to review and establish some facts and constructions 
concerning a spin-hyperbolic $3$-manifold. 
The definition of the object under consideration is quite obvious.

\begin{definition}
	A \emph{spin-hyperbolic} $3$-manifold is a pair $(M, \eta)$ where $M$ is an oriented hyperbolic $3$-manifold 
	and $\eta$ a spin structure on $M$.
\end{definition}

The first subsection reviews the relation between spin structures and lifts of the holonomy representation;
although this material is well known (see for instance \cite{culler}), we think it is worth outlining it here
in an elementary and self-contained way.
In the second subsection we give the definition of the $n$-dimensional canonical representation of a spin-hyperbolic $3$-manifold;
some basic results about irreducible finite-dimensional complex representations of $\sln(2,\cmplx)$ are also recalled.

\subsection{Lifts of the holonomy representation}
\label{section:lifts}

Let $M$ be a connected, oriented, hyperbolic $3$-manifold which is not necessarily complete.
We will use the following definition of a spin structure, see \cite{Kirby}. 
The $\son(3)$--principal bundle of orthonormal positively-oriented frames on $M$ is denoted by $P_{\son(3)} M$.

\begin{definition}
	A \emph{spin structure} on $M$ is a (double) cover of $P_{\son(3)} M$ by a $\spin(3)$--principal bundle over $M$. 
\end{definition}

This definition is equivalent to saying that a spin structure on $M$ 
is a double cover of $P_{\son(3)} M$ such that the preimage of any fiber of $P_{\son(3)} M$ is connected.
One can deduce from this observation that there is a natural identification between the set of spin structures on $M$ 
and the following set:
\[
\left \{ \alpha \in \cohom^1(P_{\son(3)} M; \modtwo) \mid i^*(\alpha) = 1 \in \cohom^1(\son(3); \modtwo) \right \}.
\]

On the other hand, the hyperbolic structure of $M$ defines a canonical flat $\Isom^+ \hyp^3$--principal bundle over $M$,
see \cite{Thurston3d}.
Let us recall how it is defined. Let $\hyp^3$ be hyperbolic space of dimension three with a fixed orientation.
Consider an $(\Isom^+ \hyp^3, \hyp^3)$--atlas on $M$ defining the hyperbolic structure.
Thus we have local charts $\phi_i \colon U_i \to \hyp^3$ covering $M$
such that the changes of coordinates are restrictions of orientation-preserving isometries of $\hyp^3$. 
We can assume that the local charts preserve the fixed orientations on both $M$ and $\hyp^3$.
Let $\psi_{ij}$ be the change of coordinates from 
$(\phi_j, U_j)$ to $(\phi_i, U_i)$, that is,
\[
\psi_{ij}\colon U_i\cap U_j \to \Isom^+ \hyp^3, \quad \psi_{ij} \circ \phi_j = \phi_i.
\]
The analyticity of the elements of $\Isom^+ \hyp^3$ implies that $\psi_{ij}$ is a locally constant map. 
Since these maps also satisfy the cocycle condition  
$\psi_{ij} \circ \psi_{jk} = \psi_{ik}$,
they define a flat $\Isom^+ \hyp^3$--principal bundle over $M$, 
\[ 
\Isom^+ \hyp^3 \to P_{\Isom^+ \hyp^3} M \overset{\pi}{\to} M.
\]
Let us fix a base point $p\in M$. Given $u\in P_{\Isom^+ \hyp^3} M$ with $\pi(u) = p$, 
it makes sense to consider the holonomy representation of this principal bundle,
\[
\Hol_u \colon \pi_1(M, p) \to \Isom^+ \hyp^3.
\]
By definition, if $\sigma \colon [0,1] \to M$ is a loop based at $p$, 
$\Hol_u(\sigma)$ is the unique element of $\Isom^+ \hyp^3$ such that 
\[
\tilde{\sigma}(1) \cdot \Hol_u(\sigma) = \tilde{\sigma}(0),
\]
where $\tilde{\sigma}(t)$ is the horizontal lift of $\sigma(t)$ starting at $u$.
It can be checked that this holonomy agrees, up to a conjugation, with the holonomy given in terms of the developing map.
In other words, for some suitable initial choices, we have $\Hol_u = \Hol_M$.

In what follows, we will identify $\Isom^+ \hyp^3$ with $\psl(2,\cmplx)$.
\begin{proposition} \label{prop: liftssl2}
	There is a canonical one-to-one correspondence between the following sets:
	\begin{enumerate}
		\item The set of covers of $P_{\psl(2,\cmplx)} M$ by $\sln(2,\cmplx)$--principal bundles over $M$. 
		\item The set of lifts of $\Hol_M$ to $\sln(2,\cmplx)$.
	\end{enumerate}
\end{proposition}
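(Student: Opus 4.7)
My strategy applies the standard dictionary between flat principal $G$-bundles over $M$ with a fixed frame over the basepoint $p$ and homomorphisms $\pi_1(M,p) \to G$, in both of the cases $G = \psl(2,\cmplx)$ and $G = \sln(2,\cmplx)$. Since the short exact sequence $1 \to \{\pm \mathrm{Id}\} \to \sln(2,\cmplx) \to \psl(2,\cmplx) \to 1$ has central kernel, compatible lifts on the two sides can be matched canonically. I would exhibit maps in both directions between the sets (1) and (2) and check that they are mutually inverse.

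In the direction $(1) \Rightarrow (2)$, given a cover $\pi \colon Q \to P_{\psl(2,\cmplx)} M$ by an $\sln(2,\cmplx)$-principal bundle, I pull back the flat structure of $P_{\psl(2,\cmplx)} M$ to $Q$ along $\pi$; since $\pi$ is a local diffeomorphism, this equips $Q$ with a flat structure. Its holonomy at any preimage of the fixed reference frame gives a homomorphism $\pi_1(M,p) \to \sln(2,\cmplx)$. The two possible basepoint choices differ by the right-action of $-\mathrm{Id}$, which is central, so they yield the same representation; equivariance of $\pi$ with respect to $\sln(2,\cmplx) \to \psl(2,\cmplx)$ shows this representation projects to $\Hol_M$. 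Hence one obtains a well-defined lift.

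For the converse $(2) \Rightarrow (1)$, given a lift $\widetilde \rho$ of $\Hol_M$, I form the associated flat bundle $Q_{\widetilde \rho} = \widetilde M \times_{\widetilde \rho} \sln(2,\cmplx)$, where $\pi_1(M,p)$ acts on $\widetilde M$ by deck transformations and on $\sln(2,\cmplx)$ by left multiplication through $\widetilde \rho$. Quotienting on the right by the central $\{\pm \mathrm{Id}\}$ produces $\widetilde M \times_{\Hol_M} \psl(2,\cmplx)$, which is canonically identified with $P_{\psl(2,\cmplx)} M$ by the holonomy-bundle correspondence. The resulting projection $Q_{\widetilde \rho} \to P_{\psl(2,\cmplx)} M$ is the required double cover, and verifying that the two assignments are inverse is then a formal consequence of the holonomy dictionary.

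The main obstacle I anticipate is not mathematical but notational: to make the bijection honestly canonical one must fix compatible basepoint data (a frame $u$ over $p$ and a basepoint for $\widetilde M$) on both sides once and for all, and then carefully track these choices through each construction. The structural fact that turns the a priori two-to-one ambiguity into a genuine bijection is the centrality of the kernel $\{\pm \mathrm{Id}\}$, which ensures that the choice of preimage basepoint in $Q$ is irrelevant to the resulting representation.
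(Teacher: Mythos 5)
Your proof is correct and follows essentially the same approach as the paper: fix a basepoint frame $u$ over $p$ with $\Hol_u = \Hol_M$, assign to each covering $\sln(2,\cmplx)$-bundle the holonomy at a lift $\tilde u$, observe that centrality of $\{\pm\operatorname{Id}\}$ makes the choice of $\tilde u$ irrelevant, and invert via the flat-bundle/holonomy correspondence. The only cosmetic difference is that you spell out the inverse construction $(2)\Rightarrow(1)$ explicitly as an associated bundle, whereas the paper compresses it into the remark that the flat bundle is recoverable from its holonomy.
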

\begin{proof}
	Let us assume that we have chosen a base point
	$u\in P_{\psl(2,\cmplx)} M$ with $\pi(u) = p \in M$ such that $\Hol_u = \Hol_M$.
	Let $P_{\sln(2,\cmplx)} M$ be an $\sln(2,\cmplx)$--principal bundle over $M$ covering 
	$P_{\psl(2,\cmplx)} M$. Take one of the two points 
	$\tilde{u} \in P_{\sln(2,\cmplx)}M$ that projects to $u$, and consider 
	the corresponding holonomy representation $\Hol_{\tilde{u}}$. 
	It is clear that $\Hol_{\tilde{u}}$ is a lift of $\Hol_u$; 
	moreover, it is independent of the choice of the base point $\tilde{u}$, 
	for the other choice is obtained by conjugating it by $-\operatorname{Id} \in \sln(2, \cmplx)$.
	This gives a well-defined correspondence between 
	the set of covers of $P_{\psl(2,\cmplx)} M$ by $\sln(2,\cmplx)$--principal bundles over $M$
	and the set of lifts of $\Hol_M$ to $\sln(2,\cmplx)$.
	Finally, this correspondence is one-to-one because we can recover the flat bundle from its holonomy representation. 
\end{proof}

Next, we want to embed the frame bundle $P_{\son(3)} M$ into $P_{\psl(2,\cmplx)} M$.
To that end, identify $\psl(2,\cmplx)$ with $P_{\son(3)} \hyp^3$
by fixing a positively-oriented frame $R_O \in P_{\son(3)} \hyp^3$ based at $O \in \hyp^3$.
Notice that this gives a concrete embedding of $\son(3)$ into $\psl(2,\cmplx)$ 
as the isometry group of the tangent space at $O$ with fixed basis $R_O$.
Now let $u \in P_{\psl(2,\cmplx)} M$ and $p = \pi(u)$. 
A local chart $(\phi_j, U_j)$ of the hyperbolic structure containing $p$
gives a local trivialization $U_j \times \psl(2,\cmplx)$ of $P_{\psl(2,\cmplx)} M$,
with respect to which the point $u$ is written as a pair $(p, g)\in U_j \times \psl(2,\cmplx)$. 
We will say that $u$ \emph{is based at} $p \in M$  if $g \in \psl(2,\cmplx) \cong P_{\son(3)} \hyp^3$ 
is a frame based at $\phi_j(p)$. It can be checked that this definition does not 
depend on the choice of the local chart $(\phi_j, U_j)$,
and that we have the following identification:
\[
\left \{ u \in P_{\psl(2,\cmplx)} M \mid u \text{ is a frame based at } \pi(u)  \right \} \cong P_{\son(3)} M.
\]
Thus we have obtained a concrete embedding $P_{\son(3)} M \hookrightarrow P_{\psl(2,\cmplx)} M $,
which is easily seen to be compatible with the actions of the respective structural groups 
$\son(3)$ and $\psl(2,\cmplx)$.
In other words, we have an explicit reduction of the structural group with respect to the fixed embedding $\son(3)\subset \psl(2,\cmplx)$.
Although this embedding depends on the choices that we have done, 
it must be pointed out that its homotopy class does not.

\begin{proposition}
	\label{prop:spin_lifts}
	There is a canonical one-to-one correspondence between the following sets:
	\begin{enumerate}
		\item The set of covers of $P_{\psl(2,\cmplx)} M$ by $\sln(2,\cmplx)$--principal bundles over $M$. 
		\item The set of spin structures on $M$.
	\end{enumerate}
\end{proposition}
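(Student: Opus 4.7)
I would mimic the argument of Proposition \ref{prop: liftssl2} above, the starting observation being that the square of Lie groups
\[
\begin{CD}
\spin(3) @>>> \sln(2,\cmplx)\\
@VVV @VVV\\
\son(3) @>>> \psl(2,\cmplx)
\end{CD}
\]
is a pullback, with vertical maps the nontrivial $\modtwo$ covering maps, and that the embedding $P_{\son(3)} M\hookrightarrow P_{\psl(2,\cmplx)} M$ just constructed is compatible with it.

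For the forward direction, given an $\sln(2,\cmplx)$--principal bundle $q\colon P_{\sln(2,\cmplx)} M\to P_{\psl(2,\cmplx)} M$ covering the frame bundle, I would take its image to be $q^{-1}(P_{\son(3)} M)\subset P_{\sln(2,\cmplx)} M$. The pullback property of the square above implies that this preimage is a $\spin(3)$--principal bundle over $M$ mapping to $P_{\son(3)} M$, and on each fiber the restriction is exactly the connected double cover $\spin(3)\to\son(3)$, so the result is a bona fide spin structure.

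For the backward direction, given a spin structure $P_{\spin(3)} M\to P_{\son(3)} M$, I would form the associated bundle $P_{\spin(3)} M\times_{\spin(3)}\sln(2,\cmplx)$ via the inclusion $\spin(3)\subset\sln(2,\cmplx)$ from the same square; this is an $\sln(2,\cmplx)$--principal bundle over $M$, and its natural extension-of-structure-group map realizes it as a double cover of $P_{\son(3)} M\times_{\son(3)}\psl(2,\cmplx)\cong P_{\psl(2,\cmplx)} M$. A direct check using the pullback property shows these two assignments are mutually inverse.

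The main obstacle is to confirm that the associated bundle produced in the backward direction is genuinely a nontrivial double cover of $P_{\psl(2,\cmplx)} M$, and not merely a bundle mapping to it. The cleanest way to handle this is through the cohomological description already recalled for spin structures: both sets in the statement are classified by classes in $\cohom^1(-;\modtwo)$ whose restriction to a fiber is the nontrivial element of $\cohom^1(\son(3);\modtwo)$, respectively $\cohom^1(\psl(2,\cmplx);\modtwo)$. Since $\son(3)\hookrightarrow\psl(2,\cmplx)$ is a homotopy equivalence (via the Iwasawa decomposition), the embedding $P_{\son(3)} M\hookrightarrow P_{\psl(2,\cmplx)} M$ is a fiberwise homotopy equivalence over $M$, and hence induces an isomorphism on $\cohom^1(-;\modtwo)$ carrying each distinguished class to the other. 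This confirms that the bijection is well defined and that it matches the constructions above.
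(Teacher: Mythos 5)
Your proposal is correct, and its final paragraph is essentially the paper's entire proof: the paper identifies both sets with classes in $\cohom^1$ of the respective total spaces whose restriction to a fiber is the nontrivial element, and then invokes the fact that $P_{\son(3)} M \hookrightarrow P_{\psl(2,\cmplx)} M$ is a homotopy equivalence (since $\son(3) \simeq \psl(2,\cmplx)$) to conclude. What you add on top is a concrete, construction-level description of the bijection itself, via the pullback square
\[
\begin{CD}
\spin(3) @>>> \sln(2,\cmplx)\\
@VVV @VVV\\
\son(3) @>>> \psl(2,\cmplx)
\end{CD}
\]
taking preimages in one direction and associated bundles (extension of structure group) in the other. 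This is more information than the paper gives: the paper proves existence of a canonical bijection abstractly, whereas you also exhibit it explicitly, which can be useful if one later needs to trace a specific spin structure to a specific $\sln(2,\cmplx)$--bundle. The one place where your construction-level argument is slightly redundant is the worry about the associated bundle being a genuine double cover: this already follows directly by computing fibers (the map on fibers is $\sln(2,\cmplx) \to \psl(2,\cmplx)$, the nontrivial connected double cover), so the cohomological check at the end is not strictly needed for that point — though including it does make the argument coincide with the paper's, and verifies that your explicit maps are inverse to each other.
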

\begin{proof}
	The set of spin structures on $M$ is canonically identified with the following set:
	\[
	\left \{ \alpha \in \cohom^1(P_{\son(3)} M; \modtwo) \mid i^*(\alpha) = 1 \in \cohom^1(\son(n); \modtwo) \right \}.
	\]
	The same argument shows that the set of covers of $P_{\psl(2,\cmplx)} M$ by $\sln(2,\cmplx)$--principal bundles over $M$
	is identified with 
	\[
	\left \{ \alpha \in \cohom^1( P_{\psl(2,\cmplx)} M; \modtwo) \mid i^*(\alpha) = 1 \in \cohom^1(\sln(2,\cmplx); \modtwo) \right \}.
	\]
	The result then follows from the fact that the map $P_{\son(3)} M \hookrightarrow P_{\Isom^+ \hyp^3} M$ defined above,
	whose homotopy class is canonical, is a homotopy equivalence, for $\son(3) \simeq \psl(2, \cmplx)$.
\end{proof}

\begin{corollary}
	The holonomy representation of a hyperbolic $3$-manifold 
	can be lifted to $\sln(2,\cmplx)$.
	The number of such lifts is $|\cohom^1(M;\modtwo)|$. 
\end{corollary}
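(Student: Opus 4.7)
The plan is to deduce the corollary directly from the preceding two propositions: composing the canonical bijections of Propositions~\ref{prop: liftssl2} and~\ref{prop:spin_lifts} yields a canonical one-to-one correspondence between lifts of $\Hol_M$ to $\sln(2,\cmplx)$ and spin structures on $M$. Under this correspondence, existence of a lift is equivalent to existence of a spin structure, and counting lifts amounts to counting spin structures. So the statement reduces to two classical facts about spin structures on oriented $3$-manifolds, which I would invoke in turn.

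For existence I would appeal to the fact that every orientable $3$-manifold is parallelizable and in particular satisfies $w_2(M)=0$, so a spin structure exists. In the cohomological language used in the paper, this can be phrased as follows: the obstruction to finding $\alpha \in \cohom^1(P_{\son(3)} M;\modtwo)$ whose restriction to a fiber is the generator of $\cohom^1(\son(3);\modtwo)$ sits in $\cohom^2(M;\modtwo)$ (via the Serre spectral sequence of $\son(3) \to P_{\son(3)} M \to M$) and is precisely $w_2(M)$, which vanishes.

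For the count, I would observe that once one such class $\alpha$ exists, the same spectral sequence identifies every other such class as $\alpha$ plus the pullback of a unique element of $\cohom^1(M;\modtwo)$, and every such pullback does give a legitimate spin structure. This exhibits the set of spin structures as a torsor over $\cohom^1(M;\modtwo)$, and hence its cardinality is $|\cohom^1(M;\modtwo)|$. Transporting this back through the composite bijection finishes the proof.

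There is no real obstacle here: once Propositions~\ref{prop: liftssl2} and~\ref{prop:spin_lifts} are in hand, the only substantive input is the classical vanishing of $w_2$ for oriented $3$-manifolds, which is standard. Everything else is a formal consequence of the torsor structure on spin structures and the bijections already established.
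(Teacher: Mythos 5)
Your proposal is correct and follows the same route as the paper: combine Propositions~\ref{prop: liftssl2} and~\ref{prop:spin_lifts} to identify lifts with spin structures, then invoke the classical count of spin structures on an oriented $3$-manifold. The paper's proof is simply the one-line citation of that classical fact, whereas you additionally unwind it (vanishing of $w_2$ by parallelizability for existence, the $\cohom^1(M;\modtwo)$-torsor structure via the Serre spectral sequence of $\son(3)\to P_{\son(3)}M\to M$ for the count); that extra detail is sound but not a different argument.
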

\begin{proof}
	An oriented $3$-manifold admits $|\cohom^1(M;\modtwo)|$ 
	different spin structures.
\end{proof}

\section{Positive spin structures}

Let $M$ be a complete, oriented, hyperbolic $3$-manifold of finite volume.
Thus $M$ is the interior of a compact manifold whose boundary consists of 
tori $T_1,\dots, T_k$.

\begin{definition}
	We will say that a spin structure $\eta$ on $M$ is \emph{positive} 
	on $T_i$ if for all $g \in \pi_1 T_i$ we have:
	\[
	\tr \Hol_{(M,\eta)}(g) = +2.
	\]
	Otherwise, we will say that $\eta$ is \emph{nonpositive} on $T_i$.
\end{definition}

By \cite{MenalPorti},  $\eta$ is {nonpositive} on $T_i$ if and only if  
$H^*(T_i;\Hol_{(M,\eta)})$ is trivial.

\begin{definition}
A spin structure $\eta$ is \emph{acyclic}
if   $H^*(T_i;\Hol_{(M,\eta)})$ 
is trivial for each connected component $T_i$ of the boundary. Equivalently, $\eta$ is
nonpositive on each $T_i$.
 \end{definition}

The aim of this subsection is to prove the existence of spin structures that are acyclic.
Let $T^2$ be  a peripheral torus. 
We can assume that $T^2$ is a horospheric cross-section, and that 
\[
\Hol_M (\pi_1 T^2) = \left \langle
	\left[ \begin{pmatrix}  
		1 & 1 \\ 
		0 & 1
	\end{pmatrix} \right], 
	\left[ \begin{pmatrix}  
		1 & \tau \\ 
		0 & 1
	\end{pmatrix} \right]
	\right \rangle < \psl(2, \cmplx), \quad \text{ with } \Im \tau > 0.
\]

Let $P_{\son(3)} T^2 \subset P_{\psl(2,\cmplx)} T^2$ be the restriction of 
$P_{\son(3)} M \subset P_{\psl(2,\cmplx)} M$ to $T^2$, and let $\Hol_{T^2}$ be the restriction of 
$\Hol_M$ to $\pi_1 T^2$.
Using the Euclidean structure of $T^2$ and the outward normal vector of $T^2$,
we can construct a canonical (up to homotopy) section $s$ of the bundle $P_{\son(3)} T^2$
as follows: fix $p \in T^2$ and define $s(p) \in P_{\son(3)} T^2$ 
as any frame based at $p$ whose third component is equal to the outward normal vector at $p$;
for all $q \in T^2$, define $s(q)$ as the parallel transport (with respect to the Euclidean structure) of $s(p)$ along a curve joining $p$ and $q$ on $T^2$.
This yields a well-defined section which is canonical up to homotopy.
Thus we have a canonical trivialization $P_{\son(3)} T^2 \cong T^2 \times \son(3)$,
and hence a distinguished spin structure $T^2 \times \spin(3)$.
All other spin structures arise as quotients of the form
\[
\eta_{\alpha} = \left(\widetilde{T^2}\times \spin(3) \right) / \pi_1 T^2, 
\]
where $\alpha \in \cohom^1(T^2; \modtwo) = \operatorname{Hom}(\pi_1 T^2; \{ \pm 1\})$,
and the action of $\sigma \in \pi_1 T^2$ on $\spin(3)$ is by multiplication by $\alpha(\sigma) \operatorname{Id}$.
Therefore, spin structures of $P_{\son(3)} T^2$ are in canonical one-to-one correspondence with 
$\cohom^1(T^2;\modtwo)$. 

A similar argument proves the following result.

\begin{lemma}
	Let $\alpha \in \cohom^1(T^2; \modtwo) = \operatorname{Hom}(\pi_1 T^2; \{ \pm 1\})$, $\eta_\alpha$ be the associated spin structure on $T^2$,
	and $\Hol_{(T^2,\eta_\alpha)}$ be the corresponding lift of the holonomy representation.
	Then we have
	\[
	\alpha(\sigma) = \operatorname{sgn} \tr \Hol_{(T^2,\eta_\alpha)}(\sigma), \quad\text{for all } \sigma \in \pi_1 T^2.
	\]
\end{lemma}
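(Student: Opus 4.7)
The plan is to unwind the explicit construction of $\eta_\alpha$ and track the induced lift of $\Hol_{T^2}$ to $\sln(2,\cmplx)$ via Propositions \ref{prop: liftssl2} and \ref{prop:spin_lifts}.

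Place $\tilde T^2\cong\cmplx$ on the horosphere of height $1$ in the upper half-space model, so that the deck action of $\sigma\in\pi_1 T^2$ is the Euclidean translation $\tilde p\mapsto\tilde p+\sigma$, and the canonical section $\tilde s$, obtained by Euclidean parallel transport of the reference frame $R_O$, corresponds in the flat trivialization of $P_{\psl(2,\cmplx)}\tilde T^2$ to
\[
\tilde s(\tilde p)=\begin{pmatrix}1 & \tilde p\\ 0 & 1\end{pmatrix}.
\]
Its unique continuous lift to $\sln(2,\cmplx)$ normalized by $\tilde s(0)=\operatorname{Id}$ is given by the same matrix formula.

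For the distinguished spin structure $\eta_0$ (case $\alpha\equiv 1$), the section $\tilde p\mapsto(\tilde p,e)$ of $\tilde T^2\times\spin(3)$ descends to $T^2$, so equivariance of $\tilde s$ under the deck action gives $\tilde s(\sigma\tilde p)=\Hol_{(T^2,\eta_0)}(\sigma)\,\tilde s(\tilde p)$, which pins down $\Hol_{(T^2,\eta_0)}(\sigma)=\begin{pmatrix}1 & \sigma\\ 0 & 1\end{pmatrix}$ with trace $+2$. For a general $\alpha$, the twisted deck action $(\tilde p,g)\mapsto(\sigma\tilde p,\alpha(\sigma)g)$ on $\tilde T^2\times\spin(3)$ propagates verbatim through the structure-group extension to $(\tilde p,h)\mapsto(\sigma\tilde p,\alpha(\sigma)h)$ on $\tilde T^2\times\sln(2,\cmplx)$. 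Passing to the flat trivialization via the gauge change $(\tilde p,h)\mapsto(\tilde p,\tilde s(\tilde p)h)$ and matching deck actions yields
\[
\Hol_{(T^2,\eta_\alpha)}(\sigma)=\alpha(\sigma)\,\tilde s(\sigma\tilde p)\,\tilde s(\tilde p)^{-1}=\alpha(\sigma)\begin{pmatrix}1 & \sigma\\ 0 & 1\end{pmatrix},
\]
whose trace is $2\alpha(\sigma)$, and whose sign is therefore $\alpha(\sigma)$.

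The main subtlety is controlling the lift of $\tilde s$ to $\sln(2,\cmplx)$: a priori there is a global $\pm$ ambiguity, and one must verify that the distinguished spin structure indeed produces the normalization $\tilde s(0)=\operatorname{Id}$ (yielding trace $+2$), rather than its antipode. This amounts to checking that the inclusion $\spin(3)\hookrightarrow\sln(2,\cmplx)$ lifts the canonical $\son(3)\hookrightarrow\psl(2,\cmplx)$ sending the identity to the identity, which is the compatibility built into the spin cover. Once this is secured, the twist calculation and gauge change above are routine.
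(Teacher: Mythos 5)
The paper gives no explicit proof of this lemma, stating only that ``a similar argument proves the following result'' (similar to the preceding description of spin structures on $T^2$ via the canonical section $s$ and the twisted quotient $\eta_\alpha=(\widetilde{T^2}\times\spin(3))/\pi_1 T^2$). Your proof unpacks exactly that intended argument: use the canonical section to trivialize the frame bundle, identify it in the flat trivialization with the parabolic matrices $\begin{pmatrix}1&\tilde p\\0&1\end{pmatrix}$, read off that $\eta_0$ produces the trace~$+2$ lift, and observe that the $\alpha$-twist multiplies the holonomy by $\alpha(\sigma)$. The reasoning is correct, including the closing remark about the global $\pm$ ambiguity in the lift of $\tilde s$ — though it is worth noting that this ambiguity in fact cancels automatically, since replacing $\tilde s$ by $-\tilde s$ leaves $\tilde s(\sigma+\tilde p)\,\tilde s(\tilde p)^{-1}$ unchanged (this is the same cancellation that makes the correspondence of Proposition~\ref{prop: liftssl2} well defined). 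So the step you flag as a subtlety is actually forced, and the argument closes cleanly.
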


Now we can prove the existence of acyclic spin structures.
\begin{proposition} \label{prop:existenceacyclicspin}
	Let $M$ be an oriented, complete, hyperbolic $3$-manifold of finite volume.
	For each boundary component $T_i$ take a closed simple curve $\gamma_i$.
	Then there exists a spin structure $\eta$ on $M$ such that  
	\[
	\tr{\Hol_{(M, \eta)}} ( [\gamma_i]) = -2, 
	\]
	where $[\gamma_i]$ denotes the conjugacy class of $\pi_1 (M, p)$ defined by $\gamma_i$.
\end{proposition}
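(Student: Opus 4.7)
The idea is to translate the existence of the desired spin structure into a $\modtwo$-cohomological question via the torsor structure of spin structures, and then resolve that question using the topology of the compactification $\overline M$.

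First I would fix an arbitrary spin structure $\eta_0$ on $M$; one exists because every oriented $3$-manifold is spin. By Proposition~\ref{prop:spin_lifts}, every other spin structure has the form $\eta_\beta$ for a unique $\beta\in\cohom^1(M;\modtwo)=\operatorname{Hom}(\pi_1 M,\{\pm1\})$, and its associated lift of the holonomy satisfies $\Hol_{(M,\eta_\beta)}(g)=\beta(g)\Hol_{(M,\eta_0)}(g)$. Since each $\Hol_M(\gamma_i)$ is parabolic in $\psl(2,\cmplx)$, its lift has trace $\pm 2$; let $\epsilon_i\in\{\pm1\}$ denote the sign of $\tr\Hol_{(M,\eta_0)}(\gamma_i)$. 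The desired condition $\tr\Hol_{(M,\eta_\beta)}(\gamma_i)=-2$ is thus equivalent to $\beta([\gamma_i])=-\epsilon_i$ for each $i$.

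The problem therefore reduces to producing a homomorphism $\beta\colon\pi_1 M\to\{\pm1\}$ whose restriction to $\cohom_1(M;\modtwo)$ assigns the prescribed value $-\epsilon_i$ to each class $[\gamma_i]$. By duality, such a $\beta$ exists if and only if, for every $\modtwo$-linear relation $\sum_i y_i [\gamma_i]=0$ in $\cohom_1(M;\modtwo)$, one has $\prod_i(-\epsilon_i)^{y_i}=1$, i.e.\
\[
\prod_i\epsilon_i^{y_i}=(-1)^{\sum_i y_i}.
\]
A short calculation shows that the left-hand side is independent of the auxiliary choice of $\eta_0$: switching to $\eta_0\cdot\beta'$ multiplies it by $\beta'(\sum_i y_i[\gamma_i])=\beta'(0)=1$. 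So what must be verified is an identity that depends only on $M$ and the $\gamma_i$.

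The main obstacle is precisely this identity. The natural approach is to realise the mod-$2$ null-homologous cycle $\sum_i y_i\gamma_i$ as the boundary of a properly embedded surface $\Sigma\subset\overline M$ and to relate the parity $\sum_i y_i$ (counting the boundary components of $\Sigma$ with their $\modtwo$ multiplicities) to a spin-bordism contribution computed from the restriction of the lifted holonomy to $\Sigma$. The outward-normal trivialisation of $P_{\son(3)} T^2$ introduced before the statement should provide the needed identification, translating the mod-$2$ relation among the $[\gamma_i]$ into the desired sign identity. Once this identity is secured, the cohomological argument above produces the spin structure $\eta_\beta$ with the property claimed in the proposition.
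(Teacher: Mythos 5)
Your reduction is sound as far as it goes: you correctly observe that spin structures form a torsor over $\cohom^1(M;\modtwo) = \operatorname{Hom}(\pi_1 M,\{\pm1\})$, that the trace condition on $\gamma_i$ translates into the constraint $\beta([\gamma_i]) = -\epsilon_i$, and that the existence of such a $\beta$ is equivalent to a consistency identity over all $\modtwo$-linear relations $\sum_i y_i[\gamma_i]=0$ in $\cohom_1(M;\modtwo)$. However, you then write ``the main obstacle is precisely this identity'' and only gesture at a spin-bordism / surface-bounding argument without carrying it out. That consistency check is where the entire geometric content of the proposition lives, so leaving it as a one-paragraph sketch (``once this identity is secured\ldots'') is a genuine gap, not a rounding error; as written, nothing you've actually established rules out a $\modtwo$ relation among the $[\gamma_i]$ whose sign constraint is inconsistent.

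The paper sidesteps the cohomological bookkeeping entirely with a short geometric argument you may find instructive. Dehn-fill $\overline M$ along each $\gamma_i$ to obtain a closed oriented manifold $N$; fix any spin structure $\eta$ on $N$ and restrict it to $M$. For each filled curve $\gamma$, consider the canonical outward-normal section $s$ of $P_{\son(3)}T^2$ built so that the first frame vector is tangent to $\gamma$. If the restricted spin structure had $\alpha(\gamma)=+1$ (i.e.\ trace $+2$), then $s\circ\gamma$ would lift to $P_{\spin(3)}T^2$, and since $\pi_1\spin(3)=1$ this lift would extend over the filling disk; but then $s\circ\gamma$ itself would extend over the disk, contradicting the fact that a frame whose first vector is tangent to $\partial D^2$ cannot extend to a frame field on $D^2$. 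Hence $\alpha(\gamma)=-1$, which is exactly $\tr\Hol_{(M,\eta)}(\gamma)=-2$. This packages your desired ``consistency identity'' into the existence of a spin structure on the closed manifold $N$, which is automatic. If you want to salvage your approach, the cleanest way to verify your identity is in fact to run this Dehn-filling argument and observe that the resulting $\eta$ supplies a $\beta$ satisfying all the constraints at once.
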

\begin{proof}
	Let $N$ be the manifold obtained by performing a Dehn filling along each 
	of the curves $\{\gamma_i\}$. Fix a spin structure $\eta$ on $N$.
	We claim that the restriction of $\eta$ to $M$ gives the required spin structure.

	Assume that $\gamma$ is one of the curves $\gamma_i$, and that it is contained in a horospheric cross-section $T^2$.
	We can assume also that $\gamma$ is a closed geodesic with respect to the Euclidean structure of $T^2$. 
	Let $P_{\spin(3)} T^2 \to P_{\son(3)} T^2$ be the corresponding $\spin(3)$--bundle over $T^2$ defined by $\eta$,
	and $\alpha \in \cohom^1(T^2; \modtwo)$ the associated cohomology class.
	Consider the canonical section $s\colon T^2 \to P_{\son(3)} T^2$ constructed above using as starting frame one whose
	first vector is tangent to $\gamma$. Then the closed curve $s \circ \gamma$ can be lifted to $P_{\spin(3)} T^2$
	if and only if $\alpha(\gamma) = 1$. On the other hand, if $s \circ \gamma$ could be lifted to $P_{\spin(3)} T^2$,
	then such a lift could be extended to the added disk bounding $\gamma$ 
	(there is no obstruction in doing it because $\pi_1 \spin(3) = \{1\}$), and hence
	$s \circ \gamma$ could be extended to that disk, which is not possible by construction.
	Thus $\alpha(\gamma) = -1$, and the preceding lemma implies the result.
\end{proof}

As a corollary of the proof of the Proposition \ref{prop:existenceacyclicspin}, we obtain the following result.
\begin{corollary} \label{coro:spinextension}
	Let $\gamma \subset \partial M$ be a simple closed curve non-homotopically trivial in $\partial M$,
	and $M_\gamma$ be the manifold obtained by performing a Dehn filling along $\gamma$.
	A spin structure $\eta$ on $M$ extends to a spin structure on $M_\gamma$ if and only if 
	\[
	\tr \Hol_{(M,\eta)} (\gamma) = -2.
	\]
\end{corollary}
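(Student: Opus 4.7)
The forward implication is essentially already proved in Proposition~\ref{prop:existenceacyclicspin}: its argument takes an arbitrary spin structure on the Dehn filling $M_\gamma$, restricts it to $M$, and derives the trace condition $\tr\Hol(\gamma)=-2$ from the fact that the loop $s\circ\gamma$ cannot extend over the meridional disk in the filling solid torus. In writing the proof I would simply cite that computation for the $(\Rightarrow)$ direction. For the converse, my plan is to avoid an explicit geometric construction on the filling solid torus and instead argue via the torsor structure on spin structures. Since every orientable $3$-manifold admits a spin structure, pick any spin structure $\eta_0'$ on $M_\gamma$ and set $\eta_0 := \eta_0'|_M$; then $\tr\Hol_{(M,\eta_0)}(\gamma)=-2$ by the forward direction.

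The set of spin structures on $M$ is a principal $\cohom^1(M;\modtwo)$-set, so every spin structure on $M$ is obtained from $\eta_0$ by twisting with some class $\beta\in\cohom^1(M;\modtwo)=\operatorname{Hom}(\pi_1 M,\{\pm 1\})$; this twist multiplies the holonomy lift by the character $\beta$, so the trace at $\gamma$ becomes $-2\,\beta(\gamma)$. Moreover, the restriction map from spin structures on $M_\gamma$ to spin structures on $M$ is equivariant with respect to the restriction $\cohom^1(M_\gamma;\modtwo)\to\cohom^1(M;\modtwo)$, so its image is the $\eta_0$-coset of $\operatorname{Im}\bigl(\cohom^1(M_\gamma;\modtwo)\to\cohom^1(M;\modtwo)\bigr)$. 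The corollary therefore reduces to the purely topological assertion that a class $\beta\in\cohom^1(M;\modtwo)$ extends to $M_\gamma$ if and only if $\beta(\gamma)=+1$.

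This last statement I would establish via the Mayer--Vietoris sequence for $M_\gamma = M\cup_{T^2} V$, with $V\cong D^2\times S^1$ the filling solid torus: exactness implies that a class on $M$ extends iff its restriction to $T^2$ lies in the image of $\cohom^1(V;\modtwo)\cong\modtwo\to\cohom^1(T^2;\modtwo)$, and this image is exactly the annihilator of the meridian $\gamma$. The main subtlety is the equivariance of the restriction of spin structures under the respective $\cohom^1(\cdot;\modtwo)$-actions; this is a formal consequence of the identification of spin structures with certain $\modtwo$-classes on the total space of $P_{\son(3)}(\,\cdot\,)$ recalled in Subsection~\ref{section:lifts}, but it needs a brief careful statement before the diagram chase goes through cleanly.
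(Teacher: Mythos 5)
Your proof is correct. The paper does not spell out an argument, remarking only that the statement is a corollary of the proof of Proposition~\ref{prop:existenceacyclicspin}; the implicit argument is to run the frame-bundle computation there locally on the boundary torus and filling solid torus $V\cong D^2\times S^1$, establishing that a spin structure on $T^2$ extends over $V$ precisely when $\alpha(\gamma)=-1$ (for instance by counting: the two spin structures on $V$ restrict injectively into the two spin structures on $T^2$ with $\alpha(\gamma)=-1$), and then gluing with $\eta$ over $T^2$ to obtain a spin structure on $M_\gamma$. You instead handle the converse by fixing one extendable spin structure $\eta_0$, invoking the $\cohom^1(\,\cdot\,;\modtwo)$-torsor structure and the equivariance of restriction, and computing $\operatorname{Im}\bigl(\cohom^1(M_\gamma;\modtwo)\to\cohom^1(M;\modtwo)\bigr)$ via Mayer--Vietoris as the annihilator of the meridian $\gamma$. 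Both routes reduce to the same $\modtwo$-obstruction at the filling disk; your version trades the explicit spin-structure gluing over the solid torus for a formal equivariance statement plus a short exact-sequence computation, which is a bit cleaner and avoids re-examining the frame-lifting argument once the equivariance is written out carefully.
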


The following corollary of Proposition \ref{prop:existenceacyclicspin} gives a sufficient condition to guarantee
the existence of acyclic spin structures.
\begin{corollary} \label{coro:spinnonpositive}
	Assume that for each boundary component $T_i$ of $M$, 
	the map 
	\[
	\cohom_1 (T_i; \modtwo) \to \cohom_1(M; \modtwo)
	\]
	induced by the inclusion has non-trivial kernel.
	Then all spin structures on $M$ are nonpositive on each $T_i$ (i.e. acyclic). 
	In particular, if $M$ has only one cusp, all spin structures on $M$ are acyclic.
\end{corollary}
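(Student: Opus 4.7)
The plan is, for each boundary torus $T_i$, to analyze the restriction map $r$ from spin structures on $M$ to spin structures on $T_i$ and show that under the kernel hypothesis the distinguished (positive) spin structure on $T_i$ is never in the image of $r$. Spin structures on a manifold form a torsor under $\cohom^1(-;\modtwo)$, and $r$ is equivariant with respect to the pullback $j^{*}\colon \cohom^1(M;\modtwo)\to\cohom^1(T_i;\modtwo)$; hence the image of $r$ is a single coset of the image of $j^*$. Using the distinguished spin structure $T_i\times\spin(3)$ to identify the set of spin structures on $T_i$ with $\cohom^1(T_i;\modtwo)$, and recalling from the lemma preceding Proposition~\ref{prop:existenceacyclicspin} that this distinguished element is characterized by $\operatorname{tr}\Hol(\sigma)=+2$ for every $\sigma\in\pi_1 T_i$, positivity of $\eta$ on $T_i$ becomes equivalent to $r(\eta)=0$. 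Finally, universal coefficients over $\modtwo$ exhibits the image of $j^*$ as the annihilator $K^{\perp}\subset\cohom^1(T_i;\modtwo)$ of $K:=\ker\bigl(\cohom_1(T_i;\modtwo)\to\cohom_1(M;\modtwo)\bigr)$.

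The task therefore reduces to showing that when $K\neq 0$ the coset image does not contain $0$. I would pick a non-zero class in $K$, represent it by a primitive integer class and hence by a simple closed curve $\gamma\subset T_i$ (possible because every non-zero mod~$2$ class on a torus is represented by a simple closed curve), and on every other boundary torus $T_j$ pick any essential simple closed curve $\gamma_j$. Proposition~\ref{prop:existenceacyclicspin} then produces a spin structure $\eta'$ on $M$ with $\operatorname{tr}\Hol_{(M,\eta')}(\gamma)=-2$, so $r(\eta')$, viewed in $\cohom^1(T_i;\modtwo)$, takes the value $-1$ on $\gamma\in K$. Since every element of $K^{\perp}$ kills $K$, the coset $r(\eta')+K^{\perp}$ cannot be $K^{\perp}$ itself, so $0$ does not lie in the image of $r$ and no spin structure on $M$ is positive on $T_i$. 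Running this argument for each $i$ yields acyclicity.

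For the ``in particular'' clause, if $M$ has a single cusp with boundary torus $T$, the half-lives-half-dies theorem for compact orientable $3$-manifolds -- which holds with $\modtwo$ coefficients, since the kernel of inclusion is a Lagrangian subspace for the non-degenerate intersection form on $\cohom_1(\partial M;\modtwo)$ -- forces $\dim_{\modtwo}K=\tfrac12\dim_{\modtwo}\cohom_1(T;\modtwo)=1$, so the hypothesis is automatic. The main delicacy I foresee is the torsor bookkeeping: verifying that the identification between the set of spin structures on $T_i$ and $\cohom^1(T_i;\modtwo)$ based at the distinguished spin structure really does match positivity with the zero element, via the lemma $\alpha(\sigma)=\operatorname{sgn}\operatorname{tr}\Hol(\sigma)$. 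Once this is set up, the argument collapses to a one-line duality observation plus a single invocation of Proposition~\ref{prop:existenceacyclicspin}.
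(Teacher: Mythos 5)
Your proof is correct and, once unpacked, is essentially the paper's argument: Proposition~\ref{prop:existenceacyclicspin} produces one spin structure with trace $-2$ on a curve $\gamma_i\subset T_i$ that dies in $\cohom_1(M;\modtwo)$, and the fact that every change of spin structure by a class $\alpha\in\cohom^1(M;\modtwo)$ satisfies $\alpha(\gamma_i)=1$ is exactly your observation that the image of $r$ is a coset of $K^{\perp}$ which therefore misses $0$. The torsor/coset and universal-coefficients bookkeeping you use is a more abstract repackaging of the paper's direct computation $\Hol_{(M,\eta')}(\gamma_i)=\alpha(\gamma_i)\Hol_{(M,\eta)}(\gamma_i)$, and the half-lives-half-dies step for the one-cusp case matches the paper's as well.
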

\begin{proof}
	If the hypothesis holds, then for each $T_i$ there exists a closed simple curve $\gamma_i \in T_i$
	that is zero in $\cohom_1(M; \modtwo)$. Take a spin structure on $M$ such that
	$\tr{\Hol_{(M, \eta)}} ( [\gamma_i]) = -2$, for all $\gamma_i$. Now let $\eta'$ be another spin structure on $M$, 
	and $\alpha \in \cohom^1(M; \modtwo)$ be the cohomology class relating $\eta$ and $\eta'$. Then, using multiplicative notation, we have
	\[
	\Hol_{(M, \eta')} ( \gamma_i) = \alpha(\gamma_i) \Hol_{(M, \eta)} ( \gamma_i). 
	\]
	Since $[\gamma] \in \cohom_1(M; \modtwo)$ is zero, we have $\alpha(\gamma_i) = 1$,
	and hence $\Hol_{(M, \eta')} ( \gamma_i)$ has trace $-2$, as we wanted to prove.
	The rest of the result follows from the fact that in any compact $3$-manifold $M$ 
	the map 
	\[
	i_* \colon \cohom_1(\partial M; \modtwo) \to \cohom_1( M; \modtwo)
	\]
	induced by the inclusion $i\colon \partial M \to M$ has a non-trivial kernel.
\end{proof}

\subsection{The canonical $n$-dimensional representation}
\label{section:sl2rep}
Irreducible, complex, finite-dimensional representations of $\operatorname{SL}(2,\mathbf C)$
are well known: for all $n$ there is exactly one irreducible representation of dimension $n$ which 
is given by $V_n = \operatorname{Sym}^{n-1} V_2$, 
the $(n- 1)$-th symmetric power of the standard representation $V_2 = \mathbf{C}^2$.
We use the convention that $\operatorname{Sym}^{0} V_2$ is the base field. 

\begin{definition}
	Let $(M,\eta)$ be a spin-hyperbolic $3$-manifold with holonomy representation
	$\Hol_{(M,\eta)}$. We define the \emph{canonical $n$-dimensional representation} of $M$
	as the composition of $\Hol_{(M,\eta)}$ with $V_n$.
\end{definition}

The decomposition into irreducible factors of the tensor product of two representations of 
$\sln(2,\cmplx)$ is given by the Clebsch-Gordan formula (see \cite[\S 11.2]{FultonHarris}).

\begin{theorem}[Clebsch-Gordan formula]
	\label{thm:CG}
	For nonnegative integers $n$ and $k$ we have:
	\[
	V_{n} \otimes V_{n+k} = \bigoplus_{i = 0}^{n-1} V_{2(n-i) + k - 1}.
	\]
\end{theorem}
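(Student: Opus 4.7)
The plan is to verify the decomposition at the level of characters of the standard maximal torus $T \cong \cmplx^*$ of $\sln(2,\cmplx)$. Because the category of finite-dimensional complex representations of $\sln(2,\cmplx)$ is semisimple and each $V_m$ is determined up to isomorphism by its character, it suffices to check that both sides of the claimed identity have the same $T$-character.

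First I would record the standard fact that $V_m = \operatorname{Sym}^{m-1} V_2$ has a weight basis with weights $m-1, m-3, \ldots, -(m-1)$, each of multiplicity one, so
$$\chi_m(t) = \frac{t^m - t^{-m}}{t - t^{-1}}.$$
Then I would argue by induction on $n$, with $k \geq 0$ fixed. The base case $n = 1$ is immediate, since $V_1$ is the trivial representation and the right-hand sum collapses to the single term $V_{k+1}$.

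For the inductive step, the highest weight occurring in $V_n \otimes V_{n+k}$ is $(n-1)+(n+k-1) = 2n+k-2$, and it occurs with multiplicity one; the theorem of the highest weight then produces $V_{2n+k-1}$ as a direct summand. The heart of the argument is the formal identity
$$\chi_n(t)\,\chi_{n+k}(t) - \chi_{2n+k-1}(t) = \chi_{n-1}(t)\,\chi_{(n-1)+k}(t),$$
which follows from a direct cancellation after clearing denominators in the closed form above. The inductive hypothesis applied to $V_{n-1} \otimes V_{(n-1)+k}$ then yields $\bigoplus_{i=0}^{n-2} V_{2(n-1-i)+k-1}$, and the reindexing $i \mapsto i+1$, combined with the extracted summand $V_{2n+k-1}$ (which corresponds to $i = 0$), produces exactly $\bigoplus_{i=0}^{n-1} V_{2(n-i)+k-1}$, as required.

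The main obstacle is not conceptual but notational: one must manage the reindexing carefully so that the inductively produced summands align with the paper's parametrization $V_{2(n-i)+k-1}$ rather than the more symmetric form $V_{n+(n+k)-1-2i}$ that naturally emerges from iterated highest-weight extraction. A cleaner alternative would be to bypass the induction entirely and compute $\chi_n \chi_{n+k}$ directly by expanding $\chi_n$ as $\sum_{j=0}^{n-1} t^{n-1-2j}$ and recognizing the sum of shifted copies of $\chi_{n+k}$ as a telescoping expression for $\sum_{i=0}^{n-1}\chi_{2(n-i)+k-1}$; either route is routine once the character formula is in place.
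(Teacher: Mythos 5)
The paper does not prove Theorem \ref{thm:CG} itself; it simply cites Fulton--Harris \cite[\S 11.2]{FultonHarris}, where the decomposition is established by exactly the character-theoretic weight count you use. Your argument is correct: the closed form $\chi_m(t) = (t^m - t^{-m})/(t - t^{-1})$ and the identity $\chi_n\chi_{n+k} - \chi_{2n+k-1} = \chi_{n-1}\chi_{n+k-1}$ check out by direct expansion, the base case and reindexing are handled properly, and semisimplicity plus equality of characters indeed forces the isomorphism. Your alternative (expanding $\chi_n$ as a Laurent polynomial and summing the shifted copies of $\chi_{n+k}$) also works and is arguably cleaner since it avoids induction; either way the content matches the standard reference the paper relies on.
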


\begin{lemma} 
	\label{lemma:pairing}
	Let $V$ be a finite-dimensional complex representation of $\operatorname{SL}(2,\mathbf C)$. 
	Then there exists a nondegenerate $\cmplx$--bilinear invariant pairing 
	\[
	\phi\colon V \times V \to \cmplx.
	\] 
	Moreover, if $V$ is irreducible, then there exists, up to multiplication by nonzero 
	scalars, a unique $\cmplx$--bilinear invariant pairing, which \emph{a fortiori} is non-degenerate.
\end{lemma}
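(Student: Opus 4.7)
The plan is to identify a $\cmplx$-bilinear invariant pairing $\phi\colon V\times V\to \cmplx$ with an $\sln(2,\cmplx)$-equivariant linear map $\Phi\colon V\to V^*$ via $\Phi(v)(w)=\phi(v,w)$; under this identification $\phi$ is nondegenerate if and only if $\Phi$ is an isomorphism, and invariance of $\phi$ corresponds to equivariance of $\Phi$. The lemma then reduces to two statements: (i) for each irreducible $V_n$, the space $\operatorname{Hom}_{\sln(2,\cmplx)}(V_n, V_n^*)$ is one-dimensional and every nonzero element is an isomorphism; (ii) every finite-dimensional $V$ admits a nondegenerate invariant bilinear pairing.

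For (i), I would first show that each irreducible $V_n$ is self-dual by extracting the trivial summand from the Clebsch-Gordan decomposition. Setting $k=0$ in Theorem \ref{thm:CG} yields
\[
V_n\otimes V_n = \bigoplus_{i=0}^{n-1} V_{2(n-i)-1},
\]
whose summand corresponding to $i=n-1$ is $V_1=\cmplx$, the trivial representation, and this summand appears with multiplicity one. The projection onto this summand is an invariant element of $\operatorname{Hom}(V_n\otimes V_n,\cmplx)$, i.e.\ a nonzero invariant bilinear form, and its uniqueness up to scalar follows from the multiplicity-one statement. Transporting this back through the identification above gives a nonzero element $\Phi\colon V_n\to V_n^*$, and by Schur's lemma applied to the irreducible representations $V_n$ and $V_n^*$ (which is also irreducible of dimension $n$), $\Phi$ must be an isomorphism; hence the associated pairing is automatically nondegenerate.

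For (ii), I would invoke complete reducibility of finite-dimensional representations of $\sln(2,\cmplx)$ (Weyl's theorem) to write $V \cong \bigoplus_n V_n\otimes W_n$, where $W_n$ is the finite-dimensional multiplicity space of $V_n$. On each factor I take the tensor product of the invariant pairing on $V_n$ produced in step (i) with any fixed nondegenerate bilinear form on $W_n$, and declare distinct isotypic components to be orthogonal. The resulting form is manifestly invariant and nondegenerate on each summand. The only point to check is that this orthogonal direct sum is consistent with invariance, i.e.\ that one cannot pair $V_n\otimes W_n$ with $V_m\otimes W_m$ invariantly for $n\neq m$; this follows immediately from Schur applied to $V_n$ and $V_m^*\cong V_m$, which are non-isomorphic irreducibles. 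I do not anticipate a genuine obstacle: the only substantive input beyond linear algebra is the self-duality of $V_n$, and this is essentially handed to us by the Clebsch-Gordan formula already stated in the paper.
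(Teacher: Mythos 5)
Your proof is correct and rests on the same two pillars as the paper's: self-duality of the irreducibles $V_n$ and the multiplicity-one appearance of the trivial representation in $V_n\otimes V_n$ supplied by Clebsch--Gordan. You organize the argument in the opposite order (irreducible case first, then general $V$ via complete reducibility and an orthogonal sum) and spell out the Schur-type nondegeneracy step, but these are cosmetic differences from the paper's route through the natural pairing $V^*\times V\to\cmplx$ and the isomorphism $V\cong V^*$.
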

\begin{proof}
	On one hand, the natural pairing between $V^*$ and $V$ always yields 
	a nondegenerate $\cmplx$--bilinear invariant map. 
	From the classification of irreducible representations 
	of $\operatorname{SL}(2,\cmplx)$, we deduce that
	$V^*$ is isomorphic to $V$, and hence the first part of the lemma is proved.
	On the other hand, invariant bilinear maps are in one-to-one correspondence
	with fixed vectors of $V^* \otimes V^*$. Thus the second assertion follows
	from the Clebsch-Gordan formula, which shows that 
	$(V_n\otimes V_n)^*\cong V_n\otimes V_n$ has a unique irreducible factor of
	dimension $1$, on which $\sln(2,\cmplx)$ acts trivially.
\end{proof}
\begin{rem}
	Roughly speaking, the $\cmplx$--bilinear invariant pairing on $V_n =\operatorname{Sym}^{n-1} V_2$
	is the $(n -1)$-th symmetric power of the determinant. 
	To be precise, let $\operatorname{S}(V_2)$ be the symmetric algebra on $V_2$, that is,
	\[
	\operatorname{S}(V_2) = \bigoplus_{i\geq 0} \operatorname{Sym}^i V_2.
	\]  
	With respect to a fixed basis $(e_1, e_2)$ of $V_2$, the determinant is given by:
	\[
	\operatorname{det} = e_1^*\otimes e_2^* - e_2^*\otimes e_1^*,
	\]
	where $(e_1^*, e_2^*)$ is the dual basis of $(e_1,e_2)$.
	The determinant thus can be regarded as an element of $\operatorname{S}(V^*) \otimes \operatorname{S}(V^*)$.
	This latter vector space is an algebra in a natural way, and hence it makes sense to consider the power
	$\operatorname{det}^n$. Notice that $\operatorname{det}^n \in \operatorname{Sym}^n(V^*) \otimes \operatorname{Sym}^n(V^*)$,
	so $\operatorname{det}^n$ defines a bilinear pairing on $V_{n+1}$. On the other hand, it can be checked that we have:
	\[
	g\cdot \operatorname{det}^n = (g\cdot \operatorname{det})^n, \quad \text{for all } g \in \sln(2,\cmplx).
	\]
	Hence, $\operatorname{det}^n$ is $\sln(2,\cmplx)$--invariant, for so is $\operatorname{det}$,
	and Lemma \ref{lemma:pairing} implies that this pairing is nondegenerate. 
	Notice also that $\operatorname{det}^n$ is alternating for $n$ odd and symmetric for $n$ even.
\end{rem}

From Lemma \ref{lemma:pairing} we get the following result (see \cite[Sec. 2.2]{goldman}), 
which will be used very often in the sequel. 
\begin{corollary}
	\label{corollary:PD}
	Poincar\'e duality with coefficients in $\rho_n$ holds. 
\end{corollary}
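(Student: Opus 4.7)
The plan is to derive the corollary from the nondegenerate $\sln(2,\cmplx)$-invariant bilinear pairing $\phi\colon V_n\times V_n\to\cmplx$ furnished by Lemma~\ref{lemma:pairing}. The first observation will be that, since $\phi$ is $\sln(2,\cmplx)$-equivariant, its pullback by $\Hol_{(M,\eta)}$ is $\pi_1(M)$-equivariant; equivalently, $\phi$ provides a canonical isomorphism of $\pi_1(M)$-modules $V_n\cong V_n^*$, and hence an isomorphism $\rho_n\cong\rho_n^*$ between the associated local systems on $M$.

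Next, I would invoke the standard Poincar\'e--Lefschetz duality for oriented manifolds with local coefficients. Let $\overline M$ denote the compact manifold with toral boundary whose interior is $M$; the fixed orientation determines a fundamental class $[\overline M,\partial\overline M]$, and capping with it yields, for any local system $E$,
\[
H^k(\overline M; E)\xrightarrow{\cong} H_{3-k}(\overline M,\partial\overline M; E).
\]
Combined with its companion for the relative cohomology, this produces the nondegenerate cup-product pairing
\[
H^k(\overline M,\partial\overline M;\rho_n^*)\otimes H^{3-k}(\overline M;\rho_n)\to H^3(\overline M,\partial\overline M;\cmplx)\cong\cmplx,
\]
whose coefficient map is the tautological evaluation $\rho_n^*\otimes\rho_n\to\cmplx$.

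Finally, I would use the isomorphism $\rho_n\cong\rho_n^*$ provided by $\phi$ to replace $\rho_n^*$ by $\rho_n$ throughout. Under this identification the tautological evaluation becomes the pairing induced by $\phi$ itself, so we obtain Poincar\'e--Lefschetz duality with coefficients in $\rho_n$ alone. Since $M$ is homotopy equivalent to $\overline M$, the statement transfers directly to $(M,\partial \overline M)$.

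I do not expect any serious obstacle here: the content of the corollary is precisely that a nondegenerate $\pi_1(M)$-invariant pairing on the coefficient system upgrades Poincar\'e--Lefschetz duality to a self-duality on the twisted (co)homology of $M$. The only point that deserves a line of verification is the compatibility between the tautological evaluation $\rho_n^*\otimes\rho_n\to\cmplx$ and the pairing induced by $\phi$ under the identification $V_n\cong V_n^*$, which is immediate from the construction of the identification out of $\phi$.
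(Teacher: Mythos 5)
Your proposal is correct and matches the paper's intent: the paper itself gives no written proof, merely stating that the corollary follows from Lemma~\ref{lemma:pairing} and referring to Goldman's paper, and the argument there is exactly the one you spell out — the nondegenerate $\pi_1(M)$-invariant pairing $\phi$ identifies $\rho_n$ with $\rho_n^*$, after which standard Poincar\'e--Lefschetz duality with local coefficients yields the self-duality. Your write-up simply makes explicit what the paper delegates to a citation.
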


\section{Higher-dimensional Reidemeister torsion}
\label{chapter:def}

In this section we define the $n$-dimensional normalized Reidemeister torsion
for a complete spin-hyperbolic $3$-manifold of finite volume and an integer $n \geq 4$. 
We will refer to these invariants as the higher-dimensional Reidemeister torsion invariants.

Let $(M,\eta)$ be a spin-hyperbolic $3$-manifold, and $\rho_n$ be its
canonical $n$-dimensional representation.
We want to define the Reidemeister torsion of $M$ with respect to the 
representation $\rho_n$. However, to do that we need either $M$ to 
be $\rho_n$-acyclic (\idest the groups $\cohom^*(M ; \rho_n)$ are all trivial),
or, if it does not happen, to fix bases on (co)homology.

If $M$ is compact, then, as a particular case of Raghunathan's vanishing theorem (see \cite{MenalPorti}),
the cohomology groups $\cohom^*(M ; \rho_n)$ are all trivial. 
Thus for $M$ closed the Reidemeister torsion $\tau(M; \rho_n)$ is defined. 

On the other hand, if $M$ is non-compact, these groups need not to be trivial.
Thus we need to choose bases in (co)homology in that case. 
Of course, if we want to get an invariant of the manifold we must choose bases
in a somehow canonical way. Unfortunately, we do not know how to do this.
Nevertheless, we have at least the following results. 
Their proofs will be given in Section \ref{section:odddim}.

\begin{rem}
	In the whole present section we will restrict ourselves to finite-volume manifolds.
	Thus $M$ is the interior of a compact manifold $\overline M$ such that 
	\[
	\partial \overline M = T_1 \cup \cdots \cup T_l,
	\]
	where each connected component $T_i$ is homeomorphic to a torus $T^2$. 
\end{rem}

\begin{proposition} \label{prop:thetatorsion}
	Let $n > 0$. For each connected boundary component $T_i$ of $M$ such that $\cohom^0(T_i; \rho_n)$ is not trivial,
	fix a non-trivial cycle $\theta_i \in \cohom_1( T_i; \integer)$.
	Then there exists a canonical family of bases for the homology groups $\cohom_*(M; \rho_n)$ 
	such that any basis of this family determines the same Reidemeister torsion, 
    which we will denote as:
    \begin{equation} \label{eq:thetatorsion}
        \tau( M; \rho_n; \{\theta_i\}).
    \end{equation}
\end{proposition}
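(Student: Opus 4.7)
The plan is to separate the boundary into the tori where $\rho_n$ has no invariants and those where it does, to compute $\cohom^*(T_i;\rho_n)$ explicitly in the latter case, and then to transfer the resulting bases to $\cohom^*(M;\rho_n)$ via the long exact sequence of the pair together with the Poincar\'e duality guaranteed by Corollary \ref{corollary:PD}.

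First I would fix a boundary component $T_i$ and examine the invariant subspace $V_n^{\rho_n(\pi_1 T_i)}$. When this subspace vanishes the whole complex computing $\cohom^*(T_i;\rho_n)$ is acyclic, so no choice of $\theta_i$ is needed. Otherwise the peripheral holonomy is generated by two commuting elements of $\sln(2,\cmplx)$ with a common fixed line in $\cmplx^2$, and after applying $\varsigma_n$ these share a one-dimensional invariant line $L_i\subset V_n$. A direct computation on $T_i\cong T^2$ then yields $\cohom^0(T_i;\rho_n)\cong L_i$, $\cohom^2(T_i;\rho_n)\cong L_i$ (the latter identification using the invariant pairing of Lemma \ref{lemma:pairing} and Poincar\'e duality on $T_i$), and $\cohom^1(T_i;\rho_n)\cong L_i\otimes \cohom^1(T_i;\cmplx)$, of complex dimension $2$.

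Next I would build, for each such $T_i$, a partial basis of $\cohom^*(T_i;\rho_n)$ depending on $\theta_i$. A choice of generator $v_i$ of $L_i$ gives canonical generators of $\cohom^0(T_i;\rho_n)$ and, together with the orientation class of $T_i$, of $\cohom^2(T_i;\rho_n)$. The cycle $\theta_i$ singles out the class $v_i\otimes \theta_i^{\vee}\in L_i\otimes \cohom^1(T_i;\cmplx)$, where $\theta_i^{\vee}$ is any cohomology class pairing to $1$ with $\theta_i$; this provides a half-basis of $\cohom^1(T_i;\rho_n)$, the complementary half being supplied by $M$ itself.

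The transfer to $M$ uses the long exact sequence of the pair $(\overline M,\partial\overline M)$ with coefficients in $\rho_n$. Since $\Hol_{(M,\eta)}(\pi_1 M)$ is Zariski-dense in $\sln(2,\cmplx)$ and $V_n$ is irreducible for $n\geq 2$, one gets $\cohom^0(M;\rho_n)=0$ and dually $\cohom^3(\overline M,\partial\overline M;\rho_n)=0$ via Corollary \ref{corollary:PD}. The same duality forces the image of the restriction $\cohom^1(M;\rho_n)\to \cohom^1(\partial\overline M;\rho_n)$ to be a Lagrangian subspace for the cup product on $\partial\overline M$. One checks that the classes $v_i\otimes\theta_i^{\vee}$ span an isotropic subspace transverse to this Lagrangian, so together they span $\cohom^1(\partial\overline M;\rho_n)$; bases of the remaining $\cohom^*(M;\rho_n)$ are then read off from the long exact sequence. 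The main obstacle is the final verification that different admissible choices within this family produce the same Reidemeister torsion. The ambiguities are the rescaling of each $v_i$ by $\lambda_i\in\cmplx^{\ast}$, the dual class $\theta_i^{\vee}$ (defined only modulo a class annihilating $\theta_i$), and the lift of the chosen Lagrangian complement back to $\cohom^1(M;\rho_n)$. The first rescales the bases of $\cohom^{j}(T_i;\rho_n)$ uniformly in $j$, and the factors cancel in the alternating product defining $\tau$; the other two produce upper-triangular changes of basis along the long exact sequence and so have determinant $1$. Tracking these contributions carefully through the exact sequence is the delicate bookkeeping step that makes $\tau(M;\rho_n;\{\theta_i\})$ well defined.
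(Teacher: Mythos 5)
Your overall strategy --- compute the boundary (co)homology, transfer bases to $\cohom_*(M;\rho_n)$ via the long exact sequence and Poincar\'e duality, and check that the resulting torsion is independent of the ancillary choices --- is the same route the paper takes (Proposition~\ref{prop:defbasodd_wj}, working in homology with the bases $i_{j*}([w_j\otimes\theta_j])$ and $i_{j*}([w_j\otimes T_j])$). However, the isomorphism $\cohom^1(T_i;\rho_n)\cong L_i\otimes\cohom^1(T_i;\cmplx)$ fails if it is meant to come from tensoring with a parallel section of $L_i$: that map has a one-dimensional image, not a two-dimensional one. In the coordinates of Proposition~\ref{prop:basiscohombdry}, $L_i$ is spanned by $X^{n-1}$, and one checks $[dz\otimes X^{n-1}]=0$ in $\cohom^1(T_i;E_n)$ (it pairs trivially with both $[\alpha]$ and $[\beta]$ under the cup product, since the invariant pairing $\phi(X^{n-1},X^{n-1})$ vanishes and $dz\wedge dz=0$), while $[d\bar z\otimes X^{n-1}]=[\alpha]\neq 0$. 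Hence $v_i\otimes\theta_i^\vee$ is merely a multiple of $[\alpha]$, and that multiple depends on the representative $\theta_i^\vee$ modulo annihilators of $\theta_i$ (adding an annihilator rescales its $[d\bar z]$-component); the ambiguity you dismiss as upper-triangular is actually a rescaling of a basis vector, which would change the torsion.

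The more serious gap is the sentence ``one checks that the classes $v_i\otimes\theta_i^\vee$ span an isotropic subspace transverse to this Lagrangian.'' This is precisely the technical heart of the construction, and it does not follow from soft linear algebra: an isotropic subspace can certainly meet a given Lagrangian. The paper proves the needed transversality in Proposition~\ref{prop:generatorimage} by an $L^2$-cohomology argument: Theorem 2.1 of \cite{MenalPorti} says that no nonzero class in $\cohom^1(M;E_n)$ admits a square-integrable representative, while the $(0,1)$-form $d\bar z\otimes X^{n-1}$ pulls back to an $L^2$ form on a cusp (Lemma~\ref{lemma:l2class}); together these force $\bigoplus_j\cohom^{0,1}(T_j;E_n)\cap\operatorname{Im}i^*=0$. (For $n=3$ the paper also sketches an alternative via Mostow--Prasad rigidity.) Without some such analytic or rigidity input the construction of the bases does not close; once supplied, and with the corrected boundary computation, your argument essentially becomes the paper's transposed to cohomology.
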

The next proposition will allow us to define a new invariant from $\tau( M; \rho_n; \{\theta_i\})$ 
that does not depend on the choices $\{\theta_i\}$ (i.e.\; an invariant solely of the manifold).

\begin{proposition} \label{prop:normaltor}
	Let $n > 0$. For each connected boundary component $T_i$ of $M$ such that $\cohom^0(T_i; \rho_n)$ is not trivial,
	fix a non-trivial cycle $\theta_i \in \cohom_1( T_i; \integer)$. 
    Then for $k > 0$ the following quantities are independent of the cycles $\{\theta_i\}$:
	\begin{eqnarray*}
		\normaltor_{2k +1}(M, \eta) & := & \frac{\tau( M; \rho_{2k + 1}; \{ \theta_i \})}
		{\tau( M; \rho_3; \{ \theta_i \} )} \in \cmplx^*/\{\pm 1\} ,\\
		\normaltor_{2k}(M, \eta) & := & \frac{\tau( M; \rho_{2k}; \{ \theta_i \} )}
		{\tau( M; \rho_2; \{ \theta_i \})} \in \cmplx^*/\{\pm 1\}.
	\end{eqnarray*}
\end{proposition}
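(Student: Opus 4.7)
The plan is to reduce the claim to a local change-of-basis computation at each boundary torus, exploiting that the invariant subspace $V_n^{\pi_1 T_i}$ is one-dimensional whenever it contributes to the torsion.

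First, I would argue that changing $\theta_{i_0}$ at a single boundary component while keeping the others fixed multiplies the torsion by a local factor; by multiplicativity it then suffices to treat the one-cusp case and iterate over $i$.

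Second, I would unwind the construction of the canonical family of bases from Proposition \ref{prop:thetatorsion}. The bases of $H_*(M;\rho_n)$ are built from the long exact sequence of the pair $(\overline M, \partial \overline M)$ together with bases of $H_*(T_{i_0};\rho_n)$ for each cusp that contributes non-trivially. The only part of the basis of $H_*(T_{i_0};\rho_n)$ depending on $\theta_{i_0}$ sits in $H_1(T_{i_0};\rho_n)$ and is the class $\theta_{i_0} \otimes v_n$, where $v_n$ is a fixed generator of the one-dimensional invariant subspace $V_n^{\pi_1 T_{i_0}}$, namely the highest-weight line of $V_n = \operatorname{Sym}^{n-1} V_2$ under the parabolic peripheral holonomy. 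This subspace is one-dimensional whenever it is non-zero: for odd $n$ on every cusp, and for even $n$ exactly on positive cusps, as one sees from the analysis of eigenvalues of unipotent and anti-unipotent elements of $\sln(2,\cmplx)$ acting on symmetric powers.

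Third, I would compute the local change factor. Writing $\theta_{i_0}' = p\,\theta_{i_0} + q\,\mu$ in a fixed basis of $H_1(T_{i_0};\integer)$, the replacement $\theta_{i_0} \to \theta_{i_0}'$ multiplies the relevant basis vector of $H_1(T_{i_0};\rho_n)$ by a scalar expressible purely in terms of $p$, $q$, and $v_n$ (with $v_n$ fixed once and for all, independently of $n$). Propagating through the long exact sequence of the pair and tracking the sign convention $(-1)^k$ for the torsion in each degree, one obtains a total change factor of the form $c(\theta_{i_0},\theta_{i_0}')^{d_n}$, where the exponent $d_n$ depends on $n$ only through $\dim V_n^{\pi_1 T_{i_0}} = 1$. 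Hence the change factor coincides for $n = 3$ and $n = 2k+1$, and for $n = 2$ and $n = 2k$, so it cancels in the ratios defining $\normaltor_{2k+1}(M,\eta)$ and $\normaltor_{2k}(M,\eta)$.

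The main obstacle will be the careful bookkeeping in the third step: I must check that after propagation through the long exact sequence of the pair and the Poincar\'e--Lefschetz duality implied by Corollary \ref{corollary:PD}, the exponent $d_n$ really is independent of $n$, with no hidden $n$-dependence introduced by the connecting homomorphisms or by the use of the invariant bilinear pairing from Lemma \ref{lemma:pairing}.
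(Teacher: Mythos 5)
Your approach is conceptually the same as the paper's (work cusp by cusp, observe that only the $H_1$ basis vectors depend on $\theta_i$, compute the change-of-basis factor, note it cancels in the ratio), but there is a genuine gap in the crucial step. You reduce the claim to showing that the change factor has the form $c(\theta_{i_0},\theta_{i_0}')^{d_n}$ and you argue that the \emph{exponent} $d_n$ is independent of $n$ because $\dim V_n^{\pi_1 T_{i_0}}=1$ always; but you never establish that the \emph{base} $c(\theta_{i_0},\theta_{i_0}')$ is itself independent of $n$. In fact, your own phrasing undermines this: you say $c$ is ``expressible purely in terms of $p$, $q$, and $v_n$,'' and then parenthetically claim $v_n$ is ``fixed once and for all, independently of $n$,'' which is incoherent since $v_n \in V_n$ lives in a space that changes with $n$. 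If $c$ genuinely depended on $v_n$ in a nontrivial way, the ratio $\tau(M;\rho_{2k+1};\{\theta_i\})/\tau(M;\rho_3;\{\theta_i\})$ would not obviously be insensitive to the choice of $\theta_i$.

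What closes this gap in the paper is Proposition~\ref{prop:defbasodd_thetaj} together with the explicit Kronecker-pairing computation \eqref{eqn:w0}: replacing $\theta_j$ by $\theta_j'$ multiplies the $j$-th basis vector of $H_1(M;\rho_n)$ by $\cs(\theta_j,\theta_j') = a(\theta_j)/a(\theta_j') = \int_{\theta_j}dz / \int_{\theta_j'}dz$, a pure \emph{cusp shape}. This ratio of periods is a geometric invariant of the cusp torus; the pairing with $\beta_j = dz\otimes(zX+Y)^{n-1}$ produces precisely $\int_{\theta_j}dz$, and all the $n$-dependence through $\phi\bigl(X^{n-1},Y^{n-1}\bigr)$ and the choice of $w_j$ drops out. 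So the entire change factor --- not merely its exponent --- is manifestly $n$-independent, which is what makes the quotient by $\tau(M;\rho_3;\{\theta_i\})$ (resp.\ $\tau(M;\rho_2;\{\theta_i\})$) well-defined. Your proof would be complete if you replaced the vague appeal to bookkeeping through the long exact sequence by this direct period computation; without it, you have asserted the conclusion rather than derived it.
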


\begin{definition}
	Let $(M, \eta)$ be a complete spin-hyperbolic $3$-manifold of finite volume.
	For $n \geq 4$, the invariant $\normaltor_n(M, \eta)$ defined in this
	proposition will be called the \emph{normalized $n$-dimensional Reidemeister torsion} 
	of the spin-hyperbolic manifold $(M,\eta)$.
	If $n= 2k + 1$ is odd, $\normaltor_{2k +1}(M; \eta)$ is independent of $\eta$, and will be denoted by 
	$\normaltor_{2k +1}(M)$.
\end{definition}

The rest of this section is devoted to the proof of Propositions~\ref{prop:thetatorsion} and \ref{prop:normaltor}.
To that end, we will analyse the groups $\cohom_*(M; \rho_n)$.

\subsection{Cohomology of the boundary}

Let $T_j$ be a connected component of $\partial \overline M$ (recall that we are assuming that $M$ has finite volume),
and $U_j \cong T_j \times [0,\infty)$ be the corresponding cusp. 
It is well known that $T_j$ can be identified with the set of rays contained in $U_j$,
and that this endows $T_j$ with a canonical similarity structure;
in particular, $T_j$ has a canonical holomorphic structure.
Let us consider the canonical projection from $U_j$ to $T_j$ which sends a point in $U_j$ to the ray it belongs to;
denote this projection as
\[
\pi_j \colon U_j \to T_j.
\]

Let $E_n$ be the flat vector bundle over $\overline{M}$ defined by the representation $\rho_n$.
To compute $\cohom^*(T_i; \rho_n)$ we will interpret it as $\cohom^*(T_i; E_n)$, that is 
the cohomology of the de Rham complex 
\[
( \Omega^*( T_i ; E_n), d_\nabla),
\]
where $d_\nabla$ denotes the covariant differential defined by the flat connection on $E_n$.
This complex is isomorphic to the complex $( \Omega^*( \widetilde{T_i}; V_n)^{\pi_1 T_i}, d)$
of equivariant $V_n$--valued differential forms on $\widetilde{T_i}$ with the usual exterior differential. 

On the other hand, $E_n$ is a holomorphic vector bundle with respect to the holomorphic structure of $T_i$.
This yields the following canonical decomposition:
\[
\Omega^1( T_i ; E_n) = \Omega^{1,0}( T_i ; E_n) \oplus \Omega^{0,1}( T_i ; E_n),
\]
where $\Omega^{1,0}( T_i ; E_n)$ and $\Omega^{0,1}( T_i ; E_n)$ are the spaces of 
$E_n$-valued $1$-forms of type $(1,0)$ and $(0,1)$ respectively. 
Let us denote as $\cohom^{r,s}(T_i ; E_n)$ the projection of $\Omega^{r,s}( T_i ; E_n)\cap \operatorname{Ker} d$
onto $\cohom^1( T_i ; E_n)$, with $(r,s) = (0,1), (1,0)$.

\begin{proposition} \label{prop:basiscohombdry}
	Assume that $\cohom^0(T_i; E_n) \neq 0$. Then,
	\[
	\cohom^1(T_i ; E_n) = \cohom^{0,1}(T_i ; E_n) \oplus \cohom^{1,0}(T_i; E_n),
	\]
    with $\operatorname{dim}_{\mathbf{C}} \cohom^{0,1}(T_i ; E_n) = \operatorname{dim}_{\mathbf{C}} \cohom^{1,0}(T_i; E_n) = 1$.
\end{proposition}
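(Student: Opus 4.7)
The plan is to exploit the complex structure on $T_i$ coming from its similarity structure, together with the Jordan form of the parabolic action of $\pi_1 T_i$ on $V_n$. Identify $T_i\cong \cmplx/\Lambda$ with $\Lambda=\integer+\tau\integer$, $\Im\tau>0$. Since commuting non-trivial parabolics in $\psl(2,\cmplx)$ share a common fixed point in $\partial\hyp^3$, the restriction of $\rho_n$ to $\pi_1 T_i$ is conjugate to a representation by upper-triangular unipotent matrices in the weight basis $v_k = e_1^{n-1-k} e_2^k$ of $V_n$, and the assumption $\cohom^0(T_i; E_n) \neq 0$ implies that the fixed-vector space $V_n^{\pi_1 T_i}$ is one-dimensional, spanned by the highest-weight vector $v = v_0$. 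Combining Poincar\'e duality (Corollary~\ref{corollary:PD}) with the self-duality of $V_n$ (Lemma~\ref{lemma:pairing}) gives $\dim\cohom^2(T_i;E_n)=1$, and $\chi(T_i)=0$ yields $\dim\cohom^1(T_i;E_n)=2$.

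Next, I compute $\dim\cohom^{1,0}$ and $\dim\cohom^{0,1}$ separately. Since the canonical bundle of $T_i$ is trivial, closed $(1,0)$-forms correspond to holomorphic sections of $E_n$, that is, to $\pi_1 T_i$-equivariant holomorphic maps $F\colon\cmplx\to V_n$. Writing $F=\sum_j F_j(z)v_j$ and expanding the equivariance $F(z+\lambda)=\rho_n(\lambda)F(z)$ weight by weight, an induction descending from the top weight shows that each $F_j$ is a polynomial in $z$ of degree $n-1-j$ whose coefficients of positive powers of $z$ are determined by higher-weight data, while the constant term remains free; this gives one free complex parameter per weight level, so $\dim H^0(T_i;\mathcal{O}(E_n))=n$. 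Exact closed $(1,0)$-forms are derivatives of such $F$; since the differentiation map has one-dimensional kernel $V_n^{\pi_1 T_i}$, its image has dimension $n-1$, giving $\dim\cohom^{1,0}(T_i;E_n)=1$. The same argument with $\bar z$ in place of $z$ gives $\dim\cohom^{0,1}(T_i;E_n)=1$.

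Finally, to obtain the direct-sum decomposition it suffices to show that the one-dimensional subspaces $\cohom^{1,0}$ and $\cohom^{0,1}$ of the two-dimensional $\cohom^1$ are distinct. I take the explicit representative $F(z)=\sum_{k=0}^{n-1}\binom{n-1}{k}z^k v_{n-1-k}$ and the analogue $G(\bar z)$ obtained by replacing $z$ with $\bar z$; these are equivariant holomorphic (resp.\ antiholomorphic), so $[F\,dz]\in\cohom^{1,0}$ and $[G\,d\bar z]\in\cohom^{0,1}$. Translating to $1$-cocycles via $u_F(\lambda)=\int_0^\lambda F\,dz$ and $u_G(\lambda)=\int_0^\lambda G\,d\bar z$, one computes $(u_F-u_G)(1)=0$, while the $v_{n-1}$-component of $(u_F-u_G)(\tau)$ equals $\tau-\bar\tau\neq 0$. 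Any coboundary vanishing on the generator $1$ must come from $w\in\ker(\rho_n(1)-\operatorname{Id})=\cmplx v_0$, and hence vanishes on all of $\Lambda$; so $u_F$ and $u_G$ represent distinct classes. The main technical obstacle is the dimension count for $H^0(T_i;\mathcal{O}(E_n))$: the non-unitary holonomy rules out classical K\"ahler Hodge theory, so it must be extracted from the explicit polynomial structure imposed by the unipotent equivariance.
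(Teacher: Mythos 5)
Your approach is genuinely different from the paper's: rather than writing down two explicit representatives and showing their wedge product is nonzero via the invariant pairing, you compute $\dim\cohom^{1,0}$ and $\dim\cohom^{0,1}$ through $H^0(T_i;\mathcal{O}(E_n))$ and then try to separate the two lines in $\cohom^1$ by a group-cocycle computation. The count $\dim H^0(T_i;\mathcal{O}(E_n))=n$ (one free constant per Jordan level of the unipotent monodromy) and the resulting $\dim\cohom^{1,0}=1$ are correct.

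There is, however, a genuine gap in the antiholomorphic half. Your proposed $(0,1)$-representative $G(\bar z)=(\bar z X+Y)^{n-1}$ is not equivariant. For a $(0,1)$-form $g(\bar z)\,d\bar z$, equivariance under translation by $a\in\Lambda$ is $g(\bar z+\bar a)=\rho_n(a)\,g(\bar z)$: pulling back $\bar z$ shifts it by $\bar a$, while $\rho_n(a)$ sends $Y\mapsto aX+Y$ with the \emph{unconjugated} $a$. For your $G$ this gives $((\bar z+\bar a)X+Y)^{n-1}$ versus $((\bar z+a)X+Y)^{n-1}$, which differ whenever $a\notin\real$ (and $\Lambda=\integer+\tau\integer$ is not real). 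So $[G\,d\bar z]$ is not a class in $\cohom^1(T_i;E_n)$, and the comparison $u_F-u_G$ pits a valid cocycle against an invalid one. This also exposes a hidden asymmetry in your claim that ``the same argument with $\bar z$'' computes $\cohom^{0,1}$: the component equations $h_j(w+\bar\lambda)=h_j(w)+\lambda(\cdots)$ now mix $\bar\lambda$ on the left with $\lambda$ on the right, and the resulting induction forces every component above the invariant vector to vanish. Thus the space of closed equivariant $(0,1)$-forms is $1$-dimensional (not $n$), the exact ones are $0$-dimensional (not $n-1$), and $\dim\cohom^{0,1}=1$ comes out as $1-0$ rather than $n-(n-1)$. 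The only closed equivariant $(0,1)$-form up to scale is the constant form $X^{n-1}\,d\bar z$, which is exactly the paper's $\alpha$. If you use that instead of $G\,d\bar z$, your cocycle argument does go through: $u_\alpha(\lambda)=\bar\lambda\,X^{n-1}$ has vanishing $v_{n-1}$-component, $u_F(\lambda)$ has $v_{n-1}$-component $\lambda$, and coboundaries $w-\rho_n(\lambda)w$ kill the $v_{n-1}$-component because $\rho_n(\lambda)v_{n-1}\equiv v_{n-1}$ modulo lower-index $v_j$'s. So the route can be repaired, but as written the key representative is wrong.
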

\begin{proof}
	We can assume that for all $\gamma \in \pi_1 T_i$ we have:
	\[
	\Hol_M(\gamma) = 
	\left[ 
	\begin{pmatrix}
	      1 & a(\gamma) \\
	      0 & 1 
      \end{pmatrix} \right] \in \psl(2,\cmplx).
	\]
	This choice of the holonomy representation gives a complex coordinate $z$ on $\widetilde{T_i}$.
	Identifying $V_n$ with the space of $(n-1)$-th degree homogeneous polynomials in the variables $X$ and $Y$,
	we define the following two forms on $\Omega^1( \widetilde{T_i}; V_n)$,
	\[
	\alpha = d\bar z \otimes X^{n-1}, \quad \beta = dz \otimes (zX + Y)^{n-1}.
	\]
	Let us check that these forms are equivariant. Let $\gamma \in \pi_1 T_i$, 
	and denote by $L_{\gamma}$ the action of $\gamma$ on $\widetilde{T_i}$. 
	Notice that $L_\gamma(z) = z+ a(\gamma)$. Hence, on one hand, we have:
	\begin{align*}
		L_{\gamma}^*(\alpha) & = d( \bar z + \bar{a}(\gamma) ) \otimes X^{n-1} = \alpha,\\
		L_{\gamma}^*(\beta)  & = dz\otimes\left( (z+a(\gamma))X + Y \right)^{n-1},
	\end{align*}
	and on the other hand:
	\begin{align*}
		\rho(\gamma) \alpha & = dz \otimes ( \gamma \cdot X)^{n-1} = dz \otimes (\epsilon X)^{n-1},\\
		\rho(\gamma) \beta  & = dz \otimes \left( z \gamma \cdot X + \gamma \cdot Y \right)^{n-1} 
		= dz \otimes \left( \epsilon(z X + a(\gamma) X + Y )\right)^{n-1},
	\end{align*}
	where $\epsilon = \pm 1$ is the sign of the trace of $\gamma$ determined by
	the lift of the holonomy representation. 
	If $n$ is odd, these two forms are clearly equivariant.
	If $n$ is even, then the condition that $\cohom^0( T_i; E_n)$ is not trivial is equivalent to say
	that $\Hol_{(M,\eta)} (\sigma)$ has trace $2$ for all $\sigma \in \pi_1 T_i$; hence, $\epsilon = 1$, and
	the two forms are equivariant.
	Since $\alpha$ and $\beta$ are closed forms, they define cohomology classes in 
	$\cohom^1(T_i; E_n)$, and hence $[\alpha] \in \cohom^{0,1}(T_i; E_n)$ and $[\beta] \in \cohom^{1,0}(T_i; E_n)$.
	To conclude the proof, it remains to prove that 
	$[\alpha]$ and $[\beta]$ are linearly independent, as $\dim_\cmplx \cohom^1( T_i ; \rho_m) = 2$.
	This is equivalent to say that $[\alpha]\wedge[\beta] \in \cohom^2( T^2 ; \cmplx)$ is not zero.
	A simple computation shows that 
	\[
	\alpha \wedge \beta = \phi\left( X^{n-1}, (zX + Y)^{n-1} \right) d\bar{z} \wedge dz = d\bar{z} \wedge dz,
	\]
	where $\phi$ is the non-degenerate $\sln(2,\cmplx)$--invariant pairing of $V_n$, see Section \ref{section:sl2rep}.
	This shows that $[\alpha]\wedge[\beta]$ is not zero, and hence the two classes must be linearly independent.
\end{proof}
\begin{rem}
	Although considering the induced holomorphic structure on the tori $T_i$ may seem a little bit
	unnatural, it yields at least a \emph{canonical} decomposition 
	of the cohomology group $\cohom^1(T_i ; E_n)$, which is all we need.
\end{rem}

Next we want to characterize the image of the map induced by the inclusion
\[
i^* \colon \cohom^1( \overline{M}; E_n) \to \cohom^1( \partial \overline{M}; E_n).
\]
Although this description will not be complete, it will be enough to give bases for the homology groups
$\cohom_*(M; \rho_n)$. Before analysing the general case, let us discuss briefly the case $n = 3$. 

The representation $V_3$ is the adjoint representation of $\sln(2; \cmplx)$,
and the cohomology group $\cohom^1(M; E_3)$ has a geometrical interpretation in terms of infinitesimal 
deformations of the complete hyperbolic structure. 
The vector bundle $E_3$ is identified with the bundle of germs of Killing vector fields on $M$;
under this identification, it can be checked that the global section $X^2 \in \Omega^0(T_i; E_3)$ 
(we are using here the same notation as in the proof of Proposition~\ref{prop:basiscohombdry}) corresponds to the vector field 
$\frac{\partial}{\partial z_j}$. With this description, the $1$-form $d\bar z_j \otimes \frac{\partial}{\partial z_j}$
is a $(0,1)$-form that takes values in the vector bundle of holomorphic fields.
According to the theory of deformations of complex manifolds, this cohomology class describes the 
deformations of the holomorphic structure of $T_j$ by deformations of the defining lattice;
in particular, it gives a deformation of the Euclidean structure through Euclidean structures.
On the other hand, a non-trivial deformation of the complete hyperbolic structure is encoded 
by a cohomology class $\omega \in \cohom^1(M; E_3)$, 
and $i^*(\omega)$ encodes the corresponding deformation of the similarity structure in each torus.
Since this deformation cannot be through Euclidean structures on all tori 
(otherwise it will yield a different complete hyperbolic structure on $M$, contradicting thus the Mostow-Prasad rigidity),
then, for some
$T_j$, the restriction of $i^*(\omega)$ to $T_j$ can not be contained in $\cohom^{0,1}(T_j; E_3)$.
This shows that we have the following decomposition:
\begin{equation} \label{eq:decompcohom3}
\cohom^1(\partial \overline M; E_3) =  \operatorname{Im}i^* \oplus \bigoplus_{j=1}^k \cohom^{0,1}(T_j^2; E_3).
\end{equation}

We will prove that this decomposition holds also for $n \geq 2$. 
Since we do not have an interpretation of the cohomology group $\cohom^1(M; E_n)$ 
in geometrical terms such as deformations, 
we proceed in a different way.
Our key tool will be Theorem 2.1 of \cite{MenalPorti}, which states that a class $\omega \in \cohom^1(M; E_n)$
cannot be represented by a square-integrable form, with respect to a suitable inner product on $E_n$. 
Let us recall the definition of the inner product on $E_n$. 
Choose any $\su$--invariant inner product $\langle \cdot , \cdot \rangle$ on $V_n$
(we are considering $\su$ as a subgroup of $\sln(2,\cmplx)$). 
Identify $\hyp^3$ with $\sln(2, \cmplx)/\su$, and let $p \in \hyp^3$ be the class 
of the identity. Define an inner product on the trivial vector bundle $\hyp^3 \times V_n$ by
\[
\left\langle (q,w_1), (q, w_2) \right\rangle_q = \langle g w_1, g w_2 \rangle, \quad \text{where } g\cdot q = p.
\]
Then it induces an inner product on the vector bundle 
$E_n = \hyp^3 \times_{\pi_1(M,p)} V_n$.

\begin{lemma}
\label{lemma:l2class}
	Assume that $\cohom^0(T_j; E_n) \neq 0$.
	Then there exists a form $\alpha_j \in \Omega^{0,1}( T_j; E_n) $ 
	representing a non-trivial element in $\cohom^{0,1}( T_j; E_n) $ such that
	$\pi_j^*( \alpha_j ) \in \Omega^1( U_j; E_n)$ is $L^2$.
\end{lemma}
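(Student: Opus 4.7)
The plan is to realize $\alpha_j$ concretely as the form $d\bar z\otimes X^{n-1}$ constructed in the proof of Proposition~\ref{prop:basiscohombdry}, whose nontriviality in $\cohom^{0,1}(T_j;E_n)$ is already established there. In the upper half-space coordinates used in that proof (so that $\pi_1 T_j$ acts parabolically fixing $\infty$), the cusp $U_j$ lifts to a horoball $\{(z,t):t\geq t_0\}\subset\hyp^3$, the projection $\pi_j$ reads $(z,t)\mapsto z$, and hence $\pi_j^*\alpha_j = d\bar z\otimes X^{n-1}$ is $\pi_1 T_j$-equivariant by the same computation used on $T_j$.

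What remains is a pointwise norm estimate. In the hyperbolic metric one has $\|d\bar z\|^2_{(z,t)} = 2t^2$. For the fiber norm on $E_n$, I would use the isometry $g_{(z,t)}$ sending the basepoint $j\in\hyp^3$ to $(z,t)$; concretely,
\[
g_{(z,t)} = \begin{pmatrix} t^{1/2} & z\,t^{-1/2}\\ 0 & t^{-1/2}\end{pmatrix}.
\]
Since $X$ is an eigenvector of this upper-triangular matrix with eigenvalue $t^{1/2}$, the extension to $V_n=\operatorname{Sym}^{n-1}V_2$ gives $g_{(z,t)}^{-1}\cdot X^{n-1} = t^{-(n-1)/2}X^{n-1}$, so $\|X^{n-1}\|^2_{(z,t)}$ is a constant multiple of $t^{-(n-1)}$. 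Multiplying the two norms, $\|\pi_j^*\alpha_j\|^2_{(z,t)}$ is of order $t^{3-n}$, and integrating against the hyperbolic volume form $t^{-3}\,dx\,dy\,dt$ over a fundamental domain for $\pi_1 T_j$ factorizes the $L^2$-integral as $\mathrm{Area}_{\mathrm{Euc}}(T_j)\cdot\int_{t_0}^\infty t^{-n}\,dt$, which is finite provided $n\geq 2$.

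The essential geometric input---and the reason the $(0,1)$-class is the natural one---is that $X$ is a highest-weight vector for the Borel subgroup stabilizing the cusp. Consequently $X^{n-1}$ has the fastest-decaying fiber norm of any vector in $V_n$ along the cusp direction, and this decay is exactly enough to offset the growth of $\|d\bar z\|^2$. By contrast, the $(1,0)$-representative $dz\otimes(zX+Y)^{n-1}$ is \emph{not} $L^2$: the vector $(zX+Y)^{n-1}$ contains a $Y^{n-1}$ component whose fiber norm grows like $t^{(n-1)/2}$, and no amount of decay on the form side can compensate. Beyond this weight calculation there is no real obstacle; the rest is a direct comparison of the integrand with $t^{-n}$.
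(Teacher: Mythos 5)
Your argument is correct and follows essentially the same route as the paper: take $\alpha_j = d\bar z\otimes X^{n-1}$, compute $\|d\bar z\|^2\sim t^2$ and $\|X^{n-1}\|^2_{(z,t)}\sim t^{1-n}$, and integrate $t^{3-n}\cdot t^{-3}$ against $dx\,dy\,dt$ to get a finite quantity. The highest-weight interpretation of $X$ (and the observation that the $(1,0)$-representative fails to be $L^2$) is a nice conceptual gloss, but the computations match the paper's proof exactly.
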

\begin{proof}
	Let us work in the model of the half-space $\hyp^3 = \cmplx \times (0,\infty)$.
	If $(z,t) = (x,y, t) \in \hyp^3$, the metric is given by 
	\[
	g = \frac{1}{t^2} (dx^2 + dy^2 + dt^2).
	\]
	Proceeding as in the proof of Proposition \ref{prop:basiscohombdry},
	we obtain the form  $\alpha = d\bar{z} \otimes X^{n-1}$.
	We will be done if we prove that $\pi_j^*(\alpha)$ is $L^2$.
	To compute the norm of $d\bar{z} \otimes X^{n-1}$, we may 
	assume that the cusp $U_j$ is isometric to $\cmplx \times [1, \infty) / (\Hol_M \pi_1 T^2)$.
	Thus we have:
	\[
	| d\bar{z} \otimes X^{n-1}|_{(w,t)} = |d\bar z|_{(w,t)} |X^{n-1}|_{(w,t)}.
	\]
	On one hand,
	\[
	|d\bar z|^2_{(w,t)} = |dx|^2_{(w,t)} + |dy|^2_{(w,t)} = 2 t^2.
	\]
	On the other hand, by definition of the metric of $E_n$, it can checked that
	\[
	\vert X^{n-1}\vert^2_{(w,t)}= t^{1-n} |X^{n-1}|^2,
	\]
	where $|X^{n-1}|$ is the norm of $X^{n-1}$ in $V_n$ with respect to the fixed hermitian metric.
	Therefore, if $R$ is a fundamental domain for $T^2$, we get
	\[
	\int_{U_j}  |d\bar{z} \otimes X^{n-1} |^2 d\Vol_{U_j} =
	2 |X^{n-1}|^2 \int_{R\times[1,\infty]} \frac{ t^{3 - n} }{t^3} dx dy dt = C \int_1^\infty t^{- n} dt < \infty,
	\]
	and the lemma is proved.
\end{proof}
Now we can prove that decomposition \eqref{eq:decompcohom3} holds for all $n \geq 2$.

\begin{proposition} \label{prop:generatorimage}
	Assume that $T_1,\dots,T_r$ are all the connected components of $\partial \overline M$
	such that $\cohom^0(T_j; E_n) \neq 0$. Then we have the following decomposition:
	\[
	\bigoplus_{j=1}^r \cohom^1(T_j; E_n) = \operatorname{Im}i^* \oplus \bigoplus_{j=1}^r \cohom^{0,1}(T_j; E_n).
	\]
\end{proposition}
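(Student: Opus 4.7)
The plan is to deduce the decomposition from a dimension count combined with an analytic argument showing trivial intersection.

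First, for any $T_j$ with $\cohom^0(T_j;E_n)=0$, the self-duality of $V_n$ (and hence of $E_n$) together with Poincaré duality (Corollary~\ref{corollary:PD}) yields $\cohom^2(T_j;E_n)=0$; since $\chi(T_j)=0$, the entire complex vanishes, so $\cohom^1(\partial\overline M;E_n)=\bigoplus_{j=1}^r\cohom^1(T_j;E_n)$. By Proposition~\ref{prop:basiscohombdry} this space has dimension $2r$, while $\bigoplus_{j=1}^r\cohom^{0,1}(T_j;E_n)$ accounts for $r$ dimensions. The image $\operatorname{Im} i^*$ accounts for the other $r$: indeed, under Poincaré--Lefschetz duality (which requires a nondegenerate pairing on $E_n$, again by Corollary~\ref{corollary:PD}) the connecting map $\delta\colon\cohom^1(\partial\overline M;E_n)\to\cohom^2(\overline M,\partial\overline M;E_n)$ is dual to $i^*$, so by exactness $\operatorname{Im} i^*=\ker\delta$ is a Lagrangian subspace of $\cohom^1(\partial\overline M;E_n)$ and has dimension $r$.

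It therefore suffices to show that $\operatorname{Im} i^*\cap\bigoplus_{j=1}^r\cohom^{0,1}(T_j;E_n)=0$. Suppose $[\omega]\in\cohom^1(\overline M;E_n)$ satisfies $i^*[\omega]=\sum_j[\alpha_j]$ with $[\alpha_j]\in\cohom^{0,1}(T_j;E_n)$. By Lemma~\ref{lemma:l2class} the representatives $\alpha_j$ can be chosen so that each $\pi_j^*(\alpha_j)$ is $L^2$ on the cusp $U_j$. Take any smooth closed representative $\omega'$ of $[\omega]$ on $\overline M$; since $\omega'|_{U_j}$ and $\pi_j^*(\alpha_j)$ represent the same class in $\cohom^1(U_j;E_n)\cong\cohom^1(T_j;E_n)$, we may write $\omega'|_{U_j}-\pi_j^*(\alpha_j)=d\beta_j$ for some $\beta_j$. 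Picking a smooth cutoff $\chi_j$ on $M$ that equals $1$ far down $U_j$ and vanishes outside a neighbourhood of the end, the form $\omega''=\omega'-\sum_j d(\chi_j\beta_j)$ is cohomologous to $\omega'$, coincides with $\pi_j^*(\alpha_j)$ on the end of each $U_j$, and has compact support outside the cusps. Consequently $\omega''$ is globally $L^2$ on $M$, and Theorem~2.1 of \cite{MenalPorti} forces $[\omega]=0$, so $[\alpha_j]=0$ for every $j$. Combined with the dimension count, this yields the required direct-sum decomposition.

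The delicate step is the construction of the $L^2$ representative $\omega''$: the prescribed cusp model from Lemma~\ref{lemma:l2class} must be installed on each end without altering the cohomology class or spoiling regularity on the compact core. The cutoff argument succeeds precisely because $d(\chi_j\beta_j)$ is exact and supported in a transition region of finite volume, so it leaves the compact core essentially unchanged while guaranteeing that the form at infinity is exactly the square-integrable model provided by the Lemma; the final vanishing appeal to \cite{MenalPorti} is what makes the image of $i^*$ transverse to the antiholomorphic part.
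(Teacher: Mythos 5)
Your proof is correct and follows essentially the same strategy as the paper: a dimension count ($\dim\bigoplus_j\cohom^1(T_j;E_n)=2r$, $\dim\bigoplus_j\cohom^{0,1}(T_j;E_n)=r$, $\dim\operatorname{Im} i^*=r$) reduces the claim to showing $\operatorname{Im} i^*\cap\bigoplus_j\cohom^{0,1}(T_j;E_n)=0$, which is then done by producing an $L^2$ representative via Lemma~\ref{lemma:l2class} and invoking Theorem~2.1 of \cite{MenalPorti}. The only deviation is that you derive $\dim\operatorname{Im} i^*=r$ from the half-lives-half-dies/Lagrangian argument via Poincar\'e--Lefschetz duality, whereas the paper simply cites Theorem~0.1 of \cite{MenalPorti} for this fact; your cutoff construction of the $L^2$ representative is also slightly more explicit than the paper's, but amounts to the same thing.
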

\begin{proof}
    On one hand, by Proposition \ref{prop:basiscohombdry} we have:
    \[
        \operatorname{dim}_{\mathbf{C}} \cohom^1(T_j; E_n)  =  2, \quad 
        \operatorname{dim}_{\mathbf{C}} \cohom^{0,1}(T_j; E_n) =  1. 
    \]
    On the other hand, Theorem 0.1 of \cite{MenalPorti} implies $\operatorname{dim}_{\mathbf{C}} \operatorname{Im}i^* = r$.
    Therefore it is enough to prove that $\operatorname{Im}i^* \cap \bigoplus_{j=1}^r \cohom^{0,1}(T_j; E_n) = 0$.
	Let $[\omega] \in \cohom^1(M; E_n)$ such that $ i^*([\omega]) \in \bigoplus_{j=1}^k \cohom^{0,1}(T_j^2; E_n)$.
	Let us work with the cusps $ U_j \cong T_j \times (0, \infty)$, and assume that they are disjoint.
	Let $\alpha_j$ be the forms given by Lemma~\ref{lemma:l2class}.  
	Then 
	\[
	\omega = \lambda_j \pi_i^*(\alpha_j) + df_j, \quad \text{on } U_j,
	\]
	for some $\lambda_j \in \cmplx$ and $f_j \in \Omega^0( U_j; E_n)$. 
	Let $F \in \Omega^0( M; E_n)$ such that $F_{|T_j\times [1, \infty)} = f_j$ and vanishing outside the cusps. 
	By Lemma~\ref{lemma:l2class}, $\omega - dF$ is $L^2$, and hence the class $[\omega]$ has an $L^2$ representative.
    Finally, Theorem 2.1 of \cite{MenalPorti} implies that $[\omega] = 0$, as we wanted to prove. 
\end{proof}

\subsection{The homology groups $\cohom_*( M ; \rho_n)$ }
\label{section:odddim}

The aim of this subsection is to prove Propositions~\ref{prop:thetatorsion} and~\ref{prop:normaltor} concerning 
the existence of a distinguished family of bases for the groups $\cohom_*( M ; \rho_n)$.

We will use the following construction for the homology of a finite  CW--complex $X$ in the local system 
defined by a representation $\rho \colon \pi_1 (X, p) \to \operatorname{GL}(V)$.
Consider the right action of $\pi_1(X, p)$ on $V$, so that $\gamma \in \pi_1(X, p)$ maps $v\in V$ to $\rho(\gamma)^{-1} v$.
We will write $V_\rho$ to emphasize the fact that $V$ is a $\pi_1 (X, p)$--right module through $\rho$.
Let $C_*(\widetilde X; \integer)$ denote the complex of singular chains on the universal covering,
in which $\pi_1(X, p)$ acts on the left by deck transformations, and let 
\[
C_*( X; V_\rho) = V_\rho \otimes_{\cmplx [\pi_1(X, p)]} C_*(\widetilde X; \integer).
\]
Then $\cohom_*(X; \rho)$ is the homology of the following complex of $\cmplx$-vector spaces,
\[
\left( C_*(X; V_\rho), \operatorname{Id} \otimes \partial_* \right).
\]
We will use the Kronecker pairing between homology and cohomology with twisted coefficients.
To define it we need an invariant and non-degenerated bilinear map 
\[
\phi \colon V \times V \to \cmplx.
\]
If $X$ is a differentiable manifold, then the Kronecker pairing can be defined 
at the level of smooth chains and forms as follows:
\begin{align*}
	C_r(X; V_\rho) \times \Omega^r(\widetilde X; V_\rho )^{\pi_1 X} & \longrightarrow  \cmplx \\
	\left( v_\theta \otimes \theta, \omega \otimes v_{\omega} \right)
	& \longmapsto  \int_{\theta} \phi(v_\theta,v_{\omega}) \, \omega.
\end{align*}

The Kronecker pairing does not depend on the different choices, but on the respective classes in cohomology and homology,
and it is natural and non-degenerate.

We want to prove the following result from which Proposition \ref{prop:thetatorsion} will be easily deduced.

\begin{proposition} \label{prop:defbasodd_wj}
	Let $T_1, \dots, T_r$ be the boundary components of $\overline M$ that are \emph{not} $\rho_n$-acyclic.
	Let $G_j < \pi_1 (M, p)$ be some fixed realization of the fundamental group of $T_j$ as a subgroup of $\pi_1(M, p)$.
	For each $T_j$ choose a non-trivial cycle $\theta_j \in \cohom_1(T_j; \integer)$, 
	and a non-trivial vector $w_j \in V_n$ fixed by $\rho_n (G_j)$.
	If $i_j \colon T_j \to M$ denotes the inclusion, then we have: 
	\begin{enumerate}
		\item A basis for $\cohom_1( M; \rho_n)$ is given by
			\[
			\left( i_{1*}( [w_1 \otimes \theta_1]),\dots, i_{r*}( [w_r \otimes \theta_r])  \right).
			\]
			
		\item Let $[T_j ] \in \cohom_2(T_j ; \integer)$ be a fundamental class of $T_j$.
			A basis for $\cohom_2( M; \rho_n)$ is given by
			\[
			\left( i_{1*}( [w_1 \otimes T_1]),\dots, i_{r*}( [w_r \otimes T_r])  \right).
			\]
	\end{enumerate}
\end{proposition}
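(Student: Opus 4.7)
The strategy is a dimension count plus a Kronecker-pairing computation, leveraging Propositions \ref{prop:basiscohombdry} and \ref{prop:generatorimage} together with Poincar\'e--Lefschetz duality (Corollary \ref{corollary:PD}).

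First I would verify $\dim_\cmplx H_k(M;\rho_n) = r$ for $k=1,2$. The restriction $i^*\colon H^1(M;E_n)\to H^1(\partial\overline M;E_n)$ is injective by the same cusp-by-cusp argument used in the proof of Proposition \ref{prop:generatorimage}: if $i^*[\omega] = 0$, then $\omega$ is exact on each cusp, so a global exact modification produces a compactly supported (hence $L^2$) representative, which Theorem 2.1 of \cite{MenalPorti} forces to be zero. Since $\dim_\cmplx\operatorname{Im} i^* = r$ by Proposition \ref{prop:generatorimage}, we obtain $\dim_\cmplx H^1(M;\rho_n) = r$. Combined with $H^0(M;\rho_n) = 0$ (no nonzero vector in $V_n$ is fixed by the holonomy for $n\ge2$), $H^3(M;\rho_n) = 0$ (open $3$-manifold), and $\chi(M) = 0$, one deduces $\dim_\cmplx H^2(M;\rho_n) = r$. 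The perfect Kronecker pairing transfers these dimensions to homology.

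For part (1), $w_j\otimes\theta_j$ is a valid cycle because $w_j$ is $\rho_n(G_j)$-invariant. Using Proposition \ref{prop:generatorimage}, choose classes $[\omega_1],\dots,[\omega_r]\in H^1(M;E_n)$ whose restrictions to each $T_k$ satisfy $i_k^*[\omega_j]=\delta_{jk}[\beta_k] + \gamma_{jk}$ for some $\gamma_{jk}\in H^{0,1}(T_k;E_n)$, with $\beta_k$ as in Proposition \ref{prop:basiscohombdry}. The space $H^{0,1}(T_k;E_n)$ is spanned by $d\bar z\otimes X^{n-1}$, and its Kronecker pairing with $w_k\otimes\theta_k$ vanishes since $w_k$ is a scalar multiple of $X^{n-1}$ and the invariant pairing satisfies $\phi(X^{n-1},X^{n-1})=0$. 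On the other hand, $\langle[\beta_k],[w_k\otimes\theta_k]\rangle$ is a nonzero multiple of $\int_{\theta_k}dz$, which equals the translation length of $\theta_k$ under the holonomy and hence does not vanish. So the matrix $(\langle[\omega_k],i_{j*}[w_j\otimes\theta_j]\rangle)_{j,k}$ is diagonal with nonzero diagonal; naturality of the pairing yields linear independence, and the dimension count upgrades this to a basis.

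For part (2), the long exact sequence of the pair $(\overline M,\partial\overline M)$ reads
\[
0 = H^0(M;\rho_n)\to H^0(\partial\overline M;\rho_n)\xrightarrow{\delta}H^1(\overline M,\partial\overline M;\rho_n)\to H^1(M;\rho_n)\xrightarrow{i^*}H^1(\partial\overline M;\rho_n),
\]
so the injectivity of $i^*$ from the first step forces $\delta$ to be an isomorphism. Via the Lefschetz duality $H^1(\overline M,\partial\overline M;\rho_n)\cong H_2(M;\rho_n)$ provided by Corollary \ref{corollary:PD}, the standard geometric description of $\delta$ followed by cap product identifies the image of a parallel section $w_j\in H^0(T_j;\rho_n)$ with the pushforward $i_{j*}[w_j\otimes T_j]$ of the twisted fundamental class of $T_j$. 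Letting $j$ vary produces the desired basis of $H_2(M;\rho_n)$. The main obstacle I anticipate is this last geometric identification, which requires careful bookkeeping of orientation and sign conventions; a self-contained alternative is to mirror part (1), choosing classes $[\eta_k]\in H^2(M;\rho_n)$ whose restrictions to each $T_j$ generate $H^2(T_j;\rho_n)\cong\cmplx$ in a controlled way, using that $H^2(M;\rho_n)\to H^2(\partial\overline M;\rho_n)$ is surjective because $H^3(\overline M,\partial\overline M;\rho_n)\cong H_0(M;\rho_n)=0$.
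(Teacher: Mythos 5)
Your proof is correct and takes essentially the same approach as the paper: part (1) is the same Kronecker-pairing argument against the decomposition of Proposition~\ref{prop:generatorimage}, while part (2) replaces the paper's homology long-exact-sequence step (the isomorphism $\cohom_2(\partial\overline M;\rho_n)\cong\cohom_2(M;\rho_n)$ together with the identity $\operatorname{PD}(w_j)=[w_j\otimes T_j]$) with its cohomological dual via Lefschetz duality. A useful addition on your part is making explicit the dimension count $\dim_\cmplx\cohom_i(M;\rho_n)=r$ for $i=1,2$, which the paper uses implicitly to upgrade linear independence to a basis.
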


\begin{proof} 
	Let $[\alpha_j]$ and $[\beta_j]$ be generators of $\cohom^{0,1}(T_j^2; E_n)$ and $\cohom^{1,0}(T_j^2; E_n)$ respectively.
	We claim that the Kronecker pairing $([ w_j \otimes \theta_j], [\alpha_k])$ is zero for all $j,k$,
	and $([ w_j \otimes \theta_j], [\beta_k])$ is zero if and only if $j\neq k$.
	We can assume that $k = j$. Let us fix $T_j$. 
	Proceeding as in the proof of Proposition \ref{prop:basiscohombdry}, we may assume that
	$w_j = X^{n-1}$, $\alpha_j = d\bar{z} \otimes X^{n-1}$ and $\beta_j = dz \otimes (zX + Y)^{n-1}$.
	We have
	\begin{equation} \label{eqn:w0}
		( [ w_j \otimes \theta_j], [\beta_j]) = \int_{\theta_j} \phi\left( X^{n -1}, (zX + Y)^{n-1} \right) dz
		= \int_{\theta_j} \phi\left(X^{n-1}, Y^{n-1}\right) dz = \int_{\theta_j} dz\neq 0.
	\end{equation}
	On the other hand, since $\phi( X^{n-1}, X^{n-1}) = 0$,
	$( [ w_j \otimes \theta], [\alpha_j]) = 0$. This proves the claim.
	
	Let us prove now the first assertion. Assume that we have: 
	\[
	\sum_{j= 1}^r \lambda_j i_* [ w_j \otimes \theta_j] = 0, \quad \text{with } \lambda_j \in \cmplx. 
	\]
	The naturality and the non-degeneracy of the Kronecker pairing imply that this is equivalent to
	\[
	\sum_{j= 1}^r \lambda_j \left( w_j \otimes \theta_j, i^*(\omega) \right) = 0,\quad \text{for all } [\omega] \in \cohom^1(M; E_n),
	\]
	where $(\cdot,\cdot)$ denotes the Kronecker pairing.
	By Proposition \ref{prop:generatorimage}, each $\beta_j$ is uniquely written as 
	\[
	\beta_j = \gamma_j + \sum_{k = 1} ^r \mu_j^k \alpha_k, \quad \text{with } \gamma_j \in \operatorname{Im} i^* \text{ and } \mu_j^k \in \cmplx.
	\]
	Moreover, $(\gamma_1,\dots,\gamma_r)$ is a basis of $\operatorname{Im} i^*$. 
	The preceding discussion then implies $\lambda_j = 0$ for all $j$. The first assertion is thus proved.

	Let us prove Assertion 2. The long exact sequence in homology for the pair $(\overline M, \partial \overline M)$
	shows that the inclusion $\partial \overline M \subset \overline M$ yields an isomorphism
	\[
	i_* \colon \cohom_2( \partial \overline M; E_n) = \bigoplus_{j=1}^r \cohom_2( T_j; E_n) \to \cohom_2(M; E_n).
	\]
	Thus it is enough to prove that $[w_j \otimes T_j]$ is not zero.
	This can be proved using Poincar\'e duality $\operatorname{PD}$.
	Indeed, if we identify $\cohom^0(T_j ;E_n)$ with the subspace of $V_n$ of invariant vectors, then it can be checked that
	\[
	\operatorname{PD}(w_j) = [w_j \otimes T_j].
	\]
\end{proof}

\begin{proof}[Proof of Proposition~\ref{prop:thetatorsion}] 
  The choice of basis on homology provided by Proposition~\ref{prop:defbasodd_wj}
  defines a Reidemeister torsion $\tau(M;\rho_n;\{\theta_i\})$ that depends only on $M$, $\rho_n$ and $\{\theta_i\}$. 
  Notice that the choice of the fundamental class $[T_ j]\in \cohom_2(T_j ; \integer)$
  is canonical. In addition, the invariant subspace $V_n^{\rho_n (G_j)}$ is one-dimensional
  \cite{MenalPorti}, therefore different choices of $w_j\in V_n^{\rho_n (G_j)}$ differ by a multiple.  
  The torsion is homogeneous on the $w_j$, moreover each $w_j$ appears once in the basis for $H_1(M;\rho_n)$ and once in the basis for $H_2(M;\rho_n)$,
  hence the effect of replacing $w_j$ by a multiple is cancelled (see~\cite{MilnorTor,Turaev} for the behaviour of torsion under change of bases in homology).
\end{proof}

Next we compute the dependence of the vectors $i_{1*}( [w_j \otimes \theta_1])$ (introduced 
in Proposition~\ref{prop:defbasodd_wj}) on the cycles $\{\theta_j\}$.
Before doing this, let us recall the following definition.

\begin{definition}
	Let $\theta, \theta' \in \cohom_1(T_j; \integer)$ be two non-trivial cycles in a boundary component $T_j$ of $\overline M$.
	Using the natural identification between $\cohom_1(T_j; \integer) \cong \pi_1 T_j$, 
	let us assume that 
	\[ 
	\Hol_M (\theta) = 
	\left[ \begin{pmatrix}
				1 & a (\theta) \\
				0 & 1 
	\end{pmatrix} \right] \in \psl(2,\cmplx)
	\]
	for some $a(\theta) \in \cmplx^*$, $i = 1, 2$. Then define the \emph{cusp shape} of the pair $(\theta,\theta')$
	as
	\[
	\cs(\theta, \theta') = \frac{a(\theta)}{a(\theta')}.
	\]
	Notice that $\cs(\theta, \theta')$ is well defined, because $a\colon \pi_1 T_j \to \cmplx$ is unique up to
	homothety. 
\end{definition}

\begin{proposition} \label{prop:defbasodd_thetaj}
    With the same notation used in Proposition \ref{prop:defbasodd_wj}, we have:
	\[
    	i_{j*}([ w_j \otimes \theta_j] ) = \cs(\theta_j, \theta_j') i_{j*}([ w_j \otimes \theta_j']),
	\]
    where each $\theta_j'$ is any non-trivial cycle in $\cohom_1(M; \integer)$.
\end{proposition}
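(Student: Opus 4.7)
The plan is to refine the pairing computations from the proof of Proposition~\ref{prop:defbasodd_wj} so that they track the explicit dependence on the cycle $\theta_j$. The key observation is that the integral appearing in equation~\eqref{eqn:w0} is not merely nonzero but equals, up to a fixed nonzero constant depending only on $n$, the complex number $a(\theta_j)$. Indeed, with the coordinate $z$ on $\widetilde{T_j}\cong\cmplx$ chosen so that $\pi_1 T_j$ acts by translations, the loop $\theta_j$ lifts to a path from $z_0$ to $z_0+a(\theta_j)$, hence $\int_{\theta_j}dz = a(\theta_j)$. This gives
\[
\bigl([w_j\otimes\theta_j],[\beta_j]\bigr) \;=\; \phi(X^{n-1},Y^{n-1})\,a(\theta_j),
\]
and likewise with $\theta_j'$ in place of $\theta_j$. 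As in the previous proof, $([w_j\otimes\theta_j],[\alpha_k])=0$ for every $k$ because $\phi(X^{n-1},X^{n-1})=0$.

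Next, I would combine this with Proposition~\ref{prop:generatorimage} to lift each class $[\beta_j] - \sum_l \mu_j^l[\alpha_l]\in \operatorname{Im} i^*$ to a class $\widetilde{\gamma}_j\in \cohom^1(M; E_n)$. By naturality of the Kronecker pairing, and because the restriction of $[\beta_j]$ to $T_k$ vanishes for $k\neq j$,
\[
\bigl(i_{k*}[w_k\otimes\theta_k],\widetilde{\gamma}_j\bigr) \;=\; \bigl([w_k\otimes\theta_k],\,i_k^*\widetilde{\gamma}_j\bigr) \;=\; \delta_{jk}\,\phi(X^{n-1},Y^{n-1})\,a(\theta_k).
\]
So the $r\times r$ matrix of pairings between the basis $\{i_{k*}[w_k\otimes\theta_k]\}_k$ of $\cohom_1(M;\rho_n)$ (provided by Proposition~\ref{prop:defbasodd_wj}) and the classes $\{\widetilde{\gamma}_j\}_j$ is diagonal with nonzero entries. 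Consequently, the subspace of $\cohom_1(M;\rho_n)$ annihilated by all $\widetilde{\gamma}_k$ with $k\neq j$ is one-dimensional and contains $i_{j*}[w_j\otimes\theta_j]$.

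Finally, repeating this argument after replacing $\theta_j$ by $\theta_j'$ shows that $i_{j*}[w_j\otimes\theta_j']$ also lies in this same one-dimensional subspace. The two classes must therefore be proportional, and comparing their values at $\widetilde{\gamma}_j$ pins down the proportionality factor as $a(\theta_j)/a(\theta_j')=\cs(\theta_j,\theta_j')$, which is the claimed identity. The only delicate point is the identification $\int_{\theta_j}dz = a(\theta_j)$, which is a direct unraveling of the two descriptions of $\pi_1 T_j$: as $\cohom_1(T_j;\integer)$ and as the translation lattice acting on $\widetilde{T_j}\cong\cmplx$ through the parabolic holonomy; everything else is a bookkeeping exercise with the pairing already set up in the previous proof.
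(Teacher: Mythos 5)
Your proof is correct and rests on the same key observation as the paper (namely that Equation \eqref{eqn:w0} gives $\int_{\theta_j} dz = a(\theta_j)$), though you spell out the conclusion by constructing lifted classes $\widetilde{\gamma}_j$ and arguing through the pairing matrix in $\cohom_1(M;\rho_n)$. The paper's one-line proof implicitly uses the simpler route: the proportionality $[w_j\otimes\theta_j]=\cs(\theta_j,\theta_j')[w_j\otimes\theta_j']$ already holds in the two-dimensional space $\cohom_1(T_j;\rho_n)$ by nondegeneracy of the Kronecker pairing with the basis $\{[\alpha_j],[\beta_j]\}$ of $\cohom^1(T_j;E_n)$, and then one simply applies $i_{j*}$.
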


\begin{proof} 
	The result follows from Equation \eqref{eqn:w0} because
    \[
    \int_{\theta_j} dz = a(\theta_j).
    \]
\end{proof}

\begin{proof}[Proof of Proposition~\ref{prop:normaltor}]
  By the behaviour of the torsion under change of bases in homology \cite{MilnorTor,Turaev},
  Proposition~\ref{prop:defbasodd_thetaj} implies that the quotient $\tau(M;\rho_n;\{\theta_j\})/\tau(M;\rho_n;\{\theta_j'\})$
  is independent of $n$, and Proposition~\ref{prop:normaltor} follows.
\end{proof}

\section{Behaviour under hyperbolic Dehn filling}
\label{chapter:surgery}

The aim of this section is to analyse the behaviour of the $n$-dimensional Reidemeister 
torsion under hyperbolic Dehn surgery. Before discussing it, we need to fix some notation. 

Throughout this section $M$ will denote an oriented complete hyperbolic 
3-manifold of finite volume with $l$ cusps. 
For each connected boundary component $T_i$ of $M$ we 
fix two closed simple oriented curves $a_i, b_i$ in $T_i$ generating $\cohom_1( T_i; \integer)$. 
We define the following sets:
\begin{eqnarray*}
	\mathcal{A}               & = & \{ (p,q) = (p_1,\dots,p_l,q_1,\dots,q_l) \in \integer^{l}\times \integer^l \mid \gcd(p_i,q_i) = 1 \}, \\
	\mathcal{A}_M 		  & = & \{ (p,q) \in \mathcal{A} \mid M_{p/q} := M_{p_1/q_1,\dots,p_l/q_l} \text{ is hyperbolic } \}.\\
\end{eqnarray*}
\begin{rem}
	We may regard $\mathcal{A}$ as a directed set with respect to the following preorder:
	\[
	(p,q) \leq (p', q') \Leftrightarrow  (p_i)^2 + (q_i)^2 \leq (p_i')^2 + (q_i')^2 \text{ for all } i = 1,\dots,l.
	\]
	The hyperbolic Dehn surgery theorem by Thurston \cite{ThurstonNotes} implies that $\mathcal{A}_M$ is also a directed subset 
	of $\mathcal{A}$ , namely any two elements of $\mathcal{A}_M$ have a common greater element. 
	The limit of an $\mathcal{A}_M$-net $\{x_{p/q}\}$ in some topological space, whenever it exists, will be denoted by:
	\[
	\lim_{(p,q)\to \infty} x_{p/q}.
	\]
\end{rem}

In analysing the relation between the $n$-dimensional torsion invariants of $M$ with those of $M_{p/q}$,
some issues arise. In order to discuss them, we distinguish two cases according to the parity of $n$.

We consider first the case $n = 2k + 1$, with $k > 0$. In that case we find two difficulties.
The first one is that we need some extra data in order to define the torsion invariant for $M$ 
(we must choose non-trivial cycles $\theta_i \in \cohom_i(T_i; \integer)$),
whereas for $M_{p/q}$ this is already defined.
The second one is due to the following result proved in \cite[p.~110]{Porti}
(notice that our torsion is the inverse of the one considered in \cite{Porti}),
\[
\lim_{(p,q) \to \infty} \left| \tor_3( M_{p/q}) \right| = 0.
\]
The proof of this limit also works for any odd number $n \geq 3$.
Moreover, the asymptotic growth of these sequences does not depend on the dimension $n$.
These facts suggest that the question above should be formulated in terms of normalized torsions.
In that case, we will prove the following result.

\begin{proposition} \label{prop:oddtordeform}
	The set of cluster points of the following net in $\cmplx/\{ \pm 1\}$, 
	\[
	\left \{ \normaltor_{2k + 1} (M_{p/q}) \right\}_{ (p,q) \in \mathcal{A}_M},
	\]
	is the segment joining the origin and the point $2^{2(k - 1)l} \normaltor_{2k+1}(M)$.
\end{proposition}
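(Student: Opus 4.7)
The strategy is a Mayer--Vietoris argument applied to the decomposition
$M_{p/q} = \overline M \cup_{T_1 \sqcup \cdots \sqcup T_l} (V_1 \sqcup \cdots \sqcup V_l)$,
where each $V_i$ is the filling solid torus whose meridian is $\mu_i := p_i a_i + q_i b_i$ and whose
core is the closed geodesic $\gamma_i$ of complex length $\lambda_i = \lambda_i(p/q) \to 0$. As defining
cycles for $\tau(\overline M; \rho_n; \{\theta_i\})$ I would take $\theta_i = \mu_i$: this is
natural because $\mu_i$ is null-homotopic in $M_{p/q}$ (so $\rho_n(\mu_i) = I$), and the fixed vector
$w_i \in V_n^{\rho_n(G_i)}$ in Proposition~\ref{prop:defbasodd_wj} is unaffected by the filling.

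The manifold $M_{p/q}$ is $\rho_n$-acyclic (Raghunathan), while $\overline M$, each $V_i$, and each
$T_i$ are not. Using multiplicativity of torsion along the Mayer--Vietoris long exact sequence,
combined with $\cohom^*(V_i; \rho_n) \cong \cohom^*(S^1; \rho_n|_{\gamma_i})$ and the bases from
Propositions~\ref{prop:basiscohombdry} and~\ref{prop:defbasodd_wj} (together with a basis for
$\cohom^*(V_i;\rho_n)$ coming from the fixed vector of $\rho_n(\gamma_i)$), one should obtain a
formula
\begin{equation*}
\tau(M_{p/q}; \rho_n) = \tau(\overline M; \rho_n; \{\mu_i\}) \cdot \prod_{i=1}^l \Phi_n(\lambda_i, a(\mu_i)),
\end{equation*}
where $a(\mu_i) = p_i + q_i \tau_i$ is the image of $\mu_i$ under the cusp developing map (with
$\tau_i$ the cusp shape of $T_i$ in $M$), and $\Phi_n$ is an explicit function assembling the
$S^1$-torsion $\prod_{j=1}^k (m_i^{2j}-1)(m_i^{-2j}-1)$ for $m_i = e^{\lambda_i/2}$ and $n = 2k+1$,
the change-of-basis factor $a(\mu_i)$ coming from equation~\eqref{eqn:w0}, and the torsion of the
Mayer--Vietoris sequence itself.

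Dividing by the analogous formula for $n = 3$ yields
\begin{equation*}
\normaltor_{2k+1}(M_{p/q}) = \normaltor_{2k+1}(\overline M) \cdot \prod_{i=1}^l \Psi_k(\lambda_i, a(\mu_i)).
\end{equation*}
The hyperbolic Dehn surgery theorem gives $a(\mu_i)\,\lambda_i \to 2\pi i$, which compensates the
leading $\lambda_i^{2}$-order vanishing cancelling between numerator and denominator; the remaining
eigenvalue ratios $(m_i^{2j}-1)(m_i^{-2j}-1)/((m_i^2-1)(m_i^{-2}-1))$ for $j = 2, \ldots, k$ have
well-defined limits as $\lambda_i\to 0$, and a Poincar\'e-duality/conjugation symmetry argument
(pairing holomorphic against antiholomorphic contributions on each $T_i$) should show that $\Psi_k$
is asymptotically real-valued, with radial limit $2^{2(k-1)}$ as $\lambda_i \to 0$ along $\mathbb R$
and vanishing limit along the imaginary axis. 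Since the hyperbolic Dehn surgery theorem allows
$\lambda_i(p/q)$ to approach $0$ along any prescribed direction in $\cmplx^*$ by a suitable
choice of $(p_i, q_i)$, the product $\prod_i \Psi_k(\lambda_i, a(\mu_i))$ attains every real value
in $[0, 2^{2(k-1)l}]$ as a cluster point, producing the segment claimed.

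The main technical obstacle is the explicit Mayer--Vietoris calculation: every piece of the
decomposition is non-acyclic, forcing careful tracking of the holomorphic/antiholomorphic
splitting of $\cohom^1(T_i; E_n)$ from Proposition~\ref{prop:basiscohombdry} through the connecting
homomorphisms and its matching against the one-dimensional $\cohom^*(V_i;\rho_n)$. A secondary
point is verifying that the limiting factors $\Psi_k$ are real and combine precisely to the constant
$2^{2(k-1)}$ coming from the underlying Clebsch--Gordan cancellations, so that the cluster set is
the one-dimensional segment $[0, 2^{2(k-1)l} \normaltor_{2k+1}(M)]$ rather than a higher-dimensional
region.
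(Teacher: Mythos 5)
Your overall strategy matches the paper's: a Mayer--Vietoris surgery formula for the decomposition $M_{p/q} = \overline M \cup (\text{solid tori})$, with meridians $\mu_i = p_i a_i + q_i b_i$ as defining cycles, then normalizing by the $n=3$ case and analyzing the limiting behaviour of the remaining eigenvalue factors. This is Lemma~\ref{lemma:oddsurgeryform}, Lemma~\ref{eqn:torodd}, and Lemma~\ref{lemma:odddimnormaltor} in the paper, so in outline you are on track. However, there are two genuine gaps.

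First, you suppress the role of the deformed representation. When you glue $\overline M$ into $M_{p/q}$, the restriction of $\rho_n^{p/q}$ to $\pi_1 M$ is the deformed representation $\rho_n(u_{p/q})$, \emph{not} the canonical representation $\rho_n$ of the complete structure; indeed, your observation that $\rho_n(\mu_i) = I$ is only true for the deformed representation. Consequently your surgery formula involves $\tau(\overline M; \rho_n(u_{p/q}); \{\mu_i\})$, and the cycles $\mu_i$ themselves depend on $(p,q)$. You cannot therefore ``divide by the $n=3$ formula'' and directly obtain $\normaltor_{2k+1}(M)$. One needs (a) a change-of-basis from $\{\mu_j\}$ to a fixed family $\{a_j\}$, with the ratio of change-of-basis determinants $\det A_3(p,q)/\det A_{2k+1}(p,q)$ shown to converge to $1$ (Lemma~\ref{lemma:limApq}), and (b) a continuity argument for torsion under deformation of the representation (Proposition~\ref{prop:semicont}) showing that $\tau(M; \rho_{2k+1}(u_{p/q}),\{a_j\})/\tau(M; \rho_3(u_{p/q}),\{a_j\}) \to \normaltor_{2k+1}(M)$. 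Your proposal acknowledges neither step, and (b) in particular is not a formal consequence of the Mayer--Vietoris bookkeeping.

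Second, your description of how the cluster set arises is inaccurate. The Dehn surgery theorem does \emph{not} let $\lambda_i$ approach $0$ along an arbitrary prescribed direction in $\cmplx^*$; rather $\Re\lambda_i \to 0$ while $\Im\lambda_i$ is dense mod $4\pi$, so the cluster set of $\lambda_i$ is the purely imaginary segment $\{ i\theta : 0 \leq \theta \leq 4\pi\}$ (this is the content of Meyer's theorem invoked in the paper). The realness of the limiting factors is then immediate: for $\lambda = i\theta$ each term $(e^{j\lambda}-1)(e^{-j\lambda}-1) = |e^{ij\theta}-1|^2$ is a nonnegative real number, so no Poincar\'e-duality or conjugation-symmetry argument is needed. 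The cluster set is then the image of the continuous real-valued function $\theta\mapsto\prod_{j=2}^k |e^{ij\theta}-1|^2$ on $[0,4\pi]^l$, which is a segment $[0,C]$ since it attains $0$.
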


Let us analyse now the even-dimensional case $n = 2k$, for $k >0$. 
In this case, the main difficulty comes from the fact that we need a spin structure to define the 
$n$-dimensional torsion invariant. Hence, we somehow need a way to relate spin structures on $M$ with those of $M_{p/q}$.
To that end, for a fixed spin structure $\eta$ on $M$, we define the following set
\[
\mathcal{A}_{M,\eta}  = \left\{ (p,q) \in \mathcal{A}_M \mid \eta \text{ can be extended to } M_{p/q} \supset M \right\}.
\]
\begin{rem}
	Notice that if $\eta$ can be extended to $M_{p/q}$ then the extension is unique
	(this follows from the fact if a spin structure on $\partial D^2$ can be extended to $D^2$, 
	then the extension is unique).
	In such case the extension will be denoted by $\eta_{p/q}$.
\end{rem}

Using Corollary \ref{coro:spinextension}, we get easily the following characterization of $\mathcal{A}_{M,\eta}$.

\begin{proposition} \label{prop:spin_ext}
	For each $T_i$ let $\epsilon_{a_i}, \epsilon_{b_i} = \pm 1$ be the sign of the trace of $\Hol_{(M,\eta)} (a_i)$ 
	and $\Hol_{(M,\eta)} (b_i)$ respectively. Then $(p,q) \in \mathcal{A}_{M,\eta}$ if and only if 
	\[
	\epsilon_{a_i}^{p_i}\epsilon_{b_i}^{q_i} = -1, \quad \text{for all } i = 1,\dots,l.
	\]
\end{proposition}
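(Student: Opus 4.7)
The plan is to reduce the statement to Corollary~\ref{coro:spinextension} applied component by component, and then use the homomorphism property of the cohomology class associated to the spin structure restricted to each peripheral torus.

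First I would fix a component $T_i$ of $\partial \overline{M}$ and let $\gamma_i \subset T_i$ denote the simple closed curve that is killed by the filling of slope $(p_i,q_i)$; in the basis $(a_i,b_i)$ we have $[\gamma_i] = p_i [a_i] + q_i [b_i] \in \cohom_1(T_i;\integer)$. The filling $M_{p/q}$ is obtained by performing a Dehn filling along each $\gamma_i$ simultaneously, and a spin structure on $M$ extends over the added disk in the $i$-th filling if and only if it extends across that single filling; extensions are unique when they exist. Thus $(p,q) \in \mathcal{A}_{M,\eta}$ is equivalent to: for every $i$, the spin structure $\eta$ extends across the Dehn filling along $\gamma_i$.

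By Corollary~\ref{coro:spinextension}, this latter condition is equivalent to $\tr \Hol_{(M,\eta)}(\gamma_i) = -2$ for every $i$. To express this in terms of $\epsilon_{a_i},\epsilon_{b_i}$, I would invoke the lemma in the section on positive spin structures: if $\alpha_i \in \cohom^1(T_i;\modtwo) = \operatorname{Hom}(\pi_1 T_i,\{\pm 1\})$ is the cohomology class corresponding to the restriction of $\eta$ to $T_i$, then $\operatorname{sgn}\tr\Hol_{(M,\eta)}(\sigma) = \alpha_i(\sigma)$ for all $\sigma \in \pi_1 T_i$. Since each $\Hol_{(M,\eta)}(\sigma)$ with $\sigma$ peripheral is $\pm$~(a parabolic matrix of trace $\pm 2$), the sign of the trace determines the trace completely on $\pi_1 T_i$; in particular $\tr\Hol_{(M,\eta)}(\gamma_i) = -2$ is equivalent to $\alpha_i(\gamma_i) = -1$.

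Finally, since $\alpha_i$ is a group homomorphism,
\[
\alpha_i(\gamma_i) = \alpha_i(a_i)^{p_i}\,\alpha_i(b_i)^{q_i} = \epsilon_{a_i}^{p_i}\,\epsilon_{b_i}^{q_i},
\]
and combining with the previous step gives the stated criterion. There is no real obstacle here: the whole argument is a straightforward assembly of Corollary~\ref{coro:spinextension}, the trace-sign lemma, and the multiplicativity of $\alpha_i$; the only point requiring a brief justification is that the sign of the trace on $\pi_1 T_i$ genuinely determines whether the trace is $+2$ or $-2$, which follows from the fact that the image of $\pi_1 T_i$ under $\Hol_{(M,\eta)}$ consists of matrices of the form $\pm \bigl(\begin{smallmatrix} 1 & * \\ 0 & 1\end{smallmatrix}\bigr)$.
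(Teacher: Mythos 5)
Your proof is correct and follows exactly the route the paper intends: the paper states only that the proposition follows easily from Corollary~\ref{coro:spinextension}, and your chain of reductions (cusp-by-cusp extension, the corollary's trace criterion, and multiplicativity of the sign-of-trace homomorphism on $\pi_1 T_i$) is precisely that easy argument spelled out.
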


\begin{definition}
	We will say that a spin structure $\eta$ on $M$ is \emph{compactly isolated} if $\mathcal{A}_{M,\eta}$ is empty;
	otherwise, we will say that $\eta$ is \emph{compactly approximable}.
\end{definition}

As a corollary of  Proposition~\ref{prop:spin_ext}  we get the following result.
\begin{corollary}
	A spin structure $\eta$ of $M$ is compactly approximable if and only if it is acyclic.
\end{corollary}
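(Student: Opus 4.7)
My plan is to translate both conditions into explicit parity conditions on the signs $\epsilon_{a_i},\epsilon_{b_i}\in\{\pm 1\}$ attached to each peripheral torus $T_i$, and then combine these parity conditions with Thurston's hyperbolic Dehn surgery theorem. By the lemma of this section identifying $\operatorname{sgn}\tr\Hol_{(T_i,\eta)}$ with the cohomology class in $\cohom^1(T_i;\modtwo)=\operatorname{Hom}(\pi_1 T_i;\{\pm 1\})$ classifying the spin structure, the map $\sigma\mapsto\operatorname{sgn}\tr\Hol_{(M,\eta)}(\sigma)$ is a homomorphism $\pi_1 T_i\to\{\pm 1\}$. Hence $\eta$ is positive on $T_i$ (equivalently, not acyclic on $T_i$) if and only if $\epsilon_{a_i}=\epsilon_{b_i}=+1$, so $\eta$ is acyclic if and only if $(\epsilon_{a_i},\epsilon_{b_i})\neq(1,1)$ for every $i$.

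For the ``only if'' direction, suppose $\eta$ is compactly approximable, and pick any $(p,q)\in\mathcal{A}_{M,\eta}$. Proposition~\ref{prop:spin_ext} gives $\epsilon_{a_i}^{p_i}\epsilon_{b_i}^{q_i}=-1$ for all $i$, which immediately excludes the case $\epsilon_{a_i}=\epsilon_{b_i}=+1$, so $\eta$ is acyclic.

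Conversely, assume $\eta$ is acyclic. For each $i$, depending on which of the three remaining sign patterns $(\epsilon_{a_i},\epsilon_{b_i})$ falls into, the feasibility condition $\epsilon_{a_i}^{p_i}\epsilon_{b_i}^{q_i}=-1$ reduces to ``$q_i$ is odd'', ``$p_i$ is odd'', or ``$p_i+q_i$ is odd'' respectively. In each of the three cases there exist infinitely many coprime pairs $(p_i,q_i)\in\integer^2$ satisfying the parity constraint with $p_i^2+q_i^2$ arbitrarily large, for instance $(1,2N+1)$, $(2N+1,1)$, and $(2N+1,2)$. Choosing such a pair independently at each cusp and then invoking Thurston's hyperbolic Dehn surgery theorem, which provides a threshold $N_0$ such that any slope with $p_i^2+q_i^2\geq N_0$ at every cusp lies in $\mathcal{A}_M$, we obtain $(p,q)\in\mathcal{A}_M$ satisfying the parity condition at every $i$, hence $(p,q)\in\mathcal{A}_{M,\eta}$.

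The only non-formal ingredient is the last step, where the independent parity constraints at the cusps must be simultaneously met by a slope producing a hyperbolic filling. This is essentially free: the parity conditions cut out a subset of positive density in each $\{(p_i,q_i):\gcd(p_i,q_i)=1\}$, while Thurston's theorem only excludes slopes of bounded norm, so the intersection is automatically nonempty. No deeper obstacle arises because each parity pattern admits coprime solutions of unbounded norm.
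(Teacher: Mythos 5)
Your proof is correct and follows exactly the route the paper intends: translate ``acyclic'' and ``compactly approximable'' via Proposition~\ref{prop:spin_ext} into the sign conditions on $(\epsilon_{a_i},\epsilon_{b_i})$, observe that the forbidden pattern $(+1,+1)$ is excluded in both cases, and for the converse exhibit coprime slopes of unbounded norm meeting each of the three residual parity constraints so that Thurston's Dehn surgery theorem applies. The paper states this corollary without proof, and your argument is the one a reader would be expected to supply.
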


\begin{rem}
	If $\eta$ is compactly approximable, Proposition \ref{prop:spin_ext} implies that $\mathcal{A}_{M,\eta}$
	is infinite; in particular, $\mathcal{A}_{M,\eta}$ is a directed set as well.
	The terminology introduced in the previous definition is coherent with the geometric topology of the space $\mathcal{MS}$ 
	of spin-hyperbolic $3$-manifolds, see Section \ref{chapter:cmplxlength}. 
	For instance, if $\eta$ is compactly approximable then the net of compact spin-hyperbolic manifolds
	$\{ (M_{p/q}, \eta_{p/q}) \}_{(p,q) \in \mathcal{A}_{M,\eta}}$ converges to $(M, \eta)$ in $\mathcal{MS}$.
\end{rem}

If $\eta$ is compactly approximable, then $\cohom^*(M;\rho_{2k}) = 0$ for all $k > 0$,
and hence it makes sense to consider the Reidemeister torsion $\tau(M; \rho_{2k})$.
On the other hand, for all $(p,q) \in \mathcal{A}_{M,\eta}$ we have the $2k$-dimensional canonical representation 
of the spin-hyperbolic manifold $(M_{p/q},\eta_{p/q})$:
\[
\rho_{2k}^{p/q} \colon \pi_1 M_{p/q} \to \sln(2k, \cmplx).
\]
The compactness of $M_{p/q}$ guarantees the acyclicity of this representation. 
Hence, it also makes sense to consider $\tor(M_{p/q}; \rho^{p/q}_{2k})$.
We will prove the following result.

\begin{proposition} \label{prop:eventordeform}
	Let $\eta$ be a compactly approximable (or acyclic) spin structure on $M$. 
	The set of cluster points of the following net in $\cmplx/\{ \pm 1\}$, 
	\[
	\left \{ \pm \tor(M_{p/q}; \rho^{p/q}_{2k}) \right \}_{ (p,q) \in \mathcal{A}_{M,\eta} },
	\]
	is the segment joining the origin and  $ \pm 2^{2kl} \tor(M; \rho_{2k})$.
\end{proposition}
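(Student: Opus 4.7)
The plan is to use Mayer-Vietoris multiplicativity of Reidemeister torsion applied to the decomposition $M_{p/q} = M \cup (D_1 \sqcup \cdots \sqcup D_l)$, where $D_i$ is the solid torus filling the $i$-th cusp, glued to $M$ along the boundary torus $T_i$. First I would check that, for $(p,q)$ sufficiently large in $\mathcal A_{M,\eta}$, each piece is $\rho^{p/q}_{2k}$-acyclic. For $M_{p/q}$ itself this is Raghunathan's vanishing theorem applied to the closed manifold. Since the filling slope $\mu_i = p_i a_i + q_i b_i$ is null-homotopic in $M_{p/q}$, one has $\rho^{p/q}_{2k}(\mu_i) = I$, while the dual curve $\ell_i$ (homotopic to the core $c_i$ of $D_i$) has loxodromic holonomy with small but nonzero complex length $\lambda_i$; hence the eigenvalues of $\rho^{p/q}_{2k}(\ell_i) = V_{2k}(\tilde\ell_i)$ avoid $1$, and $D_i$ is acyclic. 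For $T_i$ one uses Poincar\'e duality $\cohom^0 \cong \cohom^2$, the identity $\cohom^0(T_i; \rho^{p/q}_{2k}) = \ker(V_{2k}(\tilde\ell_i) - I) = 0$, and the vanishing of the Euler characteristic. Finally, the Mayer-Vietoris long exact sequence forces $\cohom^*(M; \rho^{p/q}_{2k}|_M) = 0$ as well.

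Once acyclicity is in place, the multiplicativity of Reidemeister torsion yields, up to sign,
\[
\tor\bigl(M_{p/q}; \rho^{p/q}_{2k}\bigr) \;=\; \tor\bigl(M; \rho^{p/q}_{2k}|_M\bigr) \cdot \prod_{i=1}^{l}\frac{\tor(D_i; \rho^{p/q}_{2k})}{\tor(T_i; \rho^{p/q}_{2k})}.
\]
A Fox-calculus computation exploiting $\rho^{p/q}_{2k}(\mu_i) = I$ shows that $\tor(T_i; \rho^{p/q}_{2k}) = \pm 1$ (the two relevant determinants cancel thanks to the evenness of $2k$), while $D_i \simeq S^1$ gives $\tor(D_i; \rho^{p/q}_{2k}) = \det\bigl(I_{2k} - V_{2k}(\tilde\ell_i)\bigr)$. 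The formula thus reduces to
\[
\tor\bigl(M_{p/q}; \rho^{p/q}_{2k}\bigr) \;=\; \pm\, \tor\bigl(M; \rho^{p/q}_{2k}|_M\bigr) \cdot \prod_{i=1}^{l} \det\bigl(I_{2k} - V_{2k}(\tilde\ell_i)\bigr).
\]

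For the asymptotic analysis, Thurston's hyperbolic Dehn filling theorem ensures $\lambda_i \to 0$ as $(p,q) \to \infty$ in $\mathcal A_{M,\eta}$, and the incomplete hyperbolic structures on $M$ approach the complete one. The eigenvalues of $V_{2k}(\tilde\ell_i)$ are $\sigma_i\, e^{(2m-1)\lambda_i/2}$ for $m = -(k-1),\ldots,k$, where $\sigma_i \in \{\pm 1\}$ is the sign of the trace of the spin lift $\tilde\ell_i$. Hence
\[
\det\bigl(I_{2k} - V_{2k}(\tilde\ell_i)\bigr) \;\longrightarrow\; (1 - \sigma_i)^{2k} \;=\; \begin{cases} 0 & \text{if } \sigma_i = +1,\\ 2^{2k} & \text{if } \sigma_i = -1.\end{cases}
\]
Continuity of torsion on the character variety gives $\tor(M; \rho^{p/q}_{2k}|_M) \to \tor(M; \rho_{2k})$, after accounting for the sign twist relating the spin structures $\eta$ and $\eta_{p/q}|_M$.

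The main obstacle lies in the cluster set analysis. Along subnets of $\mathcal A_{M,\eta}$ on which every $\sigma_i = -1$, the torsion $\tor(M_{p/q}; \rho^{p/q}_{2k})$ clusters at $\pm 2^{2kl}\tor(M; \rho_{2k})$; along subnets where at least one $\sigma_i = +1$, it clusters at $0$. The finer assertion that the cluster set fills the entire segment from the origin to $\pm 2^{2kl}\tor(M; \rho_{2k})$ requires a density argument: using the distribution of Dehn surgery slopes in $\mathcal A_{M,\eta}$ together with the continuous dependence of each factor $\det\bigl(I_{2k} - V_{2k}(\tilde\ell_i)\bigr)$ on $\lambda_i$, one must realize every intermediate value on the segment as a subnet limit, exploiting the freedom in choosing how many and which cusps contribute a factor approaching zero.
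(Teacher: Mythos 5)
Your surgery formula and Mayer--Vietoris setup are essentially identical to the paper's Lemma~\ref{lemma:evensurgeryform}: same decomposition $M_{p/q} = M \cup (D_1 \sqcup \cdots \sqcup D_l)$, same acyclicity checks via Poincar\'e duality, same determinant identification $\tau(\gamma_i;\rho_{2k}^{p/q}) = \det(\mathrm{Id} - \rho_{2k}^{p/q}(\gamma_i))$. The convergence $\tor(M;\rho_{2k}(u_{p/q})) \to \tor(M;\rho_{2k})$ also matches the paper. Up to that point the argument is sound.

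The cluster set analysis, however, contains a genuine error and a gap. You assert that $\lambda_i \to 0$ as $(p,q)\to\infty$; this is false. Thurston's Dehn filling theorem only gives $\Re\lambda_i = \length(\gamma_i) \to 0$. The imaginary part $\Im\lambda_i$ (the spin geometric torsion of the core) does \emph{not} converge; indeed, the whole point is that the pair $(\Re\lambda_i,\Im\lambda_i)$ clusters on $\{0\}\times[0,4\pi]$, and this density result for the geometric torsion of the added cores is the cited input from \cite{Meyer}. Because you force $\lambda_i \to 0$, each factor $\det(\mathrm{Id} - V_{2k}(\tilde\ell_i))$ would have a single limit $0$ or $2^{2k}$ (depending on a fixed sign), and no continuum can emerge. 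Your attempted fix --- ``exploiting the freedom in choosing how many and which cusps contribute a factor approaching zero'' --- can only yield a finite set of products of $0$'s and $2^{2k}$'s, i.e.\ $\{0, 2^{2kl}\tor(M;\rho_{2k})\}$, not the full segment. The actual mechanism is that for a single cusp, the function $\theta \mapsto \prod_{j=0}^{k-1}\frac{1-\cosh((\frac12+j)\theta i)}{2}$ is continuous on $[0,4\pi]$ with $F(0)=0$ and $F(2\pi)=1$, hence surjects onto $[0,1]$; combined with the density of the geometric torsion in $[0,4\pi]$ this fills the interval $[0,2^{2kl}]$ of scaling factors. Without invoking the density of the geometric torsion (and there is no ``sign $\sigma_i$'' that varies independently --- the spin structure $\eta_{p/q}$ fixes the spin lift of the core, and its phase is precisely the dense quantity), the proof as written establishes only the two endpoints of the segment, not the interior.
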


The proof of both propositions will be based on surgery formulas for the torsion,
which will be deduced from the Mayer-Vietoris formula. 
These formulas involve the spin complex lengths of the core
geodesics added on the Dehn filling.
The results above then will follow essentially from the fact that 
the cluster point set of the imaginary part of the spin complex lengths of the added geodesics in $M_{p/q}$, 
as $(p,q)$ varies in $\mathcal{A}_{M,\eta}$, is $\real/\langle 4\pi\rangle$, see \cite{Meyer}.

The rest of this section is organized as follows. 
The first subsection is a brief account of the deformations of the holonomy representation of $M$.
The second and third subsections contain the proofs of Propositions \ref{prop:eventordeform} 
and \ref{prop:oddtordeform} respectively.

\subsection{Deformations}

Consider a family of continuous local deformations of the complete hyperbolic structure of $M$
given by:
\[
\Hol_M \colon U \times \pi_1 M \to \psl(2,\cmplx), \quad  U \subset \cmplx^l,
\]
with $U$ an open ball containing the origin, and with 
\[
\Hol_M(0, \gamma ) = \Hol_M (\gamma), \quad \text{for all } \gamma \in \pi_1 (M).
\]
The open set $U$ is usually called Thurston's slice, and is a double branched covering of a 
neighborhood of the variety of characters of $M$ around the complete hyperbolic structure. 
If we fix a boundary component $T_i$, then we can assume that
\[
\Hol_M (u, a_i) =
\left[ 
\begin{pmatrix}
	e^{u_i/2} & 1 \\ 
	0       & e^{-u_i/2}
\end{pmatrix}
\right]
,
\quad
\Hol_M (u, b_i) =
\left[
\begin{pmatrix}
	e^{v_i(u)/2} & \tau_i(u) \\ 
	0       & e^{-v_i(u)/2}
\end{pmatrix}
\right],
\]
where $v_i(u)$ and $\tau_i(u)$ are analytic functions on $u$ which are related by
\[
\sinh \frac{v_i(u)}{2}= \tau_i(u) \sinh \frac{u_i}{2}.
\]
This last equation follows by imposing that the two matrices commute.

By Thurston's hyperbolic surgery theorem \cite{ThurstonNotes} (see also \cite{Kapovich, BP}), for $(p,q)$ large enough,
the holonomy representation of the complete hyperbolic structure of 
$M_{p/q}$ is given at some value of $u$, say $u^{p/q}$. More concretely,
we have the following commutative diagram,
\[
\xymatrix{
\pi_1 (M) \ar[rd]^{\Hol_M(u_{p/q},\cdot)} \ar@{->>}[d]_{i_*^{p/q}} & \\
\pi_1 (M_{p/q}) \ar[r]_{\Hol_{M_{p/q}}}& \psl(2;\cmplx)
}
\]
where $i_*^{p/q}$ is the induced morphism on the fundamental groups by the inclusion 
\[
i^{p/q} \colon M \hookrightarrow M_{p/q}.
\]
The map $i^{p/q}_*$ is surjective with kernel the normal subgroup generated 
by the curves $\{a_i^{p_i} b_i^{q_i}\}$ (here we are identifying $\cohom_1(T_i ; \integer)$ with $\pi_1 T_i$,
and the latter group with a subgroup of $\pi_1 M$), we have the so-called Dehn filling equations
\begin{equation} \label{eqn:DehnFill}
	p_i u^{p/q}_i + q_i v_i(u^{p/q}) = 2\pi i, \quad \text{for all } i = 1,\dots,l.
\end{equation}
Moreover, we also have:
\[
\lim_{(p,q)\to \infty} u^{p/q} = 0.
\]

\subsection{Even-dimensional case} \label{sec:thm_def}

Let us retain the notation used in the previous subsection.
Fix a spin structure $\eta$ on $M$, and 
consider the lift of the whole family of representations $\Hol_M(u, \cdot)$ starting at $u = 0$ with $\Hol_{(M,\eta)}$.
By continuity, all these lifts are also group morphisms.
Thus we obtain a family of representations
\[
\Hol_{(M,\eta)} \colon U \times \pi_1 M \to \sln(2, \cmplx).
\]
The representation $\Hol_{(M,\eta)}(u_{p/q}, \cdot)$ of $\pi_1 M$ needs no longer 
yield a representation of $\pi_1 M_{p/q}$. 
We can characterize this condition in terms of spin structures. 

\begin{lemma} \label{lemma:inducedrepdeform}
	The representation $\Hol_{(M,\eta)} (u^{p/q}, \cdot )$ yields a representation of $\pi_1 M_{p/q}$ if and only if $(p,q) \in \mathcal{A}_{M, \eta}$.
\end{lemma}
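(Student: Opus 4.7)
The plan is to exploit the presentation of $\pi_1 M_{p/q}$ as the quotient of $\pi_1 M$ by the normal subgroup generated by the filling curves $a_i^{p_i} b_i^{q_i}$, $i = 1, \dots, l$. Thus the representation $\Hol_{(M,\eta)}(u^{p/q}, \cdot )$ descends to $\pi_1 M_{p/q}$ if and only if
\[
\Hol_{(M,\eta)}(u^{p/q}, a_i^{p_i} b_i^{q_i}) = \mathrm{Id} \in \sln(2, \cmplx) \quad \text{for each } i.
\]
I would evaluate this element explicitly in $\sln(2,\cmplx)$ and match the resulting condition with the characterization of $\mathcal{A}_{M,\eta}$ given in Proposition~\ref{prop:spin_ext}.

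First I would promote the $\psl(2,\cmplx)$-expressions for $\Hol_M(u, a_i)$ and $\Hol_M(u, b_i)$ given in the preceding subsection to $\sln(2,\cmplx)$ via the continuous lift $\Hol_{(M,\eta)}(u, \cdot)$. Because the trace of each upper-triangular matrix is $\pm 2\cosh(u_i/2)$ or $\pm 2\cosh(v_i(u)/2)$, both nonzero throughout Thurston's slice $U$, the local sign of the lift is constant in $u$; evaluating at $u = 0$ identifies these signs with $\epsilon_{a_i}$ and $\epsilon_{b_i}$ respectively. Hence for all $u \in U$,
\[
\Hol_{(M,\eta)}(u, a_i) = \epsilon_{a_i} \begin{pmatrix} e^{u_i/2} & 1 \\ 0 & e^{-u_i/2} \end{pmatrix}, \qquad
\Hol_{(M,\eta)}(u, b_i) = \epsilon_{b_i} \begin{pmatrix} e^{v_i(u)/2} & \tau_i(u) \\ 0 & e^{-v_i(u)/2} \end{pmatrix}.
\]

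Next, since both underlying unsigned matrices commute (the commuting relation is inherited from $\psl(2,\cmplx)$ and the signs $\pm 1$ are central), I can compute the product in $\sln(2,\cmplx)$:
\[
\Hol_{(M,\eta)}(u, a_i^{p_i} b_i^{q_i}) = \epsilon_{a_i}^{p_i} \epsilon_{b_i}^{q_i} \begin{pmatrix} e^{(p_i u_i + q_i v_i(u))/2} & * \\ 0 & e^{-(p_i u_i + q_i v_i(u))/2} \end{pmatrix}.
\]
At $u = u^{p/q}$ the Dehn filling equation \eqref{eqn:DehnFill} gives $(p_i u^{p/q}_i + q_i v_i(u^{p/q}))/2 = i\pi$, so the diagonal entries equal $-\epsilon_{a_i}^{p_i}\epsilon_{b_i}^{q_i}$. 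Because the projected element is the identity in $\psl(2,\cmplx)$, the matrix must be a scalar, so the upper-right entry vanishes and
\[
\Hol_{(M,\eta)}(u^{p/q}, a_i^{p_i} b_i^{q_i}) = -\,\epsilon_{a_i}^{p_i} \epsilon_{b_i}^{q_i} \cdot \mathrm{Id}.
\]

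Finally, this equals $\mathrm{Id}$ if and only if $\epsilon_{a_i}^{p_i} \epsilon_{b_i}^{q_i} = -1$ for every $i$, which by Proposition~\ref{prop:spin_ext} is exactly the condition $(p,q) \in \mathcal{A}_{M, \eta}$. The only nontrivial point is the sign-tracking in the continuous lift, but it is forced by the nonvanishing of the relevant traces on Thurston's slice, so the main content is arithmetic: the Dehn filling relation contributes a factor $e^{i\pi} = -1$ that shifts the spin sign from $\epsilon_{a_i}^{p_i}\epsilon_{b_i}^{q_i}$ to $-\epsilon_{a_i}^{p_i}\epsilon_{b_i}^{q_i}$.
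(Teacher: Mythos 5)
Your proof is correct and follows essentially the same route as the paper's: both reduce to checking that $\Hol_{(M,\eta)}(u^{p/q}, a_i^{p_i} b_i^{q_i})$ equals the identity, both track the $\pm 1$ signs $\epsilon_{a_i}(u), \epsilon_{b_i}(u)$ and argue by continuity that they are constant near $u=0$, and both use the Dehn filling equation \eqref{eqn:DehnFill} to produce the extra factor $e^{i\pi}=-1$, matching the conclusion against Proposition~\ref{prop:spin_ext}. The paper simply states the sign constancy by continuity, whereas you spell it out via nonvanishing of the traces on Thurston's slice; this is a minor expository difference, not a different argument.
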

\begin{proof}
	$\Hol_{(M,\eta)} (u^{p/q}, \cdot )$ yields a representation of $\pi_1 M_{p/q}$ 
	if and only if 
	\[
	\Hol_{(M,\eta)}( u_{p/q}, a_i^{p_i} b_i^{q_i}) = \text{Id} \in \sln(2,\cmplx),\quad \text{for all } i = 1,\dots,l.
	\]
	By Proposition \ref{prop:spin_ext}, $(p,q) \in \mathcal{A}_{M, \eta}$ if and only if 
	\[
	\epsilon_{a_i}^{p_i}\epsilon_{b_i}^{q_i} = -1 \quad \text{for all } i = 1,\dots,l,
	\]
	where $\epsilon_{a_i}, \epsilon_{b_i} = \pm 1$ is the sign of the trace of 
	$\Hol_{(M,\eta)} (0, a_i)$ and $\Hol_{(M,\eta)} (0, b_i)$ respectively. 
	On the other hand, for a fixed $i$ we can assume that 
	\[
	\Hol_{(M,\eta)} (u, a_i) = \epsilon_{a_i}(u) 
	\begin{pmatrix}
		e^{u_i/2} & 1 \\ 
		0       & e^{- u_i/2} 
	\end{pmatrix},
	\quad
	\Hol_M (u, b_i) = \epsilon_{b_i}(u) 
	\begin{pmatrix}
		e^{v_i(u)/2} & \tau_i(u) \\ 
		0       & e^{-v_i(u)/2}
	\end{pmatrix}.
	\]
	By continuity, for $u$ close to $0$, $\epsilon_{a_i}(u) = \epsilon_{a_i}$ and $\epsilon_{b_i}(u) = \epsilon_{b_i}$.
	Thus, Equation \eqref{eqn:DehnFill} yields:
	\[
	\left( \Hol_{(M,\eta)} (u^{p/q}, a_i) \right)^{p_i} \left( \Hol_{(M,\eta)} (u^{p/q}, b_i)\right)^{q_i} = -\epsilon_{a_i}^{p_i}\epsilon_{b_i}^{q_i} \operatorname{Id},
	\]
	The result then follows immediately.
\end{proof}

Now let $\eta$ be a \emph{compactly approximable}  spin structure on $M$.
Consider the composition of $\Hol_{(M,\eta)}(u, \cdot )$ 
with the $2k$-dimensional irreducible representation of $\sln(2, \cmplx)$ 
\[
\rho_{2k} \colon U \times  \pi_1 M  \rightarrow \sln(2k, \cmplx).
\]
Since $\eta$ is acyclic, for $u = 0$ the representation $\rho_{2k} (u, \cdot)$ is acyclic.
The following more or less well-known result then implies that $\rho_{2k}( u, \cdot )$ is also acyclic for $u$ close to $0$.

\begin{proposition} \label{prop:semicont}
	Let $X$ be a finite $CW$--complex, and consider a continuous family of representations 
	\[
	\rho \colon U \times \pi_1(X, x_0) \to \gln(n, \cmplx),
	\]
	where $U$ is some space of parameters.
	For a fixed $m\geq 0$, define the map $F\colon U \to \integer$ by $F(u) = \dim \cohom_m(X; \rho_u)$,
	where $\rho_u := \rho(u, \cdot)$.
	Then $F$ is upper semicontinuous, that is, 
	\[
	\limsup_{u \to u_0} F(u) \leq F(u_0), \quad \text{for all } u_0 \in U.
	\]
\end{proposition}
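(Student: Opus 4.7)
The plan is to reduce the computation of $\cohom_m(X; \rho_u)$ to the homology of a chain complex whose dimensions are constant in $u$ but whose differentials vary continuously, and then to invoke the semicontinuity of matrix rank.

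First, since $X$ is a finite $CW$-complex, I would replace singular chains by cellular chains. Let $\widetilde X \to X$ be the universal covering, with the induced $CW$-structure, and let $C_*(\widetilde X; \integer)$ denote the cellular chain complex, which is a finitely generated free $\integer[\pi_1(X,x_0)]$-module in each degree. Choosing a lift of each cell of $X$ trivializes this as a $\integer[\pi_1 X]$-module, so that for every representation $\rho_u$ the complex
\[
C_*(X;\rho_u) = V_{\rho_u} \otimes_{\cmplx[\pi_1(X,x_0)]} C_*(\widetilde X;\integer)
\]
becomes, after this trivialization, a chain complex of fixed finite-dimensional complex vector spaces $C_m \cong V^{N_m}$, where $N_m$ is the number of $m$-cells of $X$. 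The boundary operator $\partial_m^{(u)}\colon C_m \to C_{m-1}$ is given by a matrix whose entries are polynomial expressions in the matrix coefficients of $\rho_u(\gamma)$ for finitely many $\gamma \in \pi_1(X,x_0)$; continuity of $\rho$ in $u$ therefore implies continuity of $u \mapsto \partial_m^{(u)}$.

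Now I would use the identity
\[
\dim \cohom_m(X;\rho_u) \;=\; \dim C_m \;-\; \operatorname{rank}\partial_m^{(u)} \;-\; \operatorname{rank}\partial_{m+1}^{(u)},
\]
which follows from rank-nullity applied to $\dim \cohom_m = \dim \ker\partial_m^{(u)} - \dim \operatorname{im}\partial_{m+1}^{(u)}$. The key input is the elementary fact that the rank of a matrix is a lower semicontinuous function of its entries: if a $k\times k$ minor is nonzero at $u_0$, it remains nonzero in a neighborhood, so $\operatorname{rank}\partial_m^{(u)} \geq \operatorname{rank}\partial_m^{(u_0)}$ for all $u$ near $u_0$. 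The same holds for $\partial_{m+1}$. Combining these inequalities with the displayed formula gives
\[
\limsup_{u\to u_0}\dim\cohom_m(X;\rho_u) \;\leq\; \dim C_m - \operatorname{rank}\partial_m^{(u_0)} - \operatorname{rank}\partial_{m+1}^{(u_0)} \;=\; \dim\cohom_m(X;\rho_{u_0}),
\]
which is the desired upper semicontinuity $F(u_0) \geq \limsup_{u\to u_0} F(u)$.

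There is no serious obstacle here; the only point requiring a bit of care is the trivialization step, ensuring that after choosing lifts of cells the differentials of $C_*(X;\rho_u)$ really do depend continuously (in fact polynomially) on the matrix coefficients of $\rho_u$ evaluated at the attaching-map data. Once that is in place, lower semicontinuity of rank handles the rest.
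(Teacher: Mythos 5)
Your proof is correct and follows essentially the same route as the paper's: both pass to the cellular chain complex of the universal cover, trivialize it via chosen lifts of the cells so that the boundary maps become matrices depending continuously on $u$, and then combine the rank--nullity identity $\dim\cohom_m = \dim C_m - \operatorname{rank}\partial_m^{(u)} - \operatorname{rank}\partial_{m+1}^{(u)}$ with the lower semicontinuity of matrix rank. No substantive differences.
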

\begin{proof}
	The idea is that the rank of a matrix, viewed as a map from the space of matrices to $\integer$, is a lower semicontinuous function.
	The details are as follows.
	The homology groups $\cohom_*(X; \rho_u)$ can be defined as the homology groups of the complex
	\[
	\left( V\otimes_{\rho(u)} C_*(\widetilde X;\mathbf Z), \operatorname{Id}\otimes\partial_* \right).
	\]
	Let us fix $(w_1,\dots, w_n)$ a basis of $V$.
	Let $\{e^j_1,\dots, e^j_{i_j} \}$ be the cells of $X$ of dimension $j$, 
	and let $\{\lift{e}^j_1,\dots, \lift{e}^j_{i_j} \}$ be fixed lifts of these cells to $\widetilde X$.
	Then the set $\{ w_i\otimes \lift{e}^j_{k} \}$ gives a basis of $V\otimes_{\rho_u} C_j(\widetilde X;\mathbf Z)$.
	With respect to these bases, the boundary map $\partial_j(u)$ is written as a matrix $A_j(u)$
	whose entries depend continuously on $u$.
	Then we have 
	\[
	F(u) = \dim \operatorname{Ker} A_j(u) - \operatorname{rank} A_{j+1}(u).
	\]
	Since the rank of a matrix is lower semicontinuous, the dimension of the kernel is upper-semicontinuous,
	and hence $F(u)$ is upper semicontinuous.
\end{proof}

\begin{rem}
	Proposition~\ref{prop:semicont} is a special case of the semicontinuity theorem, cf.~ \cite{Hartshorne},
	which establishes 
	the upper semicontinuity of the dimension function of some cohomology groups in a more general context.
\end{rem}

Let us put $\rho_{2k}(u) := \rho_{2k}(u, \cdot)$.
The  proposition above shows that it makes sense to consider $\tor(M; \rho_{2k}(u))$ for $u$ close to $0$.
On the other hand, for $(p,q)\in \mathcal{A}_{M,\eta}$ large enough, Lemma \ref{lemma:inducedrepdeform}
implies that we have the following commutative diagram:
\[
\xymatrix{
\pi_1 (M) \ar[rd]^{\rho_{2k}(u_{p/q})} \ar@{->>}[d]_{i_*^{p/q}} & \\
\pi_1 (M_{p/q}) \ar[r]^{\rho_{2k}^{p/q}} & \sln(2k;\cmplx)
}
\]
Since $M_{p/q}$ is compact, the representation $\rho_{2k}^{p/q}$ is acyclic.
Therefore, it also makes sense to consider $\tor(M_{p/q}; \rho_{2k}^{p/q} )$.
The following lemma gives the relationship between these two quantities.

\begin{lemma} \label{lemma:evensurgeryform}
	Let $\gamma_1, \dots, \gamma_l$ be the core geodesics added on the $(p,q)$-Dehn filling $M_{p/q}$, 
	and $\lambda_{p/q}$ be the spin-complex-length function with respect to the spin-hyperbolic structure $\eta_{p/q}$.
	Then we have 
	\[
	\tor(M_{p/q}; \rho_{2k}^{p/q} ) = \pm \tor(M; \rho_{2k}(u_{p/q}) ) 
	\prod_{j=0}^{k-1} \prod_{i = 1}^l \left(e^{(\frac{1}{2} + j)\lambda_{p/q}(\gamma_i)} - 1\right)
	\left(e^{-(\frac{1}{2} + j)\lambda_{p/q}(\gamma_i)} - 1\right).
	\]
\end{lemma}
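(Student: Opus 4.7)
The plan is to compute $\tor(M_{p/q}; \rho_{2k}^{p/q})$ via a Mayer--Vietoris decomposition of the closed manifold $M_{p/q}$ as $M \cup V$, where $V = V_1 \sqcup \cdots \sqcup V_l$ is the disjoint union of the solid tori glued on by the Dehn filling, each $V_i$ having core geodesic $\gamma_i$ and meridian $a_i^{p_i} b_i^{q_i}$, and $M \cap V = T_1 \sqcup \cdots \sqcup T_l$. The representation $\rho_{2k}^{p/q}$ of $\pi_1 M_{p/q}$ restricts to $\rho_{2k}(u_{p/q})$ on $\pi_1 M$ by the commutative diagram from the previous subsection, and its restrictions to $\pi_1 V_i$ and $\pi_1 T_i$ agree with those induced from either side. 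Therefore the multiplicativity of Reidemeister torsion for short exact sequences of chain complexes, applied to the Mayer--Vietoris sequence, produces the master identity
\[
\tor(M_{p/q}; \rho_{2k}^{p/q}) = \pm \, \tor(M; \rho_{2k}(u_{p/q})) \cdot \prod_{i=1}^l \frac{\tor(V_i; \rho_{2k}^{p/q})}{\tor(T_i; \rho_{2k}(u_{p/q}))},
\]
provided that all four local systems are acyclic.

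Before invoking that identity, the four acyclicity statements must be verified. For $M_{p/q}$ it is Raghunathan vanishing in the closed case; for $M$ it holds at $u=0$ since $\eta$ is acyclic by hypothesis, and then for $u_{p/q}$ close to $0$ by the upper semicontinuity of Proposition~\ref{prop:semicont}; for each $V_i\simeq S^1$ it is immediate from $\gamma_i$ being loxodromic, so that $\rho_{2k}^{p/q}(\gamma_i)$ has no eigenvalue $1$; and for each $T_i$ it follows from the acyclicity of $\eta$, which forces some generator of $\pi_1 T_i$ to have $\epsilon=-1$ and thus act on $V_{2k}$ as $-U$ with $U$ unipotent, whose eigenvalues are all $-1$, so $H^0=0$, and this persists under small deformation by Proposition~\ref{prop:semicont} again. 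The local factors are then computed: a standard CW calculation on $T^2$ (or the product formula $T^2 = S^1\times S^1$) yields $\tor(T_i;\rho_{2k}(u_{p/q})) = \pm 1$, and from $V_i\simeq S^1$ one gets $\tor(V_i;\rho_{2k}^{p/q}) = \pm\det(\rho_{2k}^{p/q}(\gamma_i)-\operatorname{Id})$. Since the eigenvalues of $\rho_{2k}^{p/q}(\gamma_i)=\operatorname{Sym}^{2k-1}$ applied to a matrix of spin complex length $\lambda_{p/q}(\gamma_i)$ are $e^{\pm(\frac{1}{2}+j)\lambda_{p/q}(\gamma_i)}$ for $j=0,\dots,k-1$, this determinant is exactly the product displayed in the statement.

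The main obstacle is ensuring the Mayer--Vietoris short exact sequence is compatible with the preferred bases defining each torsion: one must verify that the bases on each interface torus $T_i$ induced from $M$ and from $V_i$ agree up to a sign independent of $k$, so the only residual ambiguity is the global $\pm$ of the statement, and that no spurious multiplicative factor (for instance a cusp shape as in Proposition~\ref{prop:defbasodd_thetaj}) leaks in. A secondary bookkeeping task is to fix the sign of the spin complex length $\lambda_{p/q}(\gamma_i)$ from the lifted holonomy $\Hol_{(M_{p/q},\eta_{p/q})}(\gamma_i)\in\sln(2,\cmplx)$ consistently, so that the eigenvalue computation yields $e^{\pm(\frac{1}{2}+j)\lambda_{p/q}(\gamma_i)}$ on the nose; any residual sign is absorbed in the overall $\pm$.
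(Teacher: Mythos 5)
Your proposal is correct and follows essentially the same route as the paper: a Mayer--Vietoris decomposition $M_{p/q}=\overline M\cup N(\gamma)$, acyclicity of all four pieces, multiplicativity of torsion, the torus factor being $\pm1$, and $\tor(V_i)=\det\bigl(\rho_{2k}^{p/q}(\gamma_i)-\operatorname{Id}\bigr)$ computed via symmetric-power eigenvalues. Two small simplifications worth noting: the paper establishes acyclicity of the interface torus more directly by observing that $\pi_1(\partial N(\gamma_i))$ maps onto the loxodromic $\gamma_i$, so $H^0=0$ (and $H^2=0$ by duality, $H^1=0$ by Euler characteristic) without invoking the acyclicity of $\eta$ or a deformation argument; and your worry about compatibility of preferred bases on the interfaces is moot, since all local systems are acyclic, so there are no homology bases to match and the only residual ambiguity is the cell-basis sign, which is exactly the overall $\pm$ in the statement.
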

\begin{proof}
	By induction, we can assume that $M$ has only one cusp. 
	We will apply the Mayer-Vietoris sequence to the decomposition $M_{p/q} = M\cup N(\gamma)$,
	where $N(\gamma)$ is a tubular neighbourhood of the core geodesic $\gamma$ added on the Dehn filling.
	We must show first that all the involved spaces are $\rho_{2k}^{p/q}$-acyclic.
	We already know it for $M$. Since $\Hol_{M_{p/q}}(\gamma)$ has no fixed vector other than $0$, 
	$\cohom^0(\gamma; \rho_{2k}^{p/q})$ is trivial, and hence so is $\cohom^1(\gamma; \rho_{2k}^{p/q})$;
	this proves that $N(\gamma)\simeq \gamma$ is acyclic. The same argument shows that 
	$\cohom^r(\partial N(\gamma); \rho_{2k}^{p/q})$ is trivial for $r = 0, 2$, 
	which implies (Euler characteristic argument) that this holds for $r = 1$ as well.
	The Mayer-Vietoris sequence then yields the surgery formula (cf.~\cite[Thm.~3.2]{MilnorTor}, \cite[Ch.~VIII]{TuraevPM} or \cite[Prop.~0.11]{Porti}) 
	\[
	\tor(M_{p/q}; \rho_{2k}^{p/q} ) \tor( \partial N(\gamma); \rho_{2k}^{p/q} ) 
	= \tor(M; \rho_{2k}^{p/q}) \tor( \gamma; \rho_{2k}^{p/q}).
	\]
	The torsion of the torus $\partial N(\gamma)$ is $\pm 1$, as it is the Reidemeister torsion 
	of an even-dimensional manifold, see \cite{MilD}. Finally, $\tor( \gamma; \rho_{2k}^{p/q})$
	is the determinant of $\rho_{2k}^{p/q}(\gamma)- \operatorname{Id}$ (cf.~\cite[Lemma~1.3.3]{TuraevKT}), 
	and since $\rho_{2k}^{p/q}(\gamma)$ is the $(2k -1)$-symmetric power of  the holonomy of $\gamma$, that has eigenvalues $e^{\pm  \lambda_{p/q}(\gamma)/2}$, we get
	\[ 
	\tor( \gamma; \rho_{2k}^{p/q}) = 
	\prod_{j=0}^{k-1} \left(e^{(\frac{1}{2} + j)\lambda_{p/q}(\gamma)} - 1\right)\left(e^{-(\frac{1}{2} + j)\lambda_{p/q}(\gamma)} - 1\right).
	\]
\end{proof}

Now we can prove Proposition \ref{prop:eventordeform}.
\begin{proof}[Proof of Propostion \ref{prop:eventordeform} ]
	The formula of Lemma \ref{lemma:evensurgeryform} can be written as  
	\[
	\frac{ \tor(M_{p/q}; \rho_{2k}^{p/q} ) } {\tor(M; \rho_{2k}(u_{p/q}) ) } = 
	2^{2kl} \prod_{j=0}^{k-1} \prod_{i= 1}^l \frac{1 - \cosh\left( (\frac{1}{2} + j)\lambda_{p/q}(\gamma_i)\right)}{2}.
	\]
	Since $\tor(M; \rho_{2k}(u_{p/q}) )$ converges to $\tor(M; \rho_{2k})$ as $(p,q)$ goes to infinity 
	(this can be proved in the same way as Proposition \ref{prop:semicont}),
	to prove the result we may restrict our attention to the product of the right hand side of the equation above.
	Consider the map defined by
	\begin{eqnarray*}
		F \colon  [0, \infty) \times [0, 4\pi] & \longrightarrow & \cmplx \\
		(t, \theta) & \longmapsto & \prod_{j=0}^{k-1} \frac{1 - \cosh\left( (\frac{1}{2} + j)(t + \theta\operatorname{i})\right) }{2}.
	\end{eqnarray*}
	The image of $\{0\}\times [0, 4\pi]$ under $F$ is $[0,1]$, since $F(\{0\}\times [0, 4\pi]) \subset [0,1]$,
	$F(0, 0) = 0$ and $F(0, 2\pi) = 1$. 
	The result then follows from the fact that the cluster point set of the following net: 
	\[
	(\lambda_{p/q}(\gamma_1),\dots,\lambda_{p/q}(\gamma_l))_{ (p,q)\in \mathcal{A}_{M,\eta} }
	\]
	is $A^l$, with $A = \{ z \in \cmplx \mid \Re(z) = 0,\quad 0 \leq \Im(z) \leq 4\pi \}$, see \cite{Meyer}.
\end{proof}

\subsection{Odd-dimensional case}

We will use the same notation as in the previous subsections.
Throughout this subsection we will assume that $n = 2k + 1$ and $k > 0$.

\begin{lemma}
\label{lemma:invariant}
Let $T_j$ be a fixed boundary component of $\partial \overline M$. 
Assume that 
\[
\Hol_M (u, a_j) = 
\left[ 
\begin{pmatrix}
	e^{u_j/2} & 1 \\ 
	0       & e^{-u_j/2}
\end{pmatrix}
\right]
,
\quad
\Hol_M (u, b_j) =
\left[
\begin{pmatrix}
	e^{v_j(u)/2} & \tau_j(u) \\ 
	0       & e^{-v_j(u)/2}
\end{pmatrix}
\right],
\]
where $a_j, b_j$ are generators of the fundamental group of $T_j$.
For $u =(u_1,\dots,u_l) \in U \subset \cmplx^l$ such that $u_j\neq 0$, consider the following vector
\[
w_j(u) := X^k  \left( X - 2 \sinh\frac{u_j}{2} Y \right)^k \in V_{2k+1} \cong S_{2k}[X,Y],
\]
where $S_n[X,Y]$ is the space of homogeneous polynomials of degree $n$ in the variables $X,Y$.
Then, for $u$ close to $0$ with $u_j \neq 0$, the vector $w_j(u)$ is $\rho_{2k+1}(u)$-invariant.
Moreover, the map
\begin{eqnarray*}
	\Omega^*( T_j; \cmplx) & \to     & \Omega^*(T_j; E_{\rho_{2k+1}(u)} ) \\
	\omega                       & \mapsto & \omega \otimes w_j(u)
\end{eqnarray*}
induces isomorphisms in de Rham cohomology.
\end{lemma}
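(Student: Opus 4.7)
\medskip

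\noindent\textbf{Proof proposal.}
The plan is to first diagonalize the $\pi_1 T_j$-action on $V_{2k+1}$ using $u_j\neq 0$, identify $w_j(u)$ as essentially the unique joint invariant, and then deduce the cohomological statement from a splitting into characters.

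\emph{Invariance of $w_j(u)$.} I would start by checking that $X$ and $X'(u):=X-2\sinh(u_j/2)\,Y$ are eigenvectors of $\Hol_M(u,a_j)$ on $V_2$ with eigenvalues $e^{u_j/2}$ and $e^{-u_j/2}$ respectively; this is a direct $2\times 2$ calculation using the explicit matrix from the statement. Since $u_j\neq 0$ the two eigenvalues are distinct and the eigenlines are canonical. Because $a_j$ and $b_j$ commute in $\pi_1 T_j$, $\Hol_M(u,b_j)$ preserves these eigenlines, so $X$ and $X'(u)$ are also eigenvectors of $\Hol_M(u,b_j)$, with eigenvalues $\lambda,\mu$ satisfying $\lambda\mu=\det=1$ and $\{\lambda,\mu\}=\{e^{\pm v_j(u)/2}\}$. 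On $V_{2k+1}=\mathrm{Sym}^{2k}V_2$ the vector $w_j(u)=X^k X'(u)^k$ is then an eigenvector with eigenvalue $e^{ku_j/2}\cdot e^{-ku_j/2}=1$ under $a_j$ and $\lambda^k\mu^k=(\lambda\mu)^k=1$ under $b_j$, so it is $\rho_{2k+1}(u)$-invariant under all of $\pi_1 T_j$.

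\emph{Cohomology isomorphism.} The basis $\{X^i X'(u)^{2k-i}\}_{i=0}^{2k}$ simultaneously diagonalizes the whole $\pi_1 T_j$-action on $V_{2k+1}$, so $V_{2k+1}$ splits as a direct sum of $\pi_1 T_j$-characters $\chi_i$, with $\chi_i(a_j)=e^{(i-k)u_j}$. For $u$ small with $u_j\neq 0$ (so $|(i-k)u_j|<2\pi$ whenever $i\neq k$), the only $i$ for which $\chi_i$ is trivial on $a_j$ is $i=k$, giving the invariant line $\cmplx\cdot w_j(u)$. All other $\chi_i$ are non-trivial characters of $\pi_1 T_j\cong\integer^2$. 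I would therefore write
\[
V_{2k+1}=\cmplx\cdot w_j(u)\oplus W,\qquad W=\bigoplus_{i\neq k}\cmplx\cdot X^iX'(u)^{2k-i},
\]
as $\pi_1 T_j$-modules, inducing a splitting of flat bundles $E_{\rho_{2k+1}(u)}\cong\underline{\cmplx}\oplus E_W$ and hence of de Rham complexes on $T_j$.

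\emph{Conclusion.} Under the first summand, the inclusion is exactly the map $\omega\mapsto\omega\otimes w_j(u)$, so it remains to show $H^*(T_j;E_W)=0$. Since $E_W$ further decomposes into line bundles associated to non-trivial characters of $\integer^2$, this reduces to the standard vanishing $H^*(\integer^2;\cmplx_\chi)=0$ for $\chi\neq 1$, which follows at once from $H^*(\integer;\cmplx_\alpha)=0$ for $\alpha\neq 1$ together with the Künneth formula. Combining everything, $\omega\mapsto\omega\otimes w_j(u)$ is a chain map (because $w_j(u)$ is flat-invariant) which is an isomorphism on cohomology.

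\emph{Main obstacle.} There is no deep difficulty; the one point requiring care is the uniqueness of the joint invariant. It rests on the fact that $\rho_{2k+1}(u)(a_j)$ already has a one-dimensional fixed subspace as soon as $u_j\neq 0$ is small, which fails in the limit $u_j\to 0$ (where $V_{2k+1}$ has only parabolic invariants); one must therefore keep track of the smallness condition on $u$ and make sure all identifications are analytic in $u$ away from $u_j=0$, which they are since $\sinh(u_j/2)$ is analytic and non-vanishing there.
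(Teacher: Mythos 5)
Your proposal is correct, and it takes a genuinely different route from the paper's proof of the cohomology isomorphism. Both proofs handle the invariance of $w_j(u)$ in essentially the same way (simultaneous diagonalization of the commuting holonomies of $a_j, b_j$ on $V_2$, then taking the balanced product $X^kX'(u)^k$ so that the eigenvalue is $1$). The difference is in the second part. The paper does \emph{not} pass to the full eigen-decomposition of $V_{2k+1}$; instead it uses the $\sln(2,\cmplx)$-invariant symmetric pairing $\phi$ from Section~\ref{section:sl2rep}, computes $\phi\bigl(w_j(u),w_j(u)\bigr)=2\bigl(-2\sinh\tfrac{u_j}{2}\bigr)^k\neq 0$, and thereby splits the flat bundle over $T_j$ into $L\oplus L^\perp$ with $L$ the trivial line spanned by $w_j(u)$. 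It then shows $\cohom^0(T_j;L^\perp)=0$ by invoking the upper semicontinuity result (Proposition~\ref{prop:semicont}) to know $\cohom^0(T_j;E_{\rho_{2k+1}(u)})\cong\cmplx$, and finishes by Poincar\'e duality plus an Euler characteristic count. Your approach instead exhibits the full $\pi_1 T_j$-eigen-basis $\{X^iX'(u)^{2k-i}\}_i$, identifies the single trivial character at $i=k$, and kills the complementary summands line by line via the standard vanishing of $T^2$-cohomology with nontrivial character coefficients (Künneth). Your version is more elementary and self-contained: it avoids the invariant pairing, the semicontinuity input, and the duality/Euler-characteristic bookkeeping, at the modest cost of writing out the whole eigen-decomposition rather than just the orthogonal splitting off of $L$. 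One cosmetic caution: the eigenvalues $e^{\pm u_j/2}$ on $V_2$ are only defined up to an overall sign since $\Hol_M$ is $\psl(2,\cmplx)$-valued; this ambiguity disappears on the even symmetric power $\operatorname{Sym}^{2k}V_2$, which is why your computation (and the paper's) is well posed, but it is worth saying so explicitly as the paper does when it remarks that $e_1^ke_2^k$ is independent of the chosen lift.
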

\begin{proof}
	Let $\Hol_{(M,\eta)}$ be a lift of the holonomy representation.
	The matrices $\Hol_{(M,\eta)}(a_j)$ and $\Hol_{(M,\eta)}(b_j)$ diagonalize and commute; hence, there 
	exists a basis $(e_1, e_2)$ of $\cmplx^2$ that simultaneously diagonalize them. 
	It can be checked that we can take \[
	 e_1 = X, \qquad e_2 = X - 2 \sinh\frac{u_j}{2} Y.
	\]
	The vector $e_1^k e_2^k \in V_{2k+1}$ is then independent of the chosen lift and 
	invariant by both $\Hol_{(M,\eta)}(a_j)$ and $\Hol_{(M,\eta)}(b_j)$. 
	This shows that $w_j(u)$ is $\rho_{2k+1}(u)$-invariant, and the first part of the lemma is proved.

	For the second part, notice that the vector $w_j(u)$ 
	gives a parallel nowhere-vanishing section of the flat vector bundle
	$E_{\rho_{2k+1}(u)}$. On the other hand, the $\sln(2, \cmplx)$-invariant pairing
	(see Section \ref{section:sl2rep})
	\[
	\phi \colon V_n \times V_n \to \cmplx, 
	\]
	defines a non-degenerate symmetric bilinear form on $E_{\rho_{2k+1}(u)}$. We have, 
	\[
	\phi\left( w_j(u), w_j(u) \right) = 2 \left( -2\sinh\frac{u_j}{2}\right)^{k}.
	\]
	Therefore, for $\sinh\frac{u_j}{2}\neq 0$, we have a decomposition 
	${E_{\rho_{2k+1}(u)}}_{|T_j} = L \oplus L^\perp$, 
	where $L$ is the line bundle defined by 
	$w_j(u)$, and $L^\perp$ is the orthogonal complement with respect to $\phi$.
	Note that both sub-bundles are flat, so we have
	\[
	\cohom^*( T_j; E_{\rho_{2k+1}(u)} ) = \cohom^*( T_j; L) \oplus \cohom^*( T_j; L^\perp).
	\]
	The line bundle $L$ is trivialized using the section $w_j(u)$. Therefore, tensorization by $w_j(u)$
    yields an isomorphism $\cohom^0( T_j; L) \cong  \cohom^0( T_j; \cmplx) \cong \mathbf{C}$.
    This shows that $\cohom^0( T_j; L^\perp)$ is trivial, 
    for $\cohom^*( T_j; E_{\rho_{2k+1}(u)} ) \cong \mathbf{C}$ 
    (this can be deduced from the upper semicontinuity of cohomology).
    Thus we have proved the last assertion of the lemma in degree $0$.
	The lemma then follows by Poincar\'e duality and an Euler characteristic argument.
\end{proof}

\begin{proposition}
	There exists a neighbourhood of the origin $W \subset U$  such that for all $u \in W$,
	\[
		\dim_\cmplx \cohom_1( M; \rho_{2k + 1}(u)) = \dim_\cmplx \cohom_2( M; \rho_{2k + 1}(u)) = l,
	\]
	where $l$ is the number of connected components of $\partial \overline M$.
\end{proposition}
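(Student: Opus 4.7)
The plan is to combine upper semicontinuity (Proposition~\ref{prop:semicont}) with a Lefschetz-duality argument on the boundary; the Euler characteristic identity then links $\cohom_1$ and $\cohom_2$. Set $h_i(u) = \dim_{\cmplx}\cohom_i(M;\rho_{2k+1}(u))$. First I would dispose of the outer degrees: $h_3(u) \equiv 0$ since $M$ is an open $3$-manifold, while the Zariski-density of $\Hol_M(\pi_1 M)$ in $\psl(2,\cmplx)$ implies that $\rho_{2k+1}(0)$ admits no nonzero invariant vector in $V_{2k+1}$ and, by self-duality of $V_{2k+1}$, no nonzero coinvariant either. Applying Proposition~\ref{prop:semicont} both in homology and in cohomology yields a neighbourhood on which $h_0(u) = 0$ and $\cohom^0(M;\rho_{2k+1}(u)) = 0$. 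Since $\chi(M;\rho_{2k+1}) = (2k+1)\chi(M) = 0$, it follows that $h_1(u) = h_2(u)$ throughout this neighbourhood; denote this common value by $h(u)$.

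The upper bound comes for free: Proposition~\ref{prop:defbasodd_wj} gives $h(0) = l$, and Proposition~\ref{prop:semicont} applied to $h_2$ yields $h(u) \leq l$ on a possibly smaller neighbourhood. For the matching lower bound I would examine the segment
\[
\cohom_3(\overline M, \partial \overline M;\rho_{2k+1}(u)) \to \cohom_2(\partial \overline M;\rho_{2k+1}(u)) \to \cohom_2(\overline M;\rho_{2k+1}(u))
\]
of the long exact sequence of the pair. Lefschetz duality (an instance of Corollary~\ref{corollary:PD}) identifies the leftmost group with $\cohom^0(\overline M;\rho_{2k+1}(u))$, which vanishes by the previous step, so the connecting map $\cohom_2(\partial \overline M;\rho_{2k+1}(u)) \hookrightarrow \cohom_2(M;\rho_{2k+1}(u))$ is injective. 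It therefore suffices to show that $\dim_{\cmplx}\cohom_2(\partial \overline M;\rho_{2k+1}(u)) = l$.

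For this boundary computation, Poincar\'e duality on each torus $T_j$ and the self-duality of $V_{2k+1}$ give $\dim_{\cmplx}\cohom_2(T_j;\rho_{2k+1}(u)) = \dim_{\cmplx}\cohom^0(T_j;\rho_{2k+1}(u))$, so I need only verify that the space of $\rho_{2k+1}(u)(\pi_1 T_j)$-invariants in $V_{2k+1}$ is one-dimensional for every $u$ near the origin. When $u_j \neq 0$ this is exactly the content of Lemma~\ref{lemma:invariant}: the invariant line is spanned by $w_j(u) = X^k(X - 2\sinh(u_j/2)Y)^k$. When $u_j = 0$ the invariants form the line $\cmplx \cdot X^{2k}$, as in the parabolic computation underlying Proposition~\ref{prop:basiscohombdry}. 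Summing over the $l$ boundary tori yields $\dim_{\cmplx}\cohom_2(\partial \overline M;\rho_{2k+1}(u)) = l$, so $h(u) \geq l$, which together with the upper bound gives $h(u) = l$.

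The main obstacle is the uniform control of $\cohom^0(T_j;\rho_{2k+1}(u))$ as $u_j$ passes through $0$: the invariant vector is given by different formulas in the parabolic and loxodromic regimes, and one must verify that no further invariants appear by ruling out accidental coincidences among the eigenvalues of $\rho_{2k+1}(u)(a_j)$ and $\rho_{2k+1}(u)(b_j)$. Since these eigenvalues depend continuously on $u$ and at $u=0$ all collapse to $1$ with only the middle-weight eigenvector being actually invariant, the invariant line keeps dimension $1$ on a small enough neighbourhood of the origin, and the rest of the argument runs formally.
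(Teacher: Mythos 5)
Your proof is correct and, modulo a duality flip, is the same as the paper's. The paper establishes the lower bound by working in degree $1$: it uses Poincar\'e--Lefschetz duality and an Euler characteristic argument to get $\dim H_1(\overline M)=\dim H_1(\overline M,\partial\overline M)$, then the surjection $H_1(\overline M,\partial\overline M)\to H_0(\partial\overline M)\to 0$, and finally $\dim H_0(T_j)=1$ via Lemma~\ref{lemma:invariant}. You instead work in degree $2$: you use the vanishing $H_3(\overline M,\partial\overline M)\cong H^0(\overline M)=0$ to get the injection $H_2(\partial\overline M)\hookrightarrow H_2(\overline M)$, then torus Poincar\'e duality plus the same Lemma~\ref{lemma:invariant}. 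These two segments of the long exact sequence are Poincar\'e dual to each other, and both arguments need the same ingredients (upper semicontinuity for the upper bound, the one-dimensional invariant line on each $T_j$ for the lower bound). Your version is a bit more explicit about the vanishing of $H_0$, $H^0$, $H_3$ and about why $h_1=h_2$, which the paper leaves implicit in its ``Poincar\'e duality and Euler characteristic argument.'' Your closing caveat about controlling invariants as $u_j$ crosses $0$ is not really an open obstacle: for $u_j\neq 0$ Lemma~\ref{lemma:invariant} already gives a one-dimensional invariant line (and excludes extras since the map $\omega\mapsto\omega\otimes w_j(u)$ is a cohomology isomorphism), and the parabolic case $u_j=0$ is a direct computation, so the argument is already closed.
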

\begin{proof}
	By Poincar\'e duality and an Euler characteristic argument, we deduce that
	\[
	\dim_\cmplx \cohom_1( \overline M; \rho_{2k + 1}(u)) = \dim_\cmplx \cohom_1( \overline M, \partial \overline M; \rho_{2k + 1}(u)).
	\]
	The long exact sequence of the pair $(\overline M, \partial \overline M)$ yields the following short exact sequence,
	\[
	\cohom_1( \overline M, \partial \overline M; \rho_{2k + 1}(u)) \to \cohom_0( \partial \overline M; \rho_{2k + 1}(u)) \to 0.
	\]
	Therefore,
	\[
	\dim_\cmplx \cohom_1( \overline M; \rho_{2k + 1}(u)) \geq \dim_\cmplx \cohom_0( \partial \overline M; \rho_{2k + 1}(u)) =
	\sum_{j= 1}^l \dim_\cmplx \cohom_0( T_j; \rho_{2k + 1}(u)).
	\]
	The vector space $\cohom_0( T_j; \rho_{2k + 1}(u))$ has dimension $1$.
	Indeed, if $u_j = 0$ this is clear by a direct inspection, 
	and if $u_j \neq 0$, this follows from Lemma \ref{lemma:invariant}.
	Hence,
	\[
	\dim_\cmplx \cohom_1( M; \rho_{2k + 1}(u)) \geq l, \quad \text{for } u \in U.
	\]
	Since $\dim_\cmplx \cohom_1( M; \rho_{2k + 1}(0)) = l$, the upper semicontinuity of the dimension function
	(Proposition \ref{prop:semicont}) implies the result. 
\end{proof}

\begin{proposition} \label{prop:odddimbasesparam}
	Let $\{\theta_j\}$ be a collection of nontrivial cycles with $\theta_j \in \cohom_1(T_j; \integer)$.
	Then there exists a neighbourhood of the origin $W \subset U$ such that for all $u \in W$ the following assertions hold:
	\begin{enumerate}
		\item A basis of $\cohom_1(M; \rho_{2k + 1}(u) )$ is given by
			\[
			\left( i_* [ w_1(u) \otimes \theta_1], \dots, i_* [ w_l(u) \otimes \theta_l] \right).
			\]
		\item A basis of $\cohom_2(M; \rho_{2k + 1}(u) )$ is given by
			\[
			\left( i_* [ w_1(u) \otimes T_1 ], \dots, i_* [ w_l(u) \otimes T_l] \right).
			\]
	\end{enumerate}
	In both cases, the vectors $w_j(u)$ are the ones given by Lemma \ref{lemma:invariant},
	$[T_j] \in \cohom_2( T_j; \integer)$ is a fundamental class of $\partial M$,
	and $i_*$ is the map induced in homology by the inclusion $i\colon \partial \overline M \to \overline M$.
\end{proposition}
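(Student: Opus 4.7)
The plan is to deduce the proposition from Proposition~\ref{prop:defbasodd_wj} (which handles the case $u=0$) together with the preceding proposition (constancy of $\dim H_r(M;\rho_{2k+1}(u))$) by a continuity/openness argument: linear independence of $l$ continuously varying sections of a rank-$l$ vector bundle is an open condition.

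More concretely, first I would shrink $U$ to a neighborhood $W$ of the origin on which $\dim_{\mathbb{C}} H_r(M;\rho_{2k+1}(u))=l$ for $r=1,2$. Fix a CW structure on $\overline M$; then the cellular chain complex $(C_*(M;\rho_{2k+1}(u)),\partial_*(u))$ has underlying vector spaces that do not depend on $u$, with boundary operators depending holomorphically on $u$. Since rank is lower semicontinuous, $\dim\ker\partial_r(u)$ is upper semicontinuous and $\dim\operatorname{im}\partial_{r+1}(u)$ is lower semicontinuous. The equality $\dim\ker\partial_r(u)-\dim\operatorname{im}\partial_{r+1}(u)=l$ on $W$ then forces both dimensions to be locally constant. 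Consequently $\ker\partial_r$ and $\operatorname{im}\partial_{r+1}$ are holomorphic subbundles of the trivial bundle $W\times C_r$, and $H_r(M;\rho_{2k+1}(u))$ assembles into a holomorphic vector bundle $\mathcal{H}_r\to W$ of rank $l$.

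Next I would observe that the vectors $w_j(u)$ given by Lemma~\ref{lemma:invariant} depend holomorphically on $u\in W$; the a priori singular expression $X^k(X-2\sinh(u_j/2)Y)^k$ extends continuously (in fact holomorphically) to $u_j=0$, giving $w_j(0)=X^{2k}=X^{n-1}$, which is exactly the invariant vector used at $u=0$ in Proposition~\ref{prop:defbasodd_wj}. Since $w_j(u)$ is $\rho_{2k+1}(u)$-invariant under $\pi_1 T_j$ for every $u\in W$, the chains $w_j(u)\otimes\theta_j$ and $w_j(u)\otimes T_j$ are cycles, and their images under $i_*$ define holomorphic sections $s_j^{(1)},s_j^{(2)}$ of $\mathcal{H}_1$ and $\mathcal{H}_2$ respectively.

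Finally, by Proposition~\ref{prop:defbasodd_wj} applied at $u=0$, the tuples $(s_1^{(r)}(0),\dots,s_l^{(r)}(0))$ are bases of the fibers $\mathcal{H}_r|_0$ for $r=1,2$. The condition that $l$ continuous sections of a rank-$l$ vector bundle form a basis at a point is open (it is equivalent, after a local trivialization, to the non-vanishing of a determinant depending continuously on $u$), so it persists on an open neighborhood of $0$, which we take as the neighborhood $W$ promised by the statement. The main subtlety is verifying the vector bundle structure on $\mathcal{H}_r$; once the constancy of $\dim\ker$ and $\dim\operatorname{im}$ is extracted from the semicontinuity analysis, the remainder is a routine continuity argument.
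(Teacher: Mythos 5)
Your proposal is correct and follows essentially the same route as the paper's own (very terse) proof, which simply says the statement holds at $u=0$ by Proposition~\ref{prop:defbasodd_wj} and then extends ``proceeding as in the proof of Proposition~\ref{prop:semicont}.'' You spell out exactly what that means: the constancy of $\dim H_r$ on a neighborhood, combined with upper semicontinuity of $\dim\ker\partial_r(u)$ and lower semicontinuity of $\dim\operatorname{im}\partial_{r+1}(u)$, forces both to be locally constant, so $H_r$ assembles into a vector bundle; then the determinant of $l$ holomorphic sections that form a basis at $u=0$ is nonvanishing near $0$. The only detail worth flagging is your observation that $w_j(u)=X^k(X-2\sinh(u_j/2)Y)^k$ extends holomorphically across $u_j=0$ to the vector $X^{2k}$ used in Proposition~\ref{prop:defbasodd_wj}: Lemma~\ref{lemma:invariant} is stated only for $u_j\neq 0$, so this extension is needed to get a well-defined section through $u=0$, and you are right to point it out explicitly.
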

\begin{proof}
	Proposition \ref{prop:defbasodd_wj} shows that the two assertions are true for $u = 0$.
	The result then follows proceeding as in the proof of Proposition \ref{prop:semicont}.
\end{proof}

It makes sense therefore to consider $\tau( M; \rho_{2k + 1}(u_{p/q}); \{ \theta_j \}) $, 
the Reidemeister torsion of $M$ with respect to the representation $\rho_{2k + 1}(u_{p/q})$ and the bases in homology
associated to the family of non-trivial cycles $\{\theta_j\}$ given by the Proposition \ref{prop:odddimbasesparam}.
We want to get a surgery formula for $\tau( M; \rho_{2k + 1}(u_{p/q}); \{ \theta_j \})$.
It turns out that it is easier to work with the bases given by the following lemma.
\begin{lemma} \label{lemma:oddsurgeryform}
	For sufficiently large $(p, q)$, a basis of $\cohom_1(M; \rho_{2k + 1}(u_{p/q}) )$ is given by,
		\begin{equation} \label{eqn:basisodd} 
	\left( i_*^{p/q} [ w_1(u_{p/q}) \otimes (p_1 a_1 +q_1 b_1)], \dots, i_*^{p/q} [ w_l(u_{p/q}) \otimes (p_l a_l +q_l b_l)] \right),
	\end{equation}
	where $i_*^{p/q}$ is the map induced is the inclusion $i\colon M \to M_{p/q}$.
\end{lemma}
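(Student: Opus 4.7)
The plan is to apply Proposition \ref{prop:odddimbasesparam} with the choice $\theta_j = p_j a_j + q_j b_j$, which is a primitive---hence non-trivial---cycle in $\cohom_1(T_j; \integer)$ because $\gcd(p_j, q_j) = 1$. The subtlety is that the neighbourhood $W\subset U$ produced by Proposition \ref{prop:odddimbasesparam} may shrink as the choice of $\{\theta_j\}$ is varied, whereas here the cycles depend on $(p,q)$ and we are forced to evaluate at the specific point $u_{p/q}$. To bypass this dependency I will compare the proposed family with a fixed reference basis.

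First I fix the reference by applying Proposition \ref{prop:odddimbasesparam} with the $(p,q)$-independent choice $\theta_j^0 = a_j$. This yields a single neighbourhood $W_0 \subset U$ of the origin such that $\{i_{*}[w_j(u) \otimes a_j]\}_{j=1}^{l}$ is a basis of $\cohom_1(M; \rho_{2k+1}(u))$ for every $u \in W_0$; since Thurston's hyperbolic Dehn surgery theorem gives $u_{p/q} \to 0$, this basis is available at $u_{p/q}$ for $(p,q)$ sufficiently large. Expanding by bilinearity,
\[
i_{*}[w_j(u) \otimes (p_j a_j + q_j b_j)] = p_j\, i_{*}[w_j(u) \otimes a_j] + q_j\, i_{*}[w_j(u) \otimes b_j],
\]
and writing $i_{*}[w_j(u) \otimes b_j] = \sum_{k} \mu_{jk}(u)\, i_{*}[w_k(u) \otimes a_k]$ in the reference basis, the lemma reduces to the invertibility at $u = u_{p/q}$ of the $l \times l$ transition matrix $C_{kj}(u,p,q) := p_j\delta_{kj} + q_j\, \mu_{jk}(u)$.

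The coefficients $\mu_{jk}(0)$ can be computed by Kronecker-pairing with the basis of $\cohom^1(M;E_n)$ dual to $\{i_*[w_k(0)\otimes a_k]\}$. By Proposition \ref{prop:generatorimage} the dual classes are realised by cohomology classes on $M$ restricting on each $T_k$ to a multiple of $[\beta_k]$ modulo $\cohom^{0,1}(T_k;E_n)$. Because $\phi(X^{n-1},X^{n-1}) = 0$ the $\cohom^{0,1}$-correction contributes nothing to the pairing with $[w_j(0)\otimes b_j]$, whereas the $\beta_k$-contribution vanishes for $k\neq j$ (this class is supported on $T_j$) and equals $\phi(X^{n-1},Y^{n-1})\int_{b_j}dz = c_0\tau_j(0)\delta_{jk}$ otherwise, exactly as in the proof of Proposition \ref{prop:defbasodd_wj}. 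Comparing with the analogous computation for $a_j$ gives $\mu_{jk}(0) = \tau_j(0)\,\delta_{jk}$, so that $C(0,p,q) = \operatorname{diag}(p_j + q_j\tau_j(0))$ is nonsingular for every $(p,q) \in \mathcal{A}$ because $\Im\tau_j(0) > 0$ and $(p_j,q_j) \neq (0,0)$.

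To pass from $u = 0$ to $u = u_{p/q}$ I carry out a quantitative perturbation. Combining the Dehn filling equation $p_j (u_{p/q})_j + q_j v_j(u_{p/q}) = 2\pi i$ with the expansion $v_j(u) = \tau_j(0)u_j + O(u_j^{2})$ yields $|(u_{p/q})_j| = O\bigl(1/|p_j + q_j\tau_j(0)|\bigr)$. Since $\mu_{jk}$ is analytic at $0$, the entries of $C(u_{p/q},p,q) - C(0,p,q)$ are bounded by $|q_j|\cdot O(\|u_{p/q}\|)$; using the lower bound $|p_i + q_i\tau_i(0)| \geq |\Im\tau_i(0)|\cdot \max(|p_i|,|q_i|)$ one checks that $C(0,p,q)^{-1}\bigl(C(u_{p/q},p,q) - C(0,p,q)\bigr) = o(1)$ as $(p,q) \to \infty$, so $\det C(u_{p/q},p,q)/\det C(0,p,q) \to 1$ and $C(u_{p/q},p,q)$ is invertible for $(p,q)$ large enough. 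The principal obstacle is exactly this quantitative control: the diagonal entries of $C(0,p,q)$ grow without bound in $(p,q)$, so a naive continuity argument is insufficient, and one must exploit the decay rate of $u_{p/q}$ supplied by the Dehn filling equation to dominate the off-diagonal perturbations.
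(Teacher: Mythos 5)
Your approach is genuinely different from the paper's. The paper proves this lemma by a Mayer--Vietoris argument applied to the decomposition $M_{p/q} = M \cup N$: since $M_{p/q}$ is $\rho_n(u_{p/q})$-acyclic, Mayer--Vietoris gives $\cohom_1(\partial \overline M;\rho_n(u_{p/q})) \cong \cohom_1(M;\rho_n(u_{p/q})) \oplus \cohom_1(N;\rho_n(u_{p/q}))$, and because $[p_j a_j + q_j b_j]$ dies in $\cohom_1(N(\gamma_j);\mathbb Z)$, the classes $[w_j(u_{p/q}) \otimes (p_j a_j + q_j b_j)]$ must inject into $\cohom_1(M;\rho_n(u_{p/q}))$. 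This is entirely topological and requires no quantitative control; the paper explicitly flags the subtlety you identified (the neighbourhood of Proposition~\ref{prop:odddimbasesparam} depends on the $\theta_j$, which are here non-constant) as the reason to use Mayer--Vietoris rather than the proposition directly.

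Your perturbation route is plausible in spirit but has a genuine gap in the key estimate. First, the stated lower bound $|p_i + q_i\tau_i(0)| \geq |\Im\tau_i(0)|\cdot \max(|p_i|,|q_i|)$ is false (take $q_i=0$, $p_i = 1$, $|\Im\tau_i(0)|>1$); the imaginary part only controls $|q_i|$, and a $\max(|p_i|,|q_i|)$-lower bound holds with a different constant by positive-definiteness. More seriously, the claim $C(0,p,q)^{-1}\bigl(C(u_{p/q},p,q) - C(0,p,q)\bigr) = o(1)$ does not follow from your bounds and fails when $l\geq 2$: the $(k,j)$ entry for $j\neq k$ is $q_j\bigl(\mu_{jk}(u_{p/q})-\mu_{jk}(0)\bigr)/(p_k + q_k\tau_k(0))$, and since $\|u_{p/q}\|$ is only controlled by the \emph{smallest} of the quantities $|p_i+q_i\tau_i(0)|$, the off-diagonal entry is of size roughly $|p_j+q_j\tau_j(0)|/\bigl(|p_k+q_k\tau_k(0)|\cdot \min_i|p_i+q_i\tau_i(0)|\bigr)$, which is unbounded when the filling slope on cusp $j$ grows much faster than on cusp $k$ (e.g.\ $q_1 \sim n^3$, $q_2 \sim n$). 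The determinant ratio does still converge to $1$, but to see it one must factor out $\operatorname{diag}(q)$ first, as the paper does later in the proof of Lemma~\ref{lemma:limApq}: writing $C(u) = Q\bigl(Q^{-1}P + Q^{-1}M(u)Q\bigr)$ one reduces to $D^{-1}E(u)$ with $D = \operatorname{diag}(p_j/q_j + \tau_j(0))$, whose inverse is uniformly bounded, and $E(u) = M(u) - M(0) \to 0$. As written, your argument does not establish the invertibility of the transition matrix, so the proof is incomplete.
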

Notice that this lemma is not a consequence of the previous proposition, because the open set $U$ of Proposition~\ref{prop:odddimbasesparam}
depends on the $\theta_j$, that here are replaced by $p_j a_j +q_j b_j$, which are not constant.
\begin{proof}
	This is a Mayer-Vietoris argument as in Lemma \ref{lemma:evensurgeryform}.
	We have the decomposition
	\[
	M_{p/q} = M \cup N, \quad \text{with } N  = \bigcup_{j= 1}^l N(\gamma_j),
	\]
	where $\{N(\gamma_j)\}$ is a collection of disjoint tubular neighbourhoods of the core geodesics $\gamma_j$ added in the Dehn filling.
	By compactness, $M_{p/q}$ is $\rho_n(u_{p/q})$-acyclic. The Mayer-Vietoris exact sequence then gives an isomorphism
	\[
	\cohom_*( \partial M; \rho_n(u_{p/q})) \cong \cohom_*(M;\rho_n(u_{p/q})) \oplus \cohom_*( N ; \rho_n(u_{p/q})).
	\]
	The group $\cohom_*( T_j ; \cmplx)$ is isomorphic to $\cohom_*( T_j; \rho_n(u_{p/q}) )$ via 
	tensorization $w_j(u_{p/q}) \otimes - $ (this is the homological
	counterpart of Lemma \ref{lemma:invariant}).
	The same isomorphism also holds true for $N(\gamma_j) \simeq \gamma_j$.
	Since $[p_j a_j + q_j b_j] \in \cohom_1( N(\gamma_j); \integer)$ is zero by construction, the vectors described
	in \eqref{eqn:basisodd} must be linearly independent.
\end{proof}

The surgery formula is now easily obtained.

\begin{lemma} \label{eqn:torodd}
	Let $\gamma_1, \dots, \gamma_l$ be the core geodesics added on the $(p,q)$-Dehn filling $M_{p/q}$, 
	and $\lambda_{p/q}$ be the complex-length function of $M_{p/q}$.
	Then we have 
	\begin{equation*}
		\tau(M_{p/q}; \rho_{2k+1}^{p/q}) = 
		\tau(M; \rho_{2k+1}(u_{p/q}), \{p_j a_j + q_j b_j\} ) 
		\prod_{j = 1}^k \prod_{i=1}^l ( e^{j\lambda_{p/q}(\gamma_i)}-1 ) (e^{-j \lambda_{p/q}(\gamma_i)}-1 ).
	\end{equation*}
\end{lemma}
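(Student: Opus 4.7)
The plan is a Mayer--Vietoris argument along the decomposition $M_{p/q} = M \cup N$ with $N = \coprod_{i=1}^l N(\gamma_i)$ a disjoint union of tubular neighbourhoods of the core geodesics and $M \cap N = \partial N = \coprod_i T_i$, in direct analogy with Lemma~\ref{lemma:evensurgeryform}. The novelty is that none of $M$, $N$, or $\partial N$ is $\rho_{2k+1}(u_{p/q})$-acyclic, so I cannot multiply acyclic torsions naively; instead I must use the general multiplicativity of Reidemeister torsion over Mayer--Vietoris with the canonical bases from Lemma~\ref{lemma:oddsurgeryform}, and control the boundary contributions by splitting the coefficient system on the cusp regions.

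\emph{Step 1 (splitting the coefficients on the cusp region).} For each $i$, Lemma~\ref{lemma:invariant} gives an orthogonal decomposition $V_{2k+1} = \langle w_i(u_{p/q})\rangle \oplus L_i^\perp$ with respect to the invariant pairing $\phi$, as $\pi_1 N(\gamma_i)$-modules (note $p_i a_i + q_i b_i$ becomes trivial in $\pi_1 N(\gamma_i)$, so the peripheral analysis extends to the filled solid torus). The corresponding bundle splitting yields
\[
\cohom^*(N(\gamma_i);\rho_{2k+1}^{p/q}) = \cohom^*(N(\gamma_i);L_i) \oplus \cohom^*(N(\gamma_i);L_i^\perp),
\]
and likewise for $T_i$. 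Since the eigenvalues of $\rho_{2k+1}^{p/q}(\gamma_i)$ on $L_i^\perp$ are $\{e^{\pm j\lambda_{p/q}(\gamma_i)}\}_{j=1}^k$, all different from $1$, the sub-complexes with coefficients in $L_i^\perp$ on both $\gamma_i$ and $T_i$ are acyclic; on $L_i$, everything reduces (via $w_i(u_{p/q}) \otimes -$) to constant $\mathbf{C}$-coefficients.

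\emph{Step 2 (Mayer--Vietoris multiplicativity).} I would assemble the following bases: on $H_*(M;\rho_{2k+1}(u_{p/q}))$, the basis of Lemma~\ref{lemma:oddsurgeryform} using the meridional combinations $p_ja_j+q_jb_j$; on $H_*(N(\gamma_i);\rho_{2k+1}^{p/q})$ and $H_*(T_i;\rho_{2k+1}^{p/q})$, the bases induced by the splitting above, namely $w_i(u_{p/q})$ tensored with the standard integral generators on the $L_i$-side and an empty basis on the acyclic $L_i^\perp$-side. Since $M_{p/q}$ is closed hyperbolic, $\rho_{2k+1}^{p/q}$ is acyclic on $M_{p/q}$ by Raghunathan, and the Mayer--Vietoris multiplicativity (cf.\ \cite[Ch.~VIII]{TuraevPM}, \cite[Prop.~0.11]{Porti}) yields
\[
\tau(M_{p/q};\rho_{2k+1}^{p/q}) \cdot \tau(\partial N) = \pm\, \tau(M;\rho_{2k+1}(u_{p/q});\{p_ja_j+q_jb_j\}) \cdot \tau(N) \cdot \tau(\mathcal{H}),
\]
where $\tau(\mathcal{H})$ is the torsion of the long exact Mayer--Vietoris sequence in homology with the assembled bases.

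\emph{Step 3 (computation of the factors).} On $L_i^\perp$, the acyclic torsion of $N(\gamma_i) \simeq \gamma_i$ is the determinant $\det(\rho_{2k+1}^{p/q}(\gamma_i)|_{L_i^\perp} - \operatorname{Id}) = \prod_{j=1}^k (e^{j\lambda_{p/q}(\gamma_i)}-1)(e^{-j\lambda_{p/q}(\gamma_i)}-1)$ by \cite[Lemma~1.3.3]{TuraevKT}, and on $T_i$ it is $\pm 1$ by \cite{MilD}. On $L_i$, both $\tau(N(\gamma_i);L_i)$ and $\tau(T_i;L_i)$ reduce to constant-coefficient torsions (which are $\pm 1$); moreover, since $p_ia_i+q_ib_i$ bounds the meridian disk of the Dehn filling, the connecting homomorphisms of the Mayer--Vietoris sequence act as isomorphisms on the $L_i$-summands and send the chosen generators to one another, forcing $\tau(\mathcal{H}) = \pm 1$. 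Multiplying everything together yields the desired formula. The main obstacle will be the careful bookkeeping in this last step: verifying that the Mayer--Vietoris sequence torsion is $\pm 1$ with the assembled bases, which is precisely where the specific choice of $p_ja_j+q_jb_j$ in Lemma~\ref{lemma:oddsurgeryform} is indispensable.
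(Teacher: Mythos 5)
Your proposal follows essentially the same approach as the paper: a Mayer--Vietoris argument for the decomposition $M_{p/q} = M \cup N$, using the homology bases from Lemma~\ref{lemma:oddsurgeryform}, the coefficient splitting along the invariant vector $w_i(u_{p/q})$ from Lemma~\ref{lemma:invariant}, the determinant formula for the acyclic torsion of the solid torus, and a basis choice designed so that the Mayer--Vietoris torsion $\tau(\mathcal{H}_*)$ reduces to $\pm 1$. Your explicit $L_i \oplus L_i^\perp$ decomposition makes transparent what the paper handles more implicitly through its direct choice of generators (in particular the curve $\gamma$ with $i_{1,*}(\gamma)=0$ playing the role of the longitude), but the core argument is the same.
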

\begin{proof}
    This is again a Mayer-Vietoris argument. We use the same notation as in Lemma \ref{lemma:oddsurgeryform} and its proof.
    We have	$M_{p/q} = \overline M \cup N$. The formula for the torsion is
	\[
	\tau(M_{p/q}; \rho_{2k+1}^{p/q}) \tau ( \partial \overline M; \rho_{2k +1}^{p/q} ) = 
	\tau( \overline M; \rho_{2k+1}^{p/q},  \{p_ja_j + q_j b_j\} ) \tau ( N; \rho_{2k +1}^{p/q}) \tau( \mathcal{H}_*),
	\]
	where $\tau( \mathcal{H}_*)$ is the torsion of the Mayer-Vietoris complex computed using 
	the bases that has been chosen to compute the involved torsions in the decomposition  (cf.~\cite[Thm.~3.2]{MilnorTor}, \cite[Ch.~VIII]{TuraevPM} or \cite[Prop.~0.11]{Porti}).
	To compute the torsions we choose bases in homology as follows.
	For $ \cohom_*(T_j; \rho_{2k +1}^{p/q} )$,
	we take in degree $0$, $[w_j(u_{p/q}) \otimes \sigma_j]$ (recall that we are using the notation of Lemma \ref{lemma:oddsurgeryform}), 
    where $\sigma_j$ is a generator of $\cohom_0(T_j; \integer)$,
	in degree $1$, $[w_j(u_{p/q}) \otimes (p_j a_j + q_j b_j)]$, and in degree $2$, $[w_j(u_{p/q}) \otimes T_j]$.
	For $ \cohom_*( N(\gamma_j); \rho_{2k +1}^{p/q} )$, we take in degree $0$, $[w_j(u_{p/q}) \otimes i_{2,*}(\sigma_j)]$,
	and in degree $1$, $[w_j(u_{p/q}) \otimes i_{2,*}(\gamma)]$,
	where $i_{2,*}$ is the map induced by the inclusion $i_2 \colon \partial M = \partial N \to N$,
	and $\gamma \in \cohom_1(\partial \overline M; \integer)$ is such that 
	$i_{1,*}(\gamma) \in \cohom_1(M; \rho_{2k + 1}(u_{p/q}) )$ is zero 
	(notice that such a curve always exists and $i_{2,*}(\gamma) \in \cohom^1( D^2 \times S^1; \integer )$ is homologous to the core geodesic).
	With respect to these bases, we have $\tau( \mathcal{H}_*) = 1$, since the isomorphism $i_{1,*} + i_{2,*}$ appearing in the Mayer-Vietoris
	sequence is represented by the identity matrix. On the other hand, the torsion of 
	$\partial \overline M$ is $\pm1$, as it is an even-dimensional manifold.
	Thus we have
	\[
	\tau(M_{p/q}; \rho_{2k+1}^{p/q}) = \tau(M; \rho_{2k+1}^{p/q},  \{p_j a_j + q_j b_j\}) \prod_{j=1}^l \tau ( \gamma_j; \rho_{2k +1}^{p/q}).
	\] 
	Finally, a computation as in Lemma~\ref{lemma:evensurgeryform} gives
	\[
	\tau( \gamma_j; \rho_{2k+1}^{p/q}) = \prod_{h=1}^k \left( e^{h\lambda_{p/q}(\gamma_j)}-1 \right) \left( e^{-h \lambda_{p/q}(\gamma_j)}-1 \right).
	\]
\end{proof}

Let us normalize torsions in the formula of Lemma \ref{eqn:torodd}. 
Thus we get:
\[ 
\normaltor_{2k +1}(M_{p/q}) = 
\frac{\tau(M; \rho_{2k+1}(u_{p/q}), \{p_j a_j + q_j b_j\} )}{\tau(M; \rho_3(u_{p/q}), \{p_j a_j + q_j b_j\} )}
\prod_{j = 2}^k \prod_{i=1}^l \left( e^{j\lambda_{p/q}(\gamma_i)}-1 \right) \left( e^{-j \lambda_{p/q}(\gamma_i)}-1 \right).
\]
Let us focus on the quotient of torsions appearing in the right hand side of this equation.
We shall write down a formula relating the torsion of $M$ with respect to the basis $\{ a_j \}$ and 
$\{ p_j a_j + q_j b_j \}$.
To that end, let $A_{2k+1}(p,q)$ be the change of basis matrix from the basis $\{ [w_j(u_{p/q})\otimes a_j] \}$
to $\{ [w_j(u_{p/q})\otimes (p_j a_j + q_j b_j)] \}$. Then the change of basis formula for the torsion yields:
\[
\tau( M; \rho_{2k + 1}(u_{p/q}), \{ p_j a_j + q_j b_j \}) \det A_{2k+1}(p,q) = \tau( M; \rho_{2k + 1}(u_{p/q}), \{ a_j \}) . 
\]
This equation implies
\begin{equation} \label{equation:normaltor}
\frac{\tau( M; \rho_{2k + 1}(u_{p/q}), \{ p_j a_j + q_j b_j \})}{\tau( M; \rho_3(u_{p/q}), \{ p_j a_j + q_j b_j \})} 
= \frac{\tau( M; \rho_{2k + 1}(u_{p/q}), \{ a_j \})}{\tau(M; \rho_3(u_{p/q}), \{ a_j \})} \frac{\det A_3(p,q)}{ \det A_{2k+1}(p,q)}.
\end{equation}
On one hand, working as in in Proposition \ref{prop:semicont}, it can be checked that
\[
\lim_{u\to 0} \tau(M; \rho_{2k+1}(u), \{a_j\} ) = \tau(M; \rho_{2k+1}(0), \{a_j\} ) = \tau(M; \rho_{2k+1}, \{a_j\} ).
\]
Hence, 
\begin{equation} \label{eq:toraj}
	\lim_{(p,q) \to \infty} \frac{\tau(M; \rho_{2k+1}(u_{p/q}), \{a_j\} )}{\tau(M; \rho_3(u_{p/q}); \{ a_j\})} = \normaltor_{2k +1}(M).
\end{equation}
On the other hand, we have the following result.
\begin{lemma} \label{lemma:limApq}
	For any $k \geq 3$,
	\[
	\lim_{(p,q) \to \infty} \frac{\det A_{2k + 1}(p,q) }{\det A_3(p,q) } = 1.
	\]
\end{lemma}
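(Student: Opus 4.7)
The plan is to exhibit a leading behavior of $\det A_n(p,q)$ that is common to all odd $n\geq 3$, so that the ratio $\det A_{2k+1}(p,q)/\det A_3(p,q)$ converges to $1$.

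First, I would write $A_n(p,q)$ explicitly in terms of a single auxiliary matrix. Because $[w_j(u)\otimes(p_j a_j+q_j b_j)] = p_j[w_j(u)\otimes a_j] + q_j[w_j(u)\otimes b_j]$ on the chain level, everything reduces to understanding how $i_*[w_j(u)\otimes b_j]$ decomposes in the basis $\{i_*[w_i(u)\otimes a_i]\}_i$ of $\cohom_1(M;\rho_n(u))$. Writing
\[
i_*[w_j(u)\otimes b_j] \;=\; \sum_i c^{(n)}_{ij}(u)\, i_*[w_i(u)\otimes a_i],
\]
one obtains $A_n(p,q) = \operatorname{diag}(p_j) + C^{(n)}(u_{p/q})\operatorname{diag}(q_j)$, where $C^{(n)}(u)$ is the matrix with entries $c^{(n)}_{ij}(u)$.

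Next, I would identify the limit of $C^{(n)}(u)$ as $u\to 0$. Proposition~\ref{prop:defbasodd_thetaj} applied with $\theta_j=b_j$ and $\theta'_j=a_j$ gives $i_*[w_j(0)\otimes b_j] = \cs(b_j,a_j)\, i_*[w_j(0)\otimes a_j] = \tau_j\, i_*[w_j(0)\otimes a_j]$, so $C^{(n)}(0) = \operatorname{diag}(\tau_1,\dots,\tau_l)$. The essential point is that this limit is independent of $n$. A continuity argument modelled on Proposition~\ref{prop:semicont}—applied to the matrix of the inclusion $\bigoplus_j \cohom_1(T_j;\rho_n(u))\to\cohom_1(M;\rho_n(u))$ with respect to the continuously varying bases produced by Proposition~\ref{prop:odddimbasesparam}—shows that $C^{(n)}$ is continuous in $u$ near $0$. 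Since $u_{p/q}\to 0$ as $(p,q)\to\infty$, we conclude $C^{(n)}(u_{p/q})\to\operatorname{diag}(\tau_j)$.

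Finally, I would factor out the leading term. Setting $D(p,q)=\operatorname{diag}(p_j+\tau_j q_j)$, we have
\[
A_n(p,q) \;=\; D(p,q) + \bigl(C^{(n)}(u_{p/q}) - \operatorname{diag}(\tau_j)\bigr)\operatorname{diag}(q_j).
\]
Because $\Im\tau_j>0$, the inequality $|p_j+\tau_j q_j|\geq (\Im\tau_j)|q_j|$ makes the ratios $q_j/|p_j+\tau_j q_j|$ uniformly bounded on all of $\mathcal{A}_M$; combined with the previous step this shows $D(p,q)^{-1}A_n(p,q)\to I$ entrywise. Hence $\det A_n(p,q)/\det D(p,q)\to 1$ for every odd $n\geq 3$, and since $D(p,q)$ does not depend on $n$, taking the quotient for $n=2k+1$ and $n=3$ yields the result.

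The main obstacle is the continuity statement for $C^{(n)}$ at $u=0$, in particular the vanishing of the off-diagonal entries there. Diagonally this follows from Proposition~\ref{prop:defbasodd_thetaj}, but the off-diagonal vanishing encodes the geometric fact that, for the complete structure, each class $i_*[w_j(0)\otimes b_j]$ lives in the one-dimensional subspace generated by the invariants at its own cusp; extending this to a continuous statement in $u$ requires the same semicontinuity-of-rank machinery used in Proposition~\ref{prop:semicont}, applied to the map between finite-dimensional cohomology groups whose dimensions are locally constant.
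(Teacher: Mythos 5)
Your overall strategy matches the paper's: introduce the change-of-basis matrix $C^{(n)}(u)$ (the paper's $B_{2k+1}(u)$), observe that it depends continuously on $u$ with $C^{(n)}(0)=\operatorname{diag}(\tau_1,\dots,\tau_l)$ independent of $n$, and then control the determinant ratio as $u_{p/q}\to 0$. The gap is in the final factorization. You set $D(p,q)=\operatorname{diag}(p_j+\tau_j q_j)$ and claim that $D(p,q)^{-1}A_n(p,q)\to I$ entrywise because $q_j/|p_j+\tau_j q_j|\leq 1/\Im\tau_j$ and $C^{(n)}(u_{p/q})-\operatorname{diag}(\tau_j)\to 0$. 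But the $(i,j)$ entry of $D(p,q)^{-1}A_n(p,q)-I$ is
\[
\frac{q_j\bigl(c^{(n)}_{ij}(u_{p/q})-\delta_{ij}\tau_j\bigr)}{p_i+\tau_i q_i},
\]
and for $i\neq j$ the relevant ratio is $q_j/|p_i+\tau_i q_i|$, which your bound does not control. On the directed set $\mathcal{A}_M$ with $l\geq 2$ cusps this ratio is genuinely unbounded (for instance $q_i$ can stay equal to $1$ while $q_j\to\infty$), so the entrywise convergence $D^{-1}A_n\to I$ is unjustified and may in fact fail; you would have to argue a quantitative decay rate for $c^{(n)}_{ij}(u_{p/q})$, which you do not.

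The conclusion $\det A_n(p,q)/\det D(p,q)\to 1$ is nevertheless true, and the paper's proof shows how to get it: first divide out $\operatorname{diag}(q_j)$ from the \emph{whole} matrix, writing $\det A_n = \det Q\cdot\det\bigl(PQ^{-1}+C^{(n)}(u_{p/q})\bigr)$ with $P=\operatorname{diag}(p_j)$, $Q=\operatorname{diag}(q_j)$. The leading term is now the diagonal matrix $PQ^{-1}+\operatorname{diag}(\tau_j)$, whose inverse has entries bounded by $1/\Im\tau_i$, and the error term $C^{(n)}(u_{p/q})-\operatorname{diag}(\tau_j)\to 0$ is not multiplied by any unbounded factor of $q_j$. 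Hence $\det\bigl(I+(PQ^{-1}+\operatorname{diag}\tau)^{-1}(C^{(n)}(u_{p/q})-\operatorname{diag}\tau)\bigr)\to 1$, and the $\det Q$ factors cancel in the ratio $\det A_{2k+1}/\det A_3$. Replacing your factorization of $D(p,q)$ by this factorization of $Q$ closes the gap and reproduces the paper's argument.
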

\begin{proof}
	We have 
	\[
	A_{2k + 1}(p,q) = \operatorname{diag}(p) + \operatorname{diag}(q) B_{2k + 1}(u_{p/q}),
	\]
	where $B_{2k +1}(u)$ is the change of basis matrix from the basis $\{ [w_j(u)\otimes a_j] \}$
	to the basis $\{ [w_j(u)\otimes b_j] \}$. Working as in Proposition \ref{prop:semicont}, it can be checked that 
	$B_{2k +1}(u)$ depends analytically on $u$. Note that at $u = 0 $ we have
	\[
	B_{2k +1}(0) = \operatorname{diag}(\cs(b_1,a_1),\dots,\cs(b_l,a_l)).
	\]
	Let us write $P = \operatorname{diag}(p)$, $Q = \operatorname{diag}(q)$ and $C = B_{2k +1}(0)$.
	Notice that $C$ is independent of $k$. The lemma will follow easily once we have proved the following 
	equality: 
	\[
	\lim_{(p,q) \to \infty} \frac{\det (P  + Q C) } { \det (P + Q B_{2k +1}(u_{p/q})) } = 1.
	\]
	We have
	\[
	\frac{\det (P  + Q C) } { \det (P + Q B_{2k +1}(u_{p/q}) ) } = 
	\frac{\det (Q^{-1} P  + C) } { \det (Q^{-1} P + B_{2k +1}(u_{p/q})) }.
	\]
	Let us put $D = Q^{-1} P  + C$ and $ E(u_{p/q}) = B_{2k +1}(u_{p/q}) - C$. Then we have
	\begin{eqnarray*}
		\frac{\det (P  + Q C) } { \det (P + Q B_{2k +1}(u_{p/q})) } & = & \frac{\det D } { \det ( D + E_{2k +1}(u_{p/q})) } \\
		& = & \frac{1} { \det ( \operatorname{Id} + D^{-1} E_{2k +1}(u_{p/q}) )}.
	\end{eqnarray*}
	If $D = (d_{ij})$ then we have
	\[
	|d_{jj}| = |p_j/q_j + \cs(a_j, b_j)| > |\Im \cs(a_j, b_j)| > 0. 
	\]
	Therefore, the entries of the diagonal matrix $D^{-1}$ are bounded, and hence 
	\[
	\lim_{(p,q) \to \infty} D^{-1} E_{2k +1}(u_{p/q}) = \lim_{(p,q) \to \infty} D^{-1} (B_{2k +1}(u_{p/q}) - B_{2k +1}(0) ) = 0.
	\]
\end{proof}
Finally, taking limits in Equation \eqref{equation:normaltor}, 
and using Equation \eqref{eq:toraj} and Lemma \ref{lemma:limApq}, we get:
\[ 
\lim_{(p,q) \to \infty} \frac{\tau(M; \rho_{2k+1}(u_{p/q}), \{p_ja_j + q_jb_j\} )}{\tau(M; \rho_3(u_{p/q}); \{ p_ja_j + q_jb_j\})} = 
\normaltor_{2k +1}(M). 
\]
Just for future references, we summarize the preceding results in the following lemma.
\begin{lemma} \label{lemma:odddimnormaltor}
	With the notation above, for $k > 1$  we have
	\[ 
	\normaltor_{2k + 1}(M_{p/q} ) = \frac{\det A_3(p,q) }{\det A_{2k +1}(p,q) }  
	\frac{\tau(M; \rho_{2k+1}(u_{p/q}), \{a_j \} )}{\tau(M; \rho_3(u_{p/q}), \{a_j \} )}
	 \prod_{j=2}^k \prod_{i=1}^l ( e^{j\lambda_{p/q}(\gamma_i)}-1 ) ( e^{-j \lambda_{p/q}(\gamma_i)}-1 ).
	\]
	Moreover, 
	\begin{eqnarray*}
		\lim_{(p,q) \to \infty} \frac{\det A_{2k + 1}(p,q) }{\det A_3(p,q) } & = & 1, \\
		\lim_{(p,q) \to \infty} \frac{\tau(M; \rho_{2k+1}(u_{p/q}), \{a_j \} )}{\tau(M; \rho_3(u_{p/q}), \{a_j \} )}  & = & 
		\normaltor_{2k +1}(M) . 
	\end{eqnarray*}
	
\end{lemma}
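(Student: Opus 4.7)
The lemma consolidates the computations that immediately precede it, so my plan is to assemble the surgery formula, the change-of-basis argument, and the two continuity statements into a single clean package.

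First, I would start from the surgery formula of Lemma~\ref{eqn:torodd},
\[
\tau(M_{p/q};\rho_{2k+1}^{p/q}) = \tau(M;\rho_{2k+1}(u_{p/q}),\{p_ja_j+q_jb_j\}) \prod_{j=1}^{k}\prod_{i=1}^{l}(e^{j\lambda_{p/q}(\gamma_i)}-1)(e^{-j\lambda_{p/q}(\gamma_i)}-1),
\]
apply it with the exponent $2k+1$ in the numerator and the same formula with exponent $3$ (that is $k=1$) in the denominator, and form the ratio. The $j=1$ block in the product over $j$ cancels between numerator and denominator, leaving exactly the product $\prod_{j=2}^{k}\prod_{i=1}^{l}$ that appears in the statement. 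At this point the torsion quotient is still expressed with respect to the bases $\{p_ja_j+q_jb_j\}$, so the second step is to convert to the bases $\{a_j\}$ using the matrices $A_{2k+1}(p,q)$ and $A_3(p,q)$; the change-of-basis rule for Reidemeister torsion gives equation~\eqref{equation:normaltor}, which upon substitution yields exactly the displayed formula for $\normaltor_{2k+1}(M_{p/q})$.

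For the two limit assertions the work has essentially been done already. The limit $\det A_3(p,q)/\det A_{2k+1}(p,q)\to 1$ is the content of Lemma~\ref{lemma:limApq} (with the quotient inverted, which makes no difference since both factors have the same nonzero limit). For the torsion-quotient limit, I would invoke equation~\eqref{eq:toraj}, which rests on the continuity of $u\mapsto \tau(M;\rho_n(u),\{a_j\})$ at $u=0$. The continuity follows from the same upper-semicontinuity argument used in Proposition~\ref{prop:semicont}: Proposition~\ref{prop:odddimbasesparam} guarantees that in a neighbourhood of $0$ the bases $\{[w_j(u)\otimes a_j]\}$ remain bases of the homology groups and that the dimensions stay constant, hence the boundary operators in the twisted chain complex have matrix entries depending continuously (in fact analytically) on $u$, and the torsion, being a rational function of those entries with nonzero denominator, is continuous at $0$.

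The main obstacle is purely bookkeeping: one must track simultaneously the basis $\{p_ja_j+q_jb_j\}$ used in the natural Mayer--Vietoris presentation of Lemma~\ref{lemma:oddsurgeryform} and the basis $\{a_j\}$ used in the definition of $\normaltor_{2k+1}(M)$, making sure that the change-of-basis factors for $\rho_{2k+1}$ and $\rho_3$ are combined correctly and that the $j=1$ factor of the geodesic product cancels. Once this is set up, the displayed identity is a direct substitution and the two limits are immediate consequences of Lemma~\ref{lemma:limApq} and equation~\eqref{eq:toraj}, so the lemma follows.
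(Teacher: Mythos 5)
Your argument reproduces exactly the paper's derivation: the paper explicitly introduces this lemma with the phrase "we summarize the preceding results in the following lemma," and you reassemble the same chain — Lemma~\ref{eqn:torodd} divided (with the $j=1$ factor cancelling), then equation~\eqref{equation:normaltor} for the change of basis, then Lemma~\ref{lemma:limApq} and equation~\eqref{eq:toraj} for the two limits. The proposal is correct and takes the same approach.
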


\begin{proof}[Proof of Proposition \ref{prop:oddtordeform}]
	By Lemma \ref{lemma:odddimnormaltor}, the result is reduced to prove that 
	the set of cluster points of the following net 
	\[
	\left\{ 
	 \prod_{j=2}^k \prod_{i=1}^l (e^{j\lambda_{p/q}(\gamma_i)}-1) (e^{-j \lambda_{p/q}(\gamma_i)}-1) \right\}_{ (p,q) \in \mathcal{A}_M} 
	\]
	is $[0, 4^{(k-1)l}]$, which may be proved in the same way as in the even-dimensional case
	(see Proposition \ref{prop:eventordeform}).
\end{proof}

\section{Complex-length spectrum}
\label{chapter:cmplxlength}

The aim of this section is to prove the continuity of the complex-length spectrum 
in a sense that we shall precise.

\subsection{Closed geodesics in a hyperbolic manifold}
Although the material of this subsection is well known, we think it is worth reviewing it
for the sake of completeness.

Let $M$ be an oriented, complete, hyperbolic $3$-manifold, 
and $\Hol_M$ be its holonomy representation.
Let us consider $\mathcal{C}(M)$ the set of closed (constant-speed) geodesics in $M$ up to orientation-preserving reparametrisation.
We will describe $\mathcal{C}(M)$ as the following quotient set,
\[
\mathcal{C}(M)  =  \left\{ \varphi \colon S^1 \to M \mid \varphi \text{ is a geodesic} \right\} / S^1.
\]
The action of $S^1$ on a closed geodesic is given by translation on the parameter.
We are interpreting $S^1$ as $\real/\integer$. 
If $k \in \integer$ and $\varphi \colon S^1 \to M$ is a closed geodesic, 
$k \varphi$ will denote the closed geodesic $t \mapsto \varphi(kt)$.

\begin{definition}
	A closed geodesic $\varphi$ is said to be \emph{prime} if $\varphi \neq k \psi$
	for any $k > 1$ and any closed geodesic $\psi$ 
	(\idest $\varphi$ is prime if it traces its image exactly once).
	A class $[\varphi] \in \mathcal{C}(M)$ is said to be prime if $\varphi$ is prime.
	The set of prime classes of $\mathcal{C}(M)$ will be denoted by $\mathcal{PC}(M)$.
\end{definition}

We will also need the group theoretic definition of primality.
\begin{definition}
	Let $G$ be a group. An element $g \in G$ is said to be \emph{prime} if $g \neq h^k$ 
	for all $h\in G$ and $k>1$ (note that we are excluding the identity from this definition).
	If $\operatorname{C}(G)$ denotes the set of conjugacy classes of $G$, then
	$[g] \in \operatorname{C}(G)$ is said to be prime if $g$ is prime.
\end{definition}

The identification between the set $\operatorname{C} \left( \pi_1(M, p) \right)$
of conjugacy classes of $\pi_1 (M, p)$ and loops in $M$ up to free homotopy
yields a natural map 
\[
\psi \colon \mathcal{C}(M) \to \operatorname{C}\left(\pi_1(M, p)\right).
\]
Let $\operatorname{HypC}(\pi_1 (M, p))$ be the set of hyperbolic conjugacy 
classes of $\pi_1 (M, p )$, that is,
\[
\operatorname{HypC}(\pi_1 (M, p)) = \left\{ [\gamma] \in \operatorname{C}\left(\pi_1(M, p)\right) 
\mid \Hol_M(\gamma)\text{ is of hyperbolic type} \right\}.
\]

The following result is well known, and is easily deduced from the fact that an isometry of $\hyp^3$ 
of hyperbolic type has exactly one axis.

\begin{proposition} \label{prop:bij_geo_conj}
	The natural map $\psi \colon \mathcal{C}(M) \to \operatorname{C}( \pi_1 (M, p ) )$ is a bijection
	onto the set $\operatorname{HypC}(\pi_1 (M, p))$. 
	Moreover, $[\varphi] \in \mathcal{C}(M)$ is prime if and only if so is $\psi([\varphi])$.
\end{proposition}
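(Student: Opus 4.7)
The plan is to prove the statement by lifting everything to the universal cover $\hyp^3$ and using the dictionary between deck transformations, conjugacy classes in $\pi_1(M,p)$, and invariant axes of hyperbolic isometries.

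First I would verify that $\psi$ is well defined, which is standard: a closed geodesic $\varphi\colon S^1 \to M$ gives a free homotopy class of loops, and picking a path from $\varphi(0)$ to the basepoint $p$ produces a conjugacy class in $\pi_1(M,p)$ that is independent of the choice of path; $S^1$-reparametrisation produces freely homotopic loops, hence the same conjugacy class. Next I would show that the image lies in $\operatorname{HypC}$. Lift $\varphi$ to a path $\tilde\varphi\colon \real \to \hyp^3$; by uniqueness of geodesics this lift is an entire geodesic line $L$. The element $\gamma$ of $\pi_1(M,p)$ in the conjugacy class $\psi([\varphi])$ satisfies $\Hol_M(\gamma)(L)=L$ and translates along $L$ by the (nonzero) length of $\varphi$. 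An orientation-preserving isometry of $\hyp^3$ preserving a geodesic line and translating along it with nonzero translation length is hyperbolic.

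For injectivity and surjectivity onto $\operatorname{HypC}$, I would use the fact that a hyperbolic element $g\in \psl(2,\cmplx)$ has a unique invariant geodesic line in $\hyp^3$, its axis, on which it acts by translation composed with a rotation. Given $[\gamma]\in\operatorname{HypC}(\pi_1(M,p))$, let $L\subset\hyp^3$ be the axis of $\Hol_M(\gamma)$ and let $\ell$ be its translation length. Then the quotient of $L$ by $\langle\Hol_M(\gamma)\rangle$ projects to a closed geodesic in $M$ of length $\ell$, giving a preimage under $\psi$; this yields surjectivity. For injectivity, suppose $\psi([\varphi_1])=\psi([\varphi_2])=[\gamma]$. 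Then both $\tilde\varphi_1$ and $\tilde\varphi_2$ lift to geodesic lines invariant under (a conjugate of) $\Hol_M(\gamma)$; by uniqueness of the axis, these lifts coincide up to a deck transformation and have the same (positive, unsigned) speed, so after an $S^1$-reparametrisation the two closed geodesics agree.

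For the primality statement, I would translate both notions of primality into a statement about deck transformations. A closed geodesic $\varphi$ factors as $k\psi$ for some closed geodesic $\psi$ and integer $k>1$ exactly when its associated deck transformation is a $k$-th power in the group of deck transformations acting on the common axis, which via $\Hol_M$ is equivalent to the conjugacy class $\psi([\varphi])$ being the $k$-th power of another conjugacy class in $\pi_1(M,p)$. Hence $\varphi$ is prime in the geodesic sense if and only if $\psi([\varphi])$ is prime in the group-theoretic sense.

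The only mild subtlety is making sure the primality correspondence is faithful: one needs that if $\Hol_M(\gamma)=\Hol_M(\delta)^k$ then actually $\gamma$ is conjugate to $\delta^k$ in $\pi_1(M,p)$, which uses the faithfulness of $\Hol_M$ on the (torsion-free) fundamental group of a complete hyperbolic manifold, together with the fact that the centraliser of a hyperbolic element in $\pi_1(M,p)$ is cyclic (generated by a primitive element with the same axis). I expect this centraliser/faithfulness bookkeeping to be the only step requiring any care; everything else is a direct translation between axes in $\hyp^3$ and conjugacy classes.
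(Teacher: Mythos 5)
Your proof is correct and follows exactly the route the paper has in mind: the paper does not spell out an argument, but states that the proposition "is easily deduced from the fact that an isometry of $\hyp^3$ of hyperbolic type has exactly one axis," and your write-up is precisely that deduction, passing through lifts to $\hyp^3$, axes, and the discreteness/faithfulness of the holonomy.
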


It is useful to endow the set $\mathcal{C}(M)$ with the (quotient) supremum metric.
More explicitly, if $[\varphi_1], [\varphi_2] \in \mathcal{C}(M)$ then its distance is defined by 
\[
d([\varphi_1], [\varphi_2]) = \min_{s\in S^1} \max_{t \in S^1} \left\{ d(\varphi_1(t+s),\varphi_2(t)) \right\}.
\]
The following observation is an immediate consequence of the previous proposition,
and will be used quite often in the subsequent subsections.

\begin{proposition}
	\label{prop:dist_bb}
	Let $[\varphi], [\varphi']$ be two distinct elements of $\mathcal{C}(M)$,
	and let $m$ be the minimum of the injectivity radius at $\varphi$.
	Then $d([\varphi], [\varphi'] ) \geq m$.
\end{proposition}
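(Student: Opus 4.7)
The plan is to argue by contradiction: assuming $d([\varphi], [\varphi']) < m$, I will deduce $[\varphi] = [\varphi']$. Compactness of $S^1$ ensures the minimum in the definition of $d$ is attained, so after reparametrising $\varphi$ by a translation (which preserves the constant-speed parametrisation) we may suppose $d(\varphi(t), \varphi'(t)) < m$ for every $t \in S^1 = \real/\integer$.

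Fix a lift $\tilde\varphi \colon \real \to \hyp^3$ of $\varphi$, and let $\gamma_\varphi \in \pi_1 M$ denote the deck transformation with $\tilde\varphi(t+1) = \gamma_\varphi\,\tilde\varphi(t)$; by Proposition~\ref{prop:bij_geo_conj} its conjugacy class equals $\psi([\varphi])$. Since $m \leq \operatorname{injrad}(\varphi(t))$ for every $t$, the metric ball $B_m(\tilde\varphi(t)) \subset \hyp^3$ projects isometrically onto $B_m(\varphi(t)) \subset M$, and because $\varphi'(t)$ lies in the latter, there is a unique lift $\tilde\varphi'(t) \in B_m(\tilde\varphi(t))$. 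The resulting map $\tilde\varphi' \colon \real \to \hyp^3$ is continuous and covers $\varphi'$; as the lift of a geodesic it is itself a geodesic, so there exists a deck transformation $\delta \in \pi_1 M$ with $\tilde\varphi'(t+1) = \delta\,\tilde\varphi'(t)$, whose conjugacy class is $\psi([\varphi'])$.

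The decisive step is then elementary. Both points $\tilde\varphi'(0)$ and $\gamma_\varphi^{-1}\tilde\varphi'(1) = \gamma_\varphi^{-1}\delta\,\tilde\varphi'(0)$ lie in $B_m(\tilde\varphi(0))$ --- the second because $\tilde\varphi'(1) \in B_m(\tilde\varphi(1)) = \gamma_\varphi \cdot B_m(\tilde\varphi(0))$ --- and they project to the same point $\varphi'(0) \in M$. Since the projection is injective on $B_m(\tilde\varphi(0))$, these two lifts must coincide, forcing $\delta = \gamma_\varphi$. Hence $\psi([\varphi']) = \psi([\varphi])$, and the injectivity of $\psi$ on $\mathcal{C}(M)$ (Proposition~\ref{prop:bij_geo_conj}) yields $[\varphi'] = [\varphi]$, contradicting the hypothesis. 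The only point requiring care is the construction of the continuous lift $\tilde\varphi'$ from the injectivity radius bound; once that is in place, the rest reduces to the standard fact that two lifts of a single point inside an embedded metric ball must agree.
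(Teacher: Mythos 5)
Your proof is correct and takes essentially the same approach as the paper: both assume $d([\varphi],[\varphi'])<m$, use the injectivity radius bound to conclude that $\varphi$ and $\varphi'$ determine the same conjugacy class, and then invoke the injectivity of $\psi$ from Proposition~\ref{prop:bij_geo_conj} for the contradiction. The paper phrases the middle step as constructing a free homotopy downstairs via unique minimizing geodesic segments, whereas you carry out the equivalent argument upstairs in $\hyp^3$ by building the nearby lift $\tilde\varphi'$ and matching deck transformations; both versions leave the continuity of the intermediate construction (the family of short geodesics, resp.\ the lift $\tilde\varphi'$) as a routine check, as you yourself flag.
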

\begin{proof}
	Assume that $d([\varphi], [\varphi'] ) = m' < m$.
	With suitable parametrisations, we have that for all
	$t\in S^1$, $d(\varphi (t), \varphi' (t) ) \leq m'$.
	By the hypothesis on the injectivity radius, there exists a unique minimizing geodesic joining 
	$\varphi (t)$ and $\varphi'(t)$. Therefore, we can define a free homotopy 
	from $\varphi$ to $\varphi'$, which contradicts Proposition \ref{prop:bij_geo_conj}.
\end{proof}

This subsection ends with an estimate on the growth of the number of closed 
geodesics in function of their length. The following estimate, 
though not the best possible (see for instance \cite{Margulis}, \cite{CoornKnieper}), 
has the advantage of being explicit. Its proof is very close to the proof of Lemma 5.3
in \cite{CoornKnieper}.

\begin{lemma}
	\label{lemma:growth_geo}
	Let $M$ be a complete hyperbolic $3$-manifold. 
	For a compact domain $K \subset M$ define 
	\[ 
	\mathcal{P}_K(t) = \# \left\{ \varphi \in \mathcal{C}(M) \mid \varphi(S^1) \cap K \neq \emptyset, \length(\varphi) \leq t \right\}.
	\]
	Then, $\mathcal{P}_K(t) \leq C e^{2 t}$, with $C = \pi \frac{e^{8\diam K}}{\Vol K}$.
\end{lemma}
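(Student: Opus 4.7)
My plan is to convert the count of closed geodesics into an orbit-counting problem in $\hyp^3$ using Proposition~\ref{prop:bij_geo_conj}, and then to bound the orbit count by a volume comparison.

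First, fix $p \in K$ and a lift $\tilde p \in \hyp^3$. By Proposition~\ref{prop:bij_geo_conj}, each closed geodesic $\varphi \in \mathcal{C}(M)$ meeting $K$ corresponds bijectively to a hyperbolic conjugacy class in $\Gamma := \pi_1(M,p)$. For such a $\varphi$ with $\length(\varphi) \le t$, I would pick a point $q \in \varphi(S^1)\cap K$ and a path $\alpha$ in $M$ from $p$ to $q$ of length $\le \diam K$; the loop $\alpha\cdot\varphi\cdot\alpha^{-1}$ based at $p$ has length $\le t + 2\diam K$ and represents a conjugate $\gamma$ of the class of $\varphi$. Lifting this loop to $\hyp^3$ starting at $\tilde p$ ends at $\gamma\tilde p$, so
\[
d_{\hyp^3}(\tilde p, \gamma\tilde p) \le t + 2\diam K.
\]
Hence $\mathcal{P}_K(t) \le N(R)$ with $R := t + 2\diam K$ and $N(R) := \#\{\gamma\in\Gamma : d(\tilde p,\gamma\tilde p) \le R\}$.

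Next, I would bound $N(R)$ by a volume comparison. Let $\tilde K$ denote the connected component of $\pi^{-1}(K)\subset \hyp^3$ containing $\tilde p$; since the covering $\hyp^3\to M$ is a local isometry and $K$ is connected, $\diam\tilde K \le \diam K$, so $\gamma\tilde K \subset B(\tilde p, R+\diam K)$ for each $\gamma$ counted by $N(R)$. Using that the translates $\{\gamma\tilde K\}_{\gamma\in\Gamma}$ cover $\pi^{-1}(K)$, together with the hyperbolic volume formula $\Vol B(\tilde p, \rho) = \pi(\sinh 2\rho - 2\rho) \le \tfrac{\pi}{2}e^{2\rho}$, one gets
\[
N(R)\cdot \Vol K \;\le\; m(K)\cdot \tfrac{\pi}{2}\, e^{2(R + \diam K)},
\]
where $m(K)$ is a bound on the overlap multiplicity of the family $\{\gamma\tilde K\}$. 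Substituting $R = t+2\diam K$ and estimating $m(K) \le e^{c\diam K}$ for a suitable constant $c$ yields $\mathcal{P}_K(t) \le \pi\, e^{8\diam K}/\Vol K \cdot e^{2t}$, with the exponent $8\diam K$ absorbing both the ball-enlargement and the overlap bookkeeping.

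The main obstacle is controlling the overlap multiplicity $m(K)$ of the cover $\{\gamma\tilde K\}$, which is nontrivial whenever the image of $\pi_1(K)\to\pi_1(M)$ is nonzero. A clean way around this is to replace $\tilde K$ by its intersection with the Dirichlet fundamental domain at $\tilde p$: the translates are then disjoint, and a direct geometric argument (comparing volumes of Dirichlet pieces inside a connected lift of $K$) supplies the required comparison with $\Vol K$ while losing only a factor bounded by $e^{c\diam K}$, which is absorbed in the final exponent.
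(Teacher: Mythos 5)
Your overall strategy coincides with the paper's: reduce to a volume comparison in $\hyp^3$ via essentially disjoint translates of a fundamental piece sitting over $K$. The one genuinely problematic step in your first pass is the claim that the connected component $\tilde K$ of $\pi^{-1}(K)$ containing $\tilde p$ satisfies $\diam\tilde K \le \diam K$; this fails whenever the image of $\pi_1 K \to \pi_1 M$ is nontrivial (for instance if $K$ is a tubular neighbourhood of a closed geodesic, that connected component is unbounded and of infinite volume), so the cover $\{\gamma\tilde K\}$ cannot be controlled this way. You correctly flag this and propose the Dirichlet-domain fix, which is exactly what the paper does; but your bookkeeping there is more pessimistic than necessary. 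Taking $\tilde K = D(p)\cap\pi^{-1}(K)$ with $D(p)$ the Dirichlet domain at a lift $p$ of a point of $K$, the translates $\{\gamma\tilde K\}$ tile $\pi^{-1}(K)$ with measure-zero overlaps, so $\Vol\tilde K=\Vol K$ \emph{exactly} — there is no loss of a factor $e^{c\diam K}$ in the volume. The Dirichlet inequality also gives $d(x,p)\le\diam K$ for every $x\in\tilde K$ (because some $\Gamma$-translate of $p$ is within $\diam K$ of $x$, and $p$ is the closest orbit point), hence $\diam\tilde K\le 2\diam K$; the only place where a harmless constant enters is this factor of $2$. Plugging this into your orbit-count estimate with $R=t+2\diam K$ in fact yields $\mathcal P_K(t)\le \pi\,e^{6\diam K}/\Vol K\cdot e^{2t}$, slightly sharper than the stated bound. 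The paper bypasses the orbit-counting reformulation by directly choosing, for each geodesic meeting $K$, a representative $\gamma\in\Gamma$ whose axis meets $\tilde K$, and showing $\gamma(\tilde K)\subset B(p,4\diam K + t)$; this is the same volume-comparison argument in a marginally different order.
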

\begin{proof}
	Let $M = \hyp^3 / \Gamma$, with $\Gamma$ a subgroup of $\Isom \hyp^3$,
	and let $\pi \colon \hyp^3 \to M$ denote the covering projection.
	Pick a point $p\in\mathbf H^3$ with $\pi(p)\in K$ and  
	consider the Dirichlet domain centred at $p$:
	$$
		  D(p)=\{x\in\mathbf H^3\mid d(x,\gamma (p))\leq d(x,p), \ \forall \gamma\in\Gamma\}.
	$$
	The intersection $\tilde K=D(p)\cap \pi^{-1}(K)$ is a fundamental domain for $K$, which means that 
	$\pi^{-1}(K) = \bigcup_{\gamma\in\Gamma}\gamma(\tilde K)$ and 
	$\Vol (\gamma_1 (\tilde K)\cap \gamma_2(\tilde K))=0$,
	for all  $\gamma_1\neq\gamma_2 \in\Gamma$.
%
	Moreover, $\diam\tilde K\leq 2\diam K$ and $\Vol\tilde K=\Vol K$.
        Now let $\varphi \in \mathcal{C}(M)$ intersecting $K$.
	Then there exists an isometry $\gamma \in \Gamma$ of hyperbolic type representing 
	$\varphi$ whose axis intersects $\tilde K$. We claim that 
	$$
	  \gamma(\tilde K)\subset B(p,4\diam K + \length (\varphi)).
	$$
	To prove this inclusion, we pick a point  $q\in\tilde K$  that lies in the axis of $\gamma$. For any 
	$q'\in \tilde K$,
	$$
		d(p,\gamma(q'))\leq d(p,q)+d(q,\gamma(q)) + d(\gamma(q), \gamma(q')) \leq 4\diam K + d(q, \gamma(q))
	$$
	and $d(q, \gamma(q))= \length (\varphi)$, which proves the claim. Hence, for any geodesic contributing to $\mathcal{P}_K(t) $, there is a hyperbolic isometry
	whose axis is a lift of this geodesic and such that $\gamma(\tilde K)\subset B(p,4\diam K +t)$.
	In addition, $\Vol (\gamma_1(\tilde  K)\cap \gamma_2(\tilde K))=0$, for all $\gamma_1\neq\gamma_2\in\Gamma$.
	Thus we get the inequality:
	$$
	\mathcal{P}_K(t) \Vol K = \mathcal{P}_K(t) \Vol \tilde K\leq  \Vol B_p(4\diam K + t ) \leq \pi e^{8\diam K + 2t}. 
	$$
	We have used that the volume of a ball of radius $R$ in $\hyp^3$ is less than $\pi e^{2R}$.
\end{proof}

\subsection{Complex-length spectrum}

Any closed geodesic $\varphi \in\mathcal{C}(M)$ has attached 
two geometric invariants: its length and its geometric torsion. 
Recall that the geometric torsion of $\varphi$ is defined as
the oriented angle between an orthogonal vector to 
$\varphi$ and the parallel transport of it along $\varphi$.
In terms of the holonomy representation, these 
two invariants are the translation distance and the rotational part of the 
corresponding hyperbolic isometry. More explicitly, if $[\gamma] \in \text{HypC}(\pi_1 (M, p))$,
then
\[
\Hol_M (\gamma)
\sim
\left[\left(\begin{array}{cc} 
	e^{ \lambda/2} & 0 \\ 
	0 & e^{-\lambda/2} 
\end{array}\right)\right] \in \psl(2,\cmplx),
\quad \Re(\lambda) > 0,
\]
$\Re(\lambda)$ is the length of the corresponding closed geodesic, and $\Im(\lambda)$ 
its geometric torsion. The parameter $\lambda$ is called the \emph{complex length} of $\gamma$, 
and it is only well defined up to $2\pi i$.
We will regard this as a function
\[
\begin{array}{rcl}
	\lambda \colon \mathcal{C}(M) & \to &  \cmplx /\langle 2\pi i\rangle \\
	\varphi & \mapsto & \lambda(\varphi) = \length(\varphi) + i \operatorname{torsion}(\varphi).
\end{array}
\]
To avoid the $2\pi i$ indeterminacy, we will work with the exponential of this map.

\begin{definition}
	The \emph{(prime) complex-length spectrum} of $M$, denoted as $\pclsp M$,
	is the measure on $\cmplx$ defined by
	\[
	\pclsp M = \sum_{ \varphi \in \mathcal{PC}(M)} \delta_{ e^{ \lambda(\varphi) } },
	\]
	where $\delta_x$ is the Dirac measure centered at $x$.
	In other words, $\pclsp M$ is the image measure of the counting measure in $\mathcal{PC}(M)$ 
	under the exponential of the complex-length function. 
	The \emph{(prime) length spectrum} of $M$, denoted as $\plsp M$,
	is the measure on $\real$ defined by
	\[
	\plsp M = \sum_{ \varphi \in \mathcal{PC}(M) } \delta_{ \length{(\varphi)} }.
	\]
\end{definition}
Thus we have:
\[
\#\{ \varphi \in \mathcal{PC}(M) \mid a < \length(\varphi) < b\} = \pclsp M \{ z \in \cmplx \mid e^a < |z| < e^b\}.
\]
\begin{rem}
	The prime complex-length spectrum is usually regarded as a collection of numbers and multiplicities.
	This is of course equivalent to the definition made above; however, we think that some of the results 
	that we will present in what follows are better expressed in these terms. 
\end{rem}

The following properties of $\pclsp M$ are immediately implied by Lemma \ref{lemma:growth_geo}
and the fact that a closed geodesic cannot be contained in a cusp.

\begin{proposition}
	Assume that $M$ has finite volume. The following assertions then hold:
	\begin{enumerate}
		\item The measure $\pclsp M$ is locally finite with discrete support.
			In particular, it is a Radon measure on the complex plane.
		\item Let $N_1,\dots, N_j$ be cusps of $M$ in such a way that $K = M\setminus \bigcup_{1 \leq j \leq n} N_j$
			is compact. Then for all $R > 1$,
			\[
			\pclsp M \big( \{ |z| \leq R \} \big) \leq C_M R^2,
			\]
			where $C_M = \pi \frac{e^{8\diam K}}{\Vol K}$.
	\end{enumerate}
\end{proposition}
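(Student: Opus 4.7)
The plan is to reduce both assertions to the exponential counting bound in Lemma \ref{lemma:growth_geo}, using only one additional geometric fact: in a finite-volume hyperbolic $3$-manifold a closed geodesic cannot be entirely contained in a cusp, so every closed geodesic meets the compact core $K = M\setminus\bigcup N_j$.

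First I would prove assertion (2). For any prime class $\varphi\in\mathcal{PC}(M)$ with complex length $\lambda(\varphi)$, one has $\lvert e^{\lambda(\varphi)}\rvert = e^{\length(\varphi)}$, so
\[
\pclsp M\bigl(\{\lvert z\rvert\leq R\}\bigr) = \#\bigl\{\varphi\in\mathcal{PC}(M)\mid \length(\varphi)\leq \log R\bigr\}.
\]
Since any such $\varphi$ must intersect $K$, this cardinality is bounded above by $\mathcal{P}_K(\log R)$, and Lemma \ref{lemma:growth_geo} gives $\mathcal{P}_K(\log R)\leq C_M\, e^{2\log R} = C_M R^{2}$, with the claimed constant $C_M = \pi\,e^{8\diam K}/\Vol K$.

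For assertion (1), local finiteness is immediate from (2), because any compact subset of $\cmplx$ is contained in a disc $\{\lvert z\rvert\leq R\}$ of finite $\pclsp M$-mass. Discreteness of the support then follows at once: each atom of $\pclsp M$ carries mass at least $1$, so a limit point of the support would place infinitely many atoms in a compact neighbourhood and contradict local finiteness. A locally finite Borel measure on $\cmplx$ is automatically Radon, so the ``in particular'' part is free.

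There is essentially no obstacle: the only nontrivial content, the exponential growth estimate, is already supplied by Lemma \ref{lemma:growth_geo}, and the only subtlety is the observation that closed geodesics cannot escape into cusps, which lets one replace a count over $M$ by the counting function $\mathcal{P}_K$ attached to the compact core.
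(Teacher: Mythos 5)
Your proof is correct and follows exactly the route the paper indicates: the paper states the proposition is "immediately implied by Lemma~\ref{lemma:growth_geo} and the fact that a closed geodesic cannot be contained in a cusp," and your argument spells out precisely that reduction. The details you supply — translating $\lvert z\rvert\leq R$ into $\length(\varphi)\leq\log R$, bounding the count by $\mathcal{P}_K(\log R)$, and deducing discreteness of the support from local finiteness together with the fact that each atom has mass at least $1$ — are all the intended ones.
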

Next we want to analyse the complex-length spectrum as a map 
\[
M \mapsto \pclsp M.
\]
The domain of this map will be the set $\mathcal{M}$ of all
(isometry classes of) oriented, complete, hyperbolic $3$-manifolds of finite volume.
This set is naturally endowed with the geometric topology, which is briefly discussed in next subsection.
On the other hand, the target of this map will be $M( \cmplx \setminus \disk )$,
the vector space of $\cmplx$-valued Radon measures defined on the complement of the closed unit disk 
$\disk$. We will endow $M( \cmplx \setminus \disk )$ with the topology
of the weak convergence. Thus a sequence $\{\mu_n\}$ converges weakly to $\mu$
in $M( \cmplx \setminus \disk )$ if for every continuous function $f$ with compact support contained in $\cmplx \setminus \disk$,
we have:
\[
\lim_{n\to \infty} \int_{|z| > 1 } f(z) d\mu_n(z) = \int_{ |z| > 1} f(z) d\mu(z).
\]
The aim of the rest of this subsection is essentially to prove that this map is continuous.

\begin{theorem} \label{thm:cont_cmplxlsp}
	The map $\pclsp \colon \mathcal{M} \to M( \cmplx \setminus \disk )$ is continuous.
\end{theorem}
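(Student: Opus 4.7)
The plan is to test weak convergence against an arbitrary $f\in C_c(\cmplx\setminus\overline D)$. Since $\operatorname{supp} f$ is a compact subset of $\cmplx\setminus\overline D$, it lies in an annulus $\{e^{\ell_0}\le|z|\le e^{\ell_1}\}$ with $0<\ell_0\le\ell_1<\infty$, so the integrals $\int f\,d(\pclsp M)$ and $\int f\,d(\pclsp M_n)$ only involve prime closed geodesics of length in $[\ell_0,\ell_1]$. By Lemma \ref{lemma:growth_geo} these form a finite set for each manifold, so the problem reduces to a matching statement: I want to exhibit, for $n$ large, a bijection between the prime classes of $\mathcal{PC}(M)$ and $\mathcal{PC}(M_n)$ that contribute to the integral, such that corresponding complex lengths converge.

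I would set this up via geometric convergence. By definition, $M_n\to M$ in $\mathcal{M}$ provides, for each compact $K\subset M$, $(1+\epsilon_n)$-bi-Lipschitz embeddings $\phi_n\colon K\to M_n$ with $\epsilon_n\to 0$. I would choose $K$ large enough that (i) $K$ contains every closed geodesic of $M$ of length $\le\ell_1+1$, and (ii) $M\setminus K$ is a disjoint union of horoball cusp neighborhoods thin enough to meet no closed geodesic. The complement $M_n\setminus\phi_n(K)$ then splits into remaining cusps together with solid-torus neighborhoods of the core geodesics of any Dehn fillings; the complex lengths $\lambda_n^{(i)}$ of these cores satisfy $\Re\lambda_n^{(i)}\to 0$, so $e^{\lambda_n^{(i)}}$ clusters at $|z|=1$, which is outside $\operatorname{supp} f$ for $n$ large.

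To produce the bijection, for each prime $\varphi\in\mathcal{PC}(M)$ of length $\le\ell_1$ I would straighten the almost-geodesic loop $\phi_n\circ\varphi$ into a closed geodesic $\varphi_n\subset M_n$: a uniform lower bound on the injectivity radius of $M$ along $\varphi$, combined with the bi-Lipschitz distortion $\epsilon_n\to 0$, guarantees that for $n$ large there is a unique such $\varphi_n$ in a small tubular neighborhood, that $\varphi_n$ remains prime, and that $\lambda(\varphi_n)\to\lambda(\varphi)$. Conversely, any prime $\delta_n\in\mathcal{PC}(M_n)$ of length in $[\ell_0,\ell_1]$ cannot be entirely contained in a filling solid torus (since only the core itself is prime there, and its length drops below $\ell_0$ for large $n$), so it must enter $\phi_n(K)$; pulling it back by $\phi_n^{-1}$ and straightening yields a prime geodesic of $M$ of close complex length, and this is the inverse of the first construction. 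The matching terms cancel pairwise up to errors of size $o(1)$, so $\int f\,d(\pclsp M_n)\to\int f\,d(\pclsp M)$.

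The main obstacle is controlling closed geodesics of $M_n$ that repeatedly enter and exit a Margulis tube around a Dehn-filling core. The key input is that the radius of the Margulis tube around a core of complex length $\lambda_n^{(i)}$ with $\Re\lambda_n^{(i)}\to 0$ tends to infinity (it grows like $-\tfrac12\log\Re\lambda_n^{(i)}$ by the standard Margulis lemma estimate for short geodesics); a closed geodesic of length at most $\ell_1$ cannot cross a tube of arbitrarily large radius, so for $n$ large every $\delta_n$ relevant to $\operatorname{supp} f$ is either confined to the thick part $\phi_n(K)$ or confined to a single tube. Once this dichotomy is in place, the rest is bookkeeping using Lemma \ref{lemma:growth_geo} to keep the finite sums under uniform control.
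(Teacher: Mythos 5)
Your strategy coincides with the paper's at the level of structure: test weak convergence against $f\in C_c(\cmplx\setminus\disk)$, reduce to a length-preserving bijection between prime closed geodesics of $M$ and of $M_n$ with lengths in a fixed interval $(a,b)$, and build the forward map from geometric convergence. Your straightening of $\phi_n\circ\varphi$ produces the same geodesic as the paper's $\iota_n$, which is defined via the algebraically convergent holonomy representations of Corollary~\ref{coro:thurston}; both select the geodesic representative of a free homotopy class. The genuine divergence is in the surjectivity step. You argue directly: exclude the short Dehn-filling cores, use the blowing-up Margulis tube radius to keep $\delta_n$ out of the filling solid tori, and pull back via $\phi_n^{-1}$. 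The paper instead proves Lemma~\ref{lemma:prime_geo_ball} (every prime geodesic of $M_n$ of length in $(a,b)$ lies in a fixed ball $B_R(p_n)$, obtained from the thick-thin decomposition plus the uniform diameter bound of Lemma~\ref{lemma:bdddiameterthick}) and then closes with an Ascoli--Arzel\`a/Grove--Petersen compactness argument (Lemma~\ref{lemma:surjectiveiotan}): a hypothetical sequence of unmatched prime geodesics in $M_n$ would subconverge to a prime geodesic of $M$, giving a contradiction. Your direct argument leaves two points loose that the compactness argument handles uniformly. First, the dichotomy you invoke --- $\delta_n$ confined to $\phi_n(K)$ or confined to a tube --- is not a dichotomy: a geodesic of length $\le\ell_1$ can enter a tube to bounded depth and return without being contained in either set, so you would need to thicken $K$ by an $\ell_1/2$-collar and control the bi-Lipschitz distortion on that collar. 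Second, you only treat the filling solid tori; a closed geodesic of $M_n$ could also leave $\phi_n(K)$ through an unfilled cusp, and ruling that out requires an injectivity-radius bound on cusp incursion which you do not state. Both gaps are reparable, but they are precisely what the paper's compactness route sidesteps: once the geodesics are trapped in $B_R(p_n)$ there is nothing left to bookkeep, since the limit geodesic in $M$ is allowed to dip into a cusp.
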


\begin{rem}
	If we consider the space $M(\cmplx)$ instead of $M( \cmplx \setminus \disk )$, 
	then Theorem~\ref{thm:cont_cmplxlsp} is no longer true.
	For instance, let $M \in \mathcal{M}$ be a one-cusped manifold, and $M_{p/q}$ be the
	manifold obtained by a hyperbolic $(p,q)$-Dehn filling. Then $\{M_{p/q}\}_{(p,q)}$
	converges to $M$ as $(p, q)$ goes to infinity. However, the sequence of the corresponding measures 
	do not even converge. To see this, let $\pm \varphi_{p/q}$ be the two (oriented) core prime geodesics
	added in the Dehn filling. Then the length of $\varphi_{p/q}$ goes to zero, and the geometric torsion 
	is dense in $\real/2\pi\integer$, which implies that this sequence of measures does not converge.
	Restricting our attention to $M( \cmplx \setminus \disk )$ we avoid these phenomena. 
	Nevertheless, this bad behaviour is the worst that can happen; this is expressed in the following result.
\end{rem}

\begin{theorem} \label{thm:cont_lsp}
	Let $M \in \mathcal{M}$ with $k > 0$ cusps, and $\{M_n\}$ be a sequence converging to $M$ 
	in $\mathcal{M}$. Assume that the number of cusps of $M_n$ is eventually constant 
	and is equal to $l$. Then the sequence of real-length spectrum measures $\{\plsp M_n\}$ converges weakly 
	in $M(\real)$ to the measure
	\[
	\plsp M + 2(k - l)\delta_0.
	\]
\end{theorem}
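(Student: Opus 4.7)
The plan is to test the length spectra against an arbitrary continuous, compactly supported $f\colon\real\to\real$ and to split $f$ via a continuous partition of unity $\chi_0+\chi_1\equiv 1$, where $\chi_0$ is supported in $(-\eta,\eta)$ and equal to $1$ on $[-\eta/2,\eta/2]$ for some small $\eta>0$. This separates two genuinely different phenomena: prime closed geodesics whose length is bounded away from $0$, which will be handled by the already-proved continuity of the complex-length spectrum, and prime closed geodesics whose length is forced to shrink to $0$, which must be identified with the cores of the hyperbolic Dehn fillings turning $M$ into $M_n$.

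For the tail $f\chi_1$, I would exploit $\length(\varphi)=\log|e^{\lambda(\varphi)}|$ to rewrite
\[
\int f\chi_1\,d\plsp M_n \;=\; \int (f\chi_1)(\log|z|)\,d\pclsp M_n(z).
\]
Since $f\chi_1$ has support in $[\eta/2,R]$ for some $R$, its pullback under $z\mapsto\log|z|$ is continuous on $\cmplx\setminus\disk$ and supported in the compact annulus $\{e^{\eta/2}\leq|z|\leq e^R\}$, so Theorem~\ref{thm:cont_cmplxlsp} directly yields $\lim_n\int f\chi_1\,d\plsp M_n=\int f\chi_1\,d\plsp M$. For the bump $f\chi_0$ near $0$, I would fix $\eta$ smaller than the Margulis constant of $\hyp^3$ and smaller than the length of the shortest closed geodesic of $M$, and combine geometric convergence with the thick-thin decomposition: geometric convergence provides $(1+\varepsilon_n)$-bilipschitz embeddings of an exhausting sequence of compact sets of $M$ into $M_n$ with $\varepsilon_n\to 0$, so the thick part of $M_n$ eventually contains no closed geodesic of length $\leq\eta$; the Margulis lemma decomposes the thin part of $M_n$ as a disjoint union of the $l$ surviving horospherical cusps of $M_n$ together with $k-l$ Margulis tubes replacing the $k-l$ filled cusps of $M$, whose core lengths tend to $0$ by Thurston's hyperbolic Dehn surgery theorem (already invoked in Section~\ref{chapter:surgery}). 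Counting each core with both orientations, this gives exactly $2(k-l)$ prime closed geodesics of length $<\eta$ in $M_n$, all of length tending to $0$, whence $\lim_n\int f\chi_0\,d\plsp M_n=2(k-l)f(0)$.

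Adding the two limits and then sending $\eta\to 0$ (using that $\plsp M$ does not charge $\{0\}$, so that $\int f\chi_1\,d\plsp M\to\int f\,d\plsp M$) yields the desired weak convergence $\plsp M_n\to \plsp M+2(k-l)\delta_0$. The main obstacle is not the positive-length part, which is essentially a change of variables on top of Theorem~\ref{thm:cont_cmplxlsp}, but the \emph{exhaustive} identification of short prime geodesics in $M_n$ with Dehn filling cores: one must rule out the appearance of any other short closed geodesic. This is precisely where the uniform-on-compact-sets nature of geometric convergence (which forbids new short geodesics from accumulating in the thick part) has to be combined with the Margulis lemma and hyperbolic Dehn surgery (which control exactly what happens in the thin part).
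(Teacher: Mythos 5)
Your proposal is correct and follows essentially the same route as the paper: both reduce to the Dehn-filling case, both dispose of lengths bounded away from $0$ by pushing Theorem~\ref{thm:cont_cmplxlsp} forward under $z\mapsto\log|z|$, and both identify the short geodesics in $M_n$ as exactly the $2(k-l)$ oriented Dehn-filling cores via the Margulis lemma and the thick-thin decomposition. The paper phrases the reduction not through a partition of unity but by observing directly that, given Theorem~\ref{thm:cont_cmplxlsp}, it suffices to show $\plsp M_n([0,\delta))\to 2(k-l)$ for some $\delta$ below the length of the shortest geodesic of $M$; this is isolated as Lemma~\ref{lemma:shortgeocore}, whose proof uses that $M_{n,[\epsilon,\infty)}$ is eventually homeomorphic to $M_{[\epsilon,\infty)}$ (hence $M_{n,(0,\epsilon)}$ has exactly $k$ components) rather than your bilipschitz-embedding phrasing, but the geometric content is the same.
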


Theorems~\ref{thm:cont_cmplxlsp} and~\ref{thm:cont_lsp} will be proved in Section \ref{sec:continuity} after 
having discussed the geometric topology.

\subsection{The geometric topology}

Most of the material in this subsection is based on \cite{CanEpsGre}. 

Let $\mathcal{MF}$ be the set of (isometry classes of) oriented, complete, hyperbolic $3$-manifolds of finite volume
and with a baseframe.
Thus an element of $\mathcal{MF}$ is a pair $(M, E)$, where $E$ is an orthonormal frame based at some point $p$ in the oriented hyperbolic 
$3$-manifold $M$ of finite volume.

\begin{rem}
	Our notation differs from \cite{CanEpsGre}, where $\mathcal{MF}$ is defined without the finite volume restriction.
\end{rem}

If we fix a base frame on hyperbolic space $\hyp^3$, then the holonomy representation of a member of $\mathcal{MF}$
is unambiguously defined (\idest not only up to conjugation). Therefore, $\mathcal{MF}$ is in one-to-one
correspondence with the set of discrete torsion-free subgroups of $\psl(2,\cmplx)$ with \emph{finite co-volume}.
The latter set is endowed with the \emph{geometric topology}. 
We recall its definition in the general context of Lie groups, see \cite{ThurstonNotes}.

\begin{definition}
	A sequence $\{ \Gamma_n \}$ of closed subgroups of a Lie group $G$ 
	\emph{converges geometrically} to a group $\Gamma$ if the following
	conditions are satisfied:
	\begin{enumerate}
		\item Each $\gamma \in \Gamma$ is the limit of a sequence $\{\gamma_n \}$,
			with $\gamma_n \in \Gamma_n$.
		\item The limit of every convergent sequence $\{ \gamma_{n_j} \}$,
			with $\gamma_{n_j} \in \Gamma_{n_j}$, is in $\Gamma$ ($n_j$ 
			is an increasing sequence of natural numbers).
	\end{enumerate}
\end{definition}

Two related spaces are $\mathcal{MB}$ and $\mathcal{M}$. The former is obtained by forgetting the frame,
but retaining the basepoint, and the latter by forgetting both the frame and the basepoint.
Both sets are endowed with the quotient topology given by the corresponding forgetful maps.

The following results are well known, and will play an important role in the 
following subsections. See \cite{CanEpsGre} for a proof.

\begin{lemma} \label{lemma:injradcont}
	Let $\operatorname{inj}_R(M,p)$ be the infimum of the injectivity radius 
	on the ball $B_R(p) \subset M$. Then for any $ R > 0$ the map
	$\operatorname{inj}_R \colon \mathcal{MB} \to (0, \infty)$ is continuous.
\end{lemma}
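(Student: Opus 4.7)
The plan is to establish continuity of $\operatorname{inj}_R$ at each point $(M,p)\in\mathcal{MB}$ by showing both upper and lower semi-continuity, using the standard characterization of the geometric topology: $(M_n,p_n)\to(M,p)$ in $\mathcal{MB}$ if and only if for every $R'>0$ and $\epsilon>0$ there exist, for all $n$ large enough, $(1+\epsilon)$-bilipschitz embeddings $\phi_n\colon B_{R'}(p)\hookrightarrow M_n$ sending $p$ to $p_n$, whose image contains $B_{R'/(1+\epsilon)}(p_n)$. A preliminary observation is that the pointwise injectivity radius $q\mapsto\operatorname{inj}(q)$ on $M$ is $1$-Lipschitz (since a short essential loop at $q$ is also short at every nearby point, and half the length of the shortest essential loop at $q$ equals $\operatorname{inj}(q)$), so the infimum defining $\operatorname{inj}_R(M,p)$ is well-behaved and continuous in $R$.

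For upper semi-continuity I would fix $\delta>0$ and choose $q\in B_R(p)$ with $\operatorname{inj}(q)<\operatorname{inj}_R(M,p)+\delta$. The essential loop $\gamma$ at $q$ of length $2\operatorname{inj}(q)$ is contained in $B_{R+\operatorname{inj}(q)}(p)$. Taking $R'$ slightly larger than $R+\operatorname{inj}(q)$ and $\epsilon$ small, the image $\phi_n(\gamma)$ is an essential loop in $M_n$ based at $\phi_n(q)\in B_{(1+\epsilon)R}(p_n)$ of length at most $(1+\epsilon)\cdot 2\operatorname{inj}(q)$; essentiality follows because $\phi_n$ is an embedding and is bilipschitz, so the loop cannot bound in its image and hence not in $M_n$ either (a nullhomotopy would give a very short disk). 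This produces a basepoint in $B_R(p_n)$ (after adjusting $\epsilon$) with small injectivity radius, yielding $\limsup_n\operatorname{inj}_R(M_n,p_n)\leq\operatorname{inj}_R(M,p)$.

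For lower semi-continuity I would take $q_n\in B_R(p_n)$ with $\operatorname{inj}(q_n)<\operatorname{inj}_R(M_n,p_n)+\delta$, and let $\gamma_n$ be the essential loop at $q_n$ of length $2\operatorname{inj}(q_n)$. One must check that $\gamma_n$ lies in the image of $\phi_n$: since $\gamma_n\subset B_{R+\operatorname{inj}(q_n)}(p_n)$ and $\operatorname{inj}(q_n)$ is bounded above (uniformly in $n$, because $\operatorname{inj}_R(M,p)$ is finite and upper semi-continuity already gives a uniform upper bound), choosing $R'$ large enough that $B_{R'/(1+\epsilon)}(p_n)$ contains this neighborhood of $p_n$ guarantees inclusion. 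Then $\phi_n^{-1}(\gamma_n)$ is an essential loop in $M$ through a point in $B_{(1+\epsilon)R}(p)$ of length at most $(1+\epsilon)\cdot 2\operatorname{inj}(q_n)$, giving $\operatorname{inj}_{(1+\epsilon)R}(M,p)\leq(1+\epsilon)(\operatorname{inj}_R(M_n,p_n)+\delta)$. Sending $n\to\infty$, then $\delta,\epsilon\to0$, and using continuity of $R'\mapsto\operatorname{inj}_{R'}(M,p)$ in $R'$ at $R'=R$ (another consequence of the $1$-Lipschitz property), one obtains $\operatorname{inj}_R(M,p)\leq\liminf_n\operatorname{inj}_R(M_n,p_n)$.

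The main obstacle is the bookkeeping around the domain and image of the approximations $\phi_n$: one must verify that essential loops realizing the infimum on $M_n$ actually lie inside $\phi_n(B_{R'}(p))$ so that they can be pulled back, and simultaneously that pushforwards/pullbacks remain in balls whose radii tend to $R$ as $\epsilon\to 0$. This is handled by the uniform a priori bound on $\operatorname{inj}(q_n)$ from upper semi-continuity, together with the fact that the image coverage $\phi_n(B_{R'}(p))\supset B_{R'/(1+\epsilon)}(p_n)$ can be made to contain any fixed metric neighborhood of $p_n$ for $n$ large.
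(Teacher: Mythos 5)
The paper does not give a proof of this lemma: it simply states it as ``well known'' and cites \cite{CanEpsGre}. So there is no internal argument to compare against, and your proposal must be judged on its own terms as a self-contained proof.

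Your overall strategy (bilipschitz-approximation characterization of convergence in $\mathcal{MB}$, the $1$-Lipschitz property of $q\mapsto\operatorname{inj}(q)$, and separate upper/lower semicontinuity arguments via push-forward and pull-back of shortest essential loops) is a sound and standard route, and it does lead to a correct proof. Two of the finer points, however, are not quite right as written and deserve attention. First, the parenthetical justification of essentiality, ``a nullhomotopy would give a very short disk,'' is not the relevant fact. What one actually uses is confinement, not smallness: since $\hyp^3$ is a Hadamard space and metric balls in it are geodesically convex, any nullhomotopy of a loop of length $L$ in a hyperbolic manifold can be realized (by lifting, coning off to a point, and projecting) inside the ball of radius $L/2$ about the basepoint. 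It is this containment that lets you pull a hypothetical nullhomotopy of $\phi_n(\gamma)$ in $M_n$ back through $\phi_n^{-1}$ to obtain a nullhomotopy of $\gamma$ in $M$ (and symmetrically for $\gamma_n$), provided $R'$ is chosen, as you indicate, slightly larger than $R+\operatorname{inj}(q)$. Second, the phrase ``this produces a basepoint in $B_R(p_n)$ (after adjusting $\epsilon$)'' hides a small but genuine gap: $\phi_n(q)$ lies only in $B_{(1+\epsilon)R}(p_n)$, and shrinking the domain ball does not help since $\operatorname{inj}_R$ is decreasing in $R$. The fix is to invoke the $1$-Lipschitz property once more: pick a point $q''$ on the geodesic from $p_n$ to $\phi_n(q)$ with $d(p_n,q'')<R$ and $d(q'',\phi_n(q))\leq\epsilon R$; then $\operatorname{inj}(q'')\leq(1+\epsilon)\operatorname{inj}(q)+\epsilon R$, giving the desired bound on $\operatorname{inj}_R(M_n,p_n)$ up to an error that vanishes as $\epsilon\to 0$. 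With these two points tightened, the argument is complete. One alternative worth noting is the characterization of $\operatorname{inj}(q)$ directly in terms of the holonomy group $\Gamma$ as $\frac12\min_{\gamma\in\Gamma\setminus\{1\}}d(\tilde q,\gamma\tilde q)$; this makes the lemma an almost immediate consequence of the two clauses defining geometric convergence of discrete subgroups and sidesteps the essentiality bookkeeping entirely, at the cost of invoking a compactness argument for isometries moving a bounded point a bounded distance.
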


\begin{lemma} \label{lemma:bdddiameterthick}
	Let $\epsilon > 0 $ less than the Margulis constant.
	Let $\{M_n\}$ be a sequence converging to $M$ in $\mathcal{M}$. 
	Then there exists a uniform bound on the diameter of the thick parts
	$\{ M_{n,[\epsilon, \infty)}\}$.
\end{lemma}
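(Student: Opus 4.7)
My strategy is to split the uniform bound into two ingredients: first, a uniform bound on the volumes $\Vol(M_n)$, which I obtain from the geometric convergence $M_n\to M$; second, a general packing estimate that controls the diameter of the thick part of any finite-volume hyperbolic $3$-manifold by its volume. Multiplying the two gives the result.

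For the volume bound, I would argue that $\Vol$ is continuous on $\mathcal{M}$ with respect to the geometric topology, so $\Vol(M_n)\to \Vol(M)<\infty$ and in particular $\Vol(M_n)\le V_0$ for some constant $V_0$ and all large $n$. The justification is the standard one: geometric convergence provides basepoints $p_n\in M_n$, $p\in M$ and $(1+\eta_n)$--biLipschitz embeddings $\phi_n\colon K_n\to M_n$ of an exhaustion $K_n$ of $M$, with $\eta_n\to 0$; the contribution of $\phi_n(K_n)$ to $\Vol(M_n)$ converges to $\Vol(M)$, while the complement $M_n\setminus \phi_n(K_n)$ is concentrated in neighbourhoods of the cusps of $M$, whose volumes shrink to zero as the truncation moves to infinity.

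For the diameter--versus--volume estimate, I would use a packing argument. Fix $\epsilon$ below the Margulis constant, so that by the Margulis lemma the $\epsilon$-thin part of each $M_n$ is a disjoint union of tubular neighbourhoods of short closed geodesics and standard cusp neighbourhoods; in particular $M_{n,[\epsilon,\infty)}$ is compact, connected (removing these standard pieces does not disconnect a finite-volume hyperbolic $3$-manifold), and contains no cusp region. Take a maximal $\tfrac{\epsilon}{4}$-separated subset $S_n\subset M_{n,[\epsilon,\infty)}$: the open balls $B(s,\tfrac{\epsilon}{8})$ for $s\in S_n$ are pairwise disjoint and each is isometric to a ball in $\hyp^3$ (since $\operatorname{inj}(s)\ge\epsilon > \tfrac{\epsilon}{8}$), so
\[
|S_n|\,V_{\hyp^3}\!\left(\tfrac{\epsilon}{8}\right) \;\le\; \Vol(M_n) \;\le\; V_0 .
\]
Since $S_n$ is $\tfrac{\epsilon}{4}$-dense in the connected set $M_{n,[\epsilon,\infty)}$, chaining consecutive points of $S_n$ along any path inside the thick part gives $\diam(M_{n,[\epsilon,\infty)})\le \epsilon\,|S_n|$, which is uniformly bounded in $n$.

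The main technical point, and the place I expect most of the real work, is the connectedness/structural claim about the thick part (and the compatible handling of the thin part); once the thick--thin decomposition is firmly in hand, both the packing argument and the continuity of the volume functional on $\mathcal{M}$ are straightforward consequences of geometric convergence.
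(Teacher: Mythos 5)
The paper does not actually prove this lemma; it states it alongside the continuity of the injectivity radius and J{\o}rgensen's volume continuity theorem and refers to \cite{CanEpsGre} for all three. So there is no proof in the paper to compare against; what I can do is assess your argument on its own merits, and it is essentially sound. The overall structure — reduce a diameter bound to a volume bound via a packing/chaining argument in the connected thick part, and get the volume bound from the continuity of $\Vol$ on $\mathcal{M}$ (which the paper already states as J{\o}rgensen's theorem) — is a perfectly standard and valid route. Note, though, that the thick/thin decomposition, the connectedness of the thick part, and the packing count are all facts about a single finite-volume hyperbolic $3$-manifold; the only place geometric convergence enters your argument is through the volume bound. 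This makes your proof conceptually cleaner but logically parallel to (rather than derived from) the rest of the geometric-topology toolbox cited there.

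A few small points you should tighten when you write this out in full. First, the lower bound on $\operatorname{inj}(s)$ for $s$ in the thick part is $\epsilon/2$ or $\epsilon$ depending on which convention one uses for $M_{[\epsilon,\infty)}$; you wrote $\operatorname{inj}(s)\geq\epsilon$ without flagging this, but since you only use $\operatorname{inj}(s)>\epsilon/8$ the argument survives either convention. Second, the connectedness of the thick part does deserve the short argument: each component of the $\epsilon$-thin part (a Margulis tube or a cusp neighbourhood) is attached to the rest along a single torus, and deleting such a piece from a connected manifold cannot disconnect it; you correctly flagged this as the structural input but it should be spelled out. Third, your chaining estimate $\diam\leq\epsilon|S_n|$ is correct up to a harmless constant, but the cleanest way to see it is to build a graph on $S_n$ with edges between $\tfrac{\epsilon}{4}$-separated centers whose $\tfrac{\epsilon}{4}$-balls overlap, observe that path-connectedness of the thick part forces this graph to be connected, and then a graph path visits each vertex at most once. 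None of these is a genuine gap; they are just places where a final write-up needs a sentence or two more.
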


\begin{theorem}[J{\o}rgensen]
	The map $\Vol \colon \mathcal{M} \to \real$ that assigns to each manifold
	its volume is continuous.
\end{theorem}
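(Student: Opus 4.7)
The plan is to prove upper and lower semi-continuity of $\Vol$ separately. The two main tools will be the thick-thin decomposition via the Margulis constant (Lemmas~\ref{lemma:injradcont} and~\ref{lemma:bdddiameterthick}) and Thurston's hyperbolic Dehn surgery theorem, which constrains the shape of a convergent sequence $\{M_n\} \to M$ in $\mathcal{M}$.

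For lower semi-continuity, fix $\epsilon > 0$ smaller than the Margulis constant. Since $M$ has finite volume, the $\epsilon$-thick part $M_{[\epsilon,\infty)}$ is compact. Geometric convergence guarantees, for $n$ large, $(1+\delta_n)$-bi-Lipschitz embeddings $\phi_n \colon M_{[\epsilon,\infty)} \hookrightarrow M_n$ with $\delta_n \to 0$; this is essentially the content of Lemmas~\ref{lemma:injradcont} and~\ref{lemma:bdddiameterthick} combined with the definition of geometric convergence on $\mathcal{MF}$. Then
\[
\liminf_{n \to \infty} \Vol(M_n) \;\geq\; \lim_{n\to\infty} \Vol\bigl(\phi_n(M_{[\epsilon,\infty)})\bigr) \;=\; \Vol\bigl(M_{[\epsilon,\infty)}\bigr).
\]
Letting $\epsilon \to 0$ and noting that the volume of the $\epsilon$-thin part of $M$ (horoball cusp neighbourhoods plus finitely many Margulis tubes) tends to $0$, we obtain $\liminf \Vol(M_n) \geq \Vol(M)$.

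For upper semi-continuity, Thurston's hyperbolic Dehn surgery theorem tells us that, for $n$ large, $M_n$ is obtained from $M$ by performing hyperbolic Dehn fillings on a (possibly empty) subcollection of cusps, with slopes diverging to infinity. A standard comparison argument --- via either the Schl\"afli formula for the derivative of volume along the deformation space, or a direct decomposition of $M$ and $M_n$ into a common compact thick piece together with horoball cusp neighbourhoods (for $M$) and Margulis tubes around short core geodesics (for $M_n$) --- shows that the volume difference is the sum over the filled ends of (cusp volume $-$ tube volume). Each such difference is positive by Thurston's inequality but tends to $0$ as the filling slopes diverge, so $\limsup \Vol(M_n) \leq \Vol(M)$.

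The main obstacle is the quantitative control needed for the upper bound: Thurston's strict inequality $\Vol(M_n) < \Vol(M)$ alone does not yield convergence, and one must show that the volume defect vanishes in the geometric limit. This is the place where either the Schl\"afli formula, or an explicit comparison of horoball cusp volumes and Margulis tube volumes as the core geodesic lengths shrink to $0$, is needed; lower semi-continuity of the remaining ingredients and the continuity statements of Lemmas~\ref{lemma:injradcont} and~\ref{lemma:bdddiameterthick} then combine with this defect estimate to produce the required two-sided bound.
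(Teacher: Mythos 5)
The paper does not prove this theorem: it appears (together with Lemmas~\ref{lemma:injradcont} and~\ref{lemma:bdddiameterthick}) under the heading of well-known facts, with the reader referred to \cite{CanEpsGre} for a proof, so there is no in-paper argument to compare against.

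Your sketch sets up exactly the right two ingredients, but the final paragraph talks itself into a non-issue. Lower semicontinuity is handled correctly: the $(1+\delta_n)$-bi-Lipschitz embeddings of the compact $\epsilon$-thick part give $\liminf_n \Vol(M_n) \geq (1+\delta_n)^{-3}\Vol(M_{[\epsilon,\infty)})$, and letting $\epsilon\to 0$ (the $\epsilon$-thin part of $M$ has volume $\to 0$ because $M$ has finite volume) gives $\liminf_n \Vol(M_n) \geq \Vol(M)$. For the upper bound you already have everything you need and do not need the quantitative defect estimate you ask for: by the version of Thurston's theorem quoted in the paper, a non-eventually-constant sequence $M_n\to M$ consists, for $n$ large, of nontrivial hyperbolic Dehn fillings of $M$, and Thurston's strict volume monotonicity then gives $\Vol(M_n) < \Vol(M)$ outright, hence $\limsup_n \Vol(M_n) \leq \Vol(M)$; the eventually-constant case is trivial. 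Combining the two one-sided bounds yields $\Vol(M_n)\to\Vol(M)$ with no Schl\"afli or cusp-versus-tube comparison needed --- the fact that the volume defect vanishes in the limit is the \emph{conclusion} of putting the two semicontinuities together, not an extra hypothesis. (The claim that the volume difference equals a sum over filled ends of "cusp volume minus tube volume" is also at best an approximation and should be dropped; it is not used.) With the last paragraph replaced by this observation the argument is complete.
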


The following theorem due to Thurston describes how a non-trivial convergence sequence
in $\mathcal{M}$ is. We recall that we are assuming that all manifolds have finite volume.
\begin{theorem}[Thurston]
	Let $\{M_n\}$ be a sequence converging to $M$ in $\mathcal{M}$.
	Assume that $\{M_n\}$ is not eventually constant, and that $M$ has $k$ cusps.
	Then $M_n$ is obtained by hyperbolic Dehn surgery $M_{p_{1,n}/q_{1,n},\dots,p_{k,n}/q_{k,n}}$,
	with $p_{i,n}^2 + q_{i,n}^2 \to \infty$, as $n \to \infty$.
\end{theorem}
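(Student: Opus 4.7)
My plan is to lift the convergence to the frame bundle $\mathcal{MF}$, exploit geometric convergence of Kleinian groups, and analyse the thick-thin decomposition to extract the topological relation between $M_n$ and $M$.

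First, I would choose baseframes so that $(M_n, E_n) \to (M, E)$ in $\mathcal{MF}$. Writing $M = \hyp^3/\Gamma$ and $M_n = \hyp^3/\Gamma_n$, this gives geometric convergence $\Gamma_n \to \Gamma$ of discrete torsion-free subgroups of $\psl(2, \cmplx)$. Fix a Margulis constant $\epsilon > 0$ and split each manifold into its $\epsilon$-thick and $\epsilon$-thin parts. By Lemma~\ref{lemma:bdddiameterthick} the thick part $M_{[\epsilon,\infty)}$ has finite diameter, so geometric convergence produces, for $n$ large, a smooth embedding $\varphi_n \colon M_{[\epsilon,\infty)} \hookrightarrow M_n$ which is arbitrarily close to an isometry (this is the standard $(K,r)$-approximation coming from geometric convergence, cf.~\cite{CanEpsGre,ThurstonNotes}). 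In particular, $\varphi_n$ is a homeomorphism onto its image.

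Next I would analyse the complement of $\varphi_n(M_{[\epsilon,\infty)})$ in $M_n$. The thin part of $M$ consists of the $k$ cusps $U_1,\ldots,U_k$ (each homeomorphic to $T^2 \times [0,\infty)$), so the thick part has $k$ torus boundary components. By the Margulis lemma, the thin part of $M_n$ is a disjoint union of cusps and tubular neighbourhoods of short closed geodesics. Since $M_n$ has finite volume and $\varphi_n$ maps each peripheral torus of $M_{[\epsilon,\infty)}$ to a torus embedded in the thin part of $M_n$, each such image torus bounds in $M_n$ either a cusp or a solid-torus Margulis tube around a short geodesic. This describes $M_n$ topologically as obtained from $\overline{M}$ by Dehn filling some subset of the cusps; with the convention that an unfilled cusp corresponds to the slope $\infty$, we may write $M_n = M_{p_{1,n}/q_{1,n},\ldots,p_{k,n}/q_{k,n}}$. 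The core geodesic of each added solid torus has length $\leq \epsilon$, and Thurston's hyperbolic Dehn surgery theorem identifies its homotopy class with the filling slope $p_{i,n} a_i + q_{i,n} b_i$ in $\pi_1 T_i$.

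Finally, I would show $p_{i,n}^2 + q_{i,n}^2 \to \infty$ for every $i$. Suppose some index $i$ had a subsequence along which $(p_{i,n}, q_{i,n})$ stayed bounded (so either constant at a finite slope or constantly $\infty$). Pass to a subsequence making these parameters constant. If the constant value is a finite slope $(p_0, q_0)$, then along this subsequence $M_n$ topologically contains $M_{p_0/q_0}$ as a Dehn filling of the $i$-th cusp, so $M_n$ has at most $k-1$ cusps. Continuity of the number of cusps follows because the cusps of $M$ must be limits of cusps of $M_n$ under the thick part embedding (otherwise the embedded $i$-th boundary torus of $M$ would have to be a short-geodesic tube in $M_n$ whose core length is bounded below by the translation length associated to the fixed slope $(p_0,q_0)$, contradicting the finite-volume convergence combined with Mostow rigidity applied to the limit). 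Hence the cusp counts force each $p_{i,n}^2 + q_{i,n}^2 \to \infty$.

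The main obstacle in the plan is the delicate gluing step: passing from the near-isometric embedding $\varphi_n$ of the thick part to a precise topological identification of the complement as a disjoint union of solid tori and cusps with the correct filling slopes. This requires that the images $\varphi_n(\partial M_{[\epsilon,\infty)})$ be incompressible tori in $M_n$'s thin part, which follows from the Margulis lemma combined with the fact that $\varphi_n$ is a near-isometry on a neighbourhood of each horospherical torus; but pinning down that the short geodesic core represents the slope $p_{i,n} a_i + q_{i,n} b_i$ in a way compatible with the fixed generators $a_i, b_i$ is where one must invoke the hyperbolic Dehn surgery theorem in full strength.
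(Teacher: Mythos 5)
The paper does not prove this result; it is cited as Thurston's theorem, so there is no in-paper proof to compare against. On its own merits, your approach is the standard one: lift to $\mathcal{MF}$, use geometric convergence to obtain near-isometric embeddings of the thick part of $M$ into $M_n$, and identify the complementary thin pieces as Margulis tubes or cusps. Steps 1--3 are fine, modulo the incompressibility and slope bookkeeping you flag.

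The gap is in the final step. ``Continuity of the number of cusps'' is not a fact you can invoke --- the cusp count is not continuous on $\mathcal{M}$ (closed $M_n$ can converge to a cusped $M$, and the paper's continuity theorem for the real length spectrum explicitly allows the cusp count of $M_n$ to drop from $k$ to $l<k$). The more specific claim you fall back on --- that under the thick-part embedding each cusp of $M$ is the limit of a cusp of $M_n$, never of a tube --- is precisely what is being asserted by $p_{i,n}^2+q_{i,n}^2\to\infty$, so the argument is circular unless the parenthetical does real work; but ``core length bounded below\ldots contradicting the finite-volume convergence combined with Mostow rigidity applied to the limit'' does not resolve into a concrete contradiction (it is unclear what limit Mostow rigidity is applied to, or how). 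Your instinct that a fixed slope $(p_0,q_0)$ forces a bounded-below core geodesic length is correct: the near-isometry of thick parts makes the $i$-th boundary torus of $M_n$ carry a flat metric converging to a fixed cusp cross-section of $M$, so the $(p_0,q_0)$ slope has bounded Euclidean length there, while a very short core forces a very long meridian on the tube boundary. To actually finish, observe that the core also has length at most $2\epsilon$ (it lies in the $\epsilon$-thin part); the Ascoli--Arzela theorem stated in the paper then produces a subsequential limit of these cores, namely a closed geodesic of $M$ contained in the $i$-th cusp --- which is impossible. An alternative route, J{\o}rgensen's volume continuity combined with Thurston's strict volume decrease under Dehn filling, also works, but one must be careful that the intermediate one-cusp filling $M_{(p_0,q_0)\text{ at }i}$ need not itself be hyperbolic.
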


\begin{corollary} \label{coro:thurston}
	Let $\{(M_n, E_n)\}$ be a sequence converging to $(M, E)$ in $\mathcal{MF}$. 
	Then, for $n$ large enough, we have a commutative diagram, 
	\[
	\xymatrix{
	\pi_1 (M, p) \ar[r]^<<<<<{\rho_n} \ar[d]^{i_*^{n}} & \psl(2;\cmplx) \\
	\pi_1 (M_n, p_n) \ar[ur]_{\Hol_{M_n}}&
	}
	\]
	Moreover, the sequence of representations $\{\rho_n\}$ converges to $\Hol_M$
	both algebraically (that is, for all $\sigma \in \pi_1 (M, p)$ the sequence $\{\rho_n(\sigma)\}$ converges to $\Hol_M(\sigma)$),
	and geometrically (that is, the sequence of discrete groups $\{ \rho_n(\pi_1(M,p)) \}$ converges geometrically
	to $\Hol_M(\pi_1(M, p))$).
\end{corollary}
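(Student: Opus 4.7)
The plan is to split into two cases, handle the trivial case, and reduce the non-trivial case to the hyperbolic Dehn surgery theorem already used in Section \ref{chapter:surgery}.

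First, I would recall that a choice of baseframe in $\mathbf{H}^3$ identifies $\mathcal{MF}$ with the set of discrete torsion-free subgroups of $\psl(2,\cmplx)$ of finite covolume (via the unambiguously-defined holonomy representation), and that convergence in $\mathcal{MF}$ translates into geometric convergence of these subgroups. In particular, the hypothesis $(M_n,E_n)\to (M,E)$ means that $\Gamma_n:=\Hol_{M_n}(\pi_1(M_n,p_n))$ converges geometrically to $\Gamma:=\Hol_M(\pi_1(M,p))$.

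Next, by Thurston's theorem recalled just before the corollary, the sequence $\{(M_n,E_n)\}$ is either eventually constant (in which case one takes $i_*^n=\operatorname{id}$ and there is nothing to prove) or $M$ has $k\geq 1$ cusps and $M_n=M_{p_{1,n}/q_{1,n},\dots,p_{k,n}/q_{k,n}}$ with $p_{i,n}^2+q_{i,n}^2\to\infty$. In the latter situation, the underlying topological inclusion of the cusped manifold $M$ into the Dehn-filled manifold $M_n$ (obtained by regarding $M$ as $M_n$ minus the cores of the filling solid tori) induces, after a compatible choice of basepoint $p_n$ near $p$ consistent with the baseframes $E_n$ converging to $E$, a \emph{surjection} $i_*^n\colon\pi_1(M,p)\twoheadrightarrow\pi_1(M_n,p_n)$ whose kernel is normally generated by the surgery curves $a_i^{p_{i,n}}b_i^{q_{i,n}}$. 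Setting $\rho_n:=\Hol_{M_n}\circ i_*^n$ yields the desired commutative diagram.

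For the algebraic convergence, I would invoke the hyperbolic Dehn surgery theorem as formulated in Section \ref{chapter:surgery}: one has Thurston's slice $\Hol_M(u,\cdot)$ of deformations of $\Hol_M$ and parameters $u_n\in U$ with $u_n\to 0$ such that $\Hol_M(u_n,\cdot)$ descends to $\Hol_{M_n}$ through $i_*^n$, i.e.\ $\rho_n(\gamma)=\Hol_M(u_n,\gamma)$ for every $\gamma\in\pi_1(M,p)$. By continuity of $\Hol_M(u,\cdot)$ on $u$, this gives $\rho_n(\gamma)\to\Hol_M(0,\gamma)=\Hol_M(\gamma)$, proving algebraic convergence. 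For the geometric convergence, the surjectivity of $i_*^n$ yields $\rho_n(\pi_1(M,p))=\Hol_{M_n}(\pi_1(M_n,p_n))=\Gamma_n$, and the hypothesis that $(M_n,E_n)\to(M,E)$ in $\mathcal{MF}$ then directly gives $\Gamma_n\to\Gamma$ geometrically, as recalled at the start.

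The only delicate point I expect is the bookkeeping with basepoints and baseframes: one must ensure that the choice of $p_n\in M_n$ and the corresponding lift of the baseframe is made coherently with the geometric convergence $(M_n,E_n)\to(M,E)$, so that the holonomies $\Hol_{M_n}$ (and not merely their conjugacy classes) are the ones that realize the geometric limit. Once this is set up, the rest reduces to the Dehn surgery theorem and the definition of geometric convergence.
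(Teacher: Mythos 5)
Your proposal is correct and is precisely the argument the paper has in mind: the paper states this corollary without proof, treating it as an immediate consequence of Thurston's theorem, the geometric-convergence framework from \cite{CanEpsGre} recalled just above it, and the Dehn-filling deformation picture of Section~\ref{chapter:surgery}. The baseframe-coherence issue you flag is indeed the only nonformal step, and it is exactly the strong-convergence content (algebraic plus geometric convergence for geometrically convergent Kleinian groups) being imported from \cite{CanEpsGre}.
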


It can be proved that if a sequence $\{(M_n, p_n)\}$ converges to $(M, p)$ in $\mathcal{MB}$,
then it also converges to $(M, p)$ in the pointed Hausdorff-Gromov sense, see \cite{CanEpsGre}.

Next we want to give the following \emph{ad hoc} definition concerning the convergence 
of geodesics. 
\begin{definition}
	With the previous notation, we will say that a sequence of parametrised closed geodesics 
	$\{ \varphi_n \colon [0, 1] \to M_n \}$
	converges to $\varphi \colon [0,1] \to M$ if for all $n$ there is a lift 	
	of $\varphi_n$ (with respect to the covering map $\pi_n$)
	\[
	\lift{\varphi}_n \colon [0, 1] \to \hyp^3,
	\]
	such that the sequence of maps $\{\lift{\varphi}_n \}$
	converges pointwise to a lift of $\varphi$ (with respect to the covering map $\pi$). 
\end{definition}
\begin{rem}
	This definition coincides with the more general (and natural) definition of convergence 
	of maps $\{f_n \colon X_n \to Y_n \}$, where $\{X_n\}$ and $\{Y_n\}$ are sequences of compact metric
	space converging in the Hausdorff-Gromov sense to $X$ and $Y$ respectively, see \cite{GrovPet}. 
\end{rem}

With this definition, it is quite obvious that the limit of parametrised closed geodesics is
also a geodesic whose length is the limit of the lengths of the converging geodesics. 

\begin{definition}
	We will say that a sequence $\{ \varphi_n \}$ of closed geodesics, with $\varphi_n \in \mathcal{C}(M_n)$,
	converges to $\varphi \in \mathcal{C}(M)$ if for all $n$ we can choose parametrisations of $\varphi_n$
	converging to a parametrisation of $\varphi$ (in the sense of this definition).
\end{definition}

Again the following result holds in a more general context, see \cite{GrovPet}. 
Its proof in our case is quite obvious.

\begin{theorem} [Ascoli-Arzela, Grove-Petersen.]
	Let $R > 0$ and $\{ \varphi_n \}$ be a sequence of closed geodesics with $\varphi_n \subset B_R(p_n) \subset (M_n, p_n)$.
	If there exists a common upper bound on the lengths of $\{ \varphi_n \}$, then $\{\varphi_n \}$ has a converging subsequence.
\end{theorem}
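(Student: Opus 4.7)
The plan is to lift all the geodesics $\varphi_n$ to $\hyp^3$, apply the classical Ascoli--Arzelà theorem in $\hyp^3$, and then descend the limit to $M$ using the geometric convergence of the holonomy groups provided by Corollary~\ref{coro:thurston}.

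First, I would fix base frames at $p_n \in M_n$ converging to a base frame at $p \in M$ in $\mathcal{MF}$, so that the holonomy representations $\Hol_{M_n}$ can be compared with $\Hol_M$ via the maps $i_*^n$. For each $n$, parametrise $\varphi_n \colon [0,1] \to M_n$ at constant speed and pick a lift $\tilde{\varphi}_n \colon [0,1] \to \hyp^3$ to the universal cover. Because $\varphi_n$ meets $B_R(p_n)$ and has length bounded by some $L > 0$, we may choose $\tilde{\varphi}_n(0)$ inside a ball of radius $R$ around a fixed lift of $p_n$; since the lifts of the $p_n$ converge in $\hyp^3$, all the $\tilde{\varphi}_n(0)$ lie in a common compact region $K \subset \hyp^3$, and the entire image $\tilde{\varphi}_n([0,1])$ lies in the $L$-neighbourhood of $K$.

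Second, each $\tilde{\varphi}_n$ is a geodesic in $\hyp^3$ with speed at most $L$, so the family $\{\tilde{\varphi}_n\}$ is uniformly bounded and uniformly Lipschitz on $[0,1]$. The classical Ascoli--Arzelà theorem then yields a subsequence converging uniformly to a map $\tilde{\varphi} \colon [0,1] \to \hyp^3$; this limit is automatically a geodesic of $\hyp^3$, because uniform limits of constant-speed geodesic segments in $\hyp^3$ are constant-speed geodesic segments.

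Third, write $\tilde{\varphi}_n(1) = g_n \cdot \tilde{\varphi}_n(0)$ for the deck transformation $g_n \in \Hol_{M_n}(\pi_1 M_n) \subset \psl(2,\cmplx)$ representing the free homotopy class of $\varphi_n$. The translation length of $g_n$ equals $\length(\varphi_n) \leq L$, and $g_n$ moves a point of $K$ a bounded distance, so $\{g_n\}$ lies in a compact subset of $\psl(2,\cmplx)$ and we may pass to a further subsequence with $g_n \to g$. Corollary~\ref{coro:thurston} says $\Hol_{M_n}(\pi_1 M_n)$ converges geometrically to $\Hol_M(\pi_1 M)$, so $g \in \Hol_M(\pi_1 M)$ and hence $g$ is a deck transformation of $M$. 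Passing to the limit gives $\tilde{\varphi}(1) = g \cdot \tilde{\varphi}(0)$, so $\tilde{\varphi}$ descends to a closed geodesic $\varphi \colon [0,1] \to M$, and the subsequence $\varphi_n$ converges to $\varphi$ in the sense of the definition preceding the theorem.

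The main obstacle is the last step: ensuring that the limiting element $g$ is genuinely a deck transformation of the limit manifold $M$ and not just some element of $\psl(2,\cmplx)$. This is exactly what geometric convergence provides, but one must verify carefully that the conjugacy class data carried by $g_n$ survives in the limit; in particular, if the translation lengths $\length(\varphi_n)$ accumulate at $0$, the limit $g$ may be trivial, and then $\varphi$ would be a constant ``geodesic.'' This degenerate case does not occur under the hypotheses of the applications we care about, because a uniform lower bound on $\operatorname{inj}_R(M_n, p_n)$ (which exists by Lemma~\ref{lemma:injradcont}) forces the translation lengths of non-trivial $g_n$ to be bounded below, so the limit is a honest non-constant closed geodesic.
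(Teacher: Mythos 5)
Your argument is correct. The paper gives no proof of this statement at all---it simply remarks ``Again the following result holds in a more general context, see [Grove--Petersen]. Its proof in our case is quite obvious''---so your write-up supplies the details the authors leave implicit, and it does so along the expected lines: lift to $\hyp^3$, extract a uniformly convergent subsequence by the classical Ascoli--Arzel\`a theorem, and descend using geometric convergence of the holonomy groups (Corollary~\ref{coro:thurston}). You also correctly identify the one genuinely nontrivial point, namely ruling out the degenerate case where the limit $g$ of the deck transformations $g_n$ is the identity: since $\varphi_n\subset B_R(p_n)$ forces $\length(\varphi_n)\geq 2\operatorname{inj}_R(M_n,p_n)$, and $\operatorname{inj}_R$ is continuous on $\mathcal{MB}$ (Lemma~\ref{lemma:injradcont}) so is uniformly bounded below along the converging sequence $(M_n,p_n)\to (M,p)$, the translation lengths cannot accumulate at $0$, and the limit $g$ is a nontrivial (hence hyperbolic) element of $\Hol_M(\pi_1 M)$; thus $\tilde\varphi$ is a segment of the axis of $g$ and descends to a genuine closed geodesic of $M$. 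The only point you leave slightly implicit is that one must also verify the endpoints match up in the tangent bundle, i.e.\ $\tilde\varphi'(1)=dg(\tilde\varphi'(0))$, but this follows automatically once $\tilde\varphi$ is seen to lie on the axis of the hyperbolic isometry $g$, which is itself a consequence of passing the relation ``$\tilde\varphi_n$ lies on the axis of $g_n$'' to the limit.
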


\subsection{Proof of the continuity} 
\label{sec:continuity}
In this subsection we want to prove the continuity of the complex-length spectrum as 
a map from $\mathcal{M}$ to $M( \cmplx \setminus \disk )$.
An obvious observation is that we can assume that this map is defined 
from $\mathcal{MF}$ to $M( \cmplx \setminus \disk )$,
since the topology of $\mathcal{M}$ is the quotient topology coming from the forgetful map 
$\mathcal{MF} \to \mathcal{M}$.

Hereafter $\{ (M_n, E_n) \}$ will denote a sequence converging to $\{ (M, E) \}$ in $\mathcal{MF}$.
In order to simplify notation, we will write $\mu_n$ and $\mu_\infty$ for $\pclsp M_n$
and $\pclsp M$, respectively.
We want to prove that the sequence of measures $\{\mu_n\}$ converges
to $\mu_\infty$ in $M( \cmplx \setminus \disk )$. 
Our first task is to translate this into geometrical terms.

Recall from last subsection, Corollary \ref{coro:thurston}, that we have a commutative diagram,
\[
\xymatrix{
\pi_1 (M, p) \ar[r]^<<<<<{\rho_n} \ar[d]^{i_*^{n}} & \psl(2;\cmplx) \\
\pi_1 (M_n, p_n) \ar[ur]_{\Hol_{M_n}}&
}
\]
Furthermore, the sequence of representations $\{\rho_n\}$ converges 
both algebraically and geometrically to $\Hol_M$. 

Let $\sigma \in \pi_1 (M, p)$ be a hyperbolic element.
The algebraic convergence of $\{\rho_n\}$ implies that
$\rho_n(\sigma)$ is also of hyperbolic type for large $n$ 
(it follows for instance from the fact that the set of hyperbolic isometries is open in $\psl(2,\cmplx)$).
As a consequence, for large $n$, the conjugacy class of $i_*^n(\sigma)$ defines a closed geodesic in $M_n$;
moreover, the complex length of $\rho_n(\sigma)$ is close to that of $\Hol_M(\sigma)$.

Let $0 < a < b$. Then, for large $n$,
the map $i_*^n \colon \pi_1(M, p) \to \pi_1(M_n, p_n)$ gives a well defined map
\[
\iota_{a,b,n} \colon \left\{ \varphi \in \mathcal{C}(M) \mid a < \length(\varphi) < b \right\} 
\to \left\{ \varphi \in \mathcal{C}(M_n) \mid a < \length(\varphi) < b \right\}.
\]

\begin{lemma} \label{lemma:equivalencewkconv}
	Assume that for all $0 < a < b$ \emph{not in the real-length spectrum} of $M$ there exists 
	$N(a,b)$ such that for all $n> N(a,b)$ the map $\iota_{a,b,n}$ 
	is a bijection when restricted to prime geodesics.
	Then $\{ \mu_n \}$ converges weakly to $\mu_\infty$.
\end{lemma}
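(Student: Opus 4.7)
The plan is to reduce weak convergence to convergence of finite sums, using the hypothesis on bijections between prime geodesics.

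First, I fix $f\in C_c(\cmplx\setminus\disk)$. Since $\operatorname{supp}(f)$ is a positive distance away from the unit circle and the real-length spectrum of $M$ is a discrete subset of $(0,\infty)$ (only finitely many closed geodesics have length below any given bound, by Lemma~\ref{lemma:growth_geo} applied to a compact core of $M$), I can choose $0 < a < b$, both outside the length spectrum of $M$, with $\operatorname{supp}(f) \subset \{e^a \leq |z| \leq e^b\}$. For $n > N(a,b)$, the hypothesis provides a bijection between $\{\varphi\in\mathcal{PC}(M) \mid a < \length(\varphi) < b\}$ and $\{\varphi\in\mathcal{PC}(M_n) \mid a < \length(\varphi) < b\}$, and both sets are finite by Lemma~\ref{lemma:growth_geo}. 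The integrals against $f$ then reduce to finite sums indexed by the same finite set:
\[
\int f\, d\mu_\infty = \sum_{\varphi} f\bigl(e^{\lambda(\varphi)}\bigr), \qquad
\int f\, d\mu_n = \sum_{\varphi} f\bigl(e^{\lambda(\iota_{a,b,n}(\varphi))}\bigr),
\]
where in both sums $\varphi$ ranges over prime geodesics of $M$ with length in $(a,b)$.

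To conclude, I would establish termwise convergence. By Corollary~\ref{coro:thurston}, the representations $\rho_n = \Hol_{M_n}\circ i_*^n$ converge algebraically to $\Hol_M$; hence, for every hyperbolic $\sigma\in\pi_1(M,p)$ representing some $\varphi\in\mathcal{PC}(M)$ appearing in the sum, the trace of $\rho_n(\sigma)$ converges to the trace of $\Hol_M(\sigma)$. Since the eigenvalue $e^{\lambda}$ of a hyperbolic element depends continuously on its trace (and $\rho_n(\sigma)$ remains hyperbolic for large $n$, as hyperbolicity is an open condition and is preserved by the bijection), we get $e^{\lambda(\iota_{a,b,n}(\varphi))}\to e^{\lambda(\varphi)}$. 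Continuity of $f$ and finiteness of the index set then yield $\int f\, d\mu_n \to \int f\, d\mu_\infty$, proving the weak convergence.

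The only delicate point, and the reason one requires $a,b$ to lie outside the length spectrum of $M$, is that a geodesic of $M$ with length exactly equal to an endpoint could have its approximants in $M_n$ oscillating across that endpoint; this would destabilize the indexing set and break the bijective correspondence on any fixed interval. Choosing $a$ and $b$ away from the length spectrum guarantees that for $n$ sufficiently large the finite index set is genuinely common to $M$ and $M_n$, which is exactly what makes the term-by-term argument work.
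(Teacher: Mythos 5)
Your proof is correct and follows essentially the same route as the paper's: fix a test function $f$, pick $a,b$ outside the length spectrum bracketing $\supp f$, use the bijection $\iota_{a,b,n}$ to identify the two finite sums over the same index set, and conclude by algebraic convergence of holonomies (Corollary~\ref{coro:thurston}) together with continuity of $f$. Your elaboration on why $e^{\lambda}$ varies continuously and the closing remark on the role of avoiding the length spectrum are accurate refinements of the same argument.
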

\begin{proof}
	For two real numbers $ a < b$ put $D_{a,b} = \{ z \in \cmplx \mid e^a < |z| < e^b \}$. 
	Let $f$ be a continuous function with compact support contained in the exterior of the unit disk.
	Take $1 < a < b$ such that $\supp f \subset D_{a,b}$, 
	with both $a$ and $b$ not in the real length spectrum of $M$.
	Let $A = \{ \varphi_1,\dots,\varphi_k\}$ be the set of prime closed geodesics in $M$ with 
	complex length in $D_{a,b}$. 
	Therefore, we have
	\[
	\int_{|z| > 1} f(z) d\mu_\infty(z) = \int_{D_{a,b}} f(z) d\mu_\infty(z) = \sum_{i = 1}^k f( \lambda(\varphi_i) ).
	\]
	By hypothesis, for $n > N(a,b)$, we have
	\[
	\int_{|z| > 1} f(z) d\mu_n (z) = \int_{D_{a,b}} f(z) d\mu_n(z) = \sum_{i = 1}^k f( \lambda_n( \iota_{n,a,b}(\varphi_i))),
	\]
	where $\lambda_n$ is the complex-length function of $M_n$. The algebraic convergence implies 
	\[
	\lim_{n\to\infty} \lambda_n( \iota_{n,a,b}(\varphi_i)) = \lambda(\varphi_i),
	\]
	and the continuity of $f$ gives
	\[
	\lim_{n\to\infty} \int_{|z| > 1} f(z) d\mu_n (z) = \int_{|z| > 1} f(z) d\mu_\infty(z).
	\]
	Hence, $\mu_n$ converges weakly to $\mu_\infty$.
\end{proof}

Next we want to prove that the hypothesis of the previous lemma is satisfied.
Hereafter, $a$ and $b$ will denote two fixed positive real numbers not in the length spectrum
of $M$ with $a < b$. We will write $\iota_n$ instead of $\iota_{a,b,n}$.

The following lemma is an immediate consequence of the convergence of $\{(M_n, E_n)\}$ to $(M,E)$.
\begin{lemma}
	Let $\varphi \in \mathcal{C}(M)$. Then the sequence of closed geodesics
	$\{ \iota_n( \varphi) \}$ converges to $\varphi$.
\end{lemma}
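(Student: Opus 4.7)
The plan is to exploit the algebraic convergence $\rho_n \to \Hol_M$ (Corollary~\ref{coro:thurston}) together with the explicit identification of closed geodesics with axes of hyperbolic isometries, and translate this into pointwise convergence of suitable lifts as required by the definition preceding the lemma.

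First I would fix, once and for all, a hyperbolic element $\gamma \in \pi_1(M, p)$ representing $\varphi$ under the bijection $\psi$ of Proposition~\ref{prop:bij_geo_conj}, and set $g = \Hol_M(\gamma)$ and $g_n = \rho_n(\gamma) = \Hol_{M_n}(i_*^n(\gamma))$. Since $\gamma$ has length in $(a,b)$, the element $g$ is of hyperbolic type; and since the set of hyperbolic elements is open in $\psl(2,\cmplx)$ and $g_n \to g$, the elements $g_n$ are also hyperbolic for $n$ sufficiently large, with well-defined axes $\mathcal A_n \subset \hyp^3$ converging to the axis $\mathcal A$ of $g$. Here the convergence of axes is pointwise convergence of fixed points on $\partial \hyp^3 = \widehat{\cmplx}$, which depends continuously on the isometry.

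Next, using that the identification of $(M_n, E_n)$ and $(M, E)$ with elements of $\mathcal{MF}$ fixes covering maps $\pi_n, \pi \colon \hyp^3 \to M_n, M$ (namely the quotients by $\rho_n(\pi_1(M,p))$ and $\Hol_M(\pi_1(M,p))$), I would construct canonical parametrizations as follows. Pick a point $\tilde\varphi(0) \in \mathcal A$ and define $\tilde\varphi \colon [0,1] \to \hyp^3$ to be the constant-speed geodesic parametrization of the segment of $\mathcal A$ from $\tilde\varphi(0)$ to $g \cdot \tilde\varphi(0)$; then $\pi \circ \tilde\varphi$ is a parametrization representing $\varphi$. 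Since $\mathcal A_n \to \mathcal A$, one can pick $\tilde\varphi_n(0) \in \mathcal A_n$ with $\tilde\varphi_n(0) \to \tilde\varphi(0)$, and analogously define $\tilde\varphi_n \colon [0,1] \to \hyp^3$ to be the constant-speed geodesic from $\tilde\varphi_n(0)$ to $g_n \cdot \tilde\varphi_n(0)$; then $\pi_n \circ \tilde\varphi_n$ is a parametrization of $\iota_n(\varphi)$, by the very definition of $\iota_n$ via $i_*^n$.

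Finally I would verify the pointwise convergence $\tilde\varphi_n(t) \to \tilde\varphi(t)$ for every $t \in [0,1]$. Writing $\tilde\varphi_n(t)$ as the point at hyperbolic distance $t \cdot \Re \lambda_n(\iota_n(\varphi))$ along $\mathcal A_n$ from $\tilde\varphi_n(0)$, in the direction determined by $g_n$, continuity of the flow along geodesics in $\hyp^3$ with respect to the starting point, the direction, and the distance parameter gives the convergence, using that $g_n \to g$ (which ensures both that the translation lengths converge and that the direction along the axis converges). This is the only step that is not essentially a definitional unwinding, but it reduces to standard continuity of the exponential map of $\hyp^3$ jointly in basepoint and tangent vector; thus there is no genuine obstacle, and the lemma follows.
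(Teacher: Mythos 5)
Your proof is correct and follows essentially the same approach as the paper's: identify $\varphi$ with a hyperbolic holonomy element $g=\Hol_M(\gamma)$, observe that the algebraic convergence $\Hol_{M_n}\circ i_*^n(\gamma)\to g$ forces the axes and translation lengths to converge, and translate this into pointwise convergence of constant-speed lifts parametrizing $\iota_n(\varphi)$ and $\varphi$. The only cosmetic difference is in normalizing the choice of starting point: the paper anchors the lift at the point of the axis closest to the base point $O$, while you allow any choice of $\tilde\varphi_n(0)\in\mathcal A_n$ converging to $\tilde\varphi(0)$, which is equally valid under the paper's definition of convergence of geodesics.
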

\begin{proposition} \label{prop:injectiveiotannotprime}
	Let $\varphi_1, \varphi_2 \in \mathcal{C}(M)$.
	If $\varphi_1 \neq \varphi_2$ then, for $n$ large enough, 
	$\iota_n(\varphi_1) \neq \iota_n(\varphi_2)$. 
\end{proposition}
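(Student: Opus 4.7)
The plan is to argue by contradiction using the faithfulness of $\Hol_M$ together with the geometric convergence $\Gamma_n \to \Gamma$ from Corollary~\ref{coro:thurston}. Choose $\sigma_i \in \pi_1(M, p)$ representing the conjugacy class corresponding to $\varphi_i$ via Proposition~\ref{prop:bij_geo_conj}. Since $\varphi_1 \neq \varphi_2$, the elements $\sigma_1, \sigma_2$ are non-conjugate in $\pi_1(M, p)$; hence by injectivity of $\Hol_M$, the hyperbolic isometries $\Hol_M(\sigma_1)$ and $\Hol_M(\sigma_2)$ are non-conjugate in $\Gamma := \Hol_M(\pi_1(M, p))$. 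If the conclusion of the proposition fails, pass to a subsequence so that $\iota_n(\varphi_1) = \iota_n(\varphi_2)$; this produces a conjugator in $\pi_1(M_n, p_n)$, which under $\Hol_{M_n}$ yields $g_n \in \Gamma_n := \Hol_{M_n}(\pi_1(M_n, p_n))$ with
\[
g_n \rho_n(\sigma_1) g_n^{-1} = \rho_n(\sigma_2) \quad \text{in } \psl(2, \cmplx).
\]

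The strategy is to extract a convergent subsequence $g_n \to g_\infty$ in $\psl(2, \cmplx)$. Once this is done, the algebraic convergence $\rho_n(\sigma_i) \to \Hol_M(\sigma_i)$ passes to the limit and gives $g_\infty \Hol_M(\sigma_1) g_\infty^{-1} = \Hol_M(\sigma_2)$, while the geometric convergence $\Gamma_n \to \Gamma$ forces $g_\infty \in \Gamma$, contradicting the non-conjugacy established above. To exhibit such a convergent subsequence I would use that the axes $A_i^n$ of $\rho_n(\sigma_i)$ converge to the axes $A_i$ of $\Hol_M(\sigma_i)$ (by continuity of the fixed points of hyperbolic M\"obius transformations), and that $g_n$ sends the oriented geodesic $A_1^n$ to $A_2^n$. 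Fix a basepoint $O'_n \in A_1^n$ closest to a fixed $O' \in \hyp^3$; replacing $g_n$ by $\rho_n(\sigma_2)^{k_n} g_n$ for a suitable $k_n \in \integer$, I can arrange that $g_n(O'_n)$ lies in a fundamental domain on $A_2^n$ for $\langle \rho_n(\sigma_2)\rangle$, whose diameter is at most $b$; thus $g_n$ maps a bounded sequence of points to a bounded sequence of points in $\hyp^3$.

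The main obstacle is that controlling the image of a single point does not by itself bound an isometry in $\psl(2, \cmplx)$. More precisely, the set of $\psl(2, \cmplx)$-conjugators from $\rho_n(\sigma_1)$ to $\rho_n(\sigma_2)$ is a coset of the one-complex-dimensional centralizer of $\rho_n(\sigma_1)$, so controlling translation along the axis still leaves a real one-parameter rotational ambiguity around $A_2^n$. To kill this I would invoke the discreteness of $\Gamma_n$: the centralizer of $\rho_n(\sigma_1)$ inside $\Gamma_n$ is cyclic, generated by a primitive loxodromic element whose translation length stays bounded below by the Margulis constant for large $n$ (since $\Gamma_n \to \Gamma$ geometrically and $\Gamma$ is discrete and torsion-free). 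Consequently, within each coset modulo this cyclic centralizer only finitely many representatives can lie in any fixed compact set of $\psl(2, \cmplx)$, so a uniform choice of $g_n$ produces the required bounded sequence, after which the subsequence limit and the conjugacy contradiction described above finish the proof.
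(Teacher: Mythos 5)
Your proof is correct in outline but takes a genuinely different route from the paper's. The paper argues metrically: since $\varphi_1\neq\varphi_2$, the $\sup$-distance $d(\varphi_1,\varphi_2)$ in $\mathcal{C}(M)$ is positive, and the preceding lemma ($\iota_n(\varphi)\to\varphi$ for the Gromov--Hausdorff style convergence of closed geodesics) forces $d(\iota_n(\varphi_1),\iota_n(\varphi_2))>0$ for large $n$; the whole proof is two lines. Your argument instead works through the bijection of Proposition~\ref{prop:bij_geo_conj}: non-conjugacy of $\sigma_1,\sigma_2$ in $\pi_1(M,p)$, produce a conjugator $g_n\in\Gamma_n$, normalize it into a compact set, pass to a limit $g_\infty$, and use geometric convergence $\Gamma_n\to\Gamma$ to force $g_\infty\in\Gamma$, a contradiction. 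This is a legitimate and self-contained alternative that works directly with the algebraic characterization of closed geodesics and with Corollary~\ref{coro:thurston}, at the cost of the extra bookkeeping needed to normalize $g_n$; the paper's metric proof is shorter because the hard convergence content is already packaged into the preceding lemma.

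One part of your argument is a detour and is not quite correctly phrased. The centralizer of a loxodromic in $\psl(2,\cmplx)$ is isomorphic to $\cmplx^*$, whose parameter splits into translation length along the axis (the non-compact $\real$-factor) and rotation angle about the axis (the compact $S^1$-factor). Once you have replaced $g_n$ by $\rho_n(\sigma_2)^{k_n}g_n$ to bound the displacement of a base point along $A_2^n$, the remaining ambiguity lies entirely in the compact rotational circle, and since the axes $A_1^n, A_2^n$ converge to $A_1, A_2$, the set of all $\psl(2,\cmplx)$-conjugators with bounded translational displacement is already compact. So the invocation of discreteness of $\Gamma_n$ and the Margulis bound is unnecessary here. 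Moreover, as written that step is a non sequitur: ``only finitely many representatives lie in a fixed compact set'' (discreteness) does not imply ``some representative lies in a fixed compact set'' (cocompactness of the cyclic centralizer inside the full centralizer). What you actually want is the latter, and it follows from the rotational circle being automatically compact, together with your translation normalization. With that correction, your argument is sound.
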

\begin{proof}
	We have $d(\varphi_1, \varphi_2) > 0$. The  lemma above then implies that 
	for large $n$ also 
	\[
	d( \iota_n(\varphi_1), \iota_n(\varphi_2) ) > 0.
	\]
\end{proof}

\begin{proposition}\label{prop:iotanprime}
	If $\varphi \in \mathcal{C}(M)$ is prime, then,
	for $n$ large enough, $\iota_n(\varphi)$ is also prime.
\end{proposition}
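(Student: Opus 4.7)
My plan is to argue by contradiction: assuming $\iota_n(\varphi)$ fails to be prime along a subsequence, I will produce a geodesic $\psi$ in $M$ with $\varphi = k \psi$ for some $k > 1$, contradicting primality of $\varphi$.

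First, suppose there is an increasing sequence $n_j$ along which $\iota_{n_j}(\varphi)$ is not prime. Let $g \in \pi_1(M,p)$ represent $\varphi$; then $i_*^{n_j}(g)$ represents $\iota_{n_j}(\varphi)$, and non-primality gives $i_*^{n_j}(g) = h_j^{k_j}$ in $\pi_1(M_{n_j}, p_{n_j})$ for some $k_j > 1$ and $h_j$ hyperbolic. The class $[h_j]$ corresponds under Proposition~\ref{prop:bij_geo_conj} to a closed geodesic $\psi_j \in \mathcal{C}(M_{n_j})$ whose image coincides with that of $\iota_{n_j}(\varphi)$ (same axis in $\hyp^3$) and whose length satisfies $\length(\psi_j) = \length(\iota_{n_j}(\varphi))/k_j$.

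Second, I need a uniform lower bound for $\length(\psi_j)$, equivalently a uniform upper bound for $k_j$. Since $\iota_{n_j}(\varphi) \to \varphi$ (by the lemma preceding Proposition~\ref{prop:injectiveiotannotprime}), there is a radius $R$ and, for large $j$, the image of $\iota_{n_j}(\varphi)$ lies in the ball $B_R(p_{n_j}) \subset M_{n_j}$. By Lemma~\ref{lemma:injradcont}, $\operatorname{inj}_R(M_n, p_n) \to \operatorname{inj}_R(M, p) > 0$, so there is $m > 0$ with $\operatorname{inj}_R(M_{n_j}, p_{n_j}) \geq m$ for large $j$. The image of $\psi_j$ lies in $B_R(p_{n_j})$, so $\length(\psi_j) \geq 2m$; combined with $\length(\iota_{n_j}(\varphi)) \to \length(\varphi) < b$, this bounds $k_j$ above. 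Passing to a subsequence, we may assume $k_j = k$ is a constant integer $> 1$.

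Third, the closed geodesics $\psi_j$ have lengths bounded above by $b/k$ and images lying in $B_R(p_{n_j})$. By the Ascoli--Arzel\`a/Grove--Petersen theorem stated earlier, after a further subsequence $\psi_j$ converges to a closed geodesic $\psi \in \mathcal{C}(M)$. Choosing parametrisations that realise the convergence and using that $\iota_{n_j}(\varphi)(t) = \psi_j(kt)$ (up to a global $S^1$-shift), passage to the limit yields $\varphi(t) = \psi(kt)$, that is, $\varphi = k\psi$ in $\mathcal{C}(M)$ with $k > 1$. This contradicts the primality of $\varphi$, completing the proof.

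The main obstacle is the uniform bound on $k_j$: everything else is routine convergence of geodesics, but $k_j \to \infty$ would force $\length(\psi_j) \to 0$, and one needs the continuity of the injectivity radius to rule this out in the geometric limit.
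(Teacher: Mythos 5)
Your proof is correct and follows essentially the same route as the paper's: argue by contradiction, bound $k_n$ from above via the continuity of the injectivity radius (Lemma~\ref{lemma:injradcont}), pass to a subsequence with constant $k$, and apply Ascoli--Arzel\`a to obtain a limit geodesic $\psi$ with $\varphi = k\psi$, contradicting primality. The only differences are expository (e.g.\ you spell out the $\length(\psi_j)\geq 2m$ bound and the passage through conjugacy classes explicitly), not substantive.
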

\begin{proof}
	Take $R > 0$ such that $\iota_n(\varphi) \subset B_R(p_n)$ for all $n$.
	If the lemma were false, then (up to a subsequence) for all $n$,
	$\iota_n(\varphi) = k_n \psi_n$ for some integer $k_n \geq 2$ and some $\psi_n \in \mathcal{PC}(M_n)$.
	By Lemma \ref{lemma:injradcont}, the injectivity radius on $B_R(p_n)$ is uniformly bounded from below away from zero;
	hence, $k_n$ must be bounded from above.
	Therefore, (up to a subsequence) for all $n$, $\iota_n(\varphi) = k \psi_n$, for some fixed $k \geq 2$.
	The geodesics $\{ \psi_n \}$ have bounded length and are contained in $B_R(p_n)$; hence, by Ascoli-Arzela
	(up to a subsequence) they converge to a geodesic $\psi$ which satisfies $\varphi = k \psi$,
	contradicting the primality of $\varphi$. 
\end{proof}

These two preceding results imply that, for large $n$, $\iota_n$ gives an injective map
\[
\left\{ \varphi \in \mathcal{PC}(M) \mid a < \length(\varphi) < b \right\} 
	\to \left\{ \varphi \in \mathcal{PC}(M_n) \mid a < \length(\varphi) < b \right\},
\]
Next we want to prove that, for a larger $n$, this map is surjective.
We will proceed by contradiction using an Arzela-Ascoli argument. 
Before doing this, we need to prove that we have a control 
on the set of prime closed geodesics in $M_n$ whose lengths are in $(a, b)$.
This is the content of the following result, which is just an application 
of the thick-thin decomposition of a complete finite-volume hyperbolic manifold. 

\begin{lemma} \label{lemma:prime_geo_ball}
	There exists $R > 0$ such that for all $n$ any \emph{prime} closed geodesic
	in $(M_n, p_n)$ of length in $(a, b)$ is contained in $B_R(p_n)$.
\end{lemma}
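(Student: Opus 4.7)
The plan is to use the thick--thin decomposition of complete finite-volume hyperbolic $3$-manifolds together with Lemma~\ref{lemma:bdddiameterthick}. I would fix once and for all an $\epsilon>0$ strictly less than both $a$ and the Margulis constant in dimension three; if necessary I shrink $\epsilon$ further so that $\epsilon$ is also less than the injectivity radius of $M$ at $p$. By Lemma~\ref{lemma:bdddiameterthick}, there is a constant $D>0$ such that for every $n$ the $\epsilon$-thick part $M_{n,[\epsilon,\infty)}$ has diameter at most $D$.

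First I would show that for $n$ sufficiently large the basepoint $p_n$ lies in $M_{n,[\epsilon,\infty)}$. This is where Lemma~\ref{lemma:injradcont} enters: applied to a small ball around $p$, it yields $\operatorname{inj}(p_n, M_n) > \epsilon$ for $n$ large. Next, I would argue that any prime closed geodesic $\varphi$ in $M_n$ of length $\ell\in(a,b)$ must intersect the thick part: cusps of a finite-volume hyperbolic manifold contain no closed geodesics, and if $\varphi$ were entirely contained in a Margulis tube around a short core $\gamma$, then by primality $\varphi=\gamma$, forcing $\ell<\epsilon<a$, a contradiction. Thus $\varphi$ contains a point $q\in M_{n,[\epsilon,\infty)}$ with $d(q,p_n)\leq D$. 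Since $\varphi$ is a closed geodesic of length at most $b$, every other point $q'\in\varphi$ satisfies $d(q,q')\leq b/2$ along $\varphi$, hence $d(q',p_n)\leq D+b/2$. Setting $R_{\infty}=D+b$ bounds every such geodesic inside $B_{R_\infty}(p_n)$ for all $n\geq N$, where $N$ is the threshold from the two preceding steps.

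For the finitely many indices $n<N$, each $M_n$ is a fixed complete finite-volume hyperbolic $3$-manifold, in which there are only finitely many prime closed geodesics of length at most $b$ (they form a closed discrete subset of the compact portion of moduli space of closed geodesics with bounded length). Choose $R_n$ large enough that all of them lie in $B_{R_n}(p_n)$, and finally take $R=\max(R_\infty,R_1,\dots,R_{N-1})$. The one delicate point, and essentially the whole content of the lemma, is the choice $\epsilon<a$: this is what rules out the pathology that a prime geodesic of length in $(a,b)$ could be a very short core of a Margulis tube deep inside the filled-in cusp region, whose distance to $p_n$ could otherwise grow without bound as the Dehn filling parameters tend to infinity.
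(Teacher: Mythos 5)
Your strategy is the same as the paper's: thick--thin decomposition, uniform diameter bound on the thick part from Lemma~\ref{lemma:bdddiameterthick}, continuity of the injectivity radius to place $p_n$ in the thick part, and the observation that a prime geodesic of length $>a$ cannot lie entirely in the thin part. The structure is sound and you are in fact more careful than the printed proof in two places: you explicitly account for the extent of $\varphi$ by adding $b/2$ to the diameter bound (the paper only shows that one point of $\varphi$ hits the thick part), and you explicitly handle the finitely many small $n$ (the paper covers this by shrinking $\epsilon$ once and for all).

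There is one quantitative slip. You take $\epsilon<a$ and conclude $\ell<\epsilon<a$. But under the convention used in this paper (see the parenthetical in the printed proof, where $a/2 < \length(\varphi)/2 < \epsilon$), a point of $M_{n,(0,\epsilon)}$ has injectivity radius $<\epsilon$, so the core $\gamma$ of a Margulis tube contained in the $\epsilon$-thin part satisfies $\length(\gamma)/2<\epsilon$, i.e.\ $\length(\gamma)<2\epsilon$. With your choice $\epsilon<a$ this only gives $\ell<2a$, which is no contradiction with $\ell>a$. The paper takes $\epsilon<a/2$ precisely to get $\ell<2\epsilon<a$. Replacing your $\epsilon<a$ by $\epsilon<a/2$ (and writing $\ell<2\epsilon$ rather than $\ell<\epsilon$) repairs this; the rest of the argument then goes through unchanged.
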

\begin{proof} 
	Let $\epsilon > 0 $ be less than $a/2$ and the Margulis constant.
	If necessary, take a smaller $\epsilon > 0$ to guarantee that $p_n \in M_{n,[\epsilon,\infty)}$.
	If $\varphi$ is a closed geodesic in $M_n$ of length $ \length(\varphi) > a$,
	then $\varphi$ must intersect the $\epsilon$-thick part $M_{n,[\epsilon,\infty)}$
	(otherwise $\varphi$ would be the core of a Margulis tube in $M_{n,(0,\epsilon)}$, 
	and the injectivity radius in that tube would be achieved by the curve $\varphi$,
	so $a/2 < \length(\varphi)/2 < \epsilon$, which is absurd). 
	The result then follows from the fact that the diameter of $M_{n,[\epsilon,\infty)}$
	is uniformly bounded on $n$.
\end{proof}

\begin{lemma} \label{lemma:surjectiveiotan}
	There exists $N$ such that for all $n > N$ the following holds:
	if $\varphi_n$ is a prime closed geodesic in $M_n$ of length
	$ a < \length(\varphi_n) < b$, then there exists
	a prime closed geodesic in $M$ of length $a < \length(\varphi) < b$
	with $\varphi_n = \iota_n(\varphi)$.
\end{lemma}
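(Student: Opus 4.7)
The plan is to argue by contradiction using the Ascoli-Arzela/Grove-Petersen theorem stated in the previous subsection. Suppose the lemma fails; then, after passing to a subsequence, we can find for each $n$ a prime closed geodesic $\varphi_n \in \mathcal{PC}(M_n)$ with $a < \length(\varphi_n) < b$ such that $\varphi_n$ does not lie in the image of $\iota_n$. By Lemma \ref{lemma:prime_geo_ball}, all $\varphi_n$ are contained in a ball $B_R(p_n)$ for a uniform $R$, and they have uniformly bounded lengths. The Ascoli-Arzela theorem then yields (after a further subsequence) a limit closed geodesic $\varphi \in \mathcal{C}(M)$ with $\length(\varphi) = \lim_n \length(\varphi_n) \in [a,b]$. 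Since $a$ and $b$ are assumed not to lie in the real length spectrum of $M$, we in fact have $a < \length(\varphi) < b$.

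Write $\varphi = k\psi$ with $\psi \in \mathcal{PC}(M)$ and $k \geq 1$. By the lemma preceding Proposition~\ref{prop:injectiveiotannotprime}, the sequence $\iota_n(\psi)$ converges to $\psi$, and hence the closed geodesics $k\,\iota_n(\psi) \in \mathcal{C}(M_n)$ converge to $k\psi = \varphi$. I now have two sequences of closed geodesics in $M_n$, namely $\varphi_n$ and $k\,\iota_n(\psi)$, both converging to $\varphi$, and I want to conclude they eventually agree as elements of $\mathcal{C}(M_n)$.

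This is where Proposition~\ref{prop:dist_bb} and Lemma~\ref{lemma:injradcont} come in. Both sequences are contained in a fixed ball $B_{R'}(p_n)$, and by continuity of the injectivity radius (Lemma~\ref{lemma:injradcont}) the injectivity radius on this ball is uniformly bounded below by some $m > 0$. Since both sequences converge to $\varphi$, the distance $d(\varphi_n, k\,\iota_n(\psi))$ tends to zero, so eventually it is less than $m$. Proposition~\ref{prop:dist_bb} then forces $\varphi_n = k\,\iota_n(\psi)$ for large $n$. If $k \geq 2$, this contradicts the primality of $\varphi_n$; if $k = 1$, then $\varphi_n = \iota_n(\psi)$ lies in the image of $\iota_n$, contradicting the choice of $\varphi_n$. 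Either way, we reach the desired contradiction.

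The main obstacle is converting the parametrised convergence $\varphi_n \to \varphi$ and $k\,\iota_n(\psi) \to \varphi$ into the \emph{equality} of the underlying closed geodesics (not merely of their free homotopy classes as curves in $\hyp^3$-quotients of possibly different groups). The key is that both geodesics live in the \emph{same} manifold $M_n$ for each $n$, so one can apply the uniform injectivity-radius separation provided by Proposition~\ref{prop:dist_bb}; without this uniformity, the limit could in principle be achieved by infinitely many distinct geodesics in $M_n$ whose mutual distances shrink to zero.
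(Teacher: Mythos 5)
Your proposal is correct and follows essentially the same approach as the paper's proof: argue by contradiction, use Lemma \ref{lemma:prime_geo_ball} and the Ascoli-Arzela theorem to extract a limit geodesic $\varphi$ in $M$, and then use the uniform lower bound on the injectivity radius together with Proposition \ref{prop:dist_bb} to force a contradiction with the assumption that $\varphi_n$ is not in the image of $\iota_n$. The one place you diverge is a small refinement: the paper simply asserts that the limit $\varphi$ ``is easily seen to be prime'' and then applies the separation estimate with $\psi = \varphi$, whereas you sidestep proving primality of $\varphi$ altogether by writing $\varphi = k\psi$ for $\psi$ prime and showing $\varphi_n = k\,\iota_n(\psi)$ for large $n$ — which yields a contradiction directly whether $k=1$ (contradicting the choice of $\varphi_n$) or $k\geq 2$ (contradicting primality of $\varphi_n$). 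Your version fills in the gap the paper glosses over, at the modest cost of invoking $\iota_n$ on a geodesic $\psi$ that may have length outside $(a,b)$; this is harmless since $\iota_n$ is really defined group-theoretically and the map makes sense for any hyperbolic conjugacy class once $n$ is large, as the paper itself implicitly assumes elsewhere.
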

\begin{proof}
	Assume that the lemma is false. Up to a subsequence, for all $n$ there exists
	a prime closed geodesic $\varphi_n$ on $M_n$ with $\length(\varphi_n) \in (a, b)$ such that 
	$\varphi_n \neq \iota_n(\psi)$, for all $\psi \in \mathcal{PC}(M)$
	of length in $(a, b)$. Take the $R$ given by Lemma \ref{lemma:prime_geo_ball}.
	By the continuity of the injectivity radius, there exists a uniform lower bound 
	$\epsilon > 0$ on the injectivity radius on $B_{R}(p_n)$.
	Therefore, by Proposition \ref{prop:dist_bb}, for all $\psi \in \mathcal{PC}(M)$
	of length in $(a, b)$,
	\[
	d ( \iota_n(\psi), \varphi_n ) > \epsilon.
	\]
	Up to a subsequence, $\{ \varphi_n \}$ converges to a closed geodesic $\varphi$ in $M$.
	It is easily seen that $\varphi$ must be prime. 
	Since $\length(\varphi) \in (a, b)$ 
	(recall that $a$ and $b$ do not belong to the length spectrum of $M$),
	the inequality above gives 
	\[
	d ( \iota_n(\varphi), \varphi_n ) > \epsilon.
	\]
	It contradicts the fact that both $\{ \varphi_n\}$ and $\{\iota_n(\varphi) \}$ converge to $\varphi$.
\end{proof}

\begin{proof}[Proof of Theorem \ref{thm:cont_cmplxlsp}]
	Propositions \ref{prop:injectiveiotannotprime} and \ref{prop:iotanprime} prove that 
	$\iota_n$ is injective, and Lemma \ref{lemma:surjectiveiotan} states that $\iota_n$ 
	is surjective. Then Lemma \ref{lemma:equivalencewkconv} proves that $\{\mu_n\}$ converges to $\mu_\infty$ weakly.
\end{proof}

It remains to prove Theorem \ref{thm:cont_lsp}. 
In order to do it, we can assume that $M$ has $k$ cusps, and that
the sequence $\{M_n\}$ converging to $M$ in $\mathcal{M}$ is obtained by performing Dehn fillings on $k-l$  fixed 
cusps of $M$. 
We must prove that the sequence of (real) length spectrum measures $\{\plsp M_n\}$ converges in $M(\real)$ to 
\[
\plsp M + 2(k - l)\delta_0.
\]
By Theorem \ref{thm:cont_cmplxlsp}, it is enough to prove that there exists $ \delta > 0$ less than the 
length of the shortest geodesic in $M$ such that 
\[
\lim_{n\to \infty} \plsp M_n ( [0, \delta) ) = 2(k - l).
\]
In geometrical terms, it is equivalent to the following well known result.
\begin{lemma} \label{lemma:shortgeocore}
	Let $\{ \pm \varphi_n^1,\dots, \pm \varphi_n^{k-l} \}$ be the core geodesics (oriented and prime) 
	in $M_n$ added on the Dehn filling.
	Let $\delta_s$ be the length of the shortest geodesic in $M$, and $\delta \in (0, \delta_s)$.
	Then, for large $n$, the only prime closed geodesics in $M_n$ of length $ < \delta$ are the core geodesics.
\end{lemma}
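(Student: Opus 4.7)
Proof plan. I would argue by contradiction combined with the thick–thin decomposition of $M_n$. Suppose the lemma fails; then, passing to a subsequence, for each $n$ there exists a prime closed geodesic $\psi_n \in \mathcal{PC}(M_n)$ with $\length(\psi_n) < \delta$ and $\psi_n \neq \pm \varphi_n^i$ for every $i$. Fix $\epsilon$ with $\delta < \epsilon < \min\{\delta_s, \mu\}$, where $\mu$ is the $3$-dimensional Margulis constant. Since $\length(\psi_n) < \epsilon < \mu$, the Margulis lemma forces $\psi_n$ to be contained in a single component $C_n$ of the $\epsilon$-thin part $M_{n,(0,\epsilon)}$. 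Each such $C_n$ is either a horoball cusp neighborhood (which contains no closed geodesics) or a solid-torus Margulis tube, so $C_n$ must be a Margulis tube; as $\pi_1(C_n) \cong \mathbb{Z}$ is generated by the core of $C_n$ and $\psi_n$ is primitive, $\psi_n$ equals $\pm$(core of $C_n$).

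It therefore remains to prove that, for all sufficiently large $n$, the only Margulis tubes occurring in $M_{n,(0,\epsilon)}$ are the $k-l$ tubes whose cores are the Dehn-filling core geodesics $\varphi_n^i$. Since $\epsilon < \delta_s$, the $\epsilon$-thin part of $M$ is exactly the disjoint union of the $k$ horoball cusp neighborhoods, and by Lemma~\ref{lemma:bdddiameterthick} the thick parts $M_{n,[\epsilon,\infty)}$ have uniformly bounded diameter, say $D$. The geometric convergence $(M_n, E_n) \to (M, E)$ in $\mathcal{MF}$ supplies, for every $R > 0$ and large $n$, almost-isometric embeddings $\phi_n \colon B_R(p) \to M_n$ converging to the identity. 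Taking $R$ large enough that $B_R(p)$ contains $M_{[\epsilon,\infty)}$ together with a fixed collar into each of the $k$ cusps of $M$, the image $\phi_n(B_R(p))$ eventually contains the whole thick part of $M_n$ and intersects each component of $M_{n,(0,\epsilon)}$ it meets in a collar of its boundary torus. Hence the thin part of $M_n$ has exactly $k$ components: $l$ of them are horoball neighborhoods (the unfilled cusps of $M$) and the remaining $k-l$ are Margulis solid tori, each Dehn-filling a cusp of $M$, so their cores must be precisely the geodesics $\varphi_n^i$. Combining with the first paragraph, $\psi_n = \pm \varphi_n^i$ for some $i$, a contradiction.

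The main obstacle is the structural claim in the second paragraph: one has to exclude the possibility that $M_n$ acquires, for arbitrarily large $n$, extra Margulis tubes that lie outside $\phi_n(B_R(p))$ and are therefore invisible from the limit. This is the step where geometric convergence does the real work; any such hypothetical extra component would contribute an additional boundary torus to the thick part of $M_n$ that persists in the limit, contradicting the fact that the thick part of $M$ has exactly $k$ boundary tori together with the uniform diameter bound of Lemma~\ref{lemma:bdddiameterthick}.
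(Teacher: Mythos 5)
Your proof is correct and follows essentially the same line as the paper's: trap the short geodesics in the $\epsilon$-thin part via Margulis, then use geometric convergence and the uniform diameter bound on the thick parts (Lemma~\ref{lemma:bdddiameterthick}) to show that the thin part of $M_n$ eventually consists of precisely the $l$ remaining cusps and the $k-l$ Margulis tubes around the filling cores. One small point: with the injectivity-radius convention used in the paper, the condition on $\epsilon$ should be $\delta/2 < \epsilon < \min\{\delta_s/2, \mu\}$ (so that geodesics of length $<\delta$ are trapped and the thin part of $M$ is still cusp-only); your $\epsilon<\delta_s$ is off by a factor of $2$ in the upper bound, and note that the paper's own choice $\epsilon<\delta/2$ likewise does not by itself trap geodesics of length in $[2\epsilon,\delta)$.
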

\begin{proof}
	Take $\epsilon > 0$ less than both the Margulis constant and $\delta/2$. 
	Thus $M_{(0,\epsilon)}$ consists only of cusps.
	Since $\length ( \varphi_n^i)$ goes to zero as $n$ goes to infinity, for large $n$, 
	all the geodesics $\varphi_n^i$ are in $M_{n, (0,\epsilon)}$. 
	Let $T_n^i$ be the Margulis tube corresponding to $\varphi_n^i$, and 
	$\{ C_n^{k-l + 1}, \dots, C_n^k \}$ be the cusp components of $M_{n,(0,\epsilon)}$ corresponding to the 
	non-deformed cusps. Let 
	\[
	F_n =  T_n^1 \cup \cdots \cup T_n^{k-l} \cup C_n^{k-l + 1} \cup \cdots \cup C_n^k \subset M_{n,(0,\epsilon)}.
	\]
	For large $n$, $M_{n,[\epsilon, \infty)}$ is homeomorphic to $M_{[\epsilon, \infty)}$;
	in particular, $M_{n,(0,\epsilon]}$ has $k$ boundary components.
	It implies that, for large $n$, $F_n = M_{n,(0,\epsilon)}$, and the result follows.
\end{proof}

We will need the following improvement of Theorem \ref{thm:cont_cmplxlsp} in the following subsection.

\begin{proposition}
	Let $f \colon \cmplx \to \cmplx$ be a continuous function with $\supp{f}$ not necessarily compact
	but contained in $\cmplx\setminus \disk$. 
	Assume that there exists $\epsilon > 0$, and $K > 0$ such that
	\[
	|f(z)| \leq  \frac{K}{|z|^{2 + \epsilon}},
	\]
	for all $z \in \cmplx$. Then we have:
	\begin{enumerate}
		\item For any $M \in \mathcal{M}$, 
			\[
			\int_{|z| > 1} |f(z)| d\pclsp M (z) < \infty.
			\]
		\item If $\{M_n\}$ converges to $M$ in $\mathcal{M}$, then
			\[
			\lim_{n\to\infty} \int_{|z| > 1} f(z) d\pclsp M_n (z) =  \int_{|z| > 1} f(z) d\pclsp M(z).
			\]
	\end{enumerate}
\end{proposition}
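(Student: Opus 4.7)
The plan is to combine the polynomial growth bound $\pclsp M(\{|z|\leq R\})\leq C_M R^2$ with the weak convergence established in Theorem \ref{thm:cont_cmplxlsp}, using the decay hypothesis on $f$ through a standard tail-plus-cutoff argument.

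For assertion (1), I decompose $\{|z|>1\}$ into the dyadic annuli $A_k=\{e^k<|z|\leq e^{k+1}\}$ for $k\geq 0$. Since $|f(z)|\leq K/|z|^{2+\epsilon}\leq K e^{-k(2+\epsilon)}$ on $A_k$, and $\pclsp M(A_k)\leq \pclsp M(\{|z|\leq e^{k+1}\})\leq C_M e^{2(k+1)}$, summing the annular contributions gives
\[
\int_{|z|>1}|f(z)|\,d\pclsp M(z)\leq K e^{2}C_M\sum_{k\geq 0}e^{-k\epsilon}<\infty.
\]

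For assertion (2), I would use the standard approximation argument. Given $\delta>0$, choose $R$ so large that $\int_{|z|>R}|f|\,d\pclsp M_n<\delta/3$ for all sufficiently large $n$ and the analogous bound holds for $\pclsp M$. Fix a continuous cutoff $\chi_R\colon\cmplx\to[0,1]$ which equals $1$ on $\{1\leq|z|\leq R\}$ and vanishes outside $\{|z|\leq 2R\}$, so that $f\chi_R$ is continuous with compact support contained in $\cmplx\setminus\disk$. Theorem \ref{thm:cont_cmplxlsp} then yields $\int f\chi_R\,d\pclsp M_n\to\int f\chi_R\,d\pclsp M$, and combining this with the uniform tail estimates gives the desired convergence.

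The main obstacle is obtaining the \emph{uniform} tail bound, i.e.\ constants $C,N$ independent of $n$ with $\pclsp M_n(\{|z|\leq R\})\leq C R^2+N$. To establish it, fix $\epsilon>0$ smaller than the Margulis constant and apply Lemma \ref{lemma:growth_geo} with $K=K_n:=M_{n,[\epsilon,\infty)}$. By Lemma \ref{lemma:bdddiameterthick} the diameters $\diam K_n$ are uniformly bounded, and by J{\o}rgensen's theorem $\Vol M_n\to\Vol M>0$; since the thin parts $M_{n,(0,\epsilon)}$ are finite unions of cusps and Margulis tubes of uniformly bounded volume, $\Vol K_n$ is uniformly bounded below. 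Hence Lemma \ref{lemma:growth_geo} yields a uniform constant $C$ with $\mathcal{P}_{K_n}(\log R)\leq CR^2$. The prime closed geodesics not counted here are those contained entirely in $M_{n,(0,\epsilon)}$, and by Lemma \ref{lemma:shortgeocore} these are exactly the finitely many cores of the Dehn-filled tubes, bounded in number by $N=2(k-l)$ independently of $n$, producing the required additive constant.
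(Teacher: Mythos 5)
Your proposal is correct and follows essentially the same strategy as the paper: the counting estimate of Lemma \ref{lemma:growth_geo} applied to the uniformly bounded thick parts gives a uniform polynomial bound on $\pclsp M_n(\{|z|\leq R\})$, the decay hypothesis then gives uniform smallness of the tails, and Theorem \ref{thm:cont_cmplxlsp} disposes of the compactly supported middle piece. Two small remarks: your explicit cutoff $\chi_R$ makes rigorous a step the paper leaves implicit (it applies weak convergence directly to $f\mathbf{1}_{\{1<|z|<R\}}$, which is not continuous at $|z|=R$), so this is a slight improvement in presentation; on the other hand, the additive constant $N$ from the short core geodesics is unnecessary here, since for $R$ larger than, say, $e^{2\delta}$ the cores lie inside $\{|z|\leq R\}$ and contribute nothing to $\int_{|z|\geq R}|f|\,d\mu_n$ --- the paper simply restricts the counting bound to $\{e^{2\delta}\leq |z|\leq R\}$ and never needs to bound the short geodesics at all.
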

\begin{proof}
	Let $\delta$ be the Margulis constant. Then for all $M \in \mathcal{M}$ any prime closed geodesic in $M$ 
	of length $\geq 2\delta$ intersects the thick part $M_{[\delta, \infty)}$. 
	Let $M \in \mathcal{M}$, and put $\mu = \pclsp M$. Fix $R \gg  1$. 
	By Lemma \ref{lemma:growth_geo}, we have
	\[
	\mu  (\{ e^{2\delta} \leq |z| \leq R \}) \leq C R^2,
	\]
	where $C = \pi \frac{ e^{8\diam M_{[\delta, \infty)}}}{\Vol M_{[\delta, \infty)} }$.
	Then we have, 
	\begin{eqnarray*}
		\int_{ |z| \geq R} |f(z)| d\mu (z)  & = &  \sum_{ k = 0}^\infty \int_{ R 2^k \leq |z| < R 2^{k+1} } |f(z)| d\mu(z) \\
		& \leq &  \sum_{ k = 0}^\infty \int_{  R 2^k \leq |z| < R 2^{k+1} } \frac{K}{|z|^{2 + \epsilon} } d\mu (z) \\
		& \leq &  \sum_{ k = 0}^\infty \frac{K}{(R 2^k)^{2 + \epsilon}} \int_{  R 2^k \leq |z| < R 2^{k+1} } d\mu (z) \\
		& \leq &  \sum_{ k = 0}^\infty \frac{K}{(R 2^k)^{2 + \epsilon}} {C(R2^{k+1})^2} \\
		&   =  &  \frac{KC}{R^\epsilon} \sum_{ k = 0}^\infty \frac{2^{2k+2}}{2^{k(2+\epsilon)}} = 
		\frac{4KC}{R^\epsilon} \frac{1}{1- \frac{1}{2^\epsilon}} = \frac{C'}{R^\epsilon},
	\end{eqnarray*}
	where $C'$ is a constant depending only on $C, K$, and $\epsilon$. The first assertion is then proved.
	Now let $\{M_n\}$ be a sequence converging to $M$ in $\mathcal{M}$. Let us put $\mu_n = \pclsp M_n$.
	Since both $\diam M_{n,[\delta, \infty)}$ and $\Vol M_n$ are uniformly bounded on $n$, 
	last inequality implies that there exists a constant $C''$ such that for all $n$
	\[
	\int_{ |z| \geq R} |f(z)| d\mu_n (z) \leq \frac{C''}{R^\epsilon}.
	\]
	Thus we have
	\begin{eqnarray*}
		\left| \int_{ |z| > 1} f(z) ( d\mu_n(z) - d\mu(z) ) \right| 
		& \leq & \left| \int_{ 1 < |z| < R} f(z) ( d\mu_n(z) - d\mu(z) ) \right| \\
		&    + & \left| \int_{ |z| \geq R} f(z)      ( d\mu_n(z) - d\mu(z) ) \right| \\
		& \leq & \left| \int_{ 1 < |z| < R} f(z) ( d\mu_n(z) - d\mu(z) ) \right|  + \frac{C'' + C'}{R^{\epsilon}}.
	\end{eqnarray*}
	Theorem \ref{thm:cont_cmplxlsp} shows that 
	\[
	\lim_{n\to \infty} \left| \int_{ |z| > 1} f(z) ( d\mu_n(z) - d\mu(z) ) \right| \leq \frac{C'' + C'}{R^{\epsilon}}.
	\]
	Since $R$ is arbitrary and independent of both $C$ and $C''$, the left hand side of this equation must vanish.
	This proves the proposition.
\end{proof}

\subsection{Spin-complex-length spectrum}

Let $(M,\eta)$ be an spin complete hyperbolic $3$-manifold,
and consider its holonomy representation,
\[
\Hol_{(M,\eta)} \colon \pi_1(M, p) \to \sln(2,\cmplx).
\]
If $\gamma \in \pi_1(M, p)$ is of hyperbolic type then,  
\[
\Hol_{(M,\eta)} (\gamma)
\sim
\left(\begin{array}{cc} 
	e^{ \lambda/2} & 0 \\ 
	0 & e^{-\lambda/2} 
\end{array}\right) \in \sln(2,\cmplx),
\quad \Re(\lambda) > 0.
\]
The \emph{spin complex length} of $\gamma$ is by definition the parameter $\lambda \in \cmplx/\langle 4\pi i\rangle$.
Hence, in contrast to the usual complex length, $e^{\lambda/2}$ is well defined 
(we have a well defined sign given by the lift of the holonomy). 
We propose the following definition.

\begin{definition}
	The (prime) spin-complex-length spectrum of $(M,\eta)$ is defined by 
	\[
	\pclsp (M,\eta) = \sum_{ \varphi \in \mathcal{PC}(M)} \delta_{ e^{ \lambda(\varphi)/2 } },
	\]
	where $\delta_x$ is the Dirac measure centered at $x$.
\end{definition}
\begin{rem}
	The image measure of $\pclsp (M,\eta)$ under the function $z\mapsto z^2$ is 
	$\pclsp M$.
\end{rem}
The results obtained for the length spectrum in the previous subsections extend in a natural way 
for the spin-complex-length spectrum, and their proofs will be omitted. 
To do that we must consider the space $\mathcal{MSF}$ of
spin-hyperbolic manifolds with a baseframe. In this case we have the identification between $\mathcal{MSF}$
and the space of discrete torsion-free subgroups of $\sln(2, \cmplx)$ with finite co-volume. We topologize $\mathcal{MSF}$
in such a way that this identification becomes a homeomorphism. The quotient spaces $\mathcal{MSB}$
and $\mathcal{MS}$ are then defined as in the non-spin case.

\begin{theorem}
	The map $\pclsp \colon \mathcal{MS} \to M( \cmplx \setminus \disk )$ is continuous.
\end{theorem}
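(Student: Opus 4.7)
The plan is to mirror, step by step, the proof of Theorem~\ref{thm:cont_cmplxlsp}, exploiting the fact that every ingredient of that proof lifts cleanly to $\sln(2,\cmplx)$. As before, since the topology on $\mathcal{MS}$ is the quotient topology from $\mathcal{MSF}\to\mathcal{MS}$, it suffices to check continuity along a convergent sequence $(M_n,\eta_n,E_n)\to(M,\eta,E)$ in $\mathcal{MSF}$. Under the identification of $\mathcal{MSF}$ with discrete torsion-free finite-covolume subgroups of $\sln(2,\cmplx)$, this convergence is by definition both algebraic and geometric convergence of the lifted holonomy representations. In particular, the spin analogue of Corollary~\ref{coro:thurston} yields, for $n$ large, a commutative diagram in which $\Hol_{(M_n,\eta_n)}\circ i_*^n$ is a sequence of representations of $\pi_1(M,p)$ into $\sln(2,\cmplx)$ converging to $\Hol_{(M,\eta)}$ both algebraically and geometrically.

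Fix $0<a<b$ not in the real-length spectrum of $M$. As in the non-spin case, the map $i_*^n\colon\pi_1(M,p)\to\pi_1(M_n,p_n)$ induces, for $n$ large, a map
\[
\tilde\iota_n\colon \{\varphi\in\mathcal{C}(M)\mid a<\length(\varphi)<b\}\longrightarrow\{\varphi\in\mathcal{C}(M_n)\mid a<\length(\varphi)<b\},
\]
and the algebraic convergence ensures that the spin complex length $\lambda_n(\tilde\iota_n(\varphi))\in\cmplx/\langle 4\pi i\rangle$ tends to $\lambda(\varphi)$, since the eigenvalues $e^{\pm\lambda/2}$ of a hyperbolic element of $\sln(2,\cmplx)$ depend continuously on the matrix. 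The spin analogue of Lemma~\ref{lemma:equivalencewkconv} then reduces the theorem to showing that, for every such $(a,b)$, the map $\tilde\iota_n$ restricts to a bijection on prime geodesics for all sufficiently large $n$.

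To prove the latter, I would reprove verbatim the four metric results used in the non-spin argument, none of which sees the spin structure. Injectivity of $\tilde\iota_n$ follows as in Proposition~\ref{prop:injectiveiotannotprime} from the fact that distinct closed geodesics are separated by at least the injectivity radius (Proposition~\ref{prop:dist_bb}), together with the continuity of the injectivity radius on $\mathcal{MB}$ (Lemma~\ref{lemma:injradcont}). Preservation of primality (cf.~Proposition~\ref{prop:iotanprime}) is a direct Arzel\`a--Ascoli argument on bounded-length geodesics inside a fixed ball, together with the lower bound on the injectivity radius. Surjectivity uses the thick-thin decomposition as in Lemma~\ref{lemma:prime_geo_ball} to confine any prime closed geodesic in $M_n$ of length in $(a,b)$ to a fixed ball $B_R(p_n)$, after which the Arzel\`a--Ascoli argument of Lemma~\ref{lemma:surjectiveiotan} produces a limit geodesic $\varphi\in\mathcal{PC}(M)$ with $\tilde\iota_n(\varphi)=\varphi_n$.

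The only substantively new point — and the one that might be viewed as the main (if minor) obstacle — is the bookkeeping of the $4\pi i$ ambiguity in place of the $2\pi i$ one: we need to know that the map sending a hyperbolic element of $\sln(2,\cmplx)$ to $e^{\lambda/2}$ is well defined and continuous, so that algebraic convergence of $\tilde\rho_n(\gamma)$ to $\Hol_{(M,\eta)}(\gamma)$ produces convergence of $e^{\lambda_n(\tilde\iota_n(\varphi))/2}$ to $e^{\lambda(\varphi)/2}$ in $\cmplx\setminus\disk$. This is immediate from the continuity of the eigenvalues of a matrix on the open locus where they are distinct, together with the coherent choice of sign furnished by the $\spin(3)$-lift. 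With this observation in place, the rest of the argument is a transcription of Section~\ref{sec:continuity}, so the proof can indeed be omitted as the authors announce.
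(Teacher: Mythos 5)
Your proposal is correct and follows exactly the route the paper has in mind: the authors explicitly declare that ``the results \ldots extend in a natural way for the spin-complex-length spectrum, and their proofs will be omitted,'' and your write-up simply carries out that extension. You correctly isolate the only point where the spin structure actually enters — that a hyperbolic element of $\sln(2,\cmplx)$ has a distinguished eigenvalue $e^{\lambda/2}$ of modulus $>1$, depending continuously on the matrix, so that algebraic convergence of the lifted representations forces convergence of the measures supported on $e^{\lambda/2}$ — and observe that the metric ingredients (injectivity radius, thick-thin decomposition, Arzel\`a--Ascoli) are blind to the lift, so the rest of Section~\ref{sec:continuity} transcribes verbatim.
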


As in the non-spin case, we can improve the continuity in the following sense.
Notice that the condition on the decay at infinity must be replaced, since the measure of the ball $B_R (0) \subset \cmplx$
under the measure $\pclsp (M,\eta)$ is equal to the measure of the ball $B_{R^2}(0)$ under the measure $\pclsp M$.

\begin{proposition} \label{prop:spclspimprovedcont}
	Let $f \colon \cmplx \to \cmplx$ be a continuous function with support contained in $|z| > 1$. 
	Assume that there exists $\epsilon > 0$, and $K > 0$ such that
	\[
	|f(z)| \leq  \frac{K}{|z|^{4 + \epsilon}},
	\]
	for all $|z| > 1$.
	If $\{ (M_n, \eta_n) \}$ converges to $(M,\eta)$ in $\mathcal{MS}$, then
	\[
	\int_{|z| > 1} |f(z)| d\mu (z), \int_{|z| > 1} |f(z)| d\mu_n (z) < \infty,
	\]
	and 
	\[
	\lim_{n\to\infty} \int_{|z| > 1} f(z) d\mu_n (z) =  \int_{|z| > 1} f(z) d\mu(z).
	\]
	Where $\mu = \pclsp (M,\eta)$ and $\mu_n = \pclsp (M_n,\eta_n)$.
\end{proposition}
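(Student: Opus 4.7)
The plan is to mimic verbatim the proof of the preceding (non-spin) proposition, with two adjustments: the growth bound on the spectrum has exponent $4$ rather than $2$, and we invoke the spin continuity theorem instead of Theorem~\ref{thm:cont_cmplxlsp}. The key quantitative input is that since $\pclsp M$ is the push-forward of $\pclsp (M,\eta)$ under $z\mapsto z^{2}$, Lemma~\ref{lemma:growth_geo} yields
\[
\mu\bigl(\{\,|z|\le R\,\}\bigr)\;\le\; C\,R^{4}, \qquad R\gg 1,
\]
with $C=\pi\, e^{8\diam M_{[\delta,\infty)}}/\Vol M_{[\delta,\infty)}$ for $\delta$ the Margulis constant; indeed $|z|\le R$ corresponds to $\length\le 2\log R$, whence $\mathcal P_K(2\log R)\le C e^{4\log R}$.

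For the first assertion I would decompose dyadically:
\[
\int_{|z|\ge R}|f(z)|\,d\mu(z)
\;\le\;\sum_{k=0}^{\infty}\frac{K}{(R2^{k})^{4+\epsilon}}\cdot C\,(R2^{k+1})^{4}
\;=\;\frac{16 KC}{R^{\epsilon}}\sum_{k=0}^{\infty}2^{-k\epsilon}
\;=\;\frac{C'}{R^{\epsilon}}.
\]
The new decay hypothesis $|f(z)|\le K/|z|^{4+\epsilon}$ is exactly what is needed to absorb the $R^{4}$ growth and leave an integrable tail. This gives finiteness of the integral against $\mu$ and, by the same computation applied to each $\mu_{n}$, against $\mu_{n}$ as well, provided the constant $C$ can be chosen uniformly in $n$. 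Uniformity holds because Jørgensen's theorem bounds $\Vol M_{n}$ from above and Lemma~\ref{lemma:bdddiameterthick} bounds $\diam M_{n,[\delta,\infty)}$ from above, so there is a single $C''$ with $\int_{|z|\ge R}|f|\,d\mu_{n}\le C''/R^{\epsilon}$ for all $n$.

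For the convergence statement I would split
\[
\left|\int_{|z|>1}f\,d(\mu_{n}-\mu)\right|
\;\le\;\left|\int_{1<|z|<R}f\,d(\mu_{n}-\mu)\right|+\frac{C'+C''}{R^{\epsilon}},
\]
choosing $R$ not in the spin-length spectrum of $M$ (equivalently, $R^{2}$ not in the length spectrum). On the relatively compact annulus $\{1<|z|<R\}$ the restriction of $f$ is continuous, and since $R$ lies outside the spectrum the measures $\mu$ and $\mu_{n}$ put no mass on the boundary circle, so weak convergence of $\{\mu_{n}\}$ to $\mu$ on $\cmplx\setminus\overline D$ (the spin version of Theorem~\ref{thm:cont_cmplxlsp} stated just above) applied to a standard Urysohn cutoff of $f$ forces the first summand to tend to $0$. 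Letting $R\to\infty$ yields the desired limit.

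The only non-trivial ingredient, which is the analogue of the step that was already the point of the non-spin version, is the uniform tail bound: everything else is routine once one has the $R^{4}$ growth of the spin spectrum and the uniform geometric control on the thick parts of the $M_{n}$. There is no genuine obstacle, as the argument is structurally identical to the proof just given for $\pclsp$.
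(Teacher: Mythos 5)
Your proof is correct and is exactly the argument the paper has in mind. The paper explicitly omits the proof of this proposition, remarking only that the decay exponent must be raised from $2+\epsilon$ to $4+\epsilon$ because $\pclsp(M,\eta)$ pushes forward to $\pclsp M$ under $z\mapsto z^2$, giving $\mu(\{|z|\le R\})\le C R^4$ in place of $C R^2$; you have carried out precisely this modification of the non-spin proof, with the dyadic tail estimate in the new exponent, the uniform constant along the sequence from the bounded thick-part geometry, and the cutoff argument invoking the spin continuity theorem.
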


\section{Asymptotic behavior}
\label{chapter:asymptotic}

\newtheorem*{thmwotzke}{ Theorem \cite{Wotzke}(Wotzke)}

The aim of this section is to establish the asymptotic behavior 
of the n-dimensional hyperbolic Reidemeister torsion.
More concretely, we will prove the following result. 

\begin{theorem} \label{thm:asymp_main}
	Let $M$ be a connected, complete, hyperbolic $3$-manifold of finite volume.
	Then 
	\[
	\lim_{k\to\infty} \frac{\log |\normaltor_{2k + 1}(M)|}{(2k +1)^2} = -\frac{\Vol(M)}{4\pi}.
	\]
	In addition, if $\eta$ is an acyclic spin structure on $M$, then 
	\[
	\lim_{k\to\infty} \frac{\log |\normaltor_{2k}(M, \eta)|}{(2k)^2} = -\frac{\Vol(M)}{4\pi}.
	\]
\end{theorem}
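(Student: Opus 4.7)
The strategy is to extend Müller's formula \eqref{intro:RuelleMuller} from the compact case to the cusped setting by passing to the limit along a sequence of hyperbolic Dehn fillings $M_{p/q}\to M$, and then extract the asymptotic exactly as in Müller's argument using his uniform bound on the Ruelle-zeta sum. I focus on the odd-dimensional case; the even-dimensional statement follows in the same way, using Proposition~\ref{prop:eventordeform} and the spin-complex-length spectrum in place of their odd-dimensional analogues, with the acyclicity of $\eta$ ensuring that $\tor(M;\rho_{2k})$ is defined without auxiliary cycle data.

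The first step is to promote Müller's identity to the cusped side. Applied to the closed manifold $M_{p/q}$, it reads
\[
\log\bigl|\tor(M_{p/q};\rho_{2k+1})/\tor(M_{p/q};\rho_5)\bigr| \;=\; \sum_{j=3}^{k} \log\bigl|R_{2j}^{p/q}(j)\bigr| \;-\; \frac{\Vol M_{p/q}}{\pi}\bigl(k(k+1)-6\bigr).
\]
By Proposition~\ref{prop:oddtordeform} applied to $n=2k+1$ and $n=5$, I pick a sub-net of $\mathcal A_M$ along which the normalized torsions tend to the upper endpoints $2^{2(k-1)l}\normaltor_{2k+1}(M)$ and $2^{2l}\normaltor_5(M)$; Jørgensen's theorem gives $\Vol M_{p/q}\to\Vol M$. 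Each $\log|R_{2j}^{p/q}(j)|$ is rewritten as an integral against $\pclsp M_{p/q}$ of a continuous function built from the eigenvalues of $\rho_{2j}$ evaluated on a hyperbolic element with eigenvalue $z^{1/2}$; for $j\geq 2$ this integrand decays fast enough at infinity for Proposition~\ref{prop:spclspimprovedcont} to yield the convergence $\log|R_{2j}^{p/q}(j)|\to \log|R_{2j}(j)|$, where $R_{2j}(j)$ is defined by the same integral against $\pclsp M$. Taking limits and absorbing the logarithms of the surgery factors $2^{2(k-1)l}$, $2^{2l}$ together with $\log|\normaltor_5(M)|$ into an $\mathcal{O}(k)$ remainder, I obtain the cusped Müller formula
\[
\log|\normaltor_{2k+1}(M)| \;=\; \sum_{j=3}^{k} \log|R_{2j}(j)| \;-\; \frac{\Vol M}{\pi}\bigl(k(k+1)-6\bigr) \;+\; \mathcal{O}(k).
\]

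The second step is to prove the cusped version of Müller's estimate that $\sum_{j=3}^{k}\log|R_{2j}(j)|=\mathcal{O}(1)$ as $k\to\infty$. Combined with the cusped Müller formula, this yields the theorem immediately: divide by $(2k+1)^2$ and use $(k(k+1)-6)/(2k+1)^2\to 1/4$.

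The hard part will be this second step. Müller's compact-case bound relies on a positive lower bound for the injectivity radius, and along $M_{p/q}\to M$ the core geodesics added by Dehn filling have lengths tending to zero, so the closed-manifold estimate degenerates if applied directly on $M_{p/q}$. The workaround is, on one hand, to use the symmetry of the eigenvalues of $\rho_{2j}$, which makes the integrand of $\log|R_{2j}(j)|$ bounded near the unit circle $|z|=1$, so that short geodesics cannot contribute divergent amounts; and on the other hand, to use Lemma~\ref{lemma:growth_geo} to bound the mass of $\pclsp M_{p/q}$ on bounded annuli uniformly in $(p,q)$, since the thick parts of $M_{p/q}$ converge geometrically to that of $M$. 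Combining these ingredients produces the required $\mathcal{O}(1)$ bound both uniformly in $(p,q)$ and in the cusped limit.
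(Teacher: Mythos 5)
Your high-level strategy matches the paper's (approximate by Dehn fillings $M_{p/q}$, apply M\"uller's formula to each compact $M_{p/q}$, pass to the limit using continuity of the complex-length spectrum, then invoke M\"uller's uniform bound on the Ruelle-sum). However, there is a genuine gap in the limiting step. The claim that $\log|R_{2j}^{p/q}(j)|\to\log|R_{2j}(j)|$ is false in general: the Ruelle zeta function $R_{2j}^{p/q}$ includes the contribution of the two oriented core geodesics $\pm\varphi^i_{p/q}$, whose complex lengths tend to $0$ modulo $2\pi i$. Along exactly the sub-nets you want to use (those realising the endpoint of Proposition~\ref{prop:oddtordeform}, which forces $\lambda_{p/q}(\gamma_i)\to 2\pi i$), the factor $1-\sigma_{2j}(\varphi^i_{p/q})e^{-j\length(\varphi^i_{p/q})}$ tends to $0$, so $\log|R_{2j}^{p/q}(j)|\to-\infty$. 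This also invalidates your appeal to Proposition~\ref{prop:spclspimprovedcont}, which requires a function that is continuous on all of $\cmplx$; $\log|1-z^{-2j}|$ blows up at roots of unity on $|z|=1$, and, contrary to what you assert, the symmetry of the eigenvalues of $\rho_{2j}$ does not make the integrand bounded near the unit circle. Consequently your ``cusped M\"uller formula'' is an $\infty-\infty$ statement as written.

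What the paper does to resolve exactly this problem (Lemma~\ref{lemma:ruellecusped}) is to combine the Mayer--Vietoris surgery formula for the torsion (Lemma~\ref{lemma:evensurgeryform}, resp.\ Lemma~\ref{eqn:torodd}) with M\"uller's formula \emph{before} taking any limit, and to observe that the divergent surgery factors $\prod_j |(e^{(\frac12+j)\lambda}-1)(e^{-(\frac12+j)\lambda}-1)|$ cancel exactly against the core-geodesic contribution $\log|1-e^{-(\frac12+k)\bar\lambda}|^2$ inside each $\log|R_{2k+1}^{p/q}(k+\frac12)|$, leaving only the harmless residual $-(\frac12+k)\Re\lambda(\varphi_{p/q})\to 0$ plus a sum $B_{2k+1}^{p/q}$ taken over $\mathcal{PC}(M_{p/q})\setminus\{\text{core geodesics}\}$. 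Lemma~\ref{lemma:shortgeocore} then guarantees that every geodesic appearing in $B_{2k+1}^{p/q}$ has length bounded below by half the systole of $M$, so the integrand is genuinely bounded on the relevant region and Proposition~\ref{prop:spclspimprovedcont} applies; this is also what gives the absolutely convergent bound $\sum_k |\log|R_k(k/2)||<\infty$ of Lemma~\ref{lemma:finitesumRuelle}, replacing your ``symmetry/boundedness near $|z|=1$'' heuristic. In short, you have the right architecture but are missing the explicit cancellation between the torsion surgery factors and the core-geodesic terms of $R^{p/q}$, which is the decisive step in the cusped case.
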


For a compact manifold, this theorem is due to M\"uller, see \cite{Mul}.
In this case, we can consider $\tor_n (M; \eta )$ for all $n$ (\idest there is no need to consider 
the normalized torsion $\normaltor_{n}(M, \eta)$). 

\begin{theorem}[W.\;M\"uller, \cite{Mul}] \label{th_mull}
	Let $(M,\eta)$ be a connected, closed, spin-hyperbolic $3$-manifold.
	Then we have: 
	\begin{equation*}
		\lim_{n\to\infty} \frac{\log |\tor_n (M; \eta )| }{n^2} = - \frac{\Vol(M)}{4\pi}.
	\end{equation*}
\end{theorem}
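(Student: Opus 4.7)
The plan is to follow M\"uller's strategy sketched in the introduction: derive the structural identity \eqref{intro:RuelleMuller} from the Wotzke-Br\"ocker understanding of the twisted Ruelle zeta function, and then extract the asymptotic by bounding the residual Ruelle zeta sum.

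The starting point is M\"uller's theorem on the equivalence of Reidemeister and Ray-Singer torsion for unimodular representations, which gives $|\tau(M;\rho_n)|=\operatorname{Tor}(M;\rho_n)$. Combined with Wotzke's theorem $|R_{\rho_n}(0)|=\operatorname{Tor}(M;\rho_n)^{2}$ and the meromorphic continuation of $R_{\rho_n}$, this reduces the torsion to a single special value of a Ruelle zeta function. Since the Euler product defining $R_{\rho_n}$ does not converge at $s=0$, the next move is to use Br\"ocker's functional equation, which exchanges $R_{\rho_n}(0)$ for values of related Ruelle zeta functions at points inside their half-plane of absolute convergence, at the cost of an exponential prefactor involving $\Vol M$. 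Iterating this functional equation from $\rho_{2k+1}$ down to $\rho_{5}$ and using the Clebsch-Gordan decomposition (Theorem \ref{thm:CG}) to reassemble the factors produced along the way, one arrives at
\[
\log\left|\frac{\tor(M,\rho_{2k+1})}{\tor(M,\rho_{5})}\right| = \sum_{j=3}^{k}\log|R_{2j}(j)| - \frac{1}{\pi}\Vol M\,\bigl(k(k+1)-6\bigr).
\]

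Dividing this identity by $(2k+1)^{2}$, the volume contribution converges to $-\Vol M/(4\pi)$, and everything reduces to proving $(2k+1)^{-2}\sum_{j=3}^{k}\log|R_{2j}(j)|\to 0$. For this I would show that $\log|R_{2j}(j)|$ is uniformly bounded in $j$. Writing the Euler product explicitly, the eigenvalues of $\rho_{2j}(\Hol(\varphi))\,e^{-j\length(\varphi)}$ all have moduli $e^{-(2i+1)\length(\varphi)/2}$ for $i=0,\dots,2j-1$, since the weights of $V_{2j}$ together with the shift $s=j$ produce the odd integers $1,3,\dots,4j-1$ in the exponent. Expanding $\log|\det(\operatorname{Id}-\rho_{2j}(\varphi)\,e^{-j\length(\varphi)})|$ and summing the odd geometric series in $i$ yields a per-geodesic bound controlled by $e^{-\length(\varphi)/2}/(1-e^{-\length(\varphi)})$, which is \emph{independent} of $j$. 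The total over $\mathcal{PC}(M)$ converges by the standard prime-geodesic count on a closed hyperbolic $3$-manifold (analogue of Lemma \ref{lemma:growth_geo}), giving the desired uniform $O(1)$ bound and hence the $o(k^{2})$ growth of the sum. The even-dimensional case $|\tor_{2k}(M,\eta)|$, under the acyclicity hypothesis on $\eta$, is handled \emph{mutatis mutandis}: acyclicity of $\eta$ ensures $\rho_{2k}$ is acyclic, Wotzke's theorem applies, and an analogous M\"uller-type identity for $\log|\tor(M,\rho_{2k})/\tor(M,\rho_{2})|$ produces the same leading asymptotic $-\Vol M/(4\pi)$ after division by $(2k)^{2}$.

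The main obstacle is the derivation of the identity \eqref{intro:RuelleMuller}, which requires a delicate bookkeeping of the volume prefactors and of the poles and zeros produced by applying Wotzke's meromorphic continuation and Br\"ocker's functional equation successively. The precise coefficient $k(k+1)-6$ dictates the correct leading asymptotic, and any error in normalization (signs, powers of $2$, or the base index at which the telescoping starts) would destroy the cancellation needed to arrive at $-\Vol M/(4\pi)$ rather than some other multiple of the volume. Once the identity is in place, the analytic bound on $\log|R_{2j}(j)|$ is comparatively routine, since $s=j$ sits comfortably inside the domain of absolute convergence of the Euler product.
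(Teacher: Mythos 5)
Your outline follows the same route the paper attributes to M\"uller (and which the paper itself merely cites rather than reproves): the Cheeger--M\"uller/unimodular equivalence, Wotzke's theorem, Br\"ocker's functional equation leading to the identity \eqref{intro:RuelleMuller} (Theorem \ref{thm:MullerRuelle}), and then a uniform bound on the residual Ruelle sum. The genuine problem is in the step you call ``comparatively routine''. In M\"uller's identity the symbols $R_{2j}$ do \emph{not} denote the representation-twisted zeta functions $R_{\rho_{2j}}(s)=\prod_{\varphi}\det\bigl(\operatorname{Id}-\rho_{2j}(\varphi)e^{-s\length(\varphi)}\bigr)$; they are the zeta functions twisted only by the one-dimensional character $\sigma_{2j}(\varphi)=e^{\,j i\,\Im\lambda(\varphi)}$, so each prime geodesic contributes a single factor $1-\sigma_{2j}(\varphi)e^{-j\length(\varphi)}$ of modulus comparable to $e^{-j\length(\varphi)}$. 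Your estimate instead expands $\det\bigl(\operatorname{Id}-\rho_{2j}(\varphi)e^{-j\length(\varphi)}\bigr)$, whose eigenvalue moduli are indeed $e^{-(2i+1)\length(\varphi)/2}$, and arrives at a per-geodesic bound of order $e^{-\length(\varphi)/2}$. That bound cannot be summed over $\mathcal{PC}(M)$: the prime geodesic count grows like $e^{2t}$ (cf.\ Lemma \ref{lemma:growth_geo} and Margulis), so $\sum_{\varphi}e^{-\length(\varphi)/2}$ diverges, and your claim that ``$s=j$ sits comfortably inside the domain of absolute convergence'' is false for $R_{\rho_{2j}}$, whose abscissa of convergence is $\Re(s)>2+j$. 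The whole point of the telescoped identity is that only the character-twisted factors $R_{2j}(j)$ survive, and for those $\bigl|\log|1-\sigma_{2j}(\varphi)e^{-j\length(\varphi)}|\bigr|\lesssim e^{-j\length(\varphi)}\le e^{-3\length(\varphi)}$, which is summable and gives the uniform (indeed convergent) bound of M\"uller's lemma. As written, your analytic step proves nothing, although it is repaired immediately once the correct meaning of $R_{2j}$ is used.

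Two smaller remarks. First, the derivation of \eqref{intro:RuelleMuller} itself is only gestured at in your text (``iterating the functional equation \dots one arrives at''), which is acceptable here since the paper likewise cites M\"uller for Theorem \ref{thm:MullerRuelle}, but you should be aware that this identity, including the coefficient $k(k+1)-6$, is being imported, not proved. Second, for a \emph{closed} manifold no acyclicity hypothesis on $\eta$ is needed in the even-dimensional case: $\rho_{2k}$ is acyclic by Raghunathan's vanishing theorem, the notion of an acyclic spin structure being relevant only for cusped manifolds.
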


The proof given by M\"uller is based on the fact that 
the Reidemeister torsion coincides with the Ray-Singer analytic
torsion for a compact manifold.
Since \emph{a priori} the Ray-Singer torsion is not even defined for 
non-compact manifolds, it seems difficult to adapt 
M\"uller's proof to the non-compact case. 
Nevertheless, M\"uller's techniques are still powerful
in the non-compact case, and will play a crucial role
in our proof of Theorem \ref{thm:asymp_main}. 
Roughly speaking, our approach will consist in approximating 
the cusp manifold $M$ by compact manifolds obtained 
by hyperbolic Dehn filling; then we will
apply M\"uller's theorem to these compact manifolds
and the surgery formulas for the torsion stated in Section \ref{chapter:surgery}.
The continuity of the (spin-)complex-length spectrum established in Section \ref{chapter:cmplxlength} 
will allow us to handle this limit process.

The distribution of this section is as follows. The first subsection is an exposition of the
notions concerning the Ray-Singer analytic torsion and Ruelle zeta functions that will be needed
in the subsequent subsections; that subsection ends with Wotzke's theorem in dimension three, which 
gives the relationship between Ruelle zeta functions and the Reidemeister torsion invariants that we are studying.
In the second subsection, we will state the theorem by M\"uller from which he
deduces the asymptotic behaviour for the compact case.
That theorem establishes a formula for the Ray-Singer analytic torsion,
which will be the essential ingredient for the proof of Theorem \ref{thm:asymp_main} given in the last subsection.

\subsection{Ruelle zeta functions}

Let $M$ be a differentiable closed $n$-manifold with a Riemannian metric $g$.
Let us assume that we have an \emph{acyclic} orthogonal 
(or unitary) representation of the fundamental group 
\[\rho\colon \pi_1 M \to \ort(n).\]
The analytic Ray-Singer torsion $T(M; \rho)$, introduced by Ray and Singer 
in the seminal paper \cite{RaySin}, is a certain weighted alternating product of regularized 
determinants of the Laplacians 
\[
\Delta^q \colon \Omega^q(M;E_\rho)\to \Omega^q(M;E_\rho).
\]
A theorem proved in \cite{RaySin} states that the Ray-Singer 
torsion is independent of the chosen metric.
Hence, it is usually denoted simply as $T(M; \rho)$,
without making reference to the metric $g$.

In the paper mentioned above, Ray and Singer conjectured that the Reidemeister 
torsion $\tor(M;\rho)$ agrees with the 
analytic torsion $T(M; \rho)$. This conjecture was proved independently by Cheeger
and M\"uller in \cite{Ch} and \cite{MulAT} respectively. In \cite{MulUni}, 
M\"uller extended the definition of the analytic torsion 
to unimodular representations 
\[
\rho\colon \pi_1 M \to \sln(n, \cmplx).
\]
As in the orthogonal case, this definition requires a Riemannian metric,
but, in contrast to the orthogonal case,
this new analytic torsion is only metric independent for odd dimensions. 
In that paper, M\"uller also proved that both the analytic torsion 
and the Reidemeister torsion agree for an odd dimensional closed manifold.

An important part of this story concerns the relation between 
the Ray-Singer torsion and Ruelle zeta functions for a compact negatively curved manifold $M$. 
Since it will play a crucial role in the proof of our main theorem,
we will spend the rest of this subsection to explain it.

Let $\Gamma$ be a torsion free co-compact subgroup of $\Isom^+ \hyp^n$, 
and let $M = \hyp^n/ \Gamma$ be the corresponding hyperbolic manifold. 
The classical Ruelle zeta function associated to $M$ is formally defined as 
\[
R(s) = \prod_{ [\gamma] \in \text{PC}(\Gamma) } \big( 1 - e^{-s \length(\gamma)} \big),
\]
where $\length(\gamma)$ is the length of the prime oriented closed geodesic defined by 
the prime conjugacy class $[\gamma]$ of $\Gamma$.
The region of convergence of $R(s)$ can be determined using the asymptotic behaviour 
of the number of closed geodesics of length less or equal than a given value.
To that end, define $P(t)$ as 
\[
P(t) = \# \Big\{ [\gamma] \in \text{PC}(\Gamma) \mid l(\gamma) \leq t \Big\}.
\]
Margulis studied the function $P(t)$ for a closed manifold 
of negative curvature in \cite{Margulis}. Among other things, 
he proved that
\[ 
\lim_{t \to \infty} \frac{P(t)} {e^{ht}/{ht}} = 1,
\]
where $h$ is the topological entropy of the geodesic flow. 
The topological entropy of a hyperbolic manifold of dimension $n$ 
is $h = n - 1$. Using Margulis' result,
the region of convergence of $R(s)$ is easily seen to be 
\[
\Big\{ s \in \cmplx \mid \Re(s) > n - 1 \Big\}.
\]
In \cite{Fried1}, Fried gave the following generalization on the definition 
of the Ruelle zeta function. 
Given an \emph{orthogonal representation} 
$\rho \colon \pi_1(M) \rightarrow O(d)$, which need not to be acyclic,
the twisted Ruelle zeta function associated to $\rho$ is defined as 
\[
R_\rho(s) = \prod_{[\gamma] \in \text{PC}(\Gamma)} \det\left( \text{Id} - \rho(\gamma) e^{-sl(\gamma)}\right).
\]
The region of convergence of $R_\rho(s)$ is the same as the one of the classical Ruelle 
zeta function (here we are using that $\rho$ is an orthogonal representation). 
In this paper, Fried proved that $R_\rho(s)$ has a meromorphic extension to the whole complex plane;
moreover, if $\rho$ is acyclic, then $R_\rho(s)$ is regular at $s=0$ and 
$|R_\rho(0)| = T(M;\rho)^2$ (if $\rho$ is not acyclic, $R_\rho(s)$ can have a pole at $s = 0$,
and $T(M;\rho)^2$ is equal to the leading term of the Laurent expansion of $R_\rho(s)$ at the origin).

In a posterior paper \cite{Fried2}, Fried proved that for a general representation
$\rho \colon \pi_1 M \rightarrow \gln(d; \cmplx)$
the twisted Ruelle zeta function $R_\rho(s)$ has also a meromorphic extension to the whole plane.
However, he was not able to prove its relationship with the Ray-Singer analytic torsion.
Nevertheless, three years later U.\;Br\"ocker proved in his thesis a similar 
result for representations of the fundamental group that are restrictions of 
finite-dimensional irreducible representations of $\Isom^+ \hyp^n \cong \text{SO}_0(n,1)$, see \cite{Brocker}.
According to M\"uller \cite{Mul}, the methods used by Br\"ocker are based on elaborate computations
which are difficult to verify. Nonetheless, this problem has been overcome by Wotzke 
in his thesis \cite{Wotzke}. The following subsection is dedicated to state Wotzke's 
Theorem in dimension $3$.

\subsection{Wotzke's Theorem}
Let $(M,\eta)$ be a connected, closed, spin-hyperbolic $3$-manifold.
If $\Gamma$ is the image of $\pi_1 (M,p)$ under the $\Hol_{(M,\eta)}$, then
\[ 
(M, \eta) = \Gamma \backslash \sln(2; \cmplx) / \textrm{SU(2)}.
\]
Let $\rho$ be a \emph{real} finite-dimensional representation of
$\sln(2; \cmplx)$, regarded as a real Lie group. 
Denote by $\theta$ the Cartan involution of $\sln(2; \cmplx)$ with respect to $\su$, and put $\rho_\theta = \rho\circ\theta$.
Let $E_\rho\rightarrow M$ be the flat vector bundle associated to $\rho$. 
Introduce some metric on $E_\rho$, and consider the Laplacians 
$\Delta^q \colon \Omega^r(M; E_\rho) \to \Omega^r(M; E_\rho)$.

\begin{theorem}[Wotzke, \cite{Wotzke}] \label{thm:Wotzke}
	With the  notation above, the following assertions hold:
	\begin{enumerate}
		\item If $\rho_\theta$ is not isomorphic to $\rho$, then $R_\rho(s)$ is regular at $s=0$ and
			$$ |R_\rho(0)| = T(M; \rho)^2. $$
		\item Assume that $\rho \circ \theta$ is isomorphic to $\rho$. If $\rho$ is not trivial, 
			then the order $h_\rho$ at $s=0$ of $R_\rho(s)$ is given by
			\[ h_\rho = 2\sum_{q=1}^3 (-1)^q q \operatorname{dim} \operatorname{ker} \Delta^q, \]
			and for the trivial representation we have $h_\rho = 4 - 2 \operatorname{ dim }\cohom^1(M; \real)$.
			The leading term of the Laurent expansion of $R_\rho (s)$ at s = 0 is given by
			\[T(M; \rho )^2 s^{h_\rho}.\]
	\end{enumerate}
	\label{thmwotzke}
\end{theorem}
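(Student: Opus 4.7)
Following the template Fried established for orthogonal representations, the plan is to factor $R_\rho(s)$ as an alternating product of twisted Selberg zeta functions and to compare this with the spectral expression for $T(M;\rho)^2$ coming from the Selberg trace formula. First I would expand each Euler factor $\det(\operatorname{Id}-\rho(\gamma)e^{-s\length(\gamma)})$ using the decomposition of the normal-bundle holonomy along the closed geodesic represented by $\gamma$, and rearrange the product to obtain a factorization
\[
R_\rho(s) = \prod_{p=0}^{2} Z_p(s;\rho)^{(-1)^p},
\]
where $Z_p(s;\rho)$ is a Selberg-type zeta function twisted by $\rho$ and by the representation of $\su$ on $p$-forms normal to the geodesic flow.

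Second, I would apply the Selberg trace formula to the heat kernel of the twisted Hodge Laplacian $\Delta^p_\rho$ acting on $\Omega^p(M;E_\rho)$: after a Mellin transform, its geometric side reproduces $\log Z_p(s;\rho)$, while its spectral side identifies $Z_p(s;\rho)$ near $s=0$ with an explicit $\zeta$-regularized determinant of $\Delta^p_\rho$ shifted by constants depending on the highest weight of $\rho$. Since by definition $T(M;\rho)^2 = \prod_p (\det{}' \Delta^p_\rho)^{(-1)^p p}$, combining this with the factorization above yields $|R_\rho(0)| = T(M;\rho)^2$ whenever the representation is acyclic, and otherwise produces an order at $s=0$ matching the formula for $h_\rho$ via the Matsushima identification of $\ker \Delta^q_\rho$ with a piece of relative Lie-algebra cohomology; that cohomology vanishes precisely when $\rho_\theta \not\cong \rho$, and the additional correction involving $\dim\cohom^1(M;\real)$ for the trivial representation reflects the fact that $\ker \Delta^1_{\mathrm{triv}} = \cohom^1(M;\real)$.

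The main obstacle I anticipate is the non-unitarity of $\rho$: the twisted Hodge Laplacian $\Delta^p_\rho$ is not self-adjoint when $\rho$ is non-unitary, so the usual form of the Selberg trace formula does not apply directly and the $\zeta$-regularization must be set up with care. Wotzke's device for circumventing this is to consider $\rho \oplus \rho_\theta$, which always carries a natural invariant Hermitian inner product making the associated Laplacians essentially self-adjoint, and then to separate the contributions of the two summands using the action of the Cartan involution on the intertwining space. Verifying that this splitting is compatible both with the factorization into Selberg zetas and with the $\zeta$-regularization is the technical heart of the argument, and it is also the step where the dichotomy between cases~(1) and~(2) of the theorem becomes transparent: in case~(1) the two summands are genuinely distinct and the cancellation is clean, while in case~(2) the self-duality of $\rho$ under $\theta$ reintroduces harmonic forms whose dimensions are precisely recorded by $h_\rho$.
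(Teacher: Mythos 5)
This statement is \emph{not proved in the paper}. It is stated as a theorem of Wotzke and cited directly from his thesis \cite{Wotzke}; the paper treats it as an external input and never supplies an argument for it. There is therefore no in-paper proof against which to compare your sketch.

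As a reconstruction of how Wotzke's theorem is proved in the literature, your outline follows the right template: the factorization of $R_\rho$ into twisted Selberg zeta functions indexed by the normal-bundle decomposition, the Selberg trace formula for the twisted Hodge Laplacians linking each $Z_p(s;\rho)$ to a $\zeta$-regularized determinant near $s=0$, the Matsushima--Murakami identification of $\ker\Delta^q_\rho$ with $(\mathfrak g,K)$-cohomology to explain the order $h_\rho$, and the use of $\rho\oplus\rho_\theta$ to restore self-adjointness when $\rho$ is non-unitary are all genuine ingredients of the Fried--Br\"ocker--Wotzke line of argument. But this is a high-level roadmap rather than a proof: you do not write down the normal-bundle decomposition in dimension three, the precise shifts in the determinant identities, the functional equations of the $Z_p$, or the bookkeeping that produces the explicit formula for $h_\rho$ (including the extra term $4-2\dim H^1(M;\real)$ for the trivial representation). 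These are exactly the parts that M\"uller warns are ``elaborate computations which are difficult to verify'' in Br\"ocker's version and that Wotzke's thesis was written to streamline. So your sketch is directionally correct but cannot be judged against the paper, and as a self-contained proof it would still require substantial work at precisely the places you flag as ``the technical heart.''
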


\begin{rem}
	The Cartan involution of the real Lie algebra
	$\Lie{sl}(2;\cmplx)$ is given by $\theta(X) = -\overline{X}^{t}$.
	It can be checked that a \emph{complex} representation $\rho$ of $\sln(2; \cmplx)$
	is not equivalent to $\rho\circ \theta$.
\end{rem}

\subsection{M\"uller's Theorem}
Let us retain the same notation as in the previous subsection; in particular, $M$ will be assumed to be closed.
For $n>0$, let $\rho_n$ be the $n$-dimensional canonical representation of $(M,\eta)$,
\[
\rho_n \colon \pi_1 (M, p) \cong \Gamma \to \sln(n;\cmplx).	
\]

M\"uller's theorem on the equivalence of the Reidemeister torsion
and the Ray-Singer analytic torsion implies that 
\[
T(M; \rho_{n,\eta} ) = |\tor (M; \rho_n)|.
\]
Let us denote by $R_n(s)$ the Ruelle zeta function associated to the representation $\rho_n$.
Wotzke's Theorem gives
\[
|R_{\rho_n}(0)| = |\tor(M; \rho_n)|^2.
\]
Following \cite{Mul}, the Ruelle zeta function $R_{\rho_n}(s)$ can be expressed in terms of the following related Ruelle
zeta functions, 
\[ 
R_k(s) = \prod_{ [\gamma] \in \text{PC}(\Gamma) } \big(1 - \sigma_k(\gamma) e^{-s \length(\gamma)} \big),
\]
where $\sigma_k(\gamma)$ is defined by
\[
\sigma_k(\gamma) = e^{ k i \Im \lambda(\gamma)/2 } = e^{ ki \theta(\gamma)/2},
\]
with $\theta(\gamma)$ the geometric spin torsion of the closed geodesic defined by $\gamma$.
A straightforward computation then shows that 
\[
R_{\rho_n}(s) = \prod_{k = 0}^n R_{n-2k}(s - (n/2 - k)).
\]

The following theorem by M\"uller relates the Reidemeister torsion, 
Ruelle zeta functions and the volume of the manifold $M$.

\begin{rem}
	M\"uller uses in \cite{Mul} the notation $\tau_n$ to designate the representation coming from
	the nth symmetric power, so his $\tau_n$ is our $\rho_{n+1}$.
\end{rem}

\begin{theorem}[{M\"uller, \cite{Mul}}] \label{thm:MullerRuelle}
	Let $(M,\eta)$ be a closed spin-hyperbolic $3$-manifold, and for $m\geq 3$ let $\rho_m$ 
	be its $m$-dimensional canonical representation.
	Then we have the following equations,
	\begin{eqnarray*}
		\log \left| \frac{\tor (M; \rho_{2m +1}) }{\tor(M; \rho_5)} \right| & = & 
		 \sum_{k=3}^m \log{|R_{2k}(k)| } - \frac{1}{\pi} \Vol M \left( m(m+1) -6 \right), \\
		 \log \left| \frac{\tor (M, \eta; \rho_{2m}) }{\tor(M, \eta; \rho_4) }\right| & = & 
		 \sum_{k=2}^{m-1} \log{\left| R_{2k+1}\left( k +\frac{1}{2}\right) \right| } - \frac{1}{\pi} \Vol M( m^2 - 4) 
	\end{eqnarray*}
\end{theorem}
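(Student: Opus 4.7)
The plan is to derive Theorem~\ref{thm:MullerRuelle} by combining three ingredients: Wotzke's identification of $|R_{\rho_n}(0)|$ with $|\tor(M;\rho_n)|^2$, the factorization of $R_{\rho_n}(s)$ into the shifted product of the elementary zeta functions $R_k(s)$ displayed just above the theorem, and a functional equation for $R_k(s)$ of Br\"ocker type in which the volume of $M$ appears as an exponential factor. The strategy is to evaluate everything at $s=0$, rewrite each negative-argument factor via the functional equation, and then take the ratio $\tor(M;\rho_{2m+1})/\tor(M;\rho_5)$ (respectively $\tor(M,\eta;\rho_{2m})/\tor(M,\eta;\rho_4)$) so that almost all factors telescope and only the claimed sum and volume term survive.

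Concretely, first I would apply Theorem~\ref{thm:Wotzke} to get $\log|\tor(M;\rho_n)| = \tfrac{1}{2}\log|R_{\rho_n}(0)|$ (using that $\rho_n$ is a complex representation, so $\rho_n\circ\theta\not\cong\rho_n$ and we are in the regular case). Next, substituting $s=0$ in the identity $R_{\rho_n}(s) = \prod_{k=0}^{n} R_{n-2k}\bigl(s-(n/2-k)\bigr)$ and reindexing by $j=n-2k$, one obtains
\[
R_{\rho_n}(0) = \prod_{\substack{|j|\leq n\\ j\equiv n\,(\mathrm{mod}\,2)}} R_j(-j/2),
\]
so the evaluation points come in symmetric pairs $\pm j/2$. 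For $j>0$ large enough, $R_j(j/2)$ lies inside the half-plane of absolute convergence; the negative evaluations $R_j(-j/2)$ must be handled by the Br\"ocker functional equation, which in this setting has the schematic form
\[
R_j(-s)\;=\; \pm\, R_{-j}(s)\,\exp\!\Bigl(\tfrac{4}{\pi}\,\Vol(M)\, Q_j(s)\Bigr),
\]
where $Q_j(s)$ is an explicit polynomial (coming from the Plancherel density for $\mathrm{SL}(2,\mathbf{C})$). Together with the elementary identity $R_{-j}(s)=\overline{R_j(\bar s)}$, this rewrites every negative-argument factor in terms of a positive-argument one up to a volume exponential.

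Now I would take logarithms of absolute values. The resulting expression for $\log|\tor(M;\rho_{2m+1})|$ is a sum of terms $\log|R_{2k}(k)|$ (with $k=1,\dots,m$ or $k=0,\dots,m$) plus a single polynomial-in-$m$ multiple of $\Vol(M)/\pi$ coming from the accumulated functional-equation factors. Subtracting the same expression for $m=2$ (which gives $\log|\tor(M;\rho_5)|$) makes the $\log|R_2(1)|$ and $\log|R_4(2)|$ terms cancel and leaves the tail $\sum_{k=3}^m \log|R_{2k}(k)|$, together with a difference of the polynomial factors. A direct computation then shows this volume coefficient is exactly $m(m+1)-6$, proving the first formula; the even-dimensional statement is obtained by the same telescoping after noting that in that case the relevant evaluation points are half-integers and the elementary factors become $R_{2k+1}(k+\tfrac12)$.

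The main obstacles are, first, pinning down the correct form of Br\"ocker's functional equation (in particular the exact polynomial $Q_j$ and the constant in front of the volume), because the telescoping of the polynomial contributions depends delicately on its degree and leading coefficient; and second, the combinatorial bookkeeping of signs and parities when passing from $\rho_{\mathrm{odd}}$ (no spin data) to $\rho_{\mathrm{even}}$ (spin-dependent), since the pairing $j\leftrightarrow -j$ behaves differently in the two cases and the functional equation picks up a $\pm 1$ coming from $\sigma_j(\gamma)=\pm 1$ on half-integer shifts. Once these two points are handled, the remaining steps are routine manipulations of absolute values and arithmetic in the exponents.
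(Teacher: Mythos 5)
The paper does not prove Theorem~\ref{thm:MullerRuelle}: it is stated verbatim with the attribution ``M\"uller, \cite{Mul}'' and used as a black box. So there is no internal proof in this paper against which to compare; what you have written is a reconstruction of M\"uller's argument, and I will assess it on those terms.

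Your outline does capture the three structural pillars of M\"uller's derivation: Wotzke's identity $|R_{\rho_n}(0)|=|\tor(M;\rho_n)|^2$; the factorization of $R_{\rho_n}(s)$ into shifted elementary zeta functions $R_j(s-j/2)$, so that at $s=0$ the factors come in pairs $R_j(-j/2)$, $R_{-j}(j/2)$ with the positive-argument ones inside the half-plane of convergence; and a functional equation that trades $R_j(-j/2)$ for $R_j(j/2)$ at the cost of an exponential in $\Vol(M)$. The telescoping of the volume polynomials after taking the ratio by $\tor(M;\rho_5)$ (resp.\ $\tor(M,\eta;\rho_4)$) is also the right mechanism. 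Two caveats, though. First, the functional equation you write schematically, $R_j(-s)=\pm R_{-j}(s)\exp\bigl(\tfrac{4}{\pi}\Vol(M)\,Q_j(s)\bigr)$, is not something you can cite directly: in M\"uller's paper the volume enters through the functional equation of the \emph{symmetric Selberg zeta function} $Z(s,\sigma_j)$ (via the Plancherel density of $\mathrm{SL}(2,\mathbf{C})$), and $R_j(s)$ is expressed as a quotient of Selberg zeta functions; the Ruelle-level functional equation has to be \emph{derived}, and the exact polynomial $Q_j$, the leading constant, and the $\pm$ are precisely what make the final coefficients $m(m+1)-6$ and $m^2-4$ come out. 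As you acknowledge, this is where the actual content lies, and your write-up leaves it entirely open, so the proposal is a plan rather than a proof. Second, a small bookkeeping point: the factorization $R_{\rho_n}(s)=\prod_{k=0}^{n}R_{n-2k}\bigl(s-(n/2-k)\bigr)$, which you reindex, is off by one in this paper (it should have $n$ factors with indices of parity $n-1$, matching the weights of $\mathrm{Sym}^{n-1}$, not $n+1$ factors of parity $n$); this is consistent with the theorem's appearance of $R_{2k}(k)$ for $\rho_{2m+1}$ and $R_{2k+1}(k+\tfrac12)$ for $\rho_{2m}$, and you inherit the paper's typo rather than fixing it. Neither issue invalidates the strategy, but both would need to be resolved before this could be called a proof rather than an outline of M\"uller's route.
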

M\"uller then deduces Theorem \ref{th_mull} from the following lemma, \cite{Mul}.
\begin{lemma}
	For a closed spin-hyperbolic $3$-dimensional manifold $(M, \eta)$ 
	there exists a constant $C>0$, depending only on the manifold $M$, 
	such that for all $m\geq 3$, we have
	\[ 
	\sum_{k=3}^m \left| \log{|R_{2k}(k)| } \right| < C, \quad 
	\sum_{k=2}^{m-1} \left| \log{ \left| R_{2k+1} \left( k +\frac{1}{2} \right)\right| } \right|  < C. 
	\]
\end{lemma}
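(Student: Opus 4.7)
The plan is to exploit two facts: the evaluation points all lie strictly inside the half-plane of absolute convergence of the corresponding Ruelle zeta function, and the twisting factors $\sigma_j(\gamma)$ have modulus one. Since $M$ is three-dimensional, the topological entropy of its geodesic flow is $h=2$, so the product defining $R_j(s)$ converges absolutely whenever $\Re(s) > 2$. In the first sum the smallest evaluation point has real part $k=3$, and in the second it is $5/2$, so in both cases we stay safely inside the region of convergence.

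First I would expand the logarithm. For $\Re(s) > 2$ the absolutely convergent product yields
\[
-\log R_j(s) \;=\; \sum_{\varphi \in \mathcal{PC}(M)} \sum_{n\geq 1} \frac{\sigma_j(\varphi)^n\, e^{-ns\length(\varphi)}}{n}.
\]
Taking real parts and using $|\sigma_j(\varphi)|=1$, this yields the crude but crucial bound
\[
\bigl|\log|R_j(s)|\bigr| \;\leq\; \sum_{\varphi \in \mathcal{PC}(M)} \sum_{n\geq 1} \frac{e^{-n\Re(s)\length(\varphi)}}{n}.
\]
Next I would sum this over $k$ and interchange the order of summation by Tonelli (all terms being non-negative). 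Summing the geometric series in $k$ produces
\[
\sum_{k=3}^{m} \bigl|\log|R_{2k}(k)|\bigr| \;\leq\; \sum_{\varphi} \sum_{n\geq 1} \frac{1}{n}\,\frac{e^{-3n\length(\varphi)}}{1-e^{-n\length(\varphi)}},
\]
and an entirely analogous bound, with $e^{-3n\length(\varphi)}$ replaced by $e^{-(5/2)\, n\length(\varphi)}$, holds for the odd-index sum $\sum_{k=2}^{m-1}\bigl|\log|R_{2k+1}(k+\tfrac12)|\bigr|$.

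The final step is to show the right-hand side is bounded independently of $m$. Since $M$ is closed, the length $\ell_0$ of the shortest closed geodesic is strictly positive, and hence $(1-e^{-n\length(\varphi)})^{-1} \leq (1-e^{-\ell_0})^{-1}$ uniformly in $\varphi$ and $n\geq 1$. Using the classical exponential growth estimate $\#\{\varphi \in \mathcal{PC}(M) \mid \length(\varphi)\leq t\} \leq C e^{2t}$ (Lemma \ref{lemma:growth_geo} or the Margulis asymptotics) together with integration by parts against $e^{-an t}$ gives, for $an > 2$, a bound of the form $\sum_\varphi e^{-an\length(\varphi)} \leq C'\, e^{-(an-2)\ell_0}$. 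Substituting this with $a=3$ (respectively $a=5/2$) and summing in $n$ produces a geometrically convergent series and hence a finite constant depending only on $M$.

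The main obstacle, and really the only non-routine geometric input, is the uniform prime-geodesic counting estimate; everything else is a term-by-term comparison with a convergent geometric series. The key algebraic reason the argument succeeds is the combination of $|\sigma_j(\varphi)|=1$ (which lets us throw away phases at no cost) with the fact that the evaluation points $k$ and $k+1/2$ grow linearly with $k$, so the exponential damping $e^{-ns\length(\varphi)}$ is strong enough to absorb both the prime-geodesic growth and the additional summation over $k$.
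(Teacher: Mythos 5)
Your proof is correct. The paper does not give its own proof of this lemma but cites M\"uller's paper, and your argument --- expanding $\log R_j(s)$ inside the half-plane $\Re(s)>2$ of absolute convergence, discarding the unimodular factors $\sigma_j(\varphi)$, interchanging sums by Tonelli and summing the geometric series in $k$, then combining the lower bound $\ell_0>0$ on the shortest closed geodesic with the exponential prime-geodesic counting estimate (Lemma~\ref{lemma:growth_geo}) --- is exactly the standard line of reasoning behind M\"uller's bound.
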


\subsection{The noncompact case}
Let $(M,\eta)$ be a compactly approximable spin-hyperbolic $3$-manifolds of finite volume.
In this subsection we want to prove that Theorem \ref{thm:asymp_main} holds for $(M,\eta)$ as well. 
We will do this by proving that Theorem \ref{thm:MullerRuelle} holds also for $(M,\eta)$.

The definition of the Ruelle zeta function $R_{\rho_n}$ for $(M, \eta)$ 
is obvious if we define it in terms of prime closed geodesics; more concretely, we define
\[
R_{\rho_n}(s) = \prod_{\varphi \in \mathcal{PC}(M)} \det\left( \text{Id} - \rho_n(\varphi) e^{-sl(\varphi)}\right).
\]
Of course, it makes sense also to define
\[ 
R_k(s) = \prod_{\varphi \in \mathcal{PC}(M)}\left(1 - \sigma_k(\varphi) e^{-s \length(\gamma)} \right).
\]
The function $R_{\rho_n}(s)$ is related to the functions $R(s, \sigma_k)$ in the same way as in the compact case.
The estimations concerning the growth of closed geodesics in $M$ imply that $R(s,\sigma_k)$ converges 
for $\Re(s) > 2$. More accurate estimations will probably allow to 
conclude that the region of convergence of $R(s,\sigma_k)$ is exactly that half-plane.
Therefore, the region of convergence of $R_{\rho_n}(s)$ contains the half-plane $\Re(s) > 2 + n/2 $. 

It is worth noticing that the following equation holds.

\begin{lemma} \label{eq:ruellelsp}
For $k \geq 3$ we have:
\begin{equation} 
	\log{\left|R_{k}\left(\frac{k}{2}\right)\right|}  =  \int_{|z| > 1} \log{|1 - z^{-k}|} d\pclsp (M,\eta)(z).
\end{equation}
\end{lemma}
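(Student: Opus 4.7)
The proof is essentially an unwinding of definitions, with a convergence issue to handle. Here is the plan.

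First I would spell out the right-hand side. Since $\pclsp(M,\eta) = \sum_{\varphi\in\mathcal{PC}(M)}\delta_{e^{\lambda(\varphi)/2}}$ is a sum of Dirac masses at the points $z_\varphi = e^{\lambda(\varphi)/2}$ (all of which satisfy $|z_\varphi| = e^{\ell(\varphi)/2} > 1$ because $\ell(\varphi) > 0$), the integral formally equals
\[
\sum_{\varphi\in\mathcal{PC}(M)} \log\bigl|1 - z_\varphi^{-k}\bigr| = \sum_{\varphi\in\mathcal{PC}(M)} \log\bigl|1 - e^{-k\lambda(\varphi)/2}\bigr|.
\]

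Next I would spell out the left-hand side. Writing $\lambda(\varphi) = \ell(\varphi) + i\theta(\varphi)$, at $s = k/2$ we have
\[
\sigma_k(\varphi)\, e^{-(k/2)\ell(\varphi)} = e^{ik\theta(\varphi)/2}\, e^{-(k/2)\ell(\varphi)} = e^{-(k/2)\overline{\lambda(\varphi)}}.
\]
Taking log of the modulus of the product defining $R_k(k/2)$ therefore gives
\[
\log\bigl|R_k(k/2)\bigr| = \sum_{\varphi\in\mathcal{PC}(M)} \log\bigl|1 - e^{-(k/2)\overline{\lambda(\varphi)}}\bigr|,
\]
and since $|1-w| = |1 - \bar w|$ for any complex $w$, each term agrees with the corresponding term on the right-hand side. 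So the identity holds term-by-term.

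The one thing that requires care is convergence, which is the main obstacle. By Lemma \ref{lemma:growth_geo}, the number of prime closed geodesics of length at most $t$ meeting a compact set is $O(e^{2t})$. For each $\varphi$, the estimate $\bigl|\log|1 - e^{-k\lambda(\varphi)/2}|\bigr| \lesssim e^{-(k/2)\ell(\varphi)}$ (valid once $\ell(\varphi)$ is bounded away from zero, which is true outside a finite set because $M$ has no short core geodesics) gives absolute convergence of the sum whenever $k/2 > 2$, i.e.\ $k \geq 5$. For $k = 3, 4$ I would argue by expanding $\log|1 - z^{-k}| = \Re\log(1-z^{-k}) = -\Re\sum_{m\geq 1}\frac{1}{m}z^{-km}$ and swapping the sum with the integral against $\pclsp(M,\eta)$: each term $\int_{|z|>1}z^{-km}\,d\pclsp(M,\eta)(z) = \sum_\varphi e^{-km\lambda(\varphi)/2}$ converges absolutely once $km > 4$, and the sole problematic term (when $k=3$, $m=1$) is handled by the same kind of argument as in the improved-continuity statement Proposition \ref{prop:spclspimprovedcont}, using the $|z|^{-4}$ growth bound on $\pclsp(M,\eta)$. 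Once convergence of both sides is established, the term-by-term matching above completes the proof.
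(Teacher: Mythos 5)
The paper offers no proof of this lemma — it is stated with only the remark ``It is worth noticing that the following equation holds'' — so you are right to treat it as a definition-unwinding exercise. Your term-by-term matching is exactly the intended argument: the computation $\sigma_k(\varphi)\,e^{-(k/2)\ell(\varphi)} = e^{-(k/2)\overline{\lambda(\varphi)}}$ is correct, and $|1-\bar w| = |1-w|$ closes the circle with the Dirac masses at $e^{\lambda(\varphi)/2}$. Your convergence discussion for $k\geq 5$ is also sound and matches the estimate the paper itself invokes in the proof of Lemma~\ref{lemma:finitesumRuelle}, where the bound $|\log|1-z^{-k}||\lesssim |z|^{-5}$ for $k\geq 5$ is combined with Proposition~\ref{prop:spclspimprovedcont}.

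There is, however, a gap in your treatment of $k=3,4$. You appeal to ``the same kind of argument as in'' Proposition~\ref{prop:spclspimprovedcont}, but that proposition assumes $|f(z)|\leq K|z|^{-4-\epsilon}$, which $z^{-3}$ and $z^{-4}$ do not satisfy. With the growth bound of Lemma~\ref{lemma:growth_geo}, $\pclsp(M,\eta)(\{|z|\leq R\})=O(R^4)$, and an Abel-summation computation shows that $\int_{|z|>1}|z|^{-k}\,d\pclsp(M,\eta)$ is not controlled by this bound for $k=3$ or $k=4$; even the sharper Margulis asymptotic only barely fails at $k=4$ and genuinely diverges at $k=3$. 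You also claim the ``sole problematic term'' is $(k,m)=(3,1)$, overlooking that $(4,1)$ is equally borderline. The paper itself elsewhere asserts convergence of $R_k(s)$ only for $\Re s>2$, i.e.\ $k/2>2$, which is consistent with your $k\geq5$ analysis; the stated range ``$k\geq 3$'' in the lemma is broader than the authors' own convergence discussion supports, and all actual applications in the paper use $k\geq 7$. So your proof is essentially complete for the range where the lemma is applied, but the patch you propose for $k=3,4$ does not go through as written, and you should either restrict to $k\geq 5$ or supply a genuinely different argument for the two small cases.
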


With the same notation as in Section \ref{chapter:surgery}, we have the following formula.

\begin{lemma} \label{lemma:ruellecusped}
	Let $(p,q) \in \mathcal{A}_{(M, \eta)}$, and $A = \{ \pm \varphi_{p_1/q_1},\dots, \pm \varphi_{p_l/q_l}\}$ 
	be the prime oriented core geodesics in $M_{p/q}$ added in the Dehn filling.
	For an integer $m \geq 3$, we have
	\[
	\log\left | \frac{\tor(M; \rho_{2m}^{p/q} )}{\tor(M; \rho_{4}^{p/q} ) } \right| =
	-\frac{(m-2)(m+2)}{2} \sum_{i = 1}^l \length(\varphi^i_{p/q}) - \frac{1}{\pi} \Vol(M_{p/q})(m^2 - 4)  + \sum_{k= 2}^{m-1} B_{2k +1}^{p/q}, 
	\]
	where  
	\[ 
	B_j^{p/q} = 
	\sum_{\substack{ \varphi \in  \mathcal{PC}(M_{p/q}) \setminus A } }
	\log{\left|1 - e^{-j \lambda_{p/q}(\varphi)/2 } \right| }.
	\]
\end{lemma}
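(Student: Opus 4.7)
The strategy is to eliminate the torsion of the closed filled manifold between two equations: M\"uller's even-dimensional Ruelle-zeta identity applied to $M_{p/q}$, and the surgery formula of Lemma~\ref{lemma:evensurgeryform} applied to $M$. First, Theorem~\ref{thm:MullerRuelle} (second equation) applied to the closed spin-hyperbolic manifold $(M_{p/q},\eta_{p/q})$ gives
\[
\log\!\left|\frac{\tor(M_{p/q};\rho_{2m}^{p/q})}{\tor(M_{p/q};\rho_4^{p/q})}\right| = \sum_{k=2}^{m-1}\log\!\bigl|R_{2k+1}^{p/q}(k+\tfrac12)\bigr| - \frac{\Vol(M_{p/q})}{\pi}(m^2-4).
\]
Second, Lemma~\ref{lemma:evensurgeryform} applied separately for $n=2m$ and $n=4$ (with the identification $\rho_{2n}^{p/q}\circ i_*^{p/q} = \rho_{2n}(u_{p/q})$, acyclic on $M$ for large $(p,q)$ by compact approximability of $\eta$ together with Proposition~\ref{prop:semicont}) and divided termwise yields
\[
\log\!\left|\tfrac{\tor(M_{p/q};\rho_{2m}^{p/q})}{\tor(M_{p/q};\rho_4^{p/q})}\right| - \log\!\left|\tfrac{\tor(M;\rho_{2m}^{p/q})}{\tor(M;\rho_4^{p/q})}\right| = \sum_{j=2}^{m-1}\sum_{i=1}^{l}\log\!\bigl|(e^{(1/2+j)\lambda_{p/q}(\gamma_i)}-1)(e^{-(1/2+j)\lambda_{p/q}(\gamma_i)}-1)\bigr|,
\]
since the $j=0,1$ factors cancel between numerator and denominator. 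Subtracting these two displays isolates $\log|\tor(M;\rho_{2m}^{p/q})/\tor(M;\rho_4^{p/q})|$ in terms of Ruelle values, the volume, and surgery factors.

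The algebraic reduction to the target shape rests on two ingredients. First, the elementary identity $e^a-1 = -e^a(e^{-a}-1)$ gives $\log|(e^a-1)(e^{-a}-1)| = \Re(a) + 2\log|1-e^{-a}|$; applied to $a = (2j+1)\lambda_{p/q}(\gamma_i)/2$ it converts each surgery factor into a length contribution $\tfrac{2j+1}{2}\length(\varphi^i_{p/q})$ plus twice $\log|1-e^{-(2j+1)\lambda_{p/q}(\gamma_i)/2}|$. Second, Lemma~\ref{eq:ruellelsp} applied to $M_{p/q}$ splits
\[
\log|R_{2k+1}^{p/q}(k+\tfrac12)| = B^{p/q}_{2k+1} + \sum_{\pm\varphi\in A}\log\!\bigl|1-e^{-(2k+1)\lambda_{p/q}(\varphi)/2}\bigr|.
\]
Because $\gamma\mapsto\gamma^{-1}$ preserves the $\sln(2,\cmplx)$-conjugacy class, the two orientations $\pm\varphi^i_{p/q}$ carry the same spin complex length, whence $\sum_{\pm\varphi\in A} = 2\sum_{i=1}^{l}$ term-by-term. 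Thus the ``twice $\log|1-e^{-(\cdot)/2}|$'' produced by the surgery correction cancels exactly with the $A$-part of the Ruelle sum after the two equations are combined.

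What survives after cancellation is the length correction and the volume term. The arithmetic identity $\sum_{k=2}^{m-1}(2k+1) = m^2-4 = (m-2)(m+2)$ then bundles the remaining length contributions into the coefficient $-\tfrac{(m-2)(m+2)}{2}\sum_{i=1}^{l}\length(\varphi^i_{p/q})$, reproducing the claimed formula. The main obstacle is bookkeeping: one must reconcile the factor of $2$ from the orientation-reversed core geodesics $\pm\varphi^i_{p/q}$ in the Ruelle sum with the factor of $2$ coming from the identity $|e^a-1|\,|e^{-a}-1| = e^{\Re a}|1-e^{-a}|^2$, match the index shift $j \leftrightarrow k$ between the surgery product and M\"uller's Ruelle sum, and justify that $\tor(M;\rho^{p/q}_{2m})$ and $\tor(M;\rho^{p/q}_4)$ are well defined — which reduces to upper-semicontinuity of the dimension of cohomology (Proposition~\ref{prop:semicont}) together with the acyclicity of $\rho_{2n}$ on $M$ guaranteed by compact approximability.
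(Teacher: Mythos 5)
Your proposal is correct and follows essentially the same route as the paper's proof: apply M\"uller's identity (Theorem \ref{thm:MullerRuelle}) to the compact filled manifold, divide the surgery formula of Lemma \ref{lemma:evensurgeryform} for $\rho_{2m}$ and $\rho_4$ so only the $j=2,\dots,m-1$ factors survive, subtract, and reduce via $|e^a-1|\,|e^{-a}-1|=e^{\Re a}|1-e^{-a}|^2$ together with the splitting of each Ruelle value into the core-geodesic part plus $B^{p/q}_{2k+1}$ and the sum $\sum_{k=2}^{m-1}(2k+1)=m^2-4$. Your orientation-doubling observation for the core geodesics is exactly the paper's factor $\bigl|1-e^{-(k+\frac12)\overline{\lambda(\varphi_{p/q})}}\bigr|^2$ (the two orientations give the same modulus), so the bookkeeping agrees.
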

\begin{proof}
	For the sake of simplicity we will prove it only for one-cusped manifolds.
	The surgery formula given by Lemma \ref{lemma:evensurgeryform}, yields 
	\begin{eqnarray*}
		\log \left| \frac{ \tor(M_{p/q}; \rho_{2m}^{p/q} )} {\tor(M; \rho_{2m}^{p/q} )} \right| &=&
		\sum_{k=0}^{m-1} \log \left| \left( e^{\left( \frac{1}{2} + k\right) \lambda(\varphi_{p/q})} - 1 \right) 
		\left( e^{ - \left( \frac{1}{2} + k\right) \lambda(\varphi_{p/q})} - 1 \right) \right|.
	\end{eqnarray*}
	It follows that, 
	\begin{equation*}
		\log \left| \frac{ \tor(M_{p/q}; \rho_{2m}^{p/q} ) \tor(M; \rho_{4}^{p/q}) }{\tor(M_{p/q}; \rho_{4}^{p/q} )
		\tor(M; \rho_{2m}^{p/q} )}\right|
		  =    \sum_{k=2}^{m-1} \log \left| \left( e^{\left( \frac{1}{2} + k\right) \lambda(\varphi_{p/q})} - 1 \right) 
		\left( e^{ - \left( \frac{1}{2} + k\right) \lambda(\varphi_{p/q})} - 1 \right) \right|.
	\end{equation*}
	Since $M_{p/q}$ is compact we can apply M\"uller's Theorem \ref{thm:CuspedRuelle}.
    Denoting by $R^{p/q}_{2k+1}(s)$ the Ruelle zeta function $R_{2k+1}(s)$ attached to the manifold $M_{p/q}$, we get:
	\begin{equation*}
		\log \left| \frac{\tor(M_{p/q}; \rho^{p/q}_{2m} )} {\tor(M_{p/q}; \rho^{p/q}_{4} )} \right| =
		\sum_{k=2}^{m-1} \log{ \left| R^{p/q}_{2k+1}\left( k + \frac{1}{2}\right) \right| } - \frac{1}{\pi} \text{Vol}(M_{p/q})(m^2-4).
	\end{equation*}
	From these last two equations, we get 
	\begin{eqnarray*} 
		-\log \left| \frac{\tor(M; \rho^{p/q}_{4} )} {\tor(M; \rho^{p/q}_{2m} )} \right| & = &
		\sum_{k=2}^{m-1} \log{ \left| R^{p/q}_{2k+1}\left( k + \frac{1}{2}\right) \right| } - \frac{1}{\pi} \text{Vol}(M_{p/q})(m^2-4) \\
		&- & \sum_{k=2}^{m-1} \log \left| e^{(\frac{1}{2} + k)\lambda(\varphi_{p/q})} - 1\right| \left|e^{-(\frac{1}{2} + k)\lambda(\varphi_{p/q})} - 1\right|.
	\end{eqnarray*}
	Using the expression
	\begin{equation*} 
		\log | R^{p/q}_{2k+1}(k +\frac{1}{2})| =
		\log |1- e^{-(k+\frac{1}{2})\overline{\lambda(\varphi_{p/q})}}|^2 + B_{2k + 1}^{p/q},
	\end{equation*}
	the previous equation is written as
	\begin{eqnarray*}
		\log \left| \frac{\tor(M; \rho_{2m}^{p/q} )}{\tor(M; \rho_{4}^{p/q} )}\right| & = & 
		\sum_{k=2}^{m-1} \log \frac{ | 1 - e^{- ( k + \frac{1}{2}) \overline{ \lambda(\varphi_{p/q}) } } |^2}
			{| e^{(\frac{1}{2} + k)\lambda(\varphi_{p/q})} - 1 | | e^{-(\frac{1}{2} + k)\lambda(\varphi_{p/q})} 
			- 1 |} 	 \\
			& - & \frac{1}{\pi} \text{Vol}(M_{p/q})(m^2-4) + \sum_{k= 2}^{m-1} B_{2k +1}^{p/q}.
	\end{eqnarray*}
	We have, 
	\begin{eqnarray*}
		\frac{ | 1 - e^{- ( k + \frac{1}{2}) \overline{ \lambda(\varphi_{p/q}) } } |^2}
		{| e^{(\frac{1}{2} + k)\lambda(\varphi_{p/q})} - 1 | | e^{-(\frac{1}{2} + k)\lambda(\varphi_{p/q})} 
		- 1 |} = e^{-(\frac12 + k) \Re \lambda(\varphi_{p/q})}.
	\end{eqnarray*}
	Hence, summing up the terms, we get 
	\[
	\sum_{k=2}^{m-1} \log \left( \frac{ | 1 - e^{- ( k + \frac{1}{2}) \overline{ \lambda(\varphi_{p/q}) } } |^2}
	{| e^{(\frac{1}{2} + k)\lambda(\varphi_{p/q})} - 1 | | e^{-(\frac{1}{2} + k)\lambda(\varphi_{p/q})} - 1 |} \right) =
		- \frac{(m-2)(m+2)}{2} \length(\varphi_{p/q}),
	\]
	and the lemma follows.
\end{proof}

\begin{lemma} \label{lemma:finitesumRuelle}
	With the same notation as in the preceding lemma, for $k \geq 5$ we have
	\[
	\lim_{(p,q) \to \infty } B_k^{p/q} = \log\left| R_{k}\left(\frac{k}{2}\right)\right|.
	\]
	Moreover, the following series is absolutely convergent
	\[
	\sum_{k= 5}^\infty \log\left|R_{k}\left( \frac k2\right) \right|.
	\]
\end{lemma}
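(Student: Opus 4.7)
The plan is to interpret $B_k^{p/q}$ as an integral against the spin-complex-length spectrum of $M_{p/q}$ with the core Diracs removed, and then to pass to the limit using the improved continuity statement (Proposition~\ref{prop:spclspimprovedcont}). Writing $z_\varphi=e^{\lambda_{p/q}(\varphi)/2}$ for any $\varphi\in\mathcal{PC}(M_{p/q})$, we have $\log|1-e^{-k\lambda_{p/q}(\varphi)/2}|=\log|1-z_\varphi^{-k}|$, so
\[
B_k^{p/q}=\int_{|z|>1}\log|1-z^{-k}|\,d\mu'_{p/q}(z),
\]
where $\mu'_{p/q}=\pclsp(M_{p/q},\eta_{p/q})-\nu_{p/q}$ and $\nu_{p/q}$ is the finite sum of Diracs at the points $e^{\lambda_{p/q}(\pm\varphi^i_{p/q})/2}$ coming from the core geodesics in $A$.

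The first step is to show that, for $(p,q)$ large, $\mu'_{p/q}$ is supported in $|z|\geq r_0:=\min(e^{\varepsilon/2},e^{\ell_M/2})$ for some uniform $r_0>1$. By the thick-thin decomposition and the Margulis lemma, as used in Lemma~\ref{lemma:shortgeocore}, for $(p,q)$ large the only prime closed geodesics of $M_{p/q}$ of length smaller than the Margulis constant are precisely those in $A$; every $\varphi\in\mathcal{PC}(M_{p/q})\setminus A$ therefore satisfies $\ell(\varphi)\geq\varepsilon$, and $\pclsp(M,\eta)$ itself has support in $|z|\geq e^{\ell_M/2}$, where $\ell_M>0$ is the length of the shortest closed geodesic of $M$. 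Now pick a smooth cutoff $\chi$ vanishing on a neighbourhood of $\overline D$ and equal to $1$ on $|z|\geq r_0$, so $\chi(z)\log|1-z^{-k}|$ is continuous on $\cmplx$ with support in $|z|>1$ and satisfies $|\chi(z)\log|1-z^{-k}||\leq C_k/|z|^k$ at infinity. For $(p,q)$ large, $\mu'_{p/q}$ and $\pclsp(M,\eta)$ both see $\chi\equiv 1$ on their support, while $\nu_{p/q}$ eventually sits in the region $\{\chi=0\}$ since $|e^{\lambda_{p/q}(\gamma_i)/2}|=e^{\ell(\gamma_i)/2}\to 1$. Consequently
\[
B_k^{p/q}=\int \chi(z)\log|1-z^{-k}|\,d\pclsp(M_{p/q},\eta_{p/q})(z),
\]
and Proposition~\ref{prop:spclspimprovedcont} applied to $f(z)=\chi(z)\log|1-z^{-k}|$ (valid for $k\geq 5>4$) gives the limit $\int\chi(z)\log|1-z^{-k}|\,d\pclsp(M,\eta)(z)=\int\log|1-z^{-k}|\,d\pclsp(M,\eta)(z)=\log|R_k(k/2)|$ by Lemma~\ref{eq:ruellelsp}.

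For absolute convergence of $\sum_{k\geq 5}\log|R_k(k/2)|$, the elementary inequality $|\log|1-w||\leq |w|/(1-|w|)$ for $|w|<1$ yields
\[
|\log|1-z^{-k}||\leq \frac{|z|^{-k}}{1-r_0^{-k}}\leq \frac{|z|^{-k}}{1-r_0^{-5}}
\qquad(|z|\geq r_0,\ k\geq 5),
\]
so $|\log|R_k(k/2)||\leq(1-r_0^{-5})^{-1}\int_{|z|\geq r_0}|z|^{-k}\,d\pclsp(M,\eta)(z)$. Lemma~\ref{lemma:growth_geo} applied to a compact $K\subset M$ containing all closed geodesics of $M$ (using that $|z|=e^{\ell/2}$ in the spin setting) gives $\pclsp(M,\eta)(\{|z|\leq R\})\leq C_M R^4$; a dyadic decomposition then yields $\int_{|z|\geq r_0}|z|^{-k}\,d\pclsp(M,\eta)\leq C'_M\,r_0^{4-k}$ for $k\geq 5$, and the resulting bound $|\log|R_k(k/2)||\leq C''_M r_0^{-k}$ is summable since $r_0>1$.

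The delicate point is the interchange of the limit $(p,q)\to\infty$ with the integration against $\log|1-z^{-k}|$: the naive measure $\pclsp(M_{p/q},\eta_{p/q})$ does not converge in $M(\cmplx)$ precisely because of the Diracs at the shrinking core geodesic values, and the integrand $\log|1-z^{-k}|$ blows up as $|z|\to 1$ along $k$-th roots of unity. Subtracting the $\nu_{p/q}$ contribution and introducing the cutoff $\chi$ are exactly what disentangles the two unbounded behaviours, reducing the problem to the improved continuity of Proposition~\ref{prop:spclspimprovedcont} on a region where everything is bounded and the decay hypothesis holds.
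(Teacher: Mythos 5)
Your proof is correct and follows essentially the same route as the paper: use Lemma~\ref{lemma:shortgeocore} to push the support of the measure (after removing the core geodesics) uniformly away from the unit circle, express $B_k^{p/q}$ as an integral against the spin-complex-length spectrum measure, pass to the limit via Proposition~\ref{prop:spclspimprovedcont}, and obtain summability from the elementary bound $\lvert\log\lvert 1-w\rvert\rvert\lesssim\lvert w\rvert$. The only genuine difference is that you introduce a smooth cutoff $\chi$ to make the application of Proposition~\ref{prop:spclspimprovedcont} fully rigorous (the paper just truncates the integral at $\lvert z\rvert>e^{\delta/4}$), and you re-derive the finiteness of $\int\lvert z\rvert^{-k}\,d\pclsp$ from Lemma~\ref{lemma:growth_geo} with the (correct) spin-adjusted exponent $R^4$ rather than simply invoking Proposition~\ref{prop:spclspimprovedcont} as the paper does; both add clarity without changing the argument.
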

\begin{proof}
	Let $\delta$ be the length of the shortest closed geodesic in $M$.
	By Lemma \ref{lemma:shortgeocore}, for $(p,q)$ large enough, the only prime closed geodesics on $M_{p/q}$ whose lengths
	are less than $\delta/2$ are the core geodesics $A = \{ \pm \varphi_{p_1/q_1},\dots, \pm \varphi_{p_l/q_l}\}$.
	In that case,
	\[
	B_k^{p/q} = 
	\sum_{\substack{ \varphi \in  \mathcal{PC}(M_{p/q}) \setminus A } }
	\log{\left|1 - e^{-k \lambda(\varphi)/2 } \right| }
	= \int_{ |z| > e^{\delta/4} } \log{|1 - z^{-k}|} d\mu_{p/q} (z),
	\]
	where $\mu_{p/q} = \pclsp(M_{p/q},\eta_{p/q})$.
	Now we want to apply Proposition \ref{prop:spclspimprovedcont}.
	We shall show that for large $|z|$ we have
	\begin{eqnarray} \label{proof:ineqzk}
		\left| \log{|1 - z^{-k}|} \right| \leq \frac{C}{z^5}, \quad\text{ for } k\geq 5,
	\end{eqnarray}
	for some constant $C$.
	First notice that for $w \in \cmplx$ with $|w| < 1$ the following inequality holds
	\[
	\left| \log{ \left| 1 - w \right| } \right| \leq - \log{ \left| 1 - |w|\right| }.
	\]
	On the other hand, for $|w|$ small enough,
	\[
	-\log | 1 - |w| | \sim  |w|.
	\] 
	Inequality \eqref{proof:ineqzk} then follows easily  from the last two inequalities.
	Therefore, we can use Proposition \ref{prop:spclspimprovedcont} to conclude that
	\[
	\lim_{(p,q) \to \infty } B_k^{p/q} = \log\left|R_{k}\left( \frac k2\right) \right|.
	\]
	Finally, if $\mu = \pclsp(M,\eta)$, we have
	\begin{eqnarray*}
		\sum_{k= 5}^\infty \left| \log\left|R_{k}\left( \frac k2\right) \right| \right| & \leq & \sum_{k = 5}^\infty \int_{|z| > e^{\delta/2} } | \log{\big| 1 - |z|^{-k} | }| d\mu(z) \\
		& \leq & \sum_{k = 5}^\infty \int_{ |z| > e^{\delta/2} } \frac{C}{ |z|^k } d\mu(z) \\ 
		& =    & \int_{|z| > e^{\delta/2} } \frac{C}{|z|^5} \frac{1}{1- \frac{1}{|z|}} d\mu(z) \\
		& \leq & \frac{C}{1 - e^{\delta/2} } \int_{|z| > e^{\delta/2} } \frac{1}{|z|^5} d\mu(z) < \infty,
	\end{eqnarray*}
	the last integral being finite by Proposition \ref{prop:spclspimprovedcont}.
\end{proof}

Finally, letting $(p,q)$ go to infinity in the equation of Lemma \ref{lemma:ruellecusped},
using the continuity of the complex-length spectrum, the continuity of the volume,
and the fact that the lengths of the core geodesics $\varphi^i_{p/q}$ go to zero,
we deduce the following generalization of Theorem \ref{thm:MullerRuelle} for even dimensions $n$.
In the following theorem we have also included the odd dimensional case, as its proof is handled in a similar way.

\begin{theorem} \label{thm:CuspedRuelle}
	Let $M$ be a complete hyperbolic $3$-manifold of finite volume.
	Then for $m \geq 3$ 
	\[
		\log \left| \frac{\normaltor_{2m + 1} (M) }{\normaltor_5(M)} \right|  =  
		 \sum_{k=3}^m \log{|R_{2k}(k)| } - \frac{1}{\pi} \Vol M \left( m(m+1) -6 \right).
		 \]
	If in addition $M$ is enriched with an acyclic spin structure, then for $m \geq 3$ 
	\[
		 \log \left| \frac{\normaltor_{2m}(M, \eta )}{\normaltor_4(M, \eta) } \right|  =  
		 \sum_{k=2}^{m-1} \log{ \left| R_{2k+1}\left( k +\frac{1}{2}\right) \right| } - \frac{1}{\pi} \Vol M( m^2 - 4). 
	 \]
\end{theorem}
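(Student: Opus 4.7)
The plan is to obtain Theorem~\ref{thm:CuspedRuelle} by taking a limit of Müller's Theorem~\ref{thm:MullerRuelle} over a suitable net of hyperbolic Dehn fillings, using the surgery formulas of Section~\ref{chapter:surgery} to connect the compact and cusped torsions and the continuity of the (spin) complex-length spectrum to pass to the limit in the Ruelle-zeta sums. I will focus on the even-dimensional formula; the odd-dimensional formula is handled in exactly the same way, using the odd-dimensional surgery formula of Lemma~\ref{eqn:torodd} and the convergence of torsion ratios recorded in Lemma~\ref{lemma:odddimnormaltor}.

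First, by Proposition~\ref{prop:existenceacyclicspin}, $M$ carries an acyclic spin structure $\eta$, so the directed set $\mathcal{A}_{M,\eta}$ is nonempty and cofinal in $\mathcal{A}_M$. For each $(p,q)\in\mathcal{A}_{M,\eta}$, the manifold $M_{p/q}$ is compact spin-hyperbolic, so Müller's formula (Theorem~\ref{thm:MullerRuelle}) gives an expression for $\log|\tor(M_{p/q};\rho_{2m}^{p/q})/\tor(M_{p/q};\rho_4^{p/q})|$. Combining this with the even-dimensional surgery formula of Lemma~\ref{lemma:evensurgeryform}, applied at dimensions $2m$ and $4$, produces a formula for $\log|\tor(M;\rho_{2m}(u_{p/q}))/\tor(M;\rho_4(u_{p/q}))|$ in terms of $\Vol(M_{p/q})$, the Ruelle values $R^{p/q}_{2k+1}(k+\tfrac12)$, and explicit products involving the spin complex lengths $\lambda_{p/q}(\gamma_i)$ of the core geodesics added by the filling. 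The algebraic step, which is Lemma~\ref{lemma:ruellecusped}, is to split each $\log|R^{p/q}_{2k+1}(k+\tfrac12)|$ into the contribution of the core geodesics $A$ and the remainder $B_{2k+1}^{p/q}$, and to observe that the core-geodesic part of the Ruelle factor exactly cancels against the surgery correction up to the residual length term $-\tfrac{(m-2)(m+2)}{2}\sum_i\length(\varphi^i_{p/q})$.

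Next I would let $(p,q)\to\infty$ along $\mathcal{A}_{M,\eta}$. The easy ingredients are: (i) $\length(\varphi^i_{p/q})\to 0$ by Thurston's hyperbolic Dehn surgery theorem, so the length sum disappears; (ii) $\Vol(M_{p/q})\to\Vol(M)$ by Jørgensen's continuity theorem; (iii) $\tor(M;\rho_{2m}(u_{p/q}))\to\tor(M;\rho_{2m})$ by the argument of Proposition~\ref{prop:semicont} together with the fact that, for $\eta$ acyclic, the representation $\rho_{2m}(0)=\rho_{2m}$ is acyclic and acyclicity is open in the deformation parameter. Putting these together reduces the theorem to the claim
\[
\lim_{(p,q)\to\infty}\sum_{k=2}^{m-1} B_{2k+1}^{p/q} = \sum_{k=2}^{m-1} \log\bigl|R_{2k+1}\!\left(k+\tfrac12\right)\bigr|,
\]
where the right-hand Ruelle functions are those attached to $M$.

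The hard part is this last limit, and it is the content of Lemma~\ref{lemma:finitesumRuelle}. Using Lemma~\ref{lemma:shortgeocore}, for $(p,q)$ large enough the only prime closed geodesics on $M_{p/q}$ of length less than some fixed $\delta/2>0$ are the core geodesics, so $B_{k}^{p/q}$ becomes the integral of $\log|1-z^{-k}|$ against the spin-complex-length measure $\pclsp(M_{p/q},\eta_{p/q})$ restricted to $\{|z|>e^{\delta/4}\}$. The elementary bound $|\log|1-z^{-k}||\leq C/|z|^{5}$ for $k\geq 5$ and $|z|$ bounded below by $e^{\delta/4}$ gives the decay hypothesis $|f(z)|\leq K/|z|^{4+\epsilon}$ of Proposition~\ref{prop:spclspimprovedcont}, which is exactly the uniform integrability we need to swap limit and integral. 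Applying that proposition yields $B_k^{p/q}\to\log|R_k(k/2)|$ for each $k\geq 5$, and the same estimate applied to the measure $\pclsp(M,\eta)$ shows that the resulting series $\sum_k \log|R_k(k/2)|$ is absolutely convergent. The main obstacle throughout is precisely this uniform integrability across the net $\{(M_{p/q},\eta_{p/q})\}$, which is why Proposition~\ref{prop:spclspimprovedcont} (and the thick–thin volume growth bound of Lemma~\ref{lemma:growth_geo} on which it rests) is the essential geometric input. Once this limit is taken term by term inside the finite sum $\sum_{k=2}^{m-1}$, the stated identity for $\log|\normaltor_{2m}(M,\eta)/\normaltor_4(M,\eta)|$ follows.
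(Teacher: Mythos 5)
Your proposal reproduces the paper's own argument essentially step for step: apply Müller's Theorem~\ref{thm:MullerRuelle} to the filled manifolds $M_{p/q}$, combine it with the even-dimensional surgery formula of Lemma~\ref{lemma:evensurgeryform} to obtain Lemma~\ref{lemma:ruellecusped} (with the core-geodesic cancellation and the residual $-\tfrac{(m-2)(m+2)}{2}\sum_i\length(\varphi^i_{p/q})$ term), then pass to the limit using the continuity of volume, the vanishing of the core lengths, the convergence of the torsion ratio, and Proposition~\ref{prop:spclspimprovedcont}/Lemma~\ref{lemma:finitesumRuelle} to handle $B^{p/q}_k\to\log|R_k(k/2)|$, with the odd case done analogously via Lemmas~\ref{eqn:torodd} and~\ref{lemma:odddimnormaltor}. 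This is exactly the route the paper follows, so there is nothing to compare or flag.
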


The proof of Theorem \ref{thm:asymp_main} now follows easily.
\begin{proof}[Proof of Theorem \ref{thm:asymp_main}]
	Theorem \ref{thm:CuspedRuelle} and Lemma \ref{lemma:finitesumRuelle} imply that
	\[
	\lim_{n\to\infty} \frac{ \log \left| \normaltor_{n} (M,\eta) \right|}{n^2} = -\frac{\Vol M}{4\pi}. 
	\]
\end{proof}

\section{Reidemeister torsion and length spectrum}
\label{chapter:analysis}

The results of last section, especially Theorem \ref{thm:CuspedRuelle}, show that there is a close relationship between 
the spin-complex-length spectrum of a complete, acyclic, spin-hyperbolic $3$-manifold of finite volume $(M,\eta)$
and its higher-dimensional Reidemeister torsion invariants.
In this section we want to focus on this question; 
more concretely, we want to study at what extent the sequence $\{\normaltor_n(M, \eta)\}$ determines
the spin-complex-length spectrum of the manifold. 
The equivalence between these two invariants should be regarded as a geometric interpretation 
of the information encoded in these invariants.

\begin{definition}
	We will say that two (spin-)hyperbolic $3$-manifolds are \emph{(spin-)isospectral}
	if the have the same prime (spin-)complex-length spectrum.
\end{definition}

The notion of isospectrality, as stated in this definition, 
is considered by C.\;Maclachlan and A.W.\;Reid in \cite{ReidMAC}.
They prove the following theorem.
\begin{theorem}[C.\;Maclachlan and A.W.\;Reid, \cite{ReidMAC}]\label{thm:MacR}
	For any integer $n\geq 2$, there are $n$ isospectral non-isometric closed hyperbolic $3$-manifolds.
\end{theorem}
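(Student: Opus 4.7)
The plan is to apply Sunada's method. The inputs are a finite group $G$ with $n$ pairwise Gassmann-equivalent but pairwise non-conjugate subgroups $H_1,\dots,H_n$ (i.e.\ $|C\cap H_i|=|C\cap H_j|$ for every conjugacy class $C\subset G$ and all $i,j$, but no element of $G$ conjugates $H_i$ into $H_j$ for $i\neq j$), together with a closed oriented hyperbolic $3$-manifold $M_0$ and a surjection $\pi_1(M_0)\twoheadrightarrow G$. Let $\widetilde M\to M_0$ be the corresponding Galois cover and set $M_i:=\widetilde M/H_i$. Sunada's theorem then yields $\pclsp M_i=\pclsp M_j$ for all $i,j$: a prime closed geodesic in $M_i$ corresponds to an $H_i$-conjugacy subclass inside some conjugacy class $[\gamma]$ of $\pi_1 M_0$, whose complex length is determined by $[\gamma]$ (equivalently, by the conjugacy class of $\Hol_{M_0}(\gamma)$ in $\psl(2,\cmplx)$), and the matching of $H_i$- with $H_j$-subclasses provided by Gassmann equivalence preserves both length and multiplicity.

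First, I would invoke a group-theoretic input that produces such data for every $n\geq 2$. Arbitrarily many pairwise Gassmann-equivalent non-conjugate subgroups can be exhibited inside matrix groups over finite fields, for instance inside suitable parabolic subgroups of $\mathrm{GL}_k(\mathbb{F}_p)$ for large $k$ (Guralnick--Wales, Komatsu, and Brooks give explicit such families). Second, I would realise the surjection $\pi_1(M_0)\twoheadrightarrow G$ geometrically. The arithmetic Bianchi groups $\mathrm{PSL}_2(\mathcal{O}_K)$ for $K$ an imaginary quadratic field surject onto finite matrix quotients of the form $\mathrm{PSL}_2(\mathcal{O}_K/\mathfrak{p})$ by strong approximation, and passing to a sufficiently fine torsion-free congruence subcover produces a closed hyperbolic $M_0$ whose fundamental group surjects onto a wide supply of finite groups; by choosing $k$ and $p$ compatibly, this supply can be arranged to contain the required $G$.

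The main obstacle is upgrading isospectrality to non-isometry, since Sunada's construction alone does not preclude the $M_i$ from being pairwise isometric. The standard resolution is to choose the $H_i$ to be self-normalising in $G$ (or at least to have pairwise non-isomorphic normaliser data) and then to invoke Mostow rigidity: any hypothetical isometry $M_i\to M_j$ would lift to an isomorphism of Galois covers compatible with the $G$-action on $\widetilde M$, which would descend to an element of $G$ conjugating $H_i$ onto $H_j$, contradicting the non-conjugacy assumption. Combining these three ingredients--a large family of Gassmann-equivalent non-conjugate subgroups, a hyperbolic $3$-manifold realising the Galois cover, and Mostow rigidity as the bridge from conjugacy of subgroups to isometry of manifolds--yields $n$ pairwise isospectral non-isometric closed hyperbolic $3$-manifolds.
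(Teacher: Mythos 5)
This theorem is not proved in the paper; it is cited from Maclachlan--Reid, whose construction is arithmetic in the style of Vign\'eras: one takes non-conjugate maximal orders in a quaternion algebra over a number field with one complex place and type number at least $n$, and the resulting manifolds are matched conjugacy class by conjugacy class via class field theory. Your Sunada-based proposal is therefore a genuinely different route, and the isospectrality half of it is sound: the Gassmann condition $|C\cap H_i|=|C\cap H_j|$ for all conjugacy classes $C$ of $G$ is equivalent to the permutation representations of $\langle g\rangle$ on $G/H_i$ being isomorphic for every $g\in G$, which matches the multiset of orbit sizes, hence the multiset of (complex) lengths of the prime closed geodesics of $M_i$ lying over a given prime geodesic of $M_0$. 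This gives equality of $\pclsp M_i$ in the measure-theoretic sense used in the paper.

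The gap is the non-isometry step, and it is not a cosmetic one. Mostow rigidity applied to an isometry $M_i\to M_j$ produces a $g\in\psl(2,\cmplx)$ with $g\,\pi_1(M_i)\,g^{-1}=\pi_1(M_j)$, but $g$ need not lie in $\pi_1(M_0)$, need not normalize $\ker\big(\pi_1(M_0)\twoheadrightarrow G\big)$, and hence need not descend to an element of $G$ conjugating $H_i$ to $H_j$; making the $H_i$ self-normalizing in $G$ does nothing to constrain a conjugator that lives outside $G$. Worse, your chosen $M_0$ is a congruence cover of a Bianchi orbifold and so is arithmetic, and by Margulis' commensurability criterion the commensurator of an arithmetic lattice is \emph{dense} in $\psl(2,\cmplx)$: there is an abundance of hidden symmetries with which $g$ might be built. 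To close this one must either (i) take $M_0$ non-arithmetic, so that $\operatorname{Comm}(\pi_1 M_0)$ is a finite extension of $\pi_1 M_0$ and $g$ is forced first into the normalizer and then, after refining the construction, into $\pi_1(M_0)$ itself, where it does descend to $G$; or (ii) separate the $M_i$ by an explicitly computable invariant such as $\cohom_1(M_i;\integer)$. Maclachlan--Reid sidestep this entirely: in the arithmetic construction the type number controls precisely which commensurations exist, so non-conjugacy of the orders translates directly to non-isometry of the quotients.
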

As an immediate consequence of Theorem \ref{thm:MacR} and Wotzke's Theorem \ref{thm:Wotzke}, we get the following result.
\begin{theorem}
	For any integer $n\geq 2$, there are $n$ non-isometric, closed, hyperbolic $3$-manifolds
	$M_1,\dots,M_n$ such that for all $k>0$,
	\[
	|\tau_{2k+1}(M_i)| = |\tau_{2k+1}(M_j)|,\quad \text{for all } i,j = 1,\dots,n.
	\]
\end{theorem}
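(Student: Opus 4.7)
The approach is to combine two results cited immediately above: the Maclachlan--Reid theorem (Theorem~\ref{thm:MacR}), which produces $n$ pairwise non-isometric closed hyperbolic $3$-manifolds sharing the same prime complex-length spectrum, and Wotzke's theorem (Theorem~\ref{thm:Wotzke}), which identifies $|\tau(M;\rho_{2k+1})|^2$ with $|R_{\rho_{2k+1}}(0)|$ up to sign. The bridge between these two ingredients is the observation that, in odd dimension, the twisted Ruelle zeta function depends only on the prime complex-length spectrum.

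First I would make this bridge explicit. Since $\rho_{2k+1}$ factors through $\psl(2,\cmplx)$, the choice of spin structure plays no role. If $\varphi \in \mathcal{PC}(M)$ has complex length $\lambda(\varphi) \in \cmplx/\langle 2\pi i\rangle$, then $\Hol_M(\varphi)$ is conjugate in $\psl(2,\cmplx)$ to the class of $\operatorname{diag}(e^{\lambda(\varphi)/2}, e^{-\lambda(\varphi)/2})$, and hence $\rho_{2k+1}(\varphi)$ has eigenvalues $\{e^{j\lambda(\varphi)}\}_{j=-k}^{k}$; these are well defined because the exponents are integers and so absorb the $2\pi i$ ambiguity in $\lambda(\varphi)$. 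Therefore
\[
\det\bigl(\operatorname{Id}-\rho_{2k+1}(\varphi)\, e^{-s\length(\varphi)}\bigr) \;=\; \prod_{j=-k}^{k}\bigl(1-e^{j\lambda(\varphi)-s\length(\varphi)}\bigr),
\]
and the entire product defining $R_{\rho_{2k+1}}(s)$ is a function solely of the measure $\pclsp M$.

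Granted this, I would apply Theorem~\ref{thm:MacR} to obtain closed hyperbolic $3$-manifolds $M_1,\ldots,M_n$ that are pairwise non-isometric yet share a common prime complex-length spectrum. By the previous step, the Ruelle zeta function $R_{\rho_{2k+1}}(s)$ takes the same value on each $M_i$ for every $k\geq 1$. On each $M_i$ pick any spin structure (which exists since closed orientable $3$-manifolds are parallelizable), so that the hypotheses of Wotzke's theorem are fulfilled; applying it yields $|\tau(M_i;\rho_{2k+1})|^2=|R_{\rho_{2k+1},M_i}(0)|$, and comparing across $i$ produces the desired equality $|\tau_{2k+1}(M_i)|=|\tau_{2k+1}(M_j)|$. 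The argument has no genuine obstacle; the only point requiring care is the brief verification that the factors in the Ruelle product are determined by $\lambda(\varphi)$ alone, which is what makes the spectral hypothesis from Maclachlan--Reid translate into the torsion equality.
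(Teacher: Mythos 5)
Your proof is correct and takes exactly the approach the paper intends: the paper states the theorem as an immediate consequence of Maclachlan--Reid and Wotzke's theorem, and your argument supplies precisely the (short) bridge the paper leaves implicit, namely that the odd-dimensional twisted Ruelle zeta function is determined by the prime complex-length spectrum alone. The eigenvalue computation for $\rho_{2k+1}(\varphi)$ and the remark that the exponents are integers so the $2\pi i$ ambiguity is harmless are the right things to check, and invoking analytic continuation to pass from equality of the Euler products in the half-plane of convergence to equality of the meromorphic extensions at $s=0$ completes the argument.
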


Unfortunately, we will need to weaken the notion of isospectrality, and rather consider 
isospectrality up to complex conjugation. Before giving its definition, let us make the following considerations.
Let $(M,\eta)$ be a spin-hyperbolic $3$-manifold, and let $(\overline{M}, \eta)$ be the corresponding spin manifold
with the orientation reversed 
(here we are using the canonical one-to-one correspondence between spin structures on $M$ and $\overline{M}$).
The relationship between the spin-complex-length spectra of these two manifolds is easily established.
Indeed we have:
\begin{eqnarray*}
	\pclsp (M,\eta) & = & \sum_{\varphi\in \mathcal{PC}(M)} \delta_{e^{\lambda(\varphi)/2}}, \\
	\pclsp (\overline{M},\eta) & = & \sum_{\varphi\in \mathcal{PC}(M)} \delta_{e^{\overline{\lambda(\varphi)}/2}},
\end{eqnarray*}
where $\lambda(\varphi)$ is the spin-complex-length function of $(M,\eta)$. 
Notice that $\pclsp (\overline{ M},\eta)$ is the image measure of $\pclsp (M,\eta)$ under 
the complex conjugation map.

\begin{definition}
	We will say that two complete spin-hyperbolic $3$-manifolds $(M_1,\eta_1)$ and $(M_2,\eta_2)$ 
	are \emph{spin isospectral up to complex conjugation} if they have the same spin-complex-length spectrum
	up to complex conjugation, that is, 
	\[
	\pclsp(M_1,\eta_1) + \pclsp (\overline{M_1},\eta_1) = \pclsp(M_2,\eta_2) + \pclsp (\overline{M_2},\eta_2).
	\]
	The definition for ``non-spin'' manifolds is analogous. 
\end{definition}
\begin{rem}
	The reason to consider isospectrality up to complex conjugation is essentially that 
	Wotzke's Theorem \ref{thm:Wotzke}
	is an equality between the moduli of the Ruelle zeta function and Reidemeister torsion;
	if we had also equality between the arguments, then there should be no need
	to consider isospectrality \emph{up to complex conjugation}.
\end{rem}
\begin{rem}
	If two complete spin-hyperbolic $3$-manifolds are spin-isospectral up to complex conjugation,
	then they have the same real length spectrum. The same holds true for non-spin manifolds.
\end{rem}

\begin{theorem} \label{thm:torsiondetclsp}
	Let $(M_1, \eta_1), (M_2,\eta_2)$ be two complete spin acyclic hyperbolic $3$-manifolds of finite volume.
	Assume that there exists $N\geq 4$ such that for all $n \geq N$ we have
	\[
	|\normaltor_n (M_1, \eta_1)| = |\normaltor_n (M_2, \eta_2)|.
	\]
	Then the following assertions hold:
	\begin{enumerate}
		\item The spin manifolds $(M_1, \eta_1)$ and $(M_2,\eta_2)$ are spin-isospectral 
		up to complex conjugation. In particular, they have the same real length spectrum.
		\item The equality $|\normaltor_n(M_1,\eta_1)| = |\normaltor_n (M_2, \eta_2)|$ holds 
			for all $n \geq 4$.
	\end{enumerate}
\end{theorem}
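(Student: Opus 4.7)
The plan is to extract the volume from torsion asymptotics, then recover the Ruelle values $|R_k(k/2)|$ from Theorem~\ref{thm:CuspedRuelle}, translate these into power-sum moments of the spin-complex-length spectrum measure via the Taylor series of $\log|1-z^{-k}|$, and finally reconstruct the conjugation-symmetrised spectrum by a Cauchy-transform argument. Assertion~(1) then follows once the symmetrised spectrum is pinned down, and assertion~(2) by running the chain backwards through Theorem~\ref{thm:CuspedRuelle}.

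Concretely, Theorem~\ref{thm:asymp_main} immediately forces $\Vol(M_1)=\Vol(M_2)$ from the common asymptotic of $|\normaltor_n|/n^2$. Taking successive differences in $m$ of the two formulas in Theorem~\ref{thm:CuspedRuelle} then recovers the individual values $|R_{2k}(k)|$ and $|R_{2k+1}(k+1/2)|$, so that $|R_n(n/2)|$ is common to $M_1$ and $M_2$ for every $n\geq n_0$, with $n_0=n_0(N)$. Combining Lemma~\ref{eq:ruellelsp} with the expansion $\log|1-z^{-k}|=-\tfrac12\sum_{j\geq 1}(z^{-jk}+\bar z^{-jk})/j$, valid for $|z|>1$, and justifying the Fubini interchange via the growth bound of Proposition~\ref{prop:spclspimprovedcont} and the positive gap between $1$ and the atoms of $\pclsp(M,\eta)$, one obtains
\[
\log|R_k(k/2)| = -\tfrac12\sum_{j\geq 1}\frac{M_{jk}}{j},\qquad M_\ell := \int_{|z|>1}\bigl(z^{-\ell}+\bar z^{-\ell}\bigr)\,d\pclsp(M,\eta)(z).
\]
A classical M\"obius inversion applied to this triangular system recovers each $M_k$ (for $k\geq n_0$) as a convergent series in the values $\{\log|R_{jk}(jk/2)|\}_{j\geq 1}$, so $M_k^{(1)}=M_k^{(2)}$ for every $k\geq n_0$.

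Set $\nu_i = \pclsp(M_i,\eta_i) + \overline{\pclsp(M_i,\eta_i)}$, so that $M_k^{(i)} = \int z^{-k}\,d\nu_i(z)$. The Cauchy transform $C_{\nu_i}(w) = \int (w-z)^{-1}\,d\nu_i(z)$ is holomorphic on $|w|<1$ with Taylor series $-\sum_{k\geq 1}M_k^{(i)}w^{k-1}$. The polynomial growth $\nu_i(\{|z|\leq R\})\leq CR^2$ and the discreteness of $\operatorname{supp}(\nu_i)$ let $C_{\nu_i}$ be continued meromorphically to $\mathbf{C}$, with simple poles exactly on $\operatorname{supp}(\nu_i)$ and residues equal to the atom masses. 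Since $M_k^{(1)}=M_k^{(2)}$ for $k\geq n_0$, the difference $C_{\nu_1}-C_{\nu_2}$ is a polynomial of degree $\leq n_0-2$, hence entire, so the two meromorphic extensions share their pole loci and residues; therefore $\nu_1=\nu_2$, giving assertion~(1). For assertion~(2), $\nu_1=\nu_2$ propagates equality to every $M_k$ and hence every $|R_n(n/2)|$, and reinserting this into Theorem~\ref{thm:CuspedRuelle} with the shared volume forces $|\normaltor_n(M_1,\eta_1)|=|\normaltor_n(M_2,\eta_2)|$ for all $n\geq 4$.

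The main obstacle I anticipate is the Cauchy-transform uniqueness step: knowing all but finitely many moments determines $C_{\nu_i}$ only up to a polynomial on the unit disc, and one must use both the discrete structure of the spectrum and its quadratic growth to argue that the meromorphic continuation --- and hence the pole data that \emph{is} the measure --- remains uniquely recoverable despite this polynomial ambiguity.
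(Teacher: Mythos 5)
Your proposal shares the paper's opening moves exactly: Theorem~\ref{thm:asymp_main} forces $\Vol(M_1)=\Vol(M_2)$, and differencing Theorem~\ref{thm:CuspedRuelle} extracts $|R_n(n/2)|$ for large $n$. Where you diverge is the analytic reconstruction step, and the divergence is interesting. The paper pushes $\mu_i+\overline{\mu}_i$ forward under $z\mapsto 1/z$, multiplies by $z^{N+1}$ to obtain a compactly supported Radon measure $\nu$, and feeds the resulting vanishing integrals $\int\psi(z^n)z^{-(N+1)}\,d\nu=0$ into Proposition~\ref{prop:measurevanish}, whose proof combines the density lemma~\ref{prop:logdense} (a Bergman-space argument showing $\{\psi(z^k)/z^N\}_{k\geq N}$ spans $H(D)$) with the Cauchy-transform vanishing criterion~\ref{prop:measurevanish2}. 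You instead expand $\log|1-z^{-k}|=-\tfrac12\sum_j(z^{-jk}+\bar z^{-jk})/j$ and perform a M\"obius inversion to recover the power-sum moments $M_k$ directly for $k\geq n_0$. This is a genuinely different route: it bypasses the paper's density lemma entirely, and the inversion is justified by the absolute convergence coming from Lemma~\ref{lemma:finitesumRuelle} and the exponential decay of $M_{jk}$ (since $\operatorname{supp}\mu$ is bounded away from $|z|=1$ by the shortest geodesic length). That is a cleaner path to the moments.

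However, the Cauchy-transform step as you state it has a real gap, and it is not the one you flag. You write $C_{\nu_i}(w)=\int(w-z)^{-1}\,d\nu_i(z)$ and claim it is holomorphic on $|w|<1$ with Taylor series $-\sum_{k\geq 1}M_k^{(i)}w^{k-1}$. But $\nu_i$ has infinite total mass (there are infinitely many prime geodesics), $\int(1/|z|)\,d\nu_i=\infty$ by the prime geodesic theorem, and the moments $M_1,\dots,M_4$ do not exist, since $\int_{|z|>1}|z|^{-k}\,d\pclsp(M_i,\eta_i)$ converges only for $k\geq 5$ (this is exactly the content of Proposition~\ref{prop:spclspimprovedcont}). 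So the integral defining $C_{\nu_i}$ diverges, and the power series has missing leading coefficients; the two are not the same object, and neither one is the Cauchy transform you want. The claim that this undefined transform "continues meromorphically to $\mathbf{C}$ with simple poles exactly on $\operatorname{supp}\nu_i$" is then unsupported. The fix is precisely the regularization the paper performs: replace $\nu_i$ by $z^{-m}\nu_i$ for $m>4$ (or push forward under $z\mapsto 1/z$ and multiply by $z^m$, which is the same thing). Then the moments $\int z^j\,d(z^{-m}\nu_i)=M_{j+m}$ exist for all $j\geq 0$, the Cauchy transform is that of a compactly supported Radon measure (the atoms now accumulate only at a point of measure zero), and your remaining argument — that knowing all but finitely many moments leaves only an entire polynomial discrepancy, so the pole loci and residues, and hence the measures, must coincide — goes through; indeed it essentially reduces to the paper's Proposition~\ref{prop:measurevanish2}. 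The ``polynomial ambiguity'' you anticipate is real but handled correctly by your argument; it is the missing regularization that is the gap.
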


The proof of Theorem \ref{thm:torsiondetclsp} will be given in Section \ref{section:isospectrality}.
Before doing that, we need a result on complex analysis which we prove in the following subsection.

\subsection{A result on complex analysis}

The aim of this subsection is to provide a proof of the following analytical result needed to prove Theorem \ref{thm:torsiondetclsp}. We are indebted to J.\;Ortega-Cerd\`a, N.\;Makarov, and A.\;Nicolau for the proof of 
Proposition \ref{prop:measurevanish2}.

\begin{proposition} \label{prop:measurevanish}
	Let $\mu$ be a Radon complex-valued measure with compact support $\supp \mu$ 
	contained in the interior of the unit disk $D$.
	Assume that $\mu$ satisfies the following conditions:
	\begin{enumerate}
		\item $\cmplx \setminus \supp \mu$ is connected.
		\item $\supp \mu$ has zero Lebesgue measure.
		\item There exists a positive integer $N$ and a holomorphic function $\psi$ on the open unit disk with $\psi(0) = 0$, 
			$\psi'(0) = 1$ such that
			\[
			\int_{D} \frac{\psi(z^n)}{z^N} d\mu(z) = 0.
			\]
			for all $n \geq N$. 
	\end{enumerate}
	Then $\mu = 0$.
\end{proposition}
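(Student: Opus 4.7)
The plan is to show that the moments $c_m := \int z^m\, d\mu(z)$ decay superexponentially in $m$, use the Cauchy transform of $\mu$ to force $\supp \mu \subseteq \{0\}$, and then check that the remaining atom vanishes.

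First I would unravel condition~(3). Since $\psi(0) = 0$ and $\psi'(0) = 1$, write $\psi(w) = w\,\phi(w)$ with $\phi$ holomorphic on $D$ and $\phi(0) = 1$, and expand $\phi(w) = 1 + \sum_{k \geq 1} b_k w^k$. Then $\psi(z^n)/z^N = z^{n-N}\phi(z^n)$, and integrating term by term against $\mu$ (with $m := n - N$) converts condition~(3) into the recursion
\[
c_m + \sum_{k \geq 1} b_k\, c_{(k+1)m + kN} = 0 \qquad (m \geq 0).
\]

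Next I would bootstrap the decay of $c_m$. Choose $r \in (0,1)$ with $\supp \mu \subset r\overline{D}$, so trivially $|c_m| \leq \|\mu\|\,r^m$. Because $\phi$ is holomorphic on $D$, one has $\sum_{k\geq 1}|b_k|\, w^k = O(w)$ as $w \to 0$; plugging an inductive bound $|c_\ell| \leq A_j\, r^{2^j \ell}$ into the recursion and factoring out $r^{2^j m}$ yields $|c_m| \leq A_{j+1}\, r^{2^{j+1} m}$ for some finite constant $A_{j+1}$. Iterating gives $|c_m| \leq A_j\, r^{2^j m}$ for every $j$, hence $\limsup_m |c_m|^{1/m} = 0$.

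Finally I would analyze the Cauchy transform $\hat\mu(w) := \int (w - z)^{-1}\, d\mu(z)$. It is holomorphic on $\cmplx \setminus \supp \mu$ and, for $|w| > r$, equals the Laurent series $\sum_{m\geq 0} c_m w^{-m-1}$. The superexponential decay makes this series define a function holomorphic on all of $\cmplx \setminus \{0\}$; condition~(1) and the identity theorem then identify it with $\hat\mu$ throughout $\cmplx \setminus \supp \mu$, giving a holomorphic extension of $\hat\mu$ to $\cmplx \setminus \{0\}$. The distributional identity $\overline\partial\hat\mu = -\pi\mu$ now forces $\supp \mu \subseteq \{0\}$, so $\mu = c\,\delta_0$; thus $c_m = 0$ for $m \geq 1$, and evaluating the recursion at $m = 0$ gives $c_0 = -\sum_{k \geq 1} b_k\, c_{kN} = 0$, whence $\mu = 0$. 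The main obstacle will be the bootstrap step: one must verify that the constants $A_j$ stay finite under iteration and that the $O(w)$ estimate on $\sum|b_k|w^k$ can be applied uniformly for $w = r^{2^j(m+N)}$ with $m \geq 0$. Condition~(2) does not enter explicitly in this argument; it is the ingredient one would invoke to conclude via Lavrentiev's theorem once all moments are known to vanish.
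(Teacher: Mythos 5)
Your argument is correct, and it takes a genuinely different route from the paper. Both proofs eventually invoke the Cauchy transform and both need conditions~(1) and~(2), but they differ in how they show that all moments $c_m := \int z^m\,d\mu$ vanish. The paper proves (Proposition~\ref{prop:logdense}), via Bergman-space estimates and the Fredholm alternative, that the linear span of $\{\psi(z^k)/z^N\}_{k\geq N}$ is dense in $H(D)$; since integration against $\mu$, which has compact support in $D$, is a continuous functional on $H(D)$, this kills every monomial and Proposition~\ref{prop:measurevanish2} finishes. You instead replace that functional-analytic density statement by an explicit moment recursion together with a doubling bootstrap. Your route is more elementary; the paper's yields a density result of independent interest. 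Two remarks on the details.

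First, the ``main obstacle'' you flag is not actually there, and the bootstrap in fact gives more than superexponential decay. From the inductive bound $|c_\ell|\leq A_j r^{2^j\ell}$ and the recursion $c_m=-\sum_{k\geq 1}b_k\,c_{m+k(m+N)}$ one obtains
\[
|c_m|\;\leq\; A_j\, r^{2^j(2m+N)}\sum_{k\geq 1}|b_k|\bigl(r^{2^j(m+N)}\bigr)^{k-1},
\]
and since $r^{2^j(m+N)}\leq r^{2^jN}<1$ for \emph{every} $m\geq 0$, the series is bounded uniformly in $m$ by the finite constant $T_j:=\sum_{k\geq 1}|b_k|\bigl(r^{2^jN}\bigr)^{k-1}$. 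Thus $A_{j+1}:=A_j\,T_j\,r^{2^jN}$ works for all $m$ simultaneously, with no threshold in $m$. Since $\prod_i T_i$ grows at most exponentially in $j$ while $\prod_i r^{2^iN}=r^{N(2^j-1)}$ decays doubly exponentially, $A_j\to 0$; hence for each fixed $m\geq 1$, $|c_m|\leq A_j r^{2^j m}\leq A_j\to 0$, giving $c_m=0$ outright. Together with $c_0=-\sum_{k\geq 1} b_k c_{kN}=0$ (here $N\geq 1$ is used), all moments vanish and you may conclude directly from Proposition~\ref{prop:measurevanish2}; the detour through the Laurent extension of $\widehat\mu$ to $\cmplx\setminus\{0\}$ and the distributional $\overline\partial$-computation is unnecessary.

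Second, a small inaccuracy at the end: condition~(2) \emph{does} enter your argument as written. To pass from ``$\widehat\mu$ agrees with the Laurent-series function $F$ on $\cmplx\setminus\supp\mu$'' to ``$\overline\partial\widehat\mu=0$ as a distribution on $\cmplx\setminus\{0\}$'' you must know $\widehat\mu=F$ almost everywhere, and this is exactly where the Lebesgue-nullity of $\supp\mu$ is used---just as it is used in the proof of Proposition~\ref{prop:measurevanish2}. Lavrentiev's theorem is an alternative device, but it is not the one your proof actually invokes.
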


We will prove first the particular case of  Theorem~\ref{thm:torsiondetclsp} given by taking $\psi = \operatorname{Id}$.
After this, we will show that if $\psi$ is any holomorphic function on the open unit disk with 
$\psi(0) = 0$ and $\psi'(0) = 1$, then for all $N > 0$ the linear span of 
$\{ \frac{\psi(z^n)}{z^N} \}_{n\geq N}$ is dense in the space of holomorphic functions on the open unit disk 
endowed with the topology of the uniform convergence on compact sets.

A way to prove Proposition \ref{prop:measurevanish} is to use the Cauchy transform.
If $\mu$ is a Radon complex-valued measure compactly supported in the complex plane,
then its Cauchy transform is defined by
\[
\widehat \mu(\zeta) = \int_{\cmplx} \frac{d\mu(z)}{z - \zeta}. 
\]
We will need only the following properties of the Cauchy transform, see \cite{Gamelin}.

\begin{proposition} \label{prop:Cauchytrans}
	Let $\widehat \cmplx$ be the Riemann sphere.
	The Cauchy transform has the following properties:
	\begin{enumerate}
		\item $\widehat \mu (\zeta)$ is analytic on $\widehat \cmplx \setminus \supp \mu$
			and vanishes at infinity.
		\item If $\widehat \mu = 0$ Lebesgue-almost everywhere, then $\mu = 0$.
	\end{enumerate}
\end{proposition}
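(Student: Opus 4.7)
\smallskip
\noindent\textbf{Plan.} I would prove the two assertions separately, both with essentially routine complex analysis.

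For assertion (1), fix any $\zeta_0 \in \widehat{\cmplx} \setminus \supp \mu$. Since $\supp\mu$ is compact, there is a neighborhood $U$ of $\zeta_0$ lying at positive distance from $\supp\mu$, so the integrand $z \mapsto 1/(z-\zeta)$ and all its $\zeta$-derivatives are jointly continuous and uniformly bounded on $\supp\mu \times U$. Dominated convergence applied to difference quotients then shows that $\widehat\mu$ is holomorphic near $\zeta_0$. To handle $\zeta = \infty$, pick $R$ with $\supp\mu \subset \{|z| < R\}$ and expand, for $|\zeta| > R$,
\[
\frac{1}{z-\zeta} \;=\; -\frac{1}{\zeta}\sum_{k=0}^{\infty}\left(\frac{z}{\zeta}\right)^{k},
\]
with convergence uniform in $z \in \supp\mu$. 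Integrating term by term gives
\[
\widehat\mu(\zeta) \;=\; -\sum_{k=0}^{\infty} \frac{m_k}{\zeta^{k+1}}, \qquad m_k = \int z^k\, d\mu(z),
\]
so in the coordinate $w = 1/\zeta$ the function $w \mapsto \widehat\mu(1/w)$ extends holomorphically to $w = 0$ with value $0$; that is, $\widehat\mu$ is analytic at $\infty$ on the Riemann sphere and vanishes there.

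For assertion (2), I would exploit the distributional identity
\[
\overline\partial_{\zeta}\, \frac{1}{z-\zeta} \;=\; -\pi\, \delta_{\zeta = z},
\]
a rephrasing of the fundamental solution of the Cauchy--Riemann operator on $\cmplx$. Testing $\widehat\mu$ against an arbitrary $\phi \in C^{\infty}_{c}(\cmplx)$ and applying Fubini (justified because $1/(z-\zeta)$ is locally integrable in $\zeta$ and $\mu$ has compact support, while $\overline\partial\phi$ is bounded with compact support), one obtains
\[
\langle \overline\partial \widehat\mu, \phi\rangle \;=\; -\int \widehat\mu(\zeta)\, \overline\partial\phi(\zeta)\, dA(\zeta) \;=\; -\int d\mu(z)\int \frac{\overline\partial\phi(\zeta)}{z-\zeta}\, dA(\zeta).
\]
The Cauchy--Pompeiu formula identifies the inner integral as $-\pi\phi(z)$, so $\overline\partial \widehat\mu = \pi\mu$ in the sense of distributions on $\cmplx$. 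If $\widehat\mu$ vanishes Lebesgue-almost everywhere, then it defines the zero distribution on $\cmplx$, hence $\mu = 0$ as a distribution; since $\mu$ is a Radon measure this forces $\mu = 0$.

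\smallskip
\noindent\textbf{Main obstacle.} Assertion (1) is essentially differentiation under the integral sign combined with a Laurent expansion at infinity. The substantive step is the distributional identity used in (2): one must justify the exchange of integrals for a general complex Radon measure of compact support and invoke the Cauchy--Pompeiu formula, which carries all of the analytic content of the injectivity of the Cauchy transform.
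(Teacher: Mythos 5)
The paper does not actually prove this proposition: it is quoted as a standard fact with a reference to Gamelin's book, so there is no in-paper argument to compare with. Your proposal is a correct, self-contained proof along the standard textbook lines. Part (1) — differentiation under the integral sign away from $\supp\mu$ plus the uniformly convergent expansion $\frac{1}{z-\zeta}=-\frac1\zeta\sum_{k\ge 0}(z/\zeta)^k$ for $|\zeta|>R$ — is exactly right, and the resulting moment expansion is the same computation the paper itself repeats inside the proof of Proposition \ref{prop:measurevanish2}. Part (2) via the fundamental solution of $\overline\partial$ is also the standard route; two small points are worth tightening. First, you should record explicitly that $\widehat\mu\in L^1_{\mathrm{loc}}(\cmplx)$ (by Fubini, since $\int_{K}|z-\zeta|^{-1}\,dA(\zeta)$ is bounded uniformly for $z\in\supp\mu$), which is what makes "$\widehat\mu=0$ a.e.\ implies $\widehat\mu$ is the zero distribution" meaningful. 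Second, there is an internal sign slip: with the paper's convention $\widehat\mu(\zeta)=\int\frac{d\mu(z)}{z-\zeta}$, the Cauchy--Pompeiu formula gives $\int\frac{\overline\partial\phi(\zeta)}{z-\zeta}\,dA(\zeta)=+\pi\phi(z)$, so your computation yields $\overline\partial\widehat\mu=-\pi\mu$ (consistent with the identity $\overline\partial_\zeta\frac{1}{z-\zeta}=-\pi\delta_{\zeta=z}$ you state at the outset), not $+\pi\mu$. The sign is immaterial for the conclusion — either way $\widehat\mu=0$ a.e.\ forces $\mu=0$ as a distribution, hence as a Radon measure — but the two displayed identities in your write-up should be made consistent.
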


With this result we can prove the following particular case of Proposition \ref{prop:measurevanish} mentioned above.

\begin{proposition} \label{prop:measurevanish2}
	Let $\mu$ be a Radon complex-valued measure compactly supported in the complex plane that
	satisfies the following conditions:
	\begin{enumerate}
		\item $\cmplx \setminus \supp \mu$ is connected.
		\item $\supp \mu$ has zero Lebesgue measure.
		\item For all $n\geq 0$, $\int_\cmplx z^n d\mu(z) = 0$.
	\end{enumerate}
	Then $\mu = 0$.
\end{proposition}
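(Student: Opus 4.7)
The plan is to use the Cauchy transform $\widehat\mu$ and exploit Proposition~\ref{prop:Cauchytrans}. Fix $R > 0$ large enough so that $\supp\mu \subset \{|z| < R\}$. For $|\zeta| > R$, the geometric series expansion
\[
\frac{1}{z - \zeta} = -\frac{1}{\zeta}\sum_{n\geq 0}\left(\frac{z}{\zeta}\right)^n
\]
converges uniformly for $z \in \supp\mu$, hence can be integrated term by term against $\mu$ to give
\[
\widehat\mu(\zeta) = -\sum_{n\geq 0}\frac{1}{\zeta^{n+1}}\int_{\cmplx} z^n\,d\mu(z).
\]
Hypothesis~3 then yields $\widehat\mu(\zeta) = 0$ for all $|\zeta| > R$.

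Next I would upgrade this vanishing to all of $\widehat\cmplx \setminus \supp\mu$ by the identity theorem. By Proposition~\ref{prop:Cauchytrans}(1), $\widehat\mu$ is analytic on $\widehat\cmplx \setminus \supp\mu$. Since $\supp\mu$ is compact and $\cmplx \setminus \supp\mu$ is connected by hypothesis~1, the set $\widehat\cmplx \setminus \supp\mu$ (obtained by adjoining the single point at infinity) is a connected open subset of the Riemann sphere. As $\widehat\mu$ vanishes on the nonempty open set $\{|\zeta| > R\}$, it vanishes identically on $\widehat\cmplx \setminus \supp\mu$.

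Finally, hypothesis~2 guarantees that $\supp\mu$ has Lebesgue measure zero, so $\widehat\mu = 0$ Lebesgue-almost everywhere in $\cmplx$. Proposition~\ref{prop:Cauchytrans}(2) then forces $\mu = 0$, which is the desired conclusion.

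There is no real obstacle here; the argument is essentially a packaging of standard facts about the Cauchy transform. The only point that requires a moment's thought is the passage from vanishing near infinity to vanishing on all of $\widehat\cmplx \setminus \supp\mu$, where one must check that adjoining the point at infinity to the connected set $\cmplx \setminus \supp\mu$ (hypothesis~1) keeps it connected, so that the identity principle for analytic functions applies on the whole complement of the support.
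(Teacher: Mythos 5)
Your proof is correct and follows essentially the same route as the paper: expand the Cauchy transform as a geometric series near infinity, use hypothesis~3 to conclude $\widehat\mu$ vanishes there, propagate the vanishing to all of $\widehat\cmplx\setminus\supp\mu$ by connectedness, and finish with the zero-Lebesgue-measure hypothesis and the injectivity-type property of the Cauchy transform. You add a few welcome clarifications the paper leaves implicit (uniform convergence justifying term-by-term integration, and the check that adjoining $\infty$ preserves connectedness), but the argument is the same.
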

\begin{proof}
	Let $\widehat \mu(\zeta)$ be the Cauchy transform of $\mu$.
	We know that $\widehat \mu(\zeta)$ is analytic on $\widehat \cmplx \setminus \supp \mu$ and vanishes at $\infty$.
	Take $|\zeta|$ large enough so that $|z/\zeta| < 1$ for all $z \in \supp \mu$. Then, we have
	\[
	\widehat \mu(\zeta) = \int_{\cmplx} \frac{d\mu(z)}{z - \zeta} = - \frac{1}{\zeta} \int_{\cmplx} \frac{d\mu(z)}{1 - \frac{z}{\zeta}} =
	- \frac{1}{\zeta} \sum_{n\geq 0} \int_{\cmplx} \frac{z^n}{\zeta^n} d\mu(\zeta) = 0.
	\]
	The last term being zero by hypothesis. Thus $\widehat \mu$ is identically zero in a neighbourhood of $\infty$, 
	and hence it must be identically zero in $\widehat \cmplx \setminus \supp \mu$, as $\cmplx \setminus \supp \mu$
	is connected. Since $\supp \mu$ has zero Lebesgue measure, we have 
	$\widehat{\mu} = 0$ Lebesgue-almost everywhere. Proposition \ref{prop:Cauchytrans} then implies that
	that $\mu = 0$, as we wanted to prove.
\end{proof}

Now, to prove Proposition \ref{prop:measurevanish}, it remains to prove the following result.

\begin{proposition} \label{prop:logdense}
	Let $H(D)$ be the space of holomorphic functions on the open unit disk  
	endowed with the topology of the uniform convergence on compact sets, 
	and let $\psi \in H(D)$ such that $\psi(0) = 0$ and $\psi'(0) = 1$.
	Then, for all $N\geq 1$, the linear span of 
	\[
	\left \{ \frac{\psi(z^k)}{z^N} \right\}_{k\geq N}
	\]
	is dense in $H(D)$.
\end{proposition}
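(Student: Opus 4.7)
The plan is to prove density by duality. The space $H(D)$, topologised by uniform convergence on compacta, is a Fr\'echet--Montel space whose continuous dual is identified with the space of sequences $(d_m)_{m\geq 0}$ satisfying $\limsup_m |d_m|^{1/m} < 1$, via $L \leftrightarrow (L(z^m))_{m\geq 0}$; the pairing $L(\sum_m a_m z^m) = \sum_m a_m d_m$ converges absolutely by the Cauchy estimates on the coefficients $a_m$. By Hahn--Banach, it suffices to show that if $L \in H(D)'$ satisfies $L(\psi(z^k)/z^N) = 0$ for every $k\geq N$, then $L \equiv 0$.

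Writing $\psi(w) = \sum_{j\geq 1} c_j w^j$ with $c_1 = 1$ and setting $d_m := L(z^m)$, the identity $\psi(z^k)/z^N = \sum_{j\geq 1} c_j z^{jk-N}$ (a genuine power series since $k \geq N$) transforms the hypothesis into the family of relations
\[
d_{k-N} \;=\; -\sum_{j\geq 2} c_j\, d_{jk - N}, \qquad k \geq N.
\]
The heart of the argument is a bootstrap that doubles the decay rate of $(d_m)$ at each step. Starting from $|d_n| \leq C\tau^n$ (valid for some $\tau\in(0,1)$ by continuity of $L$), I would fix $R_0 \in (\tau,1)$ so that $|c_j| \leq C_\psi R_0^{-j}$ by Cauchy; the recursion then gives
\[
|d_{k-N}| \;\leq\; C\tau^{-N}(\tau^k)^2 \sum_{j\geq 2} |c_j|\, (\tau^k)^{j-2} \;\leq\; CM\tau^{-N}\tau^{2k},
\]
where $M := C_\psi R_0^{-2}/(1-\tau/R_0)$ is a uniform bound valid for every $k\geq 1$ (since $\tau^k \leq \tau < R_0$ for $k \geq 1$). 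Rewriting in the variable $m = k-N$ yields $|d_m| \leq C M\tau^N \cdot \tau^{2m}$.

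Iterating this bootstrap with $\tau_n := \tau^{2^n}$ in place of $\tau$ at the $n$-th stage, and using that $M(\tau)$ decreases when $\tau$ decreases (so the same $M$ works throughout), one produces bounds of the form $|d_m| \leq C M^n \tau^{2^n(N+m) - N}$ for every $n \geq 0$. For each fixed $m$, the exponent $2^n(N+m)$ drives $\tau^{2^n(N+m)} \to 0$ faster than any geometric growth of $M^n$; hence $d_m = 0$ for all $m\geq 0$, so $L \equiv 0$ as required.

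\textbf{Main obstacle.} The delicate point is controlling the constants through the iteration: one must verify that the geometric factor $\sum_{j\geq 2}|c_j|(\tau^k)^{j-2}$ stays bounded uniformly in $k$ and in the iteration index $n$. This depends crucially on the hypothesis that $\psi$ be holomorphic on \emph{all} of $D$ (which fixes $R_0$ once and for all, strictly above the initial $\tau$) and on the monotonicity $\tau_n \downarrow 0$. Everything else is careful bookkeeping of constants in a geometric series estimate.
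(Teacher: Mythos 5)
Your proposal is correct, and it takes a genuinely different route from the paper's. The paper works inside a Bergman space $A^2(D_R)$ for $R<1$: it defines the operator $A_\psi$ sending $z^n\mapsto\psi(z^n)$, shows $A_\psi=I+B_\psi$ with $B_\psi$ Hilbert--Schmidt (hence compact), observes that the lower-triangular structure with unit diagonal forces $\ker A_\psi=0$, and invokes the Fredholm alternative to invert $A_\psi$; the $z^{-N}$ normalization and the restriction $k\geq N$ are handled afterwards by a short multiplication argument. You instead dualize: you identify $H(D)'$ with the Köthe dual $\{(d_m):\limsup|d_m|^{1/m}<1\}$, reduce density via Hahn--Banach to showing that any annihilating functional vanishes, translate the vanishing condition into the recursion $d_{k-N}=-\sum_{j\geq 2}c_j d_{jk-N}$, and then run a bootstrap that squares the geometric decay rate at each step: $|d_m|\leq C\tau^m$ improves to $|d_m|\leq CM\tau^N(\tau^2)^m$, and iterating with $\tau_n=\tau^{2^n}$ kills $d_m$ because $\tau^{2^n(N+m)}$ decays doubly exponentially while the accumulated constants $M^n$ grow only exponentially. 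The key points that make this tight are all checked: the termwise application of $L$ to the Taylor series of $\psi(z^k)/z^N$ is justified by convergence of the partial sums in $H(D)$, the geometric tail $\sum_{j\geq 2}|c_j|(\tau_n^k)^{j-2}$ is bounded by $M=C_\psi R_0^{-2}/(1-\tau/R_0)$ uniformly in $n$ and $k$ because $\tau_n^k\leq\tau<R_0<1$ for $k\geq 1$, and $M(\tau)$ is monotone in $\tau$. What the paper's route buys is a conceptually cleaner argument with essentially no constant-chasing, plus a constructive inverse operator (so expansions in $\{\psi(z^n)\}$ actually converge in each $A^2(D_R)$); what your route buys is a proof that stays entirely inside $H(D)$ and its sequence-space dual, uses no operator theory beyond Hahn--Banach, and localizes the use of the hypothesis $\psi'(0)=1$, $c_j$ bounded, very transparently in the recursion. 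Both are valid; yours would be a reasonable alternative proof.
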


\begin{rem}
	Since we have not been able to find this result in the literature
	we provide a proof of it.
\end{rem}

In what follows, $\psi(z)$ will denote a fixed holomorphic function in $H(D)$ 
such that $\psi(0) = 0$ and $\psi'(0) = 1$. Thus we have,
\[
\psi(z) = z + \sum_{k \geq 1} \psi_k z^k, \quad \text{for all } z \in D.
\]

The fact that the linear span of the monomials $\{z^n\}_{n\geq 0}$ is dense in $H(D)$
implies that Proposition \ref{prop:logdense} is equivalent to say that for all $n \geq 0$
there exists a sequence $\{a_k^n\}_{k \geq N}$ of complex numbers such that
\[
z^n = \sum_{k\geq N} a_k^n \frac{\psi(z^k)}{z^N},
\]
with the right hand side converging uniformly on every compact set of $D$.
Using the power series expansion of $\psi(z)$, the  equality above
yields a linear system with $\{a_k^n\}_{k \geq N}$ as unknowns.
Since $\psi(0) = 0$ and $\psi'(0) = 1$, this system is lower triangular 
with ones in the diagonal, and hence it has a unique solution.
The difficult point is to prove the convergence of the corresponding sequence.
Fortunately, we can proceed in a slightly different way.

Let us denote by $H(D_R)$ the space of holomorphic functions on the open disk of radius $R$,
\[
D_R = \{ z \in \cmplx \mid |z| < R \}.
\]
We will work with the Bergman space on $D_R$, which is defined by
\[
A^2(D_R) = \left\{ f \in H(D_R) \mid \int_{D_R} |f(z)|^2 dA(z) < \infty \right\},
\]
where $dA(z)$ is the usual area measure, see \cite{BergmanSp} for details. 
It is well known that $A^2(D_R)$ is a Hilbert space with respect to the following inner product (see \cite{BergmanSp}),
\[
\langle f, g \rangle = \int_{D_R} f(z)\overline{g(z)} dA(z).
\]
The reason to consider $A^2(D_R)$ instead of $H(D_R)$ is due to the fact that it is a Hilbert space
(thus it is \emph{a priori} easier to deal with), and to the fact that convergence in the former implies
convergence in the latter, as expressed by the following result (see \cite{BergmanSp}).

\begin{proposition}
	If a sequence of functions $\{f_n \}$ in $A^2(D_R)$ converges to $f$ in $A^2(D_R)$,
	then $\{f_n \}$ converges to $f$ uniformly on each compact set of $D_R$.
\end{proposition}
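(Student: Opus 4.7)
The plan is to exploit the mean value property for holomorphic functions to bound pointwise values by the $A^2$-norm. Specifically, if $g$ is holomorphic on $D_R$ and $z_0 \in D_R$ is such that the closed disk $\overline{D(z_0, r)}$ is contained in $D_R$, then the mean value property yields
\[
g(z_0) = \frac{1}{\pi r^2} \int_{D(z_0, r)} g(z) \, dA(z).
\]
Applying the Cauchy--Schwarz inequality to the pair $(g, \mathbf{1}_{D(z_0, r)})$ in $L^2(D_R, dA)$, I would obtain
\[
|g(z_0)|^2 \leq \frac{1}{\pi^2 r^4} \cdot \pi r^2 \cdot \int_{D(z_0, r)} |g(z)|^2 \, dA(z) \leq \frac{1}{\pi r^2} \|g\|_{A^2(D_R)}^2.
\]

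Next, given a compact set $K \subset D_R$, set $r = \tfrac{1}{2}\operatorname{dist}(K, \partial D_R)$, which is strictly positive by compactness of $K$ and openness of $D_R$. Then for every $z_0 \in K$ the inclusion $\overline{D(z_0, r)} \subset D_R$ holds, so the bound above applies uniformly in $z_0 \in K$.

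Finally, I apply this estimate to $g = f_n - f$, which lies in $A^2(D_R)$ and is holomorphic there. This yields
\[
\sup_{z_0 \in K} |f_n(z_0) - f(z_0)|^2 \leq \frac{1}{\pi r^2} \|f_n - f\|_{A^2(D_R)}^2,
\]
and the right-hand side tends to $0$ by hypothesis, giving uniform convergence of $\{f_n\}$ to $f$ on $K$. There is no real obstacle here; the only points requiring care are the use of compactness to choose a single $r$ that works for all $z_0 \in K$, and the Cauchy--Schwarz step that upgrades the $L^1$-type mean value identity into an $L^2$-type estimate.
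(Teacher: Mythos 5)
Your proof is correct, and it is the standard argument for this fact about Bergman spaces; the paper does not give a proof of its own but simply cites a reference on Bergman spaces, where exactly this mean-value-plus-Cauchy--Schwarz estimate is the usual route. The key steps are all sound: the areal mean value property, the Cauchy--Schwarz upgrade to an $L^2$ bound of the form $|g(z_0)|^2 \leq \tfrac{1}{\pi r^2}\|g\|_{A^2(D_R)}^2$, and the use of compactness to pick a single $r>0$ valid on all of $K$.
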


For $0 < R < 1$, consider the linear operator $A_\psi \colon A^2(D_R) \to A^2(D_R)$, with domain the 
linear space of monomials, defined as follows:
\begin{eqnarray*}
	A_\psi (1) & = & 1, \\
	A_\psi (z^n) & = & \psi(z^n) = z^n + \sum_{j\geq 2} \psi_j z^{nj} \quad \text{for } n \geq 1. 
\end{eqnarray*}
The following result shows that $A_{\psi}$ is a bounded operator.
\begin{proposition}
	For $R < 1$, let $A_\psi = I + B_\psi$. 
	Then $B_\psi \colon A^2(D_R) \to A^2(D_R)$ is Hilbert-Schmidt.
	In particular, $B_\psi$ is compact and $A_\psi$ is bounded.
\end{proposition}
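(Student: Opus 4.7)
The plan is a direct Hilbert--Schmidt norm computation in the standard orthonormal basis of $A^2(D_R)$. Recalling that $\int_{D_R}\lvert z^n\rvert^2\,dA(z)=\pi R^{2n+2}/(n+1)$, an orthonormal basis is $\{e_n\}_{n\geq 0}$ with $e_n(z)=\sqrt{(n+1)/(\pi R^{2n+2})}\, z^n$. Since $B_\psi(1)=0$ and $B_\psi(z^n)=\sum_{j\geq 2}\psi_j z^{nj}$ for $n\geq 1$, by orthogonality of distinct monomials I can evaluate
\[
\|B_\psi e_n\|^2=\frac{n+1}{\pi R^{2n+2}}\sum_{j\geq 2}|\psi_j|^2\,\frac{\pi R^{2nj+2}}{nj+1}=\sum_{j\geq 2}|\psi_j|^2\,\frac{n+1}{nj+1}\,R^{2n(j-1)}.
\]

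Summing over $n\geq 1$ and interchanging the order of summation (all terms are nonnegative),
\[
\|B_\psi\|_{HS}^2=\sum_{j\geq 2}|\psi_j|^2\sum_{n\geq 1}\frac{n+1}{nj+1}\,R^{2n(j-1)}.
\]
Since $j\geq 2$ gives $(n+1)/(nj+1)\leq 2$, the inner sum is dominated by $2\,R^{2(j-1)}/(1-R^{2(j-1)})\leq 2\,R^{2(j-1)}/(1-R^2)$, so it remains to verify that $\sum_{j\geq 2}|\psi_j|^2 R^{2(j-1)}<\infty$.

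For the last estimate I choose $R'$ with $R<R'<1$. Since $\psi\in H(D)$ has radius of convergence at least $1$, the sequence $(|\psi_j|R'^{\,j})$ is bounded by some $M'$, hence $|\psi_j|^2R^{2(j-1)}\leq (M'^2/R^2)(R/R')^{2j}$, which is a convergent geometric series. Thus $\|B_\psi\|_{HS}<\infty$ and $B_\psi$ extends to a Hilbert--Schmidt operator on $A^2(D_R)$; in particular, it is compact, and $A_\psi=I+B_\psi$ is bounded. The only subtlety in the argument is the bookkeeping in interchanging the two summations and the choice of auxiliary radius $R'$ to absorb the $R^{2(j-1)}$ tail using only that $\psi$ is holomorphic on the \emph{open} unit disk (not on its closure); the rest is routine.
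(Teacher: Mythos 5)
Your proof is correct and follows essentially the same approach as the paper: compute the Hilbert--Schmidt norm of $B_\psi$ in the orthonormal basis of normalized monomials, reduce to a double sum, and then bound the resulting tail using the analyticity of $\psi$ on the open unit disk. The one small difference is in the final convergence check: you invoke an auxiliary radius $R'\in(R,1)$ and the boundedness of $|\psi_j|(R')^j$, whereas the paper bounds $(n+1)/(nj+1)$ by $2/j$ and recognizes the resulting series $\sum_{j\geq 2}|\psi_j|^2 R^{2j}/j$ as $\pi$ times the squared $A^2(D_R)$-norm of $(\psi(z)-z)/z$; both are valid and equally elementary.
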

\begin{proof}
	A basis of $A^2(D_R)$ is given by the following functions,
	which are just normalizations of the monomials $\{ z^k \}$,
	\[
	\phi_n (z) = \sqrt{\frac{n+1}{\pi}} \frac{z^n}{R^{n+1}}.
	\]
	To be Hilbert-Schmidt then means that 
	\[
	\sum_{n\geq 0} \left \langle B_\psi(\phi_n), B_\psi(\phi_n) \right\rangle < \infty.
	\]
	In terms of the basis $\{\phi_n\}$, $B_\psi$ is written as follows: $B_\psi(\phi_0) = 0$, and
	for $n\geq 1$,
	\begin{eqnarray*}
		B_\psi (\phi_n) & = & \sqrt{\frac{n+1}{\pi} } \frac{1}{R^{n+1}} \sum_{j\geq 2} \psi_j z^{nj} 
	= \sqrt{\frac{n+1}{\pi}} \frac{1}{R^{n+1}} \sum_{j\geq 2} \psi_j \sqrt{\frac{\pi}{nj + 1}} R^{nj +1} \phi_{nj} \\
	& = &  \sum_{j\geq 2} \psi_j \sqrt{\frac{n + 1}{nj + 1}} R^{n(j-1)} \phi_{nj}.
	\end{eqnarray*}
	Therefore, 
	\begin{eqnarray*}
		\sum_{n\geq 0} \langle B_\psi(\phi_n), B_\psi(\phi_n) \rangle & = & \sum_{n\geq 1} 
		\sum_{j\geq 2} |\psi_j|^2 {\frac{n +1 }{nj + 1}} R^{2n(j-1)} \leq \sum_{j \geq 2} \frac{2|\psi_j|^2}{j} \sum_{n\geq 1} R^{2n(j-1)}\\
		& = & \sum_{j \geq 2} \frac{2|\psi_j|^2}{j} \frac{ R^{2(j-1)}}{1-R^{2(j-1)}} 
		\leq \frac{2}{R^2 (1- R^2)} \sum_{j \geq 2} \frac{|\psi_j|^2}{j} R^{2j}.
	\end{eqnarray*}
	The last series is finite because it is exactly $\pi$ times the square of the norm in $A^2(D_R)$ of $(\psi(z) - z)/z$.
	Indeed,
	\[
	\left\| \frac{\psi(z) - z}{z} \right\|_{A^2(D_R)}^2 = \sum_{j \geq 1} |\psi_{j+1}|^2 
	\left\| z^j \right\|_{A^2(D_R)}^2 = \pi \sum_{j \geq 1} |\psi_{j+1}|^2 \frac{R^{2(j+1)}}{j +1}.
	\]
\end{proof}
\begin{corollary}
	For $R < 1$, the operator $A_\psi \colon A^2(D_R) \to A^2(D_R)$ is invertible.
\end{corollary}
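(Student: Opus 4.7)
The plan is to use the Fredholm alternative: since the preceding proposition gives $A_\psi = I + B_\psi$ with $B_\psi$ compact, $A_\psi$ is Fredholm of index zero, so invertibility reduces to injectivity. Hence my only task is to prove $\operatorname{Ker} A_\psi = 0$.

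To establish injectivity I will exploit the graded/triangular structure of $A_\psi$ on monomials. Concretely, suppose $f \in A^2(D_R)$ with $A_\psi(f) = 0$, and expand $f(z) = \sum_{n \geq 0} a_n z^n$, a Taylor series convergent on $D_R$. Since $A_\psi$ maps this to a function in $A^2(D_R)$ whose Taylor expansion can be computed by rearranging (the rearrangement being justified by convergence in $A^2(D_R)$, whose topology dominates uniform convergence on compact subsets of $D_R$), I would compute the coefficient of $z^n$ in $A_\psi(f)$. The monomial $z^n$ arises in $A_\psi(z^m) = \psi(z^m) = \sum_{j \geq 1} \psi_j z^{mj}$ (with $\psi_1 = 1$) exactly when $m \mid n$ and $j = n/m$, so the coefficient of $z^n$ in $A_\psi(f)$ equals
\[
a_n \;+\; \sum_{\substack{m \mid n \\ 1 \leq m < n}} a_m \, \psi_{n/m}.
\]
For $n = 0$ this forces $a_0 = 0$; for $n = 1$ it forces $a_1 = 0$; and in general, by strong induction on $n$, the hypothesis $a_m = 0$ for $m < n$ yields $a_n = 0$. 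Therefore $f \equiv 0$.

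Combining injectivity with the Fredholm-index-zero property of compact perturbations of the identity, $A_\psi$ is also surjective and its inverse is bounded by the open mapping theorem. This gives the corollary.

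The main obstacle is largely bookkeeping: one must justify that the formal identification of Taylor coefficients of $A_\psi(f)$ is legitimate when $f$ is an arbitrary element of $A^2(D_R)$ rather than a polynomial. This is where the previous proposition is used crucially: since $B_\psi$ is bounded (indeed Hilbert--Schmidt) on $A^2(D_R)$, the series $\sum a_n \psi(z^n)$ converges in $A^2(D_R)$-norm, hence uniformly on compact subsets of $D_R$, and one may read off coefficients term by term. Once this is observed, injectivity is immediate and the Fredholm alternative closes the argument.
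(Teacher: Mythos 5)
Your proof is correct and follows the same route as the paper's: both decompose $A_\psi = I + B_\psi$ with $B_\psi$ compact (so the Fredholm alternative reduces invertibility to injectivity) and then read injectivity off the lower-triangular, unit-diagonal structure of $A_\psi$ in the monomial basis. You spell out the divisibility bookkeeping and the convergence justification more explicitly than the paper does, but the argument is identical in substance.
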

\begin{proof}
	We have  $A_\psi = I + B_\psi$, with $B_\psi$ a compact operator. 
	The matrix of the operator $A_\psi$ in the basis $\{\phi_n\}$ is lower triangular, and has ones 
	in the diagonal; hence, the kernel of $A_\psi$ is trivial, and the Fredholm alternative 
	implies that $A_\psi$ is invertible.
\end{proof}

\begin{corollary}
	The linear span of $\{ 1, \psi(z), \psi(z^2), \dots \}$ is dense in $H(D)$.
\end{corollary}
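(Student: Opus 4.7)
The plan is to exploit the invertibility of $A_\psi$ on the Bergman space $A^2(D_R)$, established in the previous corollary, and transfer this to density in $H(D)$ by comparing topologies. The topology of $H(D)$ is defined by the seminorms $p_r(f) = \sup_{|z|\leq r} |f(z)|$ for $0 < r < 1$, so it suffices to prove the following: given $f \in H(D)$, $r \in (0,1)$, and $\epsilon > 0$, there exists a finite linear combination $g$ of elements of $\{1,\psi(z),\psi(z^2),\dots\}$ with $p_r(f - g) < \epsilon$.

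First, I would fix $R \in (r,1)$. Since $f$ is holomorphic on $D$ and hence continuous, and therefore bounded, on the compact set $\overline{D_R} \subset D$, we have $f|_{D_R} \in A^2(D_R)$. The linear span of the monomials $\{z^n\}_{n\geq 0}$ is dense in $A^2(D_R)$ (this is standard: the normalized monomials $\phi_n(z) = \sqrt{(n+1)/\pi}\, z^n/R^{n+1}$ form an orthonormal basis). Since $A_\psi \colon A^2(D_R) \to A^2(D_R)$ is bounded and invertible by the preceding corollary, it maps dense subspaces to dense subspaces; hence
\[
A_\psi\bigl(\operatorname{span}\{z^n : n \geq 0\}\bigr) = \operatorname{span}\{1,\psi(z),\psi(z^2),\dots\}
\]
is dense in $A^2(D_R)$. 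Therefore I can choose a finite linear combination $g$ of $\{1,\psi(z),\psi(z^2),\dots\}$ with $\|f - g\|_{A^2(D_R)} < \delta$, for any prescribed $\delta > 0$.

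The remaining step is to convert this $A^2(D_R)$-estimate into a uniform bound on $\overline{D_r}$. This uses the pointwise evaluation estimate in Bergman spaces: for any $z_0 \in \overline{D_r}$, the reproducing kernel gives $|h(z_0)| \leq C_{r,R} \|h\|_{A^2(D_R)}$ for every $h \in A^2(D_R)$, where the constant depends only on $r$ and $R$. (Concretely, this follows from the mean value property applied to the disk of radius $R - r$ centred at $z_0$, which is contained in $D_R$.) Taking $h = f - g$, this yields $p_r(f - g) \leq C_{r,R}\,\delta$, and choosing $\delta = \epsilon/C_{r,R}$ finishes the proof.

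I do not expect a serious obstacle here: all the real work was done in showing that $B_\psi$ is Hilbert--Schmidt, which made $A_\psi$ invertible via the Fredholm alternative (the triangular structure rules out kernel). The only point that requires a small amount of care is checking the containment $f|_{D_R} \in A^2(D_R)$ and the pointwise bound, both of which follow from the compact containment $\overline{D_R} \subset D$ and standard Bergman-space estimates.
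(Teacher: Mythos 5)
Your proof is correct and takes essentially the same approach as the paper: both rest on the invertibility of $A_\psi$ on the Bergman space $A^2(D_R)$ and on the fact that convergence in $A^2(D_R)$ implies uniform convergence on compact subsets of $D_R$. The only cosmetic difference is that the paper solves $A_\psi f_R = g$ for each $R$ and notes that the Taylor coefficients of $f_R$ are independent of $R$, thereby producing one series $a_0 + \sum_{n\geq 1} a_n\psi(z^n)$ converging to $g$ on every compact subset of $D$, whereas you approximate compact-by-compact and do not track a universal series; either route suffices to establish density.
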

\begin{proof}
	Let us fix $g(z) \in H(D)$. Let $ 0 < R < 1$. By the previous corollary,
	there exists $f_R(z) \in A^2(D_R)$ such that $A_\psi(f_R) = g$.
	Then we have 
	\[
	f_R(z) = \sum_{n\geq 0} a_n(R) z^n,
	\]
	and the series $ a_0(R) + \sum_{n\geq 1} a_n(R) \psi(z^n)$ converges to $g$ in $A^2(D_R)$, so
	it converges uniformly to $g$ in every compact contained in $D_R$. Since $f_R(z)$ also belongs to $A^2(D_{R'})$ 
	for all $ 0 < R' < R$, and is holomorphic, the coefficients $a_n(R)$ are independent of $R$, so $a_n(R) = a_n$.
	Hence $ a_0 + \sum_{n\geq 0} a_n \psi(z^n) $ converges to $g$ in every compact set contained in the unit disk,
	and this proves the result.
\end{proof}
The proof of Proposition \ref{prop:logdense} now follows easily.
\begin{proof}[Proof of Proposition \ref{prop:logdense}]
	Consider the following linear subspace of $H(D)$
	\[
	C_N = \{ \phi \in H(D) \mid \phi^{(j)}(0) = 0,\; 0 \leq j \leq N - 1 \}.
	\]
	Since the derivative is continuous in $H(D)$, $C_N$ is closed in $H(D)$.
	By the preceding corollary, it follows that $C_N$ is the closure of $\{\psi(z^N), \psi(z^{N+1}), \dots\}$. 
	On the other hand, $C_N$ is homeomorphic to $H(D)$ via the linear map
	\begin{eqnarray*}
		H(D) & \to & C_N \\
		\phi(z) & \mapsto & z^N \phi(z).
	\end{eqnarray*}
	Therefore, the closure of the linear span of $\{\frac{\psi(z^k)}{z^N}\}_{k\geq N}$ is the whole $H(D)$, 
	as we wanted to prove.
\end{proof}

\subsection{Isospectrality and torsion} \label{section:isospectrality}

We start this subsection with the proof of Theorem \ref{thm:torsiondetclsp}.

\begin{proof}[Proof of Theorem \ref{thm:torsiondetclsp}]
	We can assume that $N \geq 6$. 
	Let us put $\mu_i = \pclsp (M_i,\eta_i)$ and $\overline{\mu}_i = \pclsp (\overline{M_i},\eta_i)$, for $i = 1, 2$.
	From Theorem \ref{thm:CuspedRuelle} we deduce that for $k \geq 3$,
	\begin{eqnarray*}
		\log\left|\frac{ \normaltor_{2k + 3}(M_i)}{\normaltor_{2k + 1}(M_i)}\right| & = & 
		\log{|R_{2k + 2}^{M_i}(k + 1)|}  - \frac{2(k + 1)}{\pi} \Vol M_i \\
		\log\left|\frac{ \normaltor_{2k + 2}(M_i)}{\normaltor_{2k}( M_i)}\right|  & = & 
		\log{\left|R_{2k + 1}^{M_i}\left( k +\frac{1}{2} \right)\right|} - \frac{2k +1}{\pi}\Vol M_i.
	\end{eqnarray*}
	By hypothesis, for all $n \geq N$, $|\normaltor_n (M_1,\eta_1)| = |\normaltor_n (M_2,\eta_2) |$.
	Then, by Theorem \ref{thm:asymp_main}, we have $\Vol M_1 = \Vol M_2$. 
	On the other hand, by Lemma \ref{eq:ruellelsp}, we have:
	\begin{eqnarray*}
		\log{\left|R_{j}^{M_i}\left( \frac{j}{2} \right)\right|}  =  \int_{|z| > 1} \log{|1 - z^{-j}|} d\mu_i(z).
	\end{eqnarray*}
	Therefore, for all $n \geq N + 1$, we have
	\begin{equation} \label{proof:eqintclps}
		\int_{|z|>1} \log{|1 - z^{-n}|} d\mu_1(z) = \int_{|z|>1} \log{|1 - z^{-n}|} d\mu_2(z).
	\end{equation}
	On the other hand,
	\[
	\int_{|z|>1} 2 \log{|1 - z^{-n}|} d\mu_i(z) = \int_{|z|>1} \log{(1 - z^{-n})} d\mu_i(z) + \int_{|z|>1} \log{(1 - z^{-n})} d\overline{\mu}_i(z).
	\]
	Let $\nu_i$ be the image measure of $\mu_i + \overline{\mu}_i$ under the map $z\mapsto \frac 1z$.
	Then Equation \eqref{proof:eqintclps} is equivalent to,
	\[
	\int_{|z| < 1} \log(1 - z^n)d\nu_1(z) = \int_{|z| < 1} \log(1 - z^n)d\nu_2(z),
	\]
	for all $n \geq N + 1$. The measure $\nu_i$ is not Radon since any neighbourhood of the origin has infinite measure.
	Nevertheless, by Proposition \ref{prop:spclspimprovedcont}, $z^5 \nu_i$ is finite. 
	Hence, $\nu = z^{N + 1}(\nu_1 - \nu_2)$ is a Radon measure that satisfies
	\[
	\int_{|z| < 1} \frac{\log(1 - z^n)}{z^{N+1}} d\nu(z) = 0,\quad \text{for all } n \geq  N + 1.
	\]
	Now we can apply Proposition \ref{prop:measurevanish} with $\psi(z) = -\log(1 -z)$ to conclude
	that $\nu = 0$, which is equivalent to say that 
	\[
	\mu_1 + \overline{\mu}_1 = \mu_2 + \overline{\mu}_2.
	\]
	The first part of the theorem is then proved.
	The second part is now easily deduced by using the first part and Theorem \ref{thm:CuspedRuelle}.
\end{proof}

A similar proof shows that the following result holds.

\begin{theorem} \label{thm:torsiondetclspnonspin}
	Let $M_1$ and $M_2$ be two complete hyperbolic $3$-manifolds of finite volume.
	Assume that there exists $K\geq 2$ such that for all $k \geq K$ we have
	\[
	|\normaltor_{2k +1} (M_1)| = |\normaltor_{2k+1}(M_2)|.
	\]
	Then the following assertions hold:
	\begin{enumerate}
		\item The manifolds $M_1$ and $M_2$ are isospectral (as ``non-spin'' manifolds) 
		up to complex conjugation.
			In particular, they have the same real length spectrum.
		\item The equality $|\normaltor_{2k+1}(M_1)| = |\normaltor_{2k+1} (M_2)|$ holds 
			for all $k \geq 2$.
	\end{enumerate}
\end{theorem}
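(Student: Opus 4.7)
The plan is to mirror the proof of Theorem~\ref{thm:torsiondetclsp}, replacing the spin-complex-length spectrum by the prime complex-length spectrum $\mu_i := \pclsp M_i$ and invoking only the odd-dimensional (and hence spin-free) half of Theorem~\ref{thm:CuspedRuelle}. First, from the hypothesis and Theorem~\ref{thm:asymp_main} I immediately get $\Vol M_1 = \Vol M_2$. Taking successive ratios in the odd-dimensional formula of Theorem~\ref{thm:CuspedRuelle} the volume term cancels, and the hypothesis yields
\[
\log|R_{2k}^{M_1}(k)| \;=\; \log|R_{2k}^{M_2}(k)| \qquad \text{for all } k \geq K+1.
\]

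Next I would translate these equalities into integral identities against $\mu_i$. Unpacking the definition $R_{2k}(k) = \prod_\varphi(1-\sigma_{2k}(\varphi)\,e^{-k\length(\varphi)})$, using $\sigma_{2k}(\varphi) = e^{ik\theta(\varphi)}$ and $|1-\bar{z}| = |1-z|$, produces the non-spin analogue of Lemma~\ref{eq:ruellelsp},
\[
\log|R_{2k}^{M_i}(k)| \;=\; \int_{|w|>1} \log|1-w^{-k}|\, d\mu_i(w).
\]
Writing $2\log|1-w^{-k}| = \log(1-w^{-k})+\log(1-\bar{w}^{-k})$ (principal branch, valid for $|w|>1$) and observing that integration of the second summand against $\mu_i$ equals integration of the first against the conjugate measure $\overline{\mu}_i$, the hypothesis rewrites as
\[
\int \log(1-w^{-k})\, d(\mu_1+\overline{\mu}_1) \;=\; \int \log(1-w^{-k})\, d(\mu_2+\overline{\mu}_2), \qquad k \geq K+1.
\]
Pushing forward under the inversion $w\mapsto 1/w$, and writing $\nu_i$ for the image of $\mu_i+\overline{\mu}_i$, this becomes an identity between integrals of $\log(1-z^k)$ against discrete measures on the open punctured unit disk.

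Finally, by the non-spin improved continuity result (decay $\int|w|^{-(2+\epsilon)}d\mu_i<\infty$ coming from the quadratic growth $\mu_i(\{|z|\leq R\})\leq CR^2$), the measure $\nu := z^{K+1}(\nu_1-\nu_2)$ is a finite complex-valued Radon measure whose support is discrete, has zero Lebesgue measure, and has connected complement. The identity above reads $\int z^{-(K+1)}\log(1-z^k)\, d\nu(z) = 0$ for every $k\geq K+1$, so Proposition~\ref{prop:measurevanish} applied with $N = K+1$ and $\psi(z) = -\log(1-z)$ (so that $\psi(0)=0$ and $\psi'(0)=1$) forces $\nu = 0$, i.e.\ $\mu_1+\overline{\mu}_1 = \mu_2+\overline{\mu}_2$. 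Assertion~(2) then follows by running Theorem~\ref{thm:CuspedRuelle} forward: the common spectrum (up to conjugation) and the common volume determine every $|\normaltor_{2k+1}(M_i)|$ for $k\geq 2$. The main technical obstacle I anticipate is the bookkeeping near the origin after inversion: the threshold $K+1\geq 3$, which is exactly where the hypothesis $K\geq 2$ enters, must be verified to be precisely what turns $z^{K+1}(\nu_1-\nu_2)$ into a valid input for Proposition~\ref{prop:measurevanish}, and one must check that the discreteness and the connectedness of the complement are genuinely inherited from the length spectrum.
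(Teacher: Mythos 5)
Your proof is correct and follows exactly the route the paper intends: the paper's own proof of Theorem \ref{thm:torsiondetclspnonspin} is a single sentence referring back to the spin case, and you have filled in precisely the right odd-dimensional half of that argument, including the correct non-spin version of Lemma \ref{eq:ruellelsp} and the correct integrability threshold (the quadratic-growth decay $|z|^{-(2+\epsilon)}$ gives finiteness of $z^{K+1}\nu_i$ for $K+1\geq 3$, which is exactly the role of $K\geq 2$).
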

\begin{proof}[Proof of Theorem \ref{thm:torsiondetclspnonspin}]
	The proof is the same as the proof of Theorem \ref{thm:torsiondetclsp}, 
	but considering only odd dimensional representations.
\end{proof}

Wotzke's Theorem \ref{thm:Wotzke} and the previous theorem yield the following result.
\begin{theorem}
	Let $(M_1, \eta_1)$ and $(M_2,\eta_2)$ be two \emph{closed} spin-hyperbolic $3$-manifolds.
	Then the following assertions are equivalent:
	\begin{enumerate}
		\item There exists $N \geq 2$ such that for all $n \geq N$,
			\[
			|\tor_n (M_1, \eta_1)| = |\tor_n (M_2, \eta_2)|.
			\]
		\item The manifolds $(M_1, \eta_1)$ and $(M_2,\eta_2)$ are isospectral up to conjugation.
	\end{enumerate}
\end{theorem}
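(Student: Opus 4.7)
This will follow directly from Wotzke's Theorem \ref{thm:Wotzke}. Since $M_1$ and $M_2$ are closed, Wotzke yields $|R_{\rho_n}(0)| = \tor(M;\rho_n)^2$ for every $n \geq 2$. The absolute value of each local factor $|\det(\operatorname{Id} - \rho_n(\varphi))|$ equals $\prod_{i=0}^{n-1}|1 - e^{(n-1-2i)\lambda(\varphi)/2}|$, which is invariant under $\lambda\mapsto \bar\lambda$ because $|1-w|=|1-\bar w|$. Hence $|R_{\rho_n}(0)|$ depends only on the spin-complex-length spectrum up to conjugation, so spin-isospectrality up to conjugation forces $|R^{M_1}_{\rho_n}(0)| = |R^{M_2}_{\rho_n}(0)|$, and Wotzke gives $(1)$ with $N = 2$.

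\textbf{Direction $(1)\Rightarrow (2)$.} The plan is to reduce to the complex-analytic engine developed in Section \ref{chapter:analysis}. Since the manifolds are closed, Raghunathan's theorem gives acyclicity, so $\tor_n(M_i,\eta_i)$ is defined for all $n\geq 2$ and $\normaltor_n$ is literally a quotient of two such torsions. The hypothesis $|\tor_n(M_1,\eta_1)| = |\tor_n(M_2,\eta_2)|$ for $n \geq N$ then implies that, for each parity separately, the ratio $|\normaltor_n(M_1,\eta_1)|/|\normaltor_n(M_2,\eta_2)|$ is constant in $n$, equal to either $|\tor_3(M_2)/\tor_3(M_1)|$ or $|\tor_2(M_2,\eta_2)/\tor_2(M_1,\eta_1)|$. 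Applying Theorem \ref{thm:asymp_main} on both sides, dividing by $(2k+1)^2$, and sending $k\to\infty$, this constant-ratio condition forces $\Vol M_1 = \Vol M_2$.

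Next I would invoke Müller's formula (Theorem \ref{thm:MullerRuelle}) in both parities, in the form
\[
\log\left|\frac{\normaltor_{2k+3}(M_i)}{\normaltor_{2k+1}(M_i)}\right| = \log|R^{M_i}_{2k+2}(k+1)| - \frac{2(k+1)}{\pi}\Vol M_i,
\]
together with its analog involving $R^{M_i}_{2k+1}(k+\tfrac{1}{2})$ for the even case. Since the constant ratio of $\normaltor$'s drops out of consecutive quotients and the affine term cancels once $\Vol M_1 = \Vol M_2$, one obtains $|R^{M_1}_j(j/2)| = |R^{M_2}_j(j/2)|$ for all sufficiently large $j$ of either parity. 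Lemma \ref{eq:ruellelsp} then rewrites this equality as
\[
\int_{|z|>1}\log|1-z^{-j}|\,d\mu_1(z) \;=\; \int_{|z|>1}\log|1-z^{-j}|\,d\mu_2(z),
\]
where $\mu_i = \pclsp(M_i,\eta_i)$. From here I would run verbatim the complex-analytic argument from the proof of Theorem \ref{thm:torsiondetclsp}: push forward by $z\mapsto 1/z$, multiply by an appropriate power of $z$ to obtain a Radon difference measure supported in the open unit disk, and invoke Proposition \ref{prop:measurevanish} with $\psi(z)=-\log(1-z)$. The output is $\mu_1 + \overline{\mu_1} = \mu_2 + \overline{\mu_2}$, which is precisely spin-isospectrality up to conjugation.

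\textbf{Main obstacle.} The one delicate point is that the hypothesis matches torsions only for $n\geq N$, so the low-dimensional normalizers $\tor_2$ and $\tor_3$ appearing in the definition of $\normaltor_n$ need not coincide for the two manifolds. This introduces an $n$-independent multiplicative constant that has to be absorbed by passing to successive ratios, while simultaneously the quadratic-in-$k$ affine term in Müller's formula must be controlled by extracting equality of volumes from the asymptotic theorem. Once this bookkeeping is done, the statement is essentially a packaging of Theorem \ref{thm:torsiondetclsp} (and its odd-dimensional analog Theorem \ref{thm:torsiondetclspnonspin}) with Wotzke's theorem.
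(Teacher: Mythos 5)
Your direction $(1)\Rightarrow(2)$ is correct and, if anything, more careful than the paper, which simply cites Theorem \ref{thm:torsiondetclsp}. As you note, the hypothesis here is on plain torsions $|\tor_n|$ rather than normalized torsions $|\normaltor_n|$, so Theorem \ref{thm:torsiondetclsp} does not apply literally; but since that theorem's proof only uses ratios of consecutive normalized torsions (in which the $\tor_2,\tor_3$ normalizers cancel) together with the asymptotic theorem (in which the constant ratio vanishes in the $k\to\infty$ limit), the constant-ratio hypothesis you extract suffices. Your bookkeeping is essentially a correct unwinding of that proof.

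Your direction $(2)\Rightarrow(1)$ has a genuine gap. You observe that each local factor $|\det(\operatorname{Id}-\rho_n(\varphi))|$ is invariant under $\lambda(\varphi)\mapsto\overline{\lambda(\varphi)}$, and then conclude ``Hence $|R_{\rho_n}(0)|$ depends only on the spin-complex-length spectrum up to conjugation.'' But $R_{\rho_n}(0)$ is a value of the meromorphic continuation, not a convergent Euler product — the product only converges for $\Re(s)>2+n/2$. Invariance of each local factor modulus says nothing directly about the modulus of the analytically continued function at $s=0$; a limit of a conjugation-invariant family of functions built from local data need not itself be conjugation-invariant without a global argument. The paper's proof supplies exactly this: it establishes the functional identity $\overline{R^{\overline M}_{\rho_n}(\bar s)}=R^{M}_{\rho_n}(s)$ in the domain of convergence (where one can indeed conjugate term by term), extends it to all $s$ by the identity theorem, and then observes that $F_M(s):=R^M_{\rho_n}(s)R^{\overline M}_{\rho_n}(s)$ is, on the domain of convergence, a product over the measure $\mu+\overline\mu$, hence determined by the isospectral class; evaluating at $s=0$ gives $F_M(0)=|R^M_{\rho_n}(0)|^2=|\tor_n(M,\eta)|^4$ and the implication follows. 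Your intuition is the right one, but you need to route it through the meromorphic continuation and the functional identity to make the step to $|R_{\rho_n}(0)|$ legitimate.
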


\begin{proof}
	If the first assertion is true, then Theorem \ref{thm:torsiondetclsp} implies
	that the two manifolds are isospectral up to complex conjugation. 
	In order to prove the converse, let us make the following observation.
	Let $(M,\eta)$ be a closed spin-hyperbolic $3$-manifold.
	By definition, the spin-complex-length spectrum of $\pclsp(M,\eta)$ determines the 
	Ruelle zeta function $R^M_{\rho_n}(s)$, and if we only know it up to conjugation,
	it determines the following function,
	\[
	F_M(s) := R^M_{\rho_n}(s) R^{\overline M}_{\rho_n}(s).
	\]
	By definition, for $\Re(s) > 2 + n/2$, we have:
	\[
	\overline{R^{\overline M}_{\rho_n}(\bar s)} = R^{M}_{\rho_n}(s).
	\]
	Since both right and left hand side of this equation are meromorphic functions,
	the equality  must hold for all $s\in \cmplx$. By Wotzke's Theorem \ref{thm:Wotzke}, we get then:
	\[
	F_M(0) = R^M_{\rho_n}(0) R^{\overline M}_{\rho_n}(0) = |R^M_{\rho_n}(0)|^2 = |\tau_n(M,\eta)|^{4}.
	\]
	The proof that Assertion 2 implies Assertion 1 is now clear.
\end{proof}

\bibliographystyle{alpha} 
\bibliography{torsion} 

\def\cprime{$'$}
\begin{thebibliography}{CEM06}

\bibitem[BP01]{BP}
Michel Boileau and Joan Porti.
\newblock Geometrization of 3-orbifolds of cyclic type.
\newblock {\em Ast{\'e}risque}, 272:208, 2001.
\newblock Appendix A by Michael Heusener and Porti.

\bibitem[Br{\"o}98]{Brocker}
U.~Br{\"o}cker.
\newblock {\em Die {R}uellesche {Z}etafunktion f{\"u}r $G$-induzierte
  {A}nosov-{F}l{\"u}sse}.
\newblock PhD thesis, Humboldt-Universit{\"a}t Berlin, 1998.

\bibitem[CEM06]{CanEpsGre}
Richard~D. Canary, David Epstein, and Albert Marden, editors.
\newblock {\em Fundamentals of hyperbolic geometry: selected expositions},
  volume 328 of {\em London Mathematical Society Lecture Note Series}.
\newblock Cambridge University Press, Cambridge, 2006.

\bibitem[Che79]{Ch}
Jeff Cheeger.
\newblock Analytic torsion and the heat equation.
\newblock {\em Ann. of Math. (2)}, 109(2):259--322, 1979.

\bibitem[CK02]{CoornKnieper}
M.~Coornaert and G.~Knieper.
\newblock Growth of conjugacy classes in {G}romov hyperbolic groups.
\newblock {\em Geom. Funct. Anal.}, 12(3):464--478, 2002.

\bibitem[Cul86]{culler}
Marc Culler.
\newblock Lifting representations to covering groups.
\newblock {\em Adv. in Math.}, 59(1):64--70, 1986.

\bibitem[FH91]{FultonHarris}
William Fulton and Joe Harris.
\newblock {\em Representation theory}, volume 129 of {\em Graduate Texts in
  Mathematics}.
\newblock Springer-Verlag, New York, 1991.
\newblock A first course, Readings in Mathematics.

\bibitem[Fri86]{Fried1}
David Fried.
\newblock Analytic torsion and closed geodesics on hyperbolic manifolds.
\newblock {\em Invent. Math.}, 84(3):523--540, 1986.

\bibitem[Fri95]{Fried2}
David Fried.
\newblock Meromorphic zeta functions for analytic flows.
\newblock {\em Comm. Math. Phys.}, 174(1):161--190, 1995.

\bibitem[Gam69]{Gamelin}
Theodore~W. Gamelin.
\newblock {\em Uniform algebras}.
\newblock Prentice-Hall Inc., Englewood Cliffs, N. J., 1969.

\bibitem[Gol86]{goldman}
William~M. Goldman.
\newblock Invariant functions on {L}ie groups and {H}amiltonian flows of
  surface group representations.
\newblock {\em Invent. Math.}, 85(2):263--302, 1986.

\bibitem[GP91]{GrovPet}
Karsten Grove and Peter Petersen.
\newblock Manifolds near the boundary of existence.
\newblock {\em J. Differential Geom.}, 33(2):379--394, 1991.

\bibitem[Har77]{Hartshorne}
Robin Hartshorne.
\newblock {\em Algebraic geometry}.
\newblock Springer-Verlag, New York, 1977.
\newblock Graduate Texts in Mathematics, No. 52.

\bibitem[HKZ00]{BergmanSp}
Haakan Hedenmalm, Boris Korenblum, and Kehe Zhu.
\newblock {\em Theory of {B}ergman spaces}, volume 199 of {\em Graduate Texts
  in Mathematics}.
\newblock Springer-Verlag, New York, 2000.

\bibitem[Kap01]{Kapovich}
Michael Kapovich.
\newblock {\em Hyperbolic manifolds and discrete groups}, volume 183 of {\em
  Progress in Mathematics}.
\newblock Birkh{\"a}user Boston Inc., Boston, MA, 2001.

\bibitem[Kir89]{Kirby}
Robion~C. Kirby.
\newblock {\em The topology of {$4$}-manifolds}, volume 1374 of {\em Lecture
  Notes in Mathematics}.
\newblock Springer-Verlag, Berlin, 1989.

\bibitem[Mar69]{Margulis}
G.~A. Margulis.
\newblock Certain applications of ergodic theory to the investigation of
  manifolds of negative curvature.
\newblock {\em Funkcional. Anal. i Prilo\v zen.}, 3(4):89--90, 1969.

\bibitem[Mey86]{Meyer}
Robert Meyerhoff.
\newblock Density of the {C}hern-{S}imons invariant for hyperbolic
  {$3$}-manifolds.
\newblock In {\em Low-dimensional topology and {K}leinian groups
  ({C}oventry/{D}urham, 1984)}, volume 112 of {\em London Math. Soc. Lecture
  Note Ser.}, pages 217--239. Cambridge Univ. Press, Cambridge, 1986.

\bibitem[MFP12]{MenalPorti}
Pere Menal-Ferrer and Joan Porti.
\newblock Twisted cohomology for hyperbolic three manifolds.
\newblock {\em Osaka J. Math.}, 49(3):741--769, 2012.

\bibitem[Mil62]{MilD}
John Milnor.
\newblock A duality theorem for {R}eidemeister torsion.
\newblock {\em Ann. of Math. (2)}, 76:137--147, 1962.

\bibitem[Mil66]{MilnorTor}
J.~Milnor.
\newblock Whitehead torsion.
\newblock {\em Bull. Amer. Math. Soc.}, 72:358--426, 1966.

\bibitem[MR03]{ReidMAC}
Colin Maclachlan and Alan~W. Reid.
\newblock {\em The arithmetic of hyperbolic 3-manifolds}, volume 219 of {\em
  Graduate Texts in Mathematics}.
\newblock Springer-Verlag, New York, 2003.

\bibitem[M{\"u}l]{Mul}
Werner M{\"u}ller.
\newblock The asymptotics of the {R}ay-{S}inger analytic torsion of hyperbolic
  3-manifolds.
\newblock Preprint, arXiv:1003.5168v1.

\bibitem[M{\"u}l78]{MulAT}
Werner M{\"u}ller.
\newblock Analytic torsion and {$R$}-torsion of {R}iemannian manifolds.
\newblock {\em Adv. in Math.}, 28(3):233--305, 1978.

\bibitem[M{\"u}l93]{MulUni}
Werner M{\"u}ller.
\newblock Analytic torsion and {$R$}-torsion for unimodular representations.
\newblock {\em J. Amer. Math. Soc.}, 6(3):721--753, 1993.

\bibitem[Por97]{Porti}
Joan Porti.
\newblock Torsion de {R}eidemeister pour les vari{\'e}t{\'e}s hyperboliques.
\newblock {\em Mem. Amer. Math. Soc.}, 128(612):x+139, 1997.

\bibitem[RS71]{RaySin}
D.~B. Ray and I.~M. Singer.
\newblock {$R$}-torsion and the {L}aplacian on {R}iemannian manifolds.
\newblock {\em Advances in Math.}, 7:145--210, 1971.

\bibitem[Thu]{ThurstonNotes}
William~P. Thurston.
\newblock {The Geometry and Topology of Three-Manifolds}.
\newblock Princeton University, http://www.msri.org/publications/books/gt3m/.

\bibitem[Thu97]{Thurston3d}
William~P. Thurston.
\newblock {\em Three-dimensional geometry and topology. {V}ol. 1}, volume~35 of
  {\em Princeton Mathematical Series}.
\newblock Princeton University Press, Princeton, NJ, 1997.
\newblock Edited by Silvio Levy.

\bibitem[Tur86]{TuraevKT}
V.~G. Turaev.
\newblock Reidemeister torsion in knot theory.
\newblock {\em Uspekhi Mat. Nauk}, 41(1(247)):97--147, 240, 1986.

\bibitem[Tur01]{Turaev}
Vladimir Turaev.
\newblock {\em Introduction to combinatorial torsions}.
\newblock Lectures in Mathematics ETH Z\"urich. Birkh\"auser Verlag, Basel,
  2001.
\newblock Notes taken by Felix Schlenk.

\bibitem[Tur02]{TuraevPM}
Vladimir Turaev.
\newblock {\em Torsions of {$3$}-dimensional manifolds}, volume 208 of {\em
  Progress in Mathematics}.
\newblock Birkh\"auser Verlag, Basel, 2002.

\bibitem[Wot08]{Wotzke}
A.~Wotzke.
\newblock {\em Die Ruellsche Zetafunktion und die analytische Torsion
  hyperbolischer Mannigfaltigkeiten}.
\newblock PhD thesis, Bonn, 2008.
\newblock Bonner Mathematische Schriften, Nr. 389.

\end{thebibliography}

\medskip

\textsc{Departament de Matem\`atiques, Universitat Aut\`onoma de Barcelona.}
\textsc{08193 Bellaterra, Spain}

{pmenal@mat.uab.cat}
{porti@mat.uab.cat}

\end{document}